\documentclass[10pt]{article}
\usepackage[a4paper]{geometry}
\usepackage{amsmath, amssymb, amsthm}

\usepackage{appendix}
\usepackage{multicol} 
\usepackage{makeidx}
\usepackage{cite}
\usepackage{captdef}
\usepackage{paralist}
\usepackage{ dsfont }

\usepackage[colorlinks=true]{hyperref}
\hypersetup{urlcolor=blue, citecolor=black}

\usepackage{psfrag}
\usepackage{graphicx}
\usepackage{graphics}
\usepackage[dvipsnames]{xcolor}
\usepackage{comment}
\usepackage[showonlyrefs]{mathtools}
\mathtoolsset{showonlyrefs=true}

\usepackage{fullpage}

\usepackage{subcaption}

    \newtheorem{theo}{Theorem}[section]
    \newtheorem{coro}[theo]{Corollary}
    \newtheorem{lemma}[theo]{Lemma}
    \newtheorem{prop}[theo]{Proposition}
    \theoremstyle{definition}
    
    \newtheorem{remark}[theo]{Remark}

\newcommand{\R}{\mathbb{R}}

\newcommand{\pfrak}{\mathfrak{p}}
\newcommand{\qfrak}{\mathfrak{q}}

\newcommand{\eps}{\varepsilon}

%\renewcommand{\phi}{\varphi}   

%Delia:

\newcommand{\abs}[1]{\left|#1\right|}
\newcommand{\norm}[1]{\left \| #1\right \|}

%\newenvironment{proof}   
%{\noindent%\addvspace{0.3cm}   
%{\em Proof}.}   
%{\nopagebreak\mbox{}\hfill $\Box$\par\addvspace{0.5cm}}   

\DeclareMathOperator{\sign}{sign}

\newcommand{\N}{{\mathbb{N}}}

\renewcommand{\epsilon}{\varepsilon}

    % changed
  % changed
 % changed

 % changed

\numberwithin{equation}{section}

  %for paper content
  %for comments
  %for paper content
  %for comments
  %for paper content
  %for comments

\def\sideremark#1{\ifvmode\leavevmode\fi\vadjust{\vbox to0pt{\vss% the remark
 \hbox to 0pt{\hskip\hsize\hskip1em%                          will appear only
 \vbox{\hsize2.1cm\tiny\raggedright\pretolerance10000%          on the side
  \noindent #1\hfill}\hss}\vbox to15pt{\vfil}\vss}}}%

\title{On least energy solutions to a pure Neumann Lane-Emden system: convergence, symmetry breaking, and multiplicity}
\author{Alberto Saldaña, Delia Schiera, and Hugo Tavares}

\date{}

\begin{document}
\maketitle

\begin{abstract}
We consider the following Lane-Emden system with Neumann boundary conditions
\[
-\Delta u= |v|^{q-1}v  \text{ in } \Omega,\qquad
-\Delta v= |u|^{p-1}u  \text{ in } \Omega,\qquad
\partial_\nu u=\partial_\nu v=0  \text{ on } \partial \Omega, 
\]
where $\Omega$ is a bounded smooth domain of $\R^N$ with $N \ge 1$. We study the multiplicity of solutions and the convergence of least energy (nodal) solutions (l.e.s.) as the exponents $p, q > 0$ vary in the subcritical regime $1/(p+1) + 1/(q+1)  > (N-2)/N$, or in the critical case $1/(p+1) + 1/(q+1) =(N-2)/N$ with some additional assumptions. We consider, for the first time in this setting, the cases where one or two exponents tend to zero, proving that l.e.s. converge to a problem with a sign nonlinearity. 
Our approach is based on an alternative characterization of least energy levels in terms of the nonlinear eigenvalue problem
\[
\Delta (|\Delta u|^{\frac 1 q -1} \Delta u) = \lambda |u|^{p-1} u, \quad \partial_\nu u=\partial_\nu(|\Delta u|^{\frac 1 q -1} \Delta u)=0 \text{ on } \partial \Omega.
\]

As an application, we show a symmetry breaking phenomenon for l.e.s. of a bilaplacian equation with sign nonlinearity and for other equations with nonlinear higher-order operators.

\medskip

\noindent \textbf{Mathematics Subject Classification:} 35J50, 35J47, 35B40 (Primary), 35B07, 35B33, 35B38, 35J30, 35P30. 

\medskip

\noindent \textbf{Keywords: } Neumann boundary conditions, asymptotic analysis, dual method, symmetry breaking, biharmonic operator, sign nonlinearity.  

\end{abstract}

\section{Introduction}

Let $\Omega$ be a bounded smooth domain in $\R^N$, $N\geq 1$, and consider the following pure Neumann Lane-Emden system
\begin{equation}\label{system} 
-\Delta u= |v|^{q-1}v  \text{ in } \Omega,\qquad
-\Delta v= |u|^{p-1}u  \text{ in } \Omega,\qquad
\partial_\nu u=\partial_\nu v=0  \text{ on } \partial \Omega,
\end{equation}
where $p>0$ and $q>0$ are some suitable exponents. These problems are characterized by the fact that all the nontrivial solutions change sign. Indeed, observe that if $u$ and $v$ solve \eqref{system}, then, by the divergence theorem,
\[
\int_\Omega |u|^{p-1}u=\int_\Omega |v|^{q-1}v=0.
\]
System \eqref{system} has a variational structure and least energy solutions can be found by constrained minimization under suitable assumptions on the exponents. For instance, if $pq\neq 1$ and $p$ and $q$ are below the critical hyperbola, i.e.
\begin{equation}\label{CH} 
p,q>0,\qquad \frac{1}{p+1} +\frac{1}{q+1}  >  \frac{N-2}{N},
\end{equation}
then existence (and some qualitative properties) of least energy solutions to \eqref{system} is shown in \cite{ST2}. Furthermore, in \cite{PST} it is proved that system \eqref{system} has least energy solutions when $N\geq 4$ and the exponents are on the critical hyperbola, at least under some additional restrictions. This is in sharp contrast with respect to its Dirichlet counterpart, for which least energy solutions never exist in the critical case (see, for instance, \cite[Lemma 2.1]{g08} and \cite{w93,cs20}). These additional restrictions are needed to use a compactness argument based on Cherrier-type inequalities (following \cite{c91}). To be more precise, in the critical regime one needs that $N\geq 4$ and
\begin{equation}\label{CH2}
\begin{cases}
p,q>0,\qquad \frac{1}{p+1} +\frac{1}{q+1}  =  \frac{N-2}{N},\quad \\[10pt]
N \geq 6\text{ and } p, q>\frac{N+2}{2(N-2)},\quad \text{ or } N=5 \text{ and } p, q>\frac{17}{13},\quad \text{ or } N=4 \text{ and } p, q>\frac{7}{3}.
\end{cases}
\end{equation}

A natural question is whether the least energy subcritical solutions tend to the least energy critical ones as the exponents approach the hyperbola. 

In this paper, we answer this question and we characterize several other interesting limit behaviors such as the asymptotically linear case ($pq=1$) or the asymptotically non-smooth case
\begin{equation}\label{p=0}
p=0,\qquad 0<q<\infty
\end{equation}
(the case $q=0$ with $0<p<\infty$ follows by symmetry). We also treat here the limiting case $p=q=0$, with a different approach. To be more precise, let
\begin{align*}
\mathcal{A}:=\{(p,q)\in \R^2 \text{ satisfying either } \eqref{CH}, \eqref{CH2}, \text{ or } \eqref{p=0}\}
\quad \text{ and } \quad \mathcal{H}:=\{(p,q)\in \R^2:\ p,q>0,  \ pq=1\}.
\end{align*}

\begin{figure}[h!]
    \centering
\begin{picture}(150,130) % Define the size of the picture environment
    \includegraphics[width=0.3\linewidth]{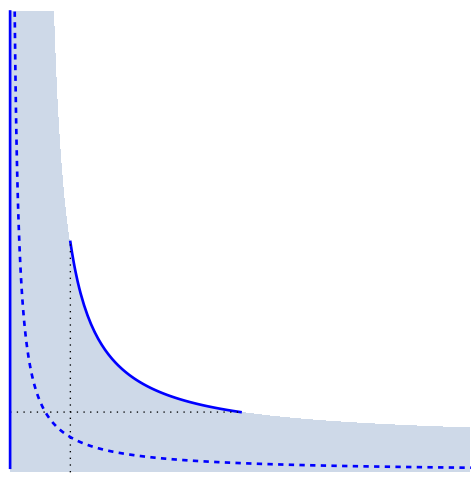}
  \put(3,0){$p$} 
  \put(-145,130){$q$}
\end{picture}
    \caption{The shaded region together with the dark curves are the admissible set of exponents $\mathcal A$.  The dashed hyperbola represents $\mathcal H$. The small dotted lines are the restrictions in \eqref{CH2}, whenever $N\geq 4$.}
    \label{fig:i}
\end{figure}

If $p,q \ne 0$, we say that a strong solution of \eqref{system} is a pair $(u,v)\in W^{2,\frac{q+1}{q}}(\Omega)\times W^{2,\frac{p+1}{p}}(\Omega)$ that satisfies the equations \eqref{system} $a.e.$ in $\Omega$ and the boundary conditions in the trace sense. Let 
\begin{align*}
    I_{p,q}(u,v):=\int_\Omega \nabla u \nabla v - \frac{1}{p+1}\int_\Omega |u|^{p+1}-\frac{1}{q+1}\int_\Omega |v|^{q+1}\, dx
\end{align*}
be the associated Euler-Lagrange functional of \eqref{system}.  Since 
\begin{align}\label{em}
W^{2,\frac{q+1}{q}}(\Omega)\times W^{2,\frac{p+1}{p}}(\Omega) \hookrightarrow L^{p+1}(\Omega)\times L^{q+1}(\Omega)\qquad \text{ for }(p,q)\in \mathcal{A},
\end{align}
 we can define the least energy level of \eqref{system} as
\[
c_{p,q}:=\inf\{I_{p,q}(u,v):\ (u,v) \text{ is a strong solution of } \eqref{system}\},\qquad (p,q)\in \mathcal{A}\setminus \mathcal{H}.
\]
Minimizers of $c_{p,q}$ are called \emph{least energy solutions}. 

If \eqref{p=0} holds, then we interpret \eqref{system} as
\begin{equation}\label{system:p=0} 
-\Delta u= |v|^{q-1}v  \text{ in } \Omega,\qquad
-\Delta v= \sign(u)  \text{ in } \Omega,\qquad
\partial_\nu u=\partial_\nu v=0  \text{ on } \partial \Omega.
\end{equation}
Here, the function $\sign$ is defined by
\begin{align}\label{sgn:def}
\sign(u) = \begin{cases}
1 & \text{if } u > 0, \\
-1 & \text{if } u < 0, \\
0 & \text{if } u = 0.
\end{cases}
\end{align}
In this setting, a strong solution of \eqref{system:p=0} is a pair $(u,v)$ which belongs to $W^{2,\frac{q+1}{q}}(\Omega)\times W^{2,r}(\Omega)$ for every $r\geq 1$ and satisfies the equations in \eqref{system:p=0} $a.e.$ in $\Omega$ and the boundary conditions in the trace sense. Similarly as before, strong solutions satisfy that
\[
|\{u>0\}|-|\{u<0\}|=\int_\Omega |v|^{q-1}v=0,
\]
which implies that all nontrivial solutions of \eqref{system:p=0} must change sign. One of the main difficulties in this setting is that the functional $I_{0, q}$ is \textit{not} differentiable and the nonlinearities are not invertible (which prevents the dual approach). We overcome these difficulties by extending some arguments from \cite{PW} and using some tools from distribution theory.

\paragraph{Main results: asymptotic study in the exponents.} To state our first main result, we need to define the first (nontrivial) pure Neumann eigenvalue for \eqref{system}. For $q>0$, let
\begin{equation}\label{mu1} 
\mu_{1,q}:= \inf \left\{ \frac{ \|\Delta u\|_\frac{q+1}{q}}{\|u\|_\frac{q+1}{q}}, u \in W^{2, \frac{q+1}{q}}_\nu(\Omega) \setminus \{ 0 \}: \int_\Omega |u|^{\frac 1 q -1} u =0\right\},
\end{equation} 
where $W^{2, \frac{q+1}{q}}_\nu(\Omega):=\{u\in W^{2, \frac{q+1}{q}}(\Omega):\ \partial_\nu u=0 \text{ on } \partial \Omega\}.$ Note that this is a closed subspace of $W^{2,\frac{q+1}{q}}(\Omega)$ endowed with the norm  $u\mapsto \|u\|_1+\|\Delta u\|_\frac{q+1}{q}$ (see Corollary~\ref{Coro:equivalentnorms} below).  A standard argument using the compact embedding $W^{2,\frac{q+1}{q}}(\Omega)\hookrightarrow L^\frac{q+1}{q}(\Omega)$ shows that $\mu_{1,q}$ is achieved.  The eigenvalue $\mu_{1,q}$ generates at least one (depending on its multiplicity) curve of eigenvalues with corresponding eigenfunctions, see, for instance, \cite[Section 2.4]{dSNST}.  

The following is our main existence and convergence result, which characterizes the properties of least energy solutions with respect to variations in the exponents $p$ and $q$ in the set $\mathcal{A}$.

\begin{theo}\label{prop:convergence direct}
For every $(p,q)\in \mathcal{A}\setminus \mathcal{H}$, the level $c_{p,q}$ is achieved, and the map 
\[
\mathcal{A}\setminus \mathcal{H}\to \R,\qquad  (p, q) \mapsto c_{p, q} \]
is continuous. 
Moreover, let $(p_n,q_n)\in \mathcal{A}\setminus \mathcal{H}$ and $(p,q)\in \mathcal{A}$ be such that $(p_n, q_n) \to (p, q)$. Let $(u_n, v_n)$ be a corresponding least energy solution. 
\begin{itemize}
\item[(i)] If $(p,q) \notin \mathcal{H}$, then up to a subsequence there exists a least energy solution $(u, v)$ such that if $p>0$, 
\begin{equation}\label{conv} (u_n, v_n) \to (u, v) \quad \text{ in } C^{2, \zeta}(\overline \Omega) \times C^{2, \eta}(\overline \Omega), \quad \text{for any $\zeta \in (0, \min\{ q , 1\})$ and $\eta \in (0, \min\{ p , 1\})$.} \end{equation}
 In case $p=0$, then 
\[ (u_n, v_n) \to (u, v) \quad \text{ in } C^{2, \zeta}(\overline \Omega) \times C^{1, \eta} (\overline \Omega), \quad \text{for any $\zeta \in (0, \min\{ q , 1\})$ and $\eta \in (0, 1)$.}\]
 
\item[(ii)] Let $(p,q) \in \mathcal{H}$ and $\mu_{1,q}>1$. If $p_n q_n \to 1^+$, then 
\[ \norm{u_n}_{L^\infty} \to \infty,  \quad \norm{v_n}_{L^\infty} \to \infty, \quad c_{p_n, q_n} \to \infty. \]
If $p_n q_n \to 1^-$, then 
\[ \norm{u_n}_{L^\infty} \to 0,  \quad \norm{v_n}_{L^\infty} \to 0, \quad c_{p_n, q_n} \to 0. \]
\item[(iii)] Let $(p,q) \in \mathcal{H}$ and $\mu_{1,q} <1$. If $p_n q_n \to 1^+$, then 
\[ \norm{u_n}_{L^\infty} \to 0,  \quad \norm{v_n}_{L^\infty} \to 0, \quad c_{p_n, q_n} \to 0. \]
If $p_n q_n \to 1^-$, then 
\[ \norm{u_n}_{L^\infty} \to \infty,  \quad \norm{v_n}_{L^\infty} \to \infty, \quad c_{p_n, q_n} \to \infty. \]
\end{itemize}
\end{theo}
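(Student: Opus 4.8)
The plan is to route the whole statement through the dual / nonlinear eigenvalue reformulation anticipated in the abstract, which I would first establish as a lemma. For $p,q>0$, a pair $(u,v)$ is a strong solution of \eqref{system} if and only if $v=-|\Delta u|^{\frac1q-1}\Delta u$ and $u$ is a free critical point of
\[
\Phi_{p,q}(u):=\tfrac{q}{q+1}\,\|\Delta u\|_{\frac{q+1}{q}}^{\frac{q+1}{q}}-\tfrac1{p+1}\int_\Omega|u|^{p+1}\qquad\text{on }W^{2,\frac{q+1}{q}}_\nu(\Omega);
\]
moreover every such critical point is a rescaling $u=\lambda^{\frac{q}{pq-1}}U$ of an eigenfunction $U$, normalized by $\int_\Omega|U|^{p+1}=1$, of $\Delta(|\Delta U|^{\frac1q-1}\Delta U)=\lambda\,|U|^{p-1}U$ with the Neumann conditions, and testing against $U$ gives $\|\Delta U\|_{\frac{q+1}{q}}^{\frac{q+1}{q}}=\lambda$ and $\Phi_{p,q}(u)=\frac{pq-1}{(p+1)(q+1)}\lambda^{\frac{q(p+1)}{pq-1}}$. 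Since this energy is monotone in $\lambda$, the least energy level is realized at the least eigenvalue, i.e.\ at the nonlinear Rayleigh quotient $\lambda_{p,q}:=\inf\{\|\Delta u\|_{\frac{q+1}{q}}^{\frac{q+1}{q}}:u\in W^{2,\frac{q+1}{q}}_\nu(\Omega),\ \int_\Omega|u|^{p-1}u=0,\ \int_\Omega|u|^{p+1}=1\}$, which is attained by the compact embedding \eqref{em} (and, on the critical hyperbola, by the Cherrier‑type compactness of \cite{PST}); this yields existence of least energy solutions together with $c_{p,q}=\frac{pq-1}{(p+1)(q+1)}\lambda_{p,q}^{\frac{q(p+1)}{pq-1}}$ (a negative number when $pq<1$). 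For $p=0$ the same scheme is run with $|u|^{p-1}u,\ |u|^{p+1}$ replaced by $\sign(u),\ |u|$ and with the non‑smooth analysis built earlier in the spirit of \cite{PW}.

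Since $c_{p,q}$ is a continuous function of $(p,q,\lambda_{p,q})$ on $\mathcal{A}\setminus\mathcal{H}$, the continuity assertion and all the limits in (ii)–(iii) reduce to continuity of $(p,q)\mapsto\lambda_{p,q}$ on all of $\mathcal{A}$. I would prove this by two inequalities. For $\limsup_n\lambda_{p_n,q_n}\le\lambda_{p,q}$: take a smooth near‑optimal competitor $u$ for $\lambda_{p,q}$, replace it by $u-t_n$ with $t_n\to0$ chosen by the intermediate value theorem so that $\int_\Omega|u-t_n|^{p_n-1}(u-t_n)=0$, renormalize, and let $n\to\infty$ using $\|\Delta u\|_r\to\|\Delta u\|_{\frac{q+1}{q}}$. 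For $\liminf_n\lambda_{p_n,q_n}\ge\lambda_{p,q}$: minimizers $u_n$ of $\lambda_{p_n,q_n}$ are, by the first inequality and the Poincaré‑type equivalence of norms in Corollary~\ref{Coro:equivalentnorms}, uniformly bounded in $W^{2,s}(\Omega)$ for $s$ slightly below $\frac{q+1}{q}$; one passes to a weak limit $u$, identifies the two constraints in the limit through the compact embedding into $L^{p_n+1}(\Omega)$ (ruling out concentration in the critical case since $\lambda_{p,q}$ lies below the escaping threshold), and concludes with a lower semicontinuity estimate for $\|\Delta\cdot\|_{r_n}$ when $r_n\to\frac{q+1}{q}$ (bound $\|\Delta u_n\|_{r-\delta}$ from below by weak l.s.c., then let $\delta\to0$). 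The degenerate limit $p_n\to0$ causes no trouble here since only $q_n$ enters the function space.

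For part (i), combining $c_{p_n,q_n}\to c_{p,q}$ (with $pq\ne1$) and the Nehari identities $\|\Delta u_n\|_{\frac{q_n+1}{q_n}}^{\frac{q_n+1}{q_n}}=\int_\Omega|u_n|^{p_n+1}=\int_\Omega|v_n|^{q_n+1}=\frac{(p_n+1)(q_n+1)}{p_nq_n-1}\,c_{p_n,q_n}$ gives a uniform bound on $\|u_n\|_{W^{2,\frac{q_n+1}{q_n}}}$; elliptic regularity (the usual Lane–Emden bootstrap in the subcritical range, and the compactness of \cite{PST} on the critical hyperbola) upgrades this to uniform $C^{2,\alpha}(\overline\Omega)$ bounds for $u_n,v_n$ with the stated Hölder exponents $\min\{q,1\}$ and $\min\{p,1\}$, and Arzelà–Ascoli extracts a subsequence converging to $(u,v)$. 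This limit is a strong solution of the limiting system, and $I_{p,q}(u,v)=\lim_n I_{p_n,q_n}(u_n,v_n)=c_{p,q}$, so it is a least energy solution. If $p=0$ one has $\||u_n|^{p_n-1}u_n\|_{L^\infty}\le C$ but $|u_n|^{p_n-1}u_n\to\sign(u)$ only, whence $v_n$ is bounded in $W^{2,r}(\Omega)$ for every $r<\infty$ and converges merely in $C^{1,\eta}(\overline\Omega)$; identifying $-\Delta v=\sign(u)$ in the limit is where the distributional arguments enter.

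For (ii)–(iii), write a least energy solution as $u_n=\beta_n U_n$ with $\beta_n=\lambda_{p_n,q_n}^{q_n/(p_nq_n-1)}$ and $U_n$ a minimizer of $\lambda_{p_n,q_n}$. As $\mathcal{H}$ sits in the strictly subcritical region, the bootstrap yields uniform $C^{2,\alpha}$ bounds on $U_n$, while $\int_\Omega|U_n|^{p_n+1}=1$ and $\lambda_{p_n,q_n}=\|\Delta U_n\|_{\frac{q_n+1}{q_n}}^{\frac{q_n+1}{q_n}}\to\mu_{1,q}^{\frac{q+1}{q}}$ (by the continuity step and the identity $\lambda_{1/q,q}=\mu_{1,q}^{\frac{q+1}{q}}$) keep $\|U_n\|_{L^\infty}$ and $\|\Delta U_n\|_{L^\infty}$ bounded away from $0$ and $\infty$. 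Hence $\|u_n\|_{L^\infty}\asymp\beta_n$ and $\|v_n\|_{L^\infty}=\|\Delta u_n\|_{L^\infty}^{1/q_n}\asymp\lambda_{p_n,q_n}^{1/(p_nq_n-1)}$, and since $\mu_{1,q}\ne1$ while $1/(p_nq_n-1)\to\pm\infty$, matching the sign of $\mu_{1,q}-1$ against that of $p_nq_n-1$ produces the four stated dichotomies for $\|u_n\|_{L^\infty},\|v_n\|_{L^\infty}$; feeding the same comparison into $c_{p_n,q_n}=\frac{p_nq_n-1}{(p_n+1)(q_n+1)}\beta_n^{p_n+1}$, where the exponential factor dominates the vanishing linear prefactor, gives the corresponding divergence (to $-\infty$ when $pq<1$) or vanishing of the levels; the excluded case $\mu_{1,q}=1$ is exactly the one where the whole ray $\{tU\}_{t>0}$ of rescaled eigenfunctions solves \eqref{system}, so the level is not defined. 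The genuinely hard parts are the two non‑standard limits: the critical‑hyperbola limit, where the uniform a priori bounds and $\lambda_{p_n,q_n}\to\lambda_{p,q}$ rest on a no‑concentration (Brezis–Nirenberg/Struwe‑type) argument based on the Cherrier inequality of \cite{PST}; and the endpoint $p=0$, where $I_{0,q}$ is non‑differentiable and the nonlinearity non‑invertible, so the dual scheme, the attainment of $c_{0,q}$, and the convergence of subcritical least energy solutions to solutions of \eqref{system:p=0} must be re‑established by hand, extending \cite{PW} and using distribution theory.
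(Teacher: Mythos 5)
Your proposal is essentially the same as the paper's: both reduce to a nonlinear eigenvalue quantity (your $\lambda_{p,q}$ is exactly $\Lambda_{p,q}^{(q+1)/q}$ in the paper's notation), establish the scaling identity $c_{p,q}=\frac{pq-1}{(p+1)(q+1)}\Lambda_{p,q}^{(p+1)(q+1)/(pq-1)}$ (the paper's Lemma~\ref{lemma lambda c}), prove continuity of the eigenvalue level, and deduce (i)--(iii) from the scaling $u_n=\beta_n U_n$ together with a priori regularity. The one place where you diverge technically is the $\liminf$ direction of the continuity: you propose a direct primal argument via a careful weak lower semicontinuity estimate for $\|\Delta\cdot\|_{r_n}$ with $r_n\to (q+1)/q$, while the paper routes this step through the dual level $D_{p,q}=\Lambda_{p,q}^{-1}$ on pairs $(f,g)$ (Proposition~\ref{D cont}). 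The paper's choice pays off because it cleanly delegates all compactness to an existing lemma (Lemma~\ref{lemma maximizing sequences}, built on the Cherrier-type inequality from \cite{PST}), whereas your weak-l.s.c.\ approach would require you to re-derive the concentration-compactness analysis directly on the primal sequence when $(p,q)$ sits on the critical hyperbola. You flag this as "ruling out concentration since $\lambda_{p,q}$ lies below the escaping threshold," which is the right idea (it is precisely the strict inequality $D_{p,q}>2^{2/N}/S_{p,q}$ that \eqref{CH2} guarantees) but it is not carried out, and it is the hardest part of the critical case.

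Two smaller remarks. First, your $\limsup$ competitor argument (shift a smooth near-optimizer by $t_n$ chosen via the intermediate value theorem to restore the constraint, then renormalize) is a genuine primal alternative to the paper's Step~1 of Proposition~\ref{D cont} (which takes the optimizers $(f,g)$ of $D_{p,q}$, observes they lie in $L^\infty$ by regularity, and applies reverse Fatou in the varying Lebesgue exponent); both work. Second, you are right that the sign in part (iii) with $p_nq_n\to1^-$ is subtle: the prefactor $\frac{p_nq_n-1}{(p_n+1)(q_n+1)}$ is negative there, so the level diverges to $-\infty$; the theorem's notation $c_{p_n,q_n}\to\infty$ should be read as $|c_{p_n,q_n}|\to\infty$. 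The $p=0$ endpoint, where the paper has to carefully re-prove the equivalence and the weak-solution identification by hand (Lemmas~\ref{lambda c 0}, \ref{thm:conv:p0}, and Section~2.2), is only sketched in your proposal, but you correctly identify that this requires extending \cite{PW} with distributional arguments, which is what the paper does.
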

The proof of Theorem~\ref{prop:convergence direct} relies on the dual method, the reduction-by-inversion approach (which relates \eqref{system} with a higher-order quasilinear equation, see \cite{BdST}), and the use of auxiliary nonlinear eigenvalue problems; for instance, for $(p,q)\in \mathcal{A}$ with $p>0$, we consider 
\begin{align}\label{lambdapq}
\Lambda_{p,q}
&:=\inf \left \{ \|\Delta u\|_{\frac{q+1}{q}} : u \in W^{2, \frac{q+1}{q}}_\nu(\Omega),\ \norm{u}_{p+1}=1,\ \int_\Omega |u|^{p-1} u=0 \right\}
\end{align}
(observe that $\mu_{1,q}=\Lambda_{\frac{1}{q},q}$). Minimizers of $\Lambda_{p,q}$ are in one-to-one correspondence with solutions of \eqref{system} (see Lemma~\ref{lemma lambda c}), but the additional normalization condition turns out to be quite helpful when considering convergence properties.  Theorem~\ref{prop:convergence direct} follows from the analogous result for minimizers of $\Lambda_{p,q}$, which  is a result of independent interest and can be found in Theorem~\ref{main thm lambda} below.

We point out that the additional assumptions \eqref{CH2} in the critical case arise only in the proof of a compactness condition, and are most likely not optimal, see Remark~\ref{rmk:conditions}.

The case $(p,q)\in \mathcal{H}$ with $\mu_{1,q}=1$ is more delicate since the way in which $p_n$ and $q_n$ approach their limits plays a role.  The following result shows that, under suitable conditions, least energy solutions converge to eigenfunctions as the exponents approach any point at the hyperbola $\mathcal H.$

\begin{theo}\label{mu1:prop}
Let $\pfrak,\qfrak\in C^1([0,1])$ be such that $\pfrak(0)\qfrak(0)=1$. Let $e(t):=\pfrak(t)\qfrak(t)-1$ and, for $n\in \mathbb N$, let
\begin{align*}
p_n:=\pfrak(\tfrac{1}{n}),\quad q_n:=\qfrak(\tfrac{1}{n}),\quad p:=\pfrak(0),\quad q:=\qfrak(0).
\end{align*}
Assume that $\mu_{1,q}=1$ and let $(u_n, v_n)$ be a least energy solution achieving $c_{p_n, q_n}$ for $n\in \mathbb N$. If $e'(0)\neq 0$, then, up to a subsequence, there exists a couple $(\varphi_1, \psi_1)$ which solves
\begin{equation}\label{eigen}
-\Delta \varphi_1 = \mu_{1,q} |\psi_1|^{q-1} \psi_1, \quad -\Delta \psi_1 = \mu_{1,q} |\varphi_1|^{\frac 1 q-1} \varphi_1\quad \text{ in }\Omega, \qquad \partial_\nu \varphi_1=\partial_\nu \psi_1=0 \text{ on } \partial \Omega,
\end{equation}
together with
\begin{equation}\label{eigen normaliz} \int_\Omega |\varphi_1|^{p-1}\varphi_1=\int_\Omega |\psi_1|^{q-1}\psi_1=0, \quad \norm{\varphi_1}_{p+1}=\norm{\psi_1}_{q+1}=1, \end{equation}
and such that $(u_n, v_n) \to (\mathfrak{c}^\frac{1}{p+1}\varphi_1, \mathfrak{c}^\frac{1}{q+1} \psi_1)$ in $L^{p+1}(\Omega) \times L^{q+1}(\Omega)$, 
where
\[ \mathfrak{c} := \lim_{n\to \infty} \Lambda_{p_n, q_n}^{\frac{(p_n+1)(q_n+1)}{p_n q_n-1}}\in(0,\infty).\]
Moreover, up to a subsequence, $\lim_{n\to \infty}\frac{c_{p_n, q_n}}{p_nq_n-1}=\frac{1}{4} \mathfrak{c}.$
\end{theo}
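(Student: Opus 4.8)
The plan is to route everything through the auxiliary minimization problem \eqref{lambdapq}, exploiting that on the hyperbola $\mathcal{H}$ one has $p=1/q$, so $\Lambda_{p,q}=\Lambda_{1/q,q}=\mu_{1,q}$; the hypothesis $\mu_{1,q}=1$ then says precisely that the limiting nonlinear eigenvalue equals $1$. Since $e(0)=0$ and $e'(0)\neq0$, we have $p_nq_n-1=e(1/n)\neq0$ for all large $n$, hence $(p_n,q_n)\in\mathcal{A}\setminus\mathcal{H}$ and $c_{p_n,q_n}$ is defined. By Lemma~\ref{lemma lambda c}, a least energy solution $(u_n,v_n)$ is (up to a sign) recovered from a minimizer $w_n$ of $\Lambda_{p_n,q_n}$ — normalized by $\|w_n\|_{p_n+1}=1$, $\int_\Omega|w_n|^{p_n-1}w_n=0$, $\|\Delta w_n\|_{(q_n+1)/q_n}=\Lambda_{p_n,q_n}$ — through the explicit scaling $u_n=t_nw_n$, $v_n=-t_n^{1/q_n}|\Delta w_n|^{1/q_n-1}\Delta w_n$ with $t_n:=\Lambda_{p_n,q_n}^{(q_n+1)/(p_nq_n-1)}$; moreover this lemma yields
\[
c_{p_n,q_n}=\frac{p_nq_n-1}{(p_n+1)(q_n+1)}\,t_n^{\,p_n+1}=\frac{p_nq_n-1}{(p_n+1)(q_n+1)}\,\Lambda_{p_n,q_n}^{\frac{(p_n+1)(q_n+1)}{p_nq_n-1}}.
\]

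Next I would establish convergence of the $w_n$. The continuity part of Theorem~\ref{main thm lambda} (the $\Lambda$-version of Theorem~\ref{prop:convergence direct}) gives $\Lambda_{p_n,q_n}\to\Lambda_{p,q}=\mu_{1,q}=1$, so $\{w_n\}$ is bounded in $W^{2,(q_n+1)/q_n}_\nu(\Omega)$; note also that $(p,q)$ lies strictly below the critical hyperbola (indeed $\frac1{p+1}+\frac1{q+1}=1>\frac{N-2}{N}$ on $\mathcal{H}$), so the compact embeddings in \eqref{em} allow passage to the limit with no loss of mass. Writing the Euler--Lagrange equation of \eqref{lambdapq}, $\Delta(|\Delta w_n|^{1/q_n-1}\Delta w_n)=\Lambda_{p_n,q_n}^{(q_n+1)/q_n}|w_n|^{p_n-1}w_n$ together with $\partial_\nu w_n=\partial_\nu(|\Delta w_n|^{1/q_n-1}\Delta w_n)=0$, and bootstrapping elliptic estimates as in the proof of Theorem~\ref{prop:convergence direct}, up to a subsequence $w_n\to\varphi_1$ in $C^2(\overline\Omega)$. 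Passing to the limit, $\varphi_1$ minimizes $\Lambda_{p,q}=\mu_{1,q}$, satisfies $\|\varphi_1\|_{p+1}=1$, $\int_\Omega|\varphi_1|^{p-1}\varphi_1=0$, and solves $\Delta(|\Delta\varphi_1|^{1/q-1}\Delta\varphi_1)=\mu_{1,q}|\varphi_1|^{1/q-1}\varphi_1$. Putting $\psi_1:=-|\Delta\varphi_1|^{1/q-1}\Delta\varphi_1$ and using $\mu_{1,q}=1$, the pair $(\varphi_1,\psi_1)$ solves \eqref{eigen}; and since $\|\Delta\varphi_1\|_{(q+1)/q}=\mu_{1,q}=1$ one gets $\|\psi_1\|_{q+1}=1$, while $|\psi_1|^{q-1}\psi_1=-\Delta\varphi_1$ pointwise gives $\int_\Omega|\psi_1|^{q-1}\psi_1=-\int_\Omega\Delta\varphi_1=0$, so \eqref{eigen normaliz} holds.

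The crux is the behaviour of $t_n=\Lambda_{p_n,q_n}^{(q_n+1)/(p_nq_n-1)}$, which is an indeterminate form $1^{\pm\infty}$: the base tends to $1$ while the exponent blows up, since $p_nq_n-1=e(1/n)\sim e'(0)\,n^{-1}$. This is exactly where $\pfrak,\qfrak\in C^1([0,1])$ and $e'(0)\neq0$ are used: one needs the quantitative bound $\Lambda_{p_n,q_n}-1=O(n^{-1})$, obtained from a two-sided Rayleigh-quotient comparison — inserting $\varphi_1$, renormalized onto the $n$-th constraint set, as a competitor for $\Lambda_{p_n,q_n}$, and $w_n$ as a competitor for $\Lambda_{p,q}$, together with the $C^1$-dependence of the constraints on $(p,q)$. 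Along a further subsequence this forces $t_n^{\,p_n+1}=\Lambda_{p_n,q_n}^{(p_n+1)(q_n+1)/(p_nq_n-1)}\to\mathfrak{c}$ for some $\mathfrak{c}\in(0,\infty)$, hence $t_n\to\mathfrak{c}^{1/(p+1)}$ and, using $q(p+1)=qp+q=1+q=q+1$ on $\mathcal{H}$, $t_n^{1/q_n}\to\mathfrak{c}^{1/(q+1)}$. Combined with $w_n\to\varphi_1$ in $C^2(\overline\Omega)$ and continuity of $s\mapsto|s|^{1/q-1}s$, we obtain $u_n=t_nw_n\to\mathfrak{c}^{1/(p+1)}\varphi_1$ and $v_n\to-\mathfrak{c}^{1/(q+1)}|\Delta\varphi_1|^{1/q-1}\Delta\varphi_1=\mathfrak{c}^{1/(q+1)}\psi_1$ in $L^{p+1}(\Omega)\times L^{q+1}(\Omega)$.

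Finally, the energy asymptotics: testing the two equations of \eqref{system} against $v_n$ and $u_n$ gives the identity $\int_\Omega\nabla u_n\cdot\nabla v_n=\int_\Omega|u_n|^{p_n+1}=\int_\Omega|v_n|^{q_n+1}$, valid at any strong solution, whence $c_{p_n,q_n}=I_{p_n,q_n}(u_n,v_n)=\frac{p_nq_n-1}{(p_n+1)(q_n+1)}\int_\Omega|u_n|^{p_n+1}$; dividing by $p_nq_n-1$ and letting $n\to\infty$ with $\int_\Omega|u_n|^{p_n+1}=t_n^{\,p_n+1}\to\mathfrak{c}$ then yields, by a direct computation from the formula of Lemma~\ref{lemma lambda c}, the value $\tfrac14\mathfrak{c}$ asserted in the statement. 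The main obstacle is the quantitative rate estimate $\Lambda_{p_n,q_n}-1=O(n^{-1})$ and the very existence of $\mathfrak{c}\in(0,\infty)$: plain continuity of $(p,q)\mapsto\Lambda_{p,q}$ does not suffice — one genuinely needs the first-order behaviour of the nonlinear eigenvalue as the exponents cross $\mathcal{H}$, which is what the hypotheses $\pfrak,\qfrak\in C^1$ and $e'(0)\neq0$ are tailored to control.
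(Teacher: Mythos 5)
Your high-level architecture is the same as the paper's: reduce to the auxiliary level $\Lambda_{p,q}$ via Lemma~\ref{lemma lambda c}, use the convergence in Theorem~\ref{main thm lambda} to get a limiting eigenfunction, and then recognize that the real content of the theorem is the existence of the finite, nonzero limit $\mathfrak{c}=\lim_n\Lambda_{p_n,q_n}^{(p_n+1)(q_n+1)/(p_nq_n-1)}$, which is of the indeterminate form $1^{\pm\infty}$ and where the $C^1$-hypothesis and $e'(0)\neq0$ must enter. You identified the right obstacle. However, the step where you dispatch this obstacle --- ``a two-sided Rayleigh-quotient comparison --- inserting $\varphi_1$, renormalized onto the $n$-th constraint set, as a competitor for $\Lambda_{p_n,q_n}$, and $w_n$ as a competitor for $\Lambda_{p,q}$'' --- is not a proof but a restatement of the difficulty, and it hides two genuine issues that the paper must resolve with separate arguments.

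First, even the upper bound requires work: ``renormalizing $\varphi_1$ onto the $n$-th constraint set'' means finding a shift $s=s(t)$ so that $\int_\Omega|\varphi_1+s(t)|^{\pfrak(t)-1}(\varphi_1+s(t))\,dx=0$, and controlling the size of $s(t)$ and the resulting change in $\|\varphi_1+s(t)\|_{p_n+1}$ and $\|\Delta(\varphi_1+s(t))\|_{(q_n+1)/q_n}$ to first order in $t$. The paper does this in Lemma~\ref{lem pq=1 1} via an implicit function theorem argument followed by a Taylor expansion, and that argument requires one of $p,q$ to be $>1$ (the paper takes $p>1$ WLOG). The lower bound is \emph{not} obtained by the symmetric trick you describe: plugging $w_n$ into the Rayleigh quotient for $\Lambda_{p,q}$ fails immediately because $w_n$ satisfies the constraint $\int_\Omega|w_n|^{p_n-1}w_n=0$, not the limiting one $\int_\Omega|w_n|^{p-1}w_n=0$, so it is not a competitor. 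The paper instead passes to the dual level $D_{p,q}$, takes a maximizing pair $(f(t),g(t))$ for $D_{\pfrak(t),\qfrak(t)}$, and shows $\|f(t)\|_{(p+1)/p}\leq1+ct$ and $\|g(t)\|_{(q+1)/q}\leq1+ct$ by a log-interpolation estimate; only then does an upper bound on $D_{\pfrak(t),\qfrak(t)}^{1/(\pfrak(t)\qfrak(t)-1)}$ drop out. Your sketch contains neither of these mechanisms.

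Second, and more seriously, the paper handles the case $p=q=1$ with a completely different argument (Lemma~\ref{lem pq=1 2}), because the implicit-function-theorem competitor construction does not apply when neither exponent exceeds $1$; in that case the paper tests both equations of the scaled system against the linear Neumann eigenfunction $\varphi_1$ and extracts $\lim_n\ln\Lambda_n^{1/(p_nq_n-1)}=-\tfrac14\int_\Omega|\varphi_1|^2\ln(|\varphi_1|^2)$ by an integration-by-parts / logarithmic-differentiation identity. Your proposal does not distinguish this case at all, and the Rayleigh-quotient sketch you offer does not cover it. As a minor point, your final step asserts $\lim_n c_{p_n,q_n}/(p_nq_n-1)=\tfrac14\mathfrak{c}$ without computation; what the formula of Lemma~\ref{lemma lambda c} actually gives is $\mathfrak{c}/((p+1)(q+1))$, which equals $\tfrac14\mathfrak{c}$ precisely when $p=q=1$ (on the hyperbola $(p+1)(q+1)=p+q+2\geq4$, with equality iff $p=q=1$); you should carry out this computation rather than assert the result.
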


If $p=q=1$, the constant $\mathfrak{c}$ can be obtained in terms of the eigenfunctions $(\varphi_1, \psi_1)$, see Remark~\ref{formula}.  About the assumption $e'(0) \ne 0$, see Remark \ref{rem_hoe}. Note also that, if $p=q\geq 0$, then \eqref{system} reduces to the scalar equation
\begin{align}\label{se}
-\Delta u = |u|^{p-1} u \text{ in } \Omega, \quad \partial_\nu u=0 \text{ on } \partial \Omega, 
\end{align}
see \cite[Lemma 2.6]{ST2} or Lemma~\ref{p=q} below. For the single equation \eqref{se}, the convergence results in Theorem~\ref{prop:convergence direct} and~\ref{mu1:prop} are established in \cite{ST}. We also refer to \cite{bbgv08} and \cite{g09} for some related results for scalar equations with Dirichlet boundary conditions. 

\smallbreak

Next, we switch our attention to the case when both exponents go to zero. In this case, the limit problem is again a scalar Neumann equation with sign nonlinearity, which is studied in \cite{PW}. To state our result, let 
\begin{align}\label{i0}
I_0(u):=\frac 12 \int_\Omega |\nabla u|^2 - \int_\Omega |u|,\qquad c_0:= \inf\{I_0(u): u\in \mathcal{M} \},
\end{align}
where $\mathcal{M}:=\left\{u\in H^1(\Omega): \Big| |\{ u >0\}| - |\{ u < 0 \}| \Big | \le |\{ u=0 \} |\right\}.$ By \cite[Theorem 1.2]{PW}, the level $c_0$ is attained, and every minimizer is a sign changing solution of 
\begin{equation}\label{eq:p=q=0} 
- \Delta u = \sign(u) \quad \text{ in } \Omega, \qquad \partial_\nu u=0 \quad \text{ on } \partial \Omega. 
\end{equation}

We can also characterize the limit when \emph{both} the exponents go to zero. 
\begin{theo}\label{p, q 0}
   If $(p_n, q_n) \to (0,0)$, then $c_{p_n, q_n} \to 2 c_0$. Moreover, if $(u_n, v_n)$ is an optimizer of $c_{p_n, q_n}$, then (up to a subsequence) $u_n,v_n \to u$ in $C^{1, \zeta}(\overline \Omega)$ for any $\zeta \in (0,1)$, where $u$ is an optimizer for $c_0.$ 
\end{theo}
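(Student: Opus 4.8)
The plan is to show the two-sided convergence $c_{p_n,q_n}\to 2c_0$ by a standard $\Gamma$-convergence–type argument: an upper bound via a recovery sequence, and a lower bound via compactness, with the factor $2$ coming from the fact that the scalar functional $I_0$ contains $\tfrac12\int|\nabla u|^2$ while in the system the two equations effectively contribute $\int\nabla u\nabla v$ with $u\approx v$. First I would set up the natural comparison. Given an optimizer $w$ for $c_0$ (which exists and solves \eqref{eq:p=q=0} by \cite[Theorem 1.2]{PW}), I would use the scaling freedom in the nonlinear eigenvalue characterization $\Lambda_{p_n,q_n}$ (or directly in $I_{p_n,q_n}$) to build a test pair $(\tilde u_n,\tilde v_n)$ with $\tilde u_n,\tilde v_n$ close to appropriate multiples of $w$. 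Concretely, since $|u|^{p_n-1}u\to \sign(u)$ and $|u|^{p_n+1}\to |u|$ pointwise and dominatedly on bounded sets as $p_n\to 0$, the constrained levels for the system should converge to twice the scalar level; the key identity is that for $u=v$ one has $I_{p,q}(u,u)\to 2\big(\tfrac12\int|\nabla u|^2-\int|u|\big)=2I_0(u)$ as $(p,q)\to(0,0)$, which gives $\limsup_n c_{p_n,q_n}\le 2c_0$ after checking the constraint $\big||\{u>0\}|-|\{u<0\}|\big|\le|\{u=0\}|$ is compatible with (a perturbation of) the zero-average conditions $\int|u|^{p_n-1}u=0$. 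This matching of constraints — the system imposes the \emph{exact} balance $|\{\tilde u_n>0\}|=|\{\tilde u_n<0\}|$ in the $p_n\to0$ limit, whereas $c_0$ only needs an inequality — is a point I would handle carefully, most likely by a small translation/perturbation of the level sets of $w$, using that for the minimizer $w$ of $c_0$ one actually expects $|\{w=0\}|=0$, so the constraint for $c_0$ is in fact the equality $|\{w>0\}|=|\{w<0\}|$, matching the system.

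For the lower bound and the strong convergence of optimizers, I would take $(u_n,v_n)$ optimal for $c_{p_n,q_n}$ and first derive uniform a priori bounds. From $c_{p_n,q_n}$ being bounded (by the upper bound already proved) and the Euler–Lagrange/Pohozaev-type relations for the system (giving $I_{p_n,q_n}(u_n,v_n)$ as a fixed fraction — here roughly $\tfrac12$ since $p_n,q_n\to 0$ makes the problem asymptotically "linear in the good variables" — of the quadratic part), one gets $\int\nabla u_n\nabla v_n$ bounded, and then bounds on $\|u_n\|_{H^1}$, $\|v_n\|_{H^1}$. Elliptic regularity applied to $-\Delta v_n=|u_n|^{p_n-1}u_n$, whose right-hand side is bounded in $L^\infty$ uniformly (it is bounded by $1$ once $|u_n|$ is controlled, and in any case in $L^r$ for all $r$), gives $v_n$ bounded in $W^{2,r}$ for all $r$, hence in $C^{1,\zeta}(\overline\Omega)$; similarly for $u_n$ from $-\Delta u_n=|v_n|^{q_n-1}v_n$. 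So up to a subsequence $u_n\to u$, $v_n\to v$ in $C^{1,\zeta}(\overline\Omega)$. Passing to the limit in the weak formulation, using dominated convergence for $|u_n|^{p_n-1}u_n$ (which is subtle only on $\{u=0\}$, where one invokes that the limit of $-\Delta v_n$ is supported appropriately, as in \cite{PW}), I would identify $u$ and $v$ as solving $-\Delta u=\chi$, $-\Delta v=\sigma$ with $\chi\in\sign(v)$, $\sigma\in\sign(u)$ in the multivalued sense, and then argue $u=v$ (they solve the same problem with the same boundary condition and, by uniqueness of the Neumann solution up to constants plus the zero-average constraint, coincide). Then $c_0\le I_0(u)\le\liminf_n \tfrac{1}{2}I_{p_n,q_n}(u_n,v_n)\cdot(\text{correction})$, yielding $2c_0\le\liminf_n c_{p_n,q_n}$, and combined with the upper bound this gives equality, that $u$ is an optimizer for $c_0$, and (by convergence of energies plus $C^{1,\zeta}$ compactness) the claimed strong convergence.

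The main obstacle I expect is the lack of differentiability and the multivalued nature of the limit nonlinearity $\sign$, exactly as flagged in the paper: one cannot pass to the limit naively in $\int|u_n|^{p_n-1}u_n\varphi$ on the set $\{u=0\}$. The resolution is to borrow the distributional/measure-theoretic machinery the authors say they import from \cite{PW}: show that any weak-$*$ limit $\chi$ of $|u_n|^{p_n-1}u_n$ satisfies $\chi=\sign(u)$ a.e.\ on $\{u\neq 0\}$ and $|\chi|\le 1$ a.e., then use the structure of $\{u=0\}$ (a.e.\ $\Delta u=0$ there, combined with $-\Delta u=\chi$, forces $\chi=0$ a.e.\ on $\{u=0\}$) to conclude $-\Delta u=\sign(u)$ in the precise sense of \eqref{eq:p=q=0}, and symmetrically for $v$; the constraint $u\in\mathcal M$ then follows from the integrated identity $|\{u>0\}|-|\{u<0\}|=\int_\Omega\chi=\int_\Omega(-\Delta u)=0$ plus $|\chi|\le 1$. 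A secondary technical point is justifying the factor $2$ rigorously at the level of the constrained minimization (rather than just formally for $u=v$), which I would do by proving $c_{p,q}\le 2c_0+o(1)$ and $c_{p,q}\ge 2c_0-o(1)$ separately as above, the first by the recovery sequence, the second by the compactness argument, so that no symmetrization of the competitors $(u_n,v_n)$ is actually needed — the identity $u=v$ is extracted only in the limit.
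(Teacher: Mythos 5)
Your proposal follows essentially the same overall plan as the paper's proof: an upper bound via a recovery sequence built from the $c_0$ minimizer, a compactness argument for optimizers, identification of the limit as a solution of \eqref{eq:p=q=0}, and the factor $2$ coming from $u=v$ in the limit. The key technical obstacles you flag (the multivalued nature of $\sign$ on $\{u=0\}$, the constraint matching, the use of $\mathcal{M}$ rather than a naive zero-average condition) are also the ones the paper addresses. However, two of the steps you sketch have genuine gaps that would need to be repaired before the argument closes.

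First, the a priori bounds. You write that boundedness of $c_{p_n,q_n}$ together with ``Pohozaev-type relations'' gives $\int\nabla u_n\nabla v_n$ bounded, ``and then bounds on $\|u_n\|_{H^1}$, $\|v_n\|_{H^1}$.'' This implication does not follow: $\int\nabla u\,\nabla v$ is not coercive in $(u,v)$, so a bound on it does not control $\|\nabla u_n\|_2$ and $\|\nabla v_n\|_2$ separately. In the paper this is handled by a dedicated lemma (Lemma~\ref{uniform bounds}) which works directly with the equations, comparing $\|\Delta u_n\|_{(q_n+1)/q_n}$ and $\|\Delta v_n\|_{(p_n+1)/p_n}$ against $\min_c\|u_n+c\|_{p_n+1}$ and $\min_c\|v_n+c\|_{q_n+1}$ via the Poincaré-type quantities $\Lambda_{s-1,1/(s-1)}$, and then bootstraps to uniform $W^{2,s}(\Omega)$ bounds for all $s$. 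Without something of this type, the $C^{1,\zeta}(\overline\Omega)$ compactness you then invoke is not justified.

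Second, you do not exclude degenerate limits. After extracting $u_n\to u$, $v_n\to v$ in $C^{1,\zeta}(\overline\Omega)$, one must rule out $u\equiv 0$ (in which case $v$ reduces to a constant). If both limits vanish, $c_{p_n,q_n}\to 0$ contradicts the upper bound; but when $u\equiv 0$ and $v\equiv c\ne 0$, the contradiction is not automatic. The paper handles this via a uniform lower bound $D_{p,q}\ge\delta>0$ (Lemma~\ref{lem bound below}), together with a separate testing argument for $c\ne 0$. This step is not routine: it is precisely the ingredient that prevents the minimizers from collapsing, and it is absent from your outline. A smaller point: at the end you justify $u=v$ by ``uniqueness of the Neumann solution up to constants plus the zero-average constraint,'' but the relevant constraint here is membership in $\mathcal{M}$, and the conclusion $c=0$ (where $u=v+c$) comes from Lemma~\ref{lem:properties M}(ii), not a zero-average normalization; once $u,v\in\mathcal M$, uniqueness of the vertical shift fixing $\mathcal{M}$-membership gives $c=0$.

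For the upper bound, your sketch (perturb the $c_0$-minimizer to meet the $p_n$-constraint) is sound, but to close it cleanly the paper works at the level of $\Lambda_{p_n,q_n}$ with the specific competitor $w_n=u_0+\kappa_{p_n}(u_0)$ (the shift from \cite[Lemma~2.1]{PW} enforcing $\int|u_0+\kappa_n|^{p_n-1}(u_0+\kappa_n)=0$), uses $|\Delta u_0|\le 1$ a.e.\ to get $\|\Delta u_0\|_{(q_n+1)/q_n}\to 1$, and $\kappa_n\to 0$ via Lemma~\ref{lem:properties M}(i); combined with the scaling identity $c_{p,q}=\frac{pq-1}{(p+1)(q+1)}\Lambda_{p,q}^{\frac{(p+1)(q+1)}{pq-1}}$, this gives $\limsup c_{p_n,q_n}\le-\|u_0\|_1=2c_0$ without any symmetrization of the test pair.
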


\paragraph{Main results: symmetry breaking.} Convergence results can be useful to characterize qualitatively limit cases that are difficult to study with other techniques. To illustrate this point, we use Theorem~\ref{prop:convergence direct} to show a new symmetry breaking result for a nonlinear equation involving the bilaplacian and a sign nonlinearity. 

To be more specific, let $B:=\{x\in\R^N\::\: |x|<1\}$. For $(p,q)\in \mathcal{A}^+:=\{(p,q)\in \mathcal{A}: p>0\}$, it is known that least energy solutions of \eqref{system} are not radially symmetric if $\Omega=B$, see \cite[Theorem 1.1]{ST} for the subcritical case and \cite[Theorem 1.9]{PST} for the critical one. These symmetry breaking results rely on a flip-\&-rearrange transformation that can only be used in a dual setting. Therefore, this approach can not be used when considering the $\sign$ nonlinearity.  For $(p,q)=(0,0)$, as mentioned before, this corresponds to the single equation \eqref{eq:p=q=0} and symmetry breaking of least energy solutions is shown in \cite[Theorem 1.2]{PW}, using unique continuation properties that are not known in the higher-order case. 

In our next result, we combine Theorem~\ref{prop:convergence direct} together with results from \cite{ST} and \cite{PW} to show a symmetry breaking result for least energy solutions of
\begin{equation}\label{bilap:sb}
\Delta^2 u = \sign (u)\quad \text{ in  } B,\qquad \partial_\nu u= \partial_\nu  \Delta u=0\quad \text{ on } \partial B.
\end{equation}
Note that \eqref{bilap:sb} is a particular case of \eqref{system:p=0} for $q=1$. In particular, 
\[
c_{0,1}=m:=\inf_{\mathcal{M}_1}\varphi,\qquad \text{ where }\varphi(w) := \frac{1}{2} \int_\Omega |\Delta w |^{2} - \int_\Omega |w|
\]
and  
\[
\mathcal{M}_1:=\left\{ u \in W_\nu^{2, 2}(\Omega): \Big| |\{ u >0\}| - |\{ u < 0 \}| \Big | \le |\{ u=0 \} | \right \}. 
\]
A function achieving $m$ is called a least energy solution of \eqref{bilap:sb}.
\begin{theo}\label{symm:break:thm}
For $N\geq 2$, least energy solutions of the biharmonic problem \eqref{bilap:sb} are not radially symmetric.
\end{theo}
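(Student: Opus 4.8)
The plan is to transfer the symmetry breaking known for the systems \eqref{system} with $p,q>0$ (Theorem~1.1 of \cite{ST}) to the limit problem \eqref{bilap:sb}, using the convergence result of Theorem~\ref{prop:convergence direct} and the variational framework for \eqref{system:p=0} developed above (which extends \cite{PW}).

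\textbf{Step 1: reduction to a strict inequality.} Let $m_{\mathrm{rad}}:=\inf\{\varphi(w):\ w\in\mathcal M_1,\ w\text{ radial}\}$ be the radial counterpart of $m=c_{0,1}$; as for $m$, using compactness of the radial embeddings and the analysis of \eqref{system:p=0}, $m_{\mathrm{rad}}$ is attained by a sign-changing radial solution of \eqref{bilap:sb}. If a radial least energy solution $u$ of \eqref{bilap:sb} existed, it would satisfy $m=\varphi(u)\ge m_{\mathrm{rad}}\ge m$; hence it suffices to prove the strict inequality $m<m_{\mathrm{rad}}$.

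\textbf{Step 2: approximation.} Since $0\cdot 1\ne 1$ we have $(0,1)\in\mathcal A\setminus\mathcal H$, and for $N\ge 2$ and $p_n\downarrow 0$ we have $(p_n,1)\in\mathcal A^+\setminus\mathcal H$, because \eqref{CH} with $q=1$ reads $\tfrac1{p_n+1}+\tfrac12>\tfrac{N-2}{N}$ (automatic for $N\le 4$, and true for small $p_n$ when $N\ge 5$), while $p_n\cdot 1\to 0\ne 1$. By Theorem~\ref{prop:convergence direct}, $c_{p_n,1}\to c_{0,1}=m$. Re-running the same arguments restricted to the radial subspace (the embeddings used remain compact, and the dual/reduction-by-inversion scheme is rotation-equivariant) gives, in addition, $c^{\mathrm{rad}}_{p_n,1}\to m_{\mathrm{rad}}$, where $c^{\mathrm{rad}}_{p,q}$ denotes the least energy among radial solutions; moreover the radial least energy solutions $(\bar u_n,\bar v_n)$ of the $(p_n,1)$-system converge, up to a subsequence, in $C^{2,\zeta}(\overline B)\times C^{1,\eta}(\overline B)$ to a radial sign-changing solution $(\bar u_\infty,-\Delta\bar u_\infty)$ of \eqref{bilap:sb} with $\varphi(\bar u_\infty)=m_{\mathrm{rad}}$; in particular $\bar u_\infty$ is nonconstant.

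\textbf{Step 3: quantitative symmetry breaking, and the main obstacle.} Fix $n$. Since $p_n>0$, the dual setting of \eqref{system} with exponents $(p_n,1)$ is available, so the flip-\&-rearrange transformation of \cite[Theorem~1.1]{ST} applies to the normalized radial minimizer $\bar u_n$ of the auxiliary level $\Lambda_{p_n,1}$: it preserves the constraints $\|\,\cdot\,\|_{p_n+1}=1$ and $\int_B|\,\cdot\,|^{p_n-1}\,\cdot\,=0$ and strictly decreases $\|\Delta\,\cdot\,\|_2$, the strict decrease being forced because a sign-changing radial function does not lie in the target symmetry class. Via Lemma~\ref{lemma lambda c} this yields $c_{p_n,1}\le c^{\mathrm{rad}}_{p_n,1}-\delta_n$ with $\delta_n>0$. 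The crux — and what I expect to be the main difficulty — is to show that $\liminf_n\delta_n=:\delta_\infty>0$: since $\bar u_n\to\bar u_\infty$ in $C^{2,\zeta}(\overline B)$ with $\bar u_\infty$ a \emph{fixed} nonconstant radial function, and since the flip-\&-rearrange construction and the strict inequality in the underlying (polarization-type) rearrangement estimate depend continuously on the function — uniformly in $p_n$ as $p_n\to 0$ — the gain for $\bar u_n$ should stay bounded below by the positive gain attached to $\bar u_\infty$. Making this stability precise, and carrying the argument through the reduced regularity available at $p=0$ (and the radial analogue of Theorem~\ref{main thm lambda}), is the technical heart of the proof.

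\textbf{Step 4: conclusion.} Passing to the limit in $c_{p_n,1}\le c^{\mathrm{rad}}_{p_n,1}-\delta_n$ gives $m\le m_{\mathrm{rad}}-\delta_\infty<m_{\mathrm{rad}}$, which is the inequality required in Step~1; hence no least energy solution of \eqref{bilap:sb} is radially symmetric. The same scheme — replacing the bilaplacian by the quasilinear higher-order operator arising for other values of $q$, and \cite[Theorem~1.1]{ST} by its critical analogue \cite[Theorem~1.9]{PST} where needed — yields the related symmetry breaking statements alluded to in the introduction.
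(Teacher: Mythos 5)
Your overall reduction — prove $m<m_{rad}$ and get there by approximating with $(p_n,1)$, $p_n\downarrow 0$, via Theorem~\ref{prop:convergence direct} applied in both the full and radial settings — coincides with the paper's Steps 1 and the beginning of its Step 1. After that, however, the two arguments diverge completely, and your version has a genuine gap precisely where you flag it.

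The paper never invokes the flip-\&-rearrange inequality of \cite{ST}. The role of the $p_n\to 0$ approximation there is only to import the \emph{qualitative} information from \cite[Theorem 1.1]{ST2} that radial least energy solutions of the system with $p_n>0$ are strictly monotone in $r$; passing to the limit forces the radial minimizer of $m_{rad}$ to be monotone nonincreasing with a single zero at $r=2^{-1/N}$. This turns the radial problem into an explicitly solvable ODE, giving closed-form expressions for the radial solution and hence an explicit value $m_{rad}=h_1(N)$. The strict inequality is then obtained by exhibiting the explicit nonradial competitor $U(r,\theta)=t\,(r-r^2/2)\cos\theta_1$ with energy $h_2(N)$, and checking $h_2(N)<h_1(N)$ by a Gamma-function estimate for $N\geq 9$ and direct numerical evaluation for $N=2,\ldots,8$.

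Your Step 3 is where the proposal breaks down, and you yourself identify it as the unproven core. You need $\liminf_n\delta_n>0$ with $\delta_n=c^{rad}_{p_n,1}-c_{p_n,1}$, and the only justification offered is the heuristic that the gain is "attached to $\bar u_\infty$". That object is not well defined: the flip-\&-rearrange transformation and the polarization-type strict inequality behind it live in the dual $L^{\alpha_n}\times L^{\beta_n}$ framework, and $\alpha_n=(p_n+1)/p_n\to\infty$ as $p_n\to 0$, so the construction degenerates in the limit — this is exactly why the paper states that the flip-\&-rearrange method "can not be used when considering the $\sign$ nonlinearity". Moreover \cite{ST} gives only a qualitative strict inequality at each fixed $(p,q)$, with no quantitative lower bound uniform in $p$; without such a stability estimate, you cannot exclude $\delta_n\to 0$, in which case the limit gives only $m\le m_{rad}$, which is trivial. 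To make your route work you would have to prove a new uniform-in-$p$ quantitative version of the rearrangement gain, which is not contained in the cited references and is arguably as hard as the original problem. The paper sidesteps this entirely by the explicit computation described above.
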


 The bilaplacian case is much more involved than the Laplacian case (treated in \cite[Theorem 1.2]{PW}), due to the higher-order nature of the equation and because some important tools (such a unique continuation results) are not yet available.  To show Theorem \ref{symm:break:thm} we follow some ideas from \cite{PW}.  We prove that the (global) least energy level is strictly below the least energy level among radially symmetric functions. For this, we use Theorem~\ref{prop:convergence direct} and previously known results for Neumann Hamiltonian systems \cite{ST2} to find an explicit formula for the solution of \eqref{bilap:sb} in the radial setting (see formulas \eqref{form:u:1}, \eqref{form:u:2}, \eqref{form:u:3}, and \eqref{form:u:4}) and we compute its energy (see \eqref{ene1} and Table~\ref{tab1} in Section~\ref{sec:symm:b}). Then, we construct suitable \emph{nonradial} competitor with lower energy, which immediately yields the symmetry breaking result.

Note that, if the boundary conditions in \eqref{bilap:sb} are replaced by Dirichlet ($u= \partial_\nu u=0$ on $\partial B$) or Navier $(u= \Delta u=0$ on $\partial B$) boundary conditions, then, by the method of decomposition with respect to dual cones (see \cite[Theorem 7.19]{GGS}), the corresponding least energy solutions are nonnegative and therefore they would be (unique and radial) torsion functions. In these cases, one would need to consider least energy \emph{nodal} solutions to see the effects of the nonlinearity. 

We also mention that the symmetry breaking is expected to hold for \eqref{system} in the whole range \eqref{p=0}. Unfortunately, our strategy requires some explicit computations, which are difficult to obtain if $q\neq 1$. However, combining Theorem~\ref{symm:break:thm}, the known symmetry breaking results for \eqref{eq:p=q=0}, and the convergence results from Theorem~\ref{prop:convergence direct} and Theorem~\ref{p, q 0}, we have the following direct consequence (where $a$ plays the role of the exponent $1/q-1$).
\begin{coro}\label{coro:intro}
There exists $\varepsilon>0$ such that, if $(u,v)$ is a least energy solution of \eqref{system} with $p=0$ and $q\in (0,\eps)\cup (1-\eps,1+\eps)$, then $u,v$ are not radially symmetric. In particular, for $a$ close to $0$ or $a$ sufficiently large, least energy solutions of
\[
\Delta (|\Delta u|^a \Delta u)=\sign(u)\quad \text{ in  } B,\qquad \partial_\nu u= \partial_\nu  (|\Delta u|^a\Delta u)=0\quad \text{ on } \partial B,
\]
are not radially symmetric. 
\end{coro}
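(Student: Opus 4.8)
The plan is to deduce both parts as limits of Theorem~\ref{symm:break:thm} (the case $q=1$) and of the known symmetry breaking result for \eqref{eq:p=q=0} (the case $p=q=0$), using the convergence statements in Theorem~\ref{prop:convergence direct} and Theorem~\ref{p, q 0} together with a standard contradiction-and-compactness argument. The first observation is that for $(p,q)\in\mathcal A$ with $p=0$, problem \eqref{system} is precisely \eqref{system:p=0}, and a least energy solution $(u,v)$ corresponds, after eliminating $u$ via the reduction-by-inversion $u=(-\Delta)^{-1}(|v|^{q-1}v)$ up to the appropriate Neumann framework, to a least energy solution of $\Delta(|\Delta w|^{a}\Delta w)=\sign(w)$ with $a=1/q-1$; so the scalar reformulation in the statement is just a rewriting, and it suffices to prove non-radial symmetry of the pair $(u,v)$ for $q$ near $0$ and for $q$ near $1$.

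For $q$ near $1$: suppose, for contradiction, that there is a sequence $q_n\to 1$ and least energy solutions $(u_n,v_n)$ of \eqref{system} with $p=0$, $q=q_n$, that are radially symmetric. Apply Theorem~\ref{prop:convergence direct}(i) with $(p_n,q_n)=(0,q_n)\to(0,1)=(p,q)\notin\mathcal H$ (note $pq=0\neq1$): up to a subsequence $(u_n,v_n)\to(u,v)$ in $C^{2,\zeta}(\overline\Omega)\times C^{1,\eta}(\overline\Omega)$ with $(u,v)$ a least energy solution of \eqref{system:p=0} with $q=1$, i.e.\ of \eqref{bilap:sb}. Radial symmetry is preserved under $C^1$ (indeed $C^0$) convergence, so $(u,v)$ is radial, hence $u$ is a radial least energy solution of \eqref{bilap:sb}, contradicting Theorem~\ref{symm:break:thm} (valid for $N\ge2$). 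This produces the interval $(1-\varepsilon_1,1+\varepsilon_1)$; note the argument covers $q>1$ as well since $(0,q)\in\mathcal A$ for all $q>0$ when $\Omega\subset\R^N$ is in the relevant dimension range, and $C^{2,\zeta}\times C^{1,\eta}$ convergence still holds by part (i).

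For $q$ near $0$: suppose there is a sequence $q_n\to0$ and radially symmetric least energy solutions $(u_n,v_n)$ of \eqref{system} with $p=0=p_n$, $q=q_n$. Here $(p_n,q_n)=(0,q_n)\to(0,0)$, so Theorem~\ref{p, q 0} applies (the statement only requires $(p_n,q_n)\to(0,0)$, with no constraint on the path): up to a subsequence $u_n,v_n\to u$ in $C^{1,\zeta}(\overline\Omega)$ for any $\zeta\in(0,1)$, where $u$ is an optimizer of $c_0$, hence a least energy solution of \eqref{eq:p=q=0}. Again radial symmetry passes to the $C^0$ limit, so $u$ would be a radial least energy solution of \eqref{eq:p=q=0}, contradicting \cite[Theorem 1.2]{PW} (whose symmetry breaking holds for $N\ge2$; if their hypothesis requires $N\ge1$ with the result only nontrivial for $N\ge2$, that matches the stated range). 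This yields $\varepsilon_2>0$ with the conclusion for $q\in(0,\varepsilon_2)$. Taking $\varepsilon:=\min\{\varepsilon_1,\varepsilon_2\}$ finishes the proof, and the final ``In particular'' clause is the scalar rewriting with $a=1/q-1\to+\infty$ as $q\to0^+$ and $a\to0$ as $q\to1^-$.

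The main obstacle — really the only nontrivial point — is making sure the limit objects are genuinely \emph{least energy} solutions of the limit problems and not merely solutions: this is exactly what Theorems~\ref{prop:convergence direct} and~\ref{p, q 0} assert (the limit pair optimizes $c_{0,1}=m$, resp.\ $c_0$), so we are using those theorems as black boxes precisely for this. A secondary point to check is that the reduction identifying a least energy solution of the $q$-dependent system with $p=0$ and a least energy solution of $\Delta(|\Delta u|^a\Delta u)=\sign(u)$ is an energy-preserving bijection on the respective constraint sets — this is the Neumann analogue (with sign nonlinearity) of the dual/reduction-by-inversion correspondence used throughout the paper, e.g.\ as in the relation $c_{0,1}=m$ recorded just before Theorem~\ref{symm:break:thm}; one invokes the same argument with general $a$ in place of $a=1$. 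No new estimates are needed beyond what the cited theorems provide.
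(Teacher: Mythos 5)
Your proof is correct and follows essentially the same route as the paper's: for $q$ near $1$ combine the convergence in Theorem~\ref{prop:convergence direct} with Theorem~\ref{symm:break:thm}, and for $q$ near $0$ combine Theorem~\ref{p, q 0} with the symmetry breaking of \cite[Theorem 1.2]{PW}; the paper's proof is just a terse two-sentence version of your contradiction-and-compactness argument. Your extra discussion of the reduction-by-inversion identification $a=1/q-1$ and of why the limits are least energy (not just any) solutions is sound and matches the framework already set up in the paper.
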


\paragraph{Main results: multiplicity.} In the previous results, we have focused on least energy solutions. However, we remark that \eqref{system} has infinitely many solutions. Indeed, using Ljusternik-Schnirelmann theory and the dual method we can show the following. 
\begin{theo}\label{prop:mult}
Let $pq\ne 1$ be such that $(p, q)$ satisfies \eqref{CH}, that is, it is below the critical hyperbola. Then, there exist infinitely many solutions to~\eqref{system}.
\end{theo}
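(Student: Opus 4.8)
The plan is to use the dual variational formulation combined with Ljusternik-Schnirelmann theory on a suitable symmetric manifold. Since $pq \ne 1$ and $(p,q)$ satisfies \eqref{CH}, the nonlinearities $t \mapsto |t|^{p-1}t$ and $t \mapsto |t|^{q-1}t$ are invertible, so one can pass to the dual problem: setting $f = |u|^{p-1}u$ and $g = |v|^{q-1}v$, solutions of \eqref{system} correspond to critical points of the dual functional
\[
J(f,g) = \frac{p}{p+1}\int_\Omega |f|^{\frac{p+1}{p}} + \frac{q}{q+1}\int_\Omega |g|^{\frac{q+1}{q}} - \int_\Omega g\,K[f],
\]
where $K$ denotes the solution operator for $-\Delta$ with Neumann conditions acting on functions of zero average (the zero-average constraint being forced by the compatibility condition already noted after \eqref{system}). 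Working on the space $X = \{(f,g) \in L^{\frac{p+1}{p}}(\Omega)\times L^{\frac{q+1}{q}}(\Omega): \int_\Omega f = \int_\Omega g = 0\}$, one shows $J$ is $C^1$, even (since $J(-f,-g)=J(f,g)$), and — after restricting to the natural Nehari-type manifold $\mathcal N$ obtained by the reduction-by-inversion of \cite{BdST}, which is a smooth $\mathbb Z_2$-symmetric manifold not containing the origin — that $J|_{\mathcal N}$ is bounded below and coercive.

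The key steps, in order, are: (1) verify that the dual functional satisfies the Palais-Smale condition on $\mathcal N$; this is where the subcriticality \eqref{CH} enters, giving compactness of $K$ from $L^{\frac{p+1}{p}}(\Omega)$ into $L^{q+1}(\Omega)$ (and the symmetric statement), so that bounded Palais-Smale sequences have convergent subsequences — the argument is standard once the compact embedding \eqref{em} is in place. (2) Show that the sublevel sets, or rather the Krasnoselskii genus of appropriate symmetric subsets of $\mathcal N$, is unbounded: for each $k$, one constructs a $k$-dimensional odd sphere inside $\mathcal N$ by taking linear combinations of $k$ functions with disjoint supports (or a $k$-dimensional subspace of $X$ with zero averages and projecting onto $\mathcal N$ via the fibering map), yielding compact symmetric sets of genus $\ge k$; by monotonicity of the genus this produces an unbounded sequence of critical values. (3) Invoke the symmetric mountain-pass / Ljusternik-Schnirelmann deformation theorem (e.g.\ as in Ambrosetti-Rabinowitz or Struwe) to conclude that $J|_{\mathcal N}$ has infinitely many critical points, which pull back to infinitely many (pairwise geometrically distinct) solutions of \eqref{system}. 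One should note that distinct critical values give genuinely distinct solutions, and since $J$ is unbounded above on $\mathcal N$ while each min-max level is finite, infinitely many distinct levels occur.

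I expect the main obstacle to be a clean verification that the reduction-by-inversion manifold $\mathcal N$ is well-defined, smooth, and natural (in the sense that constrained critical points are free critical points) when $p$ or $q$ is small — in particular when $p < 1$ or $q < 1$ the dual exponents $\frac{p+1}{p}, \frac{q+1}{q}$ are large and some care is needed with the differentiability of the $L^r$-norms and with the fibering map $t \mapsto J(tf,tg)$ having a unique positive critical point; this is precisely the kind of technical point handled in \cite{BdST}, so citing that reduction should resolve it. A secondary point is ensuring the genus lower bound: one must make sure the competitor functions remain in the zero-average subspace $X$, which is automatic for odd combinations of disjointly-supported functions only after subtracting means, so a small argument is needed that this does not collapse the genus — but this is routine. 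Everything else (evenness, coercivity on $\mathcal N$, Palais-Smale) follows the well-trodden path for Lane-Emden systems via the dual method, now in the Neumann setting where the only structural novelty is the zero-average constraint, already exploited elsewhere in the paper.
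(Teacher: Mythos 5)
Your high-level strategy (dualize, then apply Ljusternik--Schnirelmann theory to an even functional) is indeed the one the paper uses, but the concrete implementation you sketch is genuinely different and, as written, has a gap that the paper's formulation is specifically designed to avoid.

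The paper does \emph{not} work on a Nehari manifold. It takes the constraint set to be the norm sphere
\[
S=\Bigl\{(f,g)\in X:\ \gamma_1\|f\|_\alpha^\alpha+\gamma_2\|g\|_\beta^\beta=1\Bigr\},
\]
which is a smooth symmetric $C^1$ submanifold of $X$ for \emph{every} $p,q>0$ (since $\alpha,\beta>1$), and minimizes/applies min-max to $\Phi(f,g)=-\int_\Omega f\,Kg$ on $S$. Critical points then solve the Euler--Lagrange system with a Lagrange multiplier $\lambda$, and a homogeneity rescaling (Remark~\ref{eq:scalings_remark} / equation \eqref{scaling}) converts them into solutions of \eqref{system} whenever $pq\neq 1$. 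You instead fold the $L^\alpha$- and $L^\beta$-norms into a free functional $J$ and restrict to a Nehari-type manifold $\mathcal N$. The obstacle you flag is real and not merely technical: the one-parameter fibering map $t\mapsto J(tf,tg)$ reduces to $t^\alpha A+t^\beta B - t^2 C$, and its derivative has a \emph{unique} positive zero only when $\alpha$ and $\beta$ lie on the same side of $2$, i.e.\ when $p$ and $q$ lie on the same side of $1$. In the regime $pq>1$ with (say) $p>1>q$ one has $\alpha<2<\beta$ and the fibering map need not be unimodal, so the Nehari manifold may fail to be natural or even well-defined. Citing the reduction-by-inversion of~\cite{BdST} does not automatically fix this; the paper's choice of the sphere $S$ sidesteps the issue entirely, and this is precisely why that constraint is the right one here.

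Two further points. First, for the genus lower bound the paper builds the sets $A_k$ from diagonal elements $(f,f)$ with $f$ in the span of the first $k$ Neumann eigenfunctions, which makes the sign of $\Phi$ on $A_k$ (namely $\Phi\le -C/\mu_k<0$) completely explicit and ensures $c_k\neq 0$, which is exactly the set of levels where the Palais--Smale condition is verified (Lemma~\ref{lem:PS} only gives (PS)$_c$ for $c\neq 0$). Your disjoint-support construction would need an extra argument both to control the zero-average constraint (which you note) \emph{and} to guarantee the resulting min-max levels avoid $0$. Second, your closing argument --- ``$J$ is unbounded above on $\mathcal N$ while each min-max level is finite, hence infinitely many distinct levels'' --- is not the right mechanism and is not needed: in the paper's formulation $\Phi$ is in fact \emph{bounded} on $S$ ($-D_{p,q}\le\Phi\le D_{p,q}$), and the existence of infinitely many critical points when levels repeat is handled by the genus estimate in Szulkin's result (\cite[Corollary 4.1]{Szulkin}); the remaining issue of whether distinct critical $(f_k,g_k)$ of $\Phi$ yield distinct solutions of \eqref{system} after the $\lambda_k$-dependent rescaling is a separate short computation which the paper carries out at the end of the proof and which your sketch does not address.
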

We point out that, for $p=q$, Theorem~\ref{prop:mult} also gives a different and simpler proof of \cite[Theorem 2.1]{MiaoDu} in the scalar case. We prove this result using a dual approach, which allows to easily overcome the possible issues arising from the fact that $p$ or $q$ may be smaller than one.

To close this introduction, we also mention that, using the Lyapunov-Schmidt reduction method, it is possible to construct solutions of \eqref{system} in the slightly supercritical regime, namely, \emph{above} the critical hyperbola, we refer to \cite{PST23} for the scalar equation case and to \cite{GP25} for the case of Hamiltonian systems.

\medskip 

The paper is organized as follows. In Sections \ref{sec:ne} and \ref{sec:conv1} we state our main existence and convergence results about the auxiliary nonlinear eigenvalue problems $\Lambda_{p,q}$ introduced in \eqref{lambdapq} and \eqref{Lambda0q}, which are of independent interest. Then, in Section \ref{sec:conv:2}, we use these results to show Theorems \ref{prop:convergence direct}, \ref{mu1:prop}, and \ref{p, q 0}.  The symmetry breaking results Theorem \ref{symm:break:thm} and Corollary \ref{coro:intro} are shown in Section \ref{sec:symm:b}. Finally, the multiplicity result is proved in Section~\ref{sec:mul}.

\medskip

\noindent \textbf{Notation. } For $s>1$, we denote the $L^s(\Omega)$ and $W^{2,s}(\Omega)$ norms by $\|\cdot\|_s$ and $\|\cdot \|_{W^{2,s}}$ respectively.  We use $\norm{\cdot}_{L^\infty}$ for the usual supremum norm.  Whenever the integration domain is $\R^N$, we write it explicitly and denote the $L^2(\R^N)$ and $W^{2,s}(\R^N)$ norms by $\|\cdot \|_{L^s(\R^N)}$ and $\|\cdot \|_{W^{2,s}(\R^N)}$ respectively. We denote $\mathcal{H}=\{(p,q)\in \mathcal{A}: pq=1\}$, $\mathcal{A}^+=\{(p,q)\in \mathcal{A}: p>0\}$.

\section{The nonlinear eigenvalue problem}\label{sec:ne}

As mentioned in the introduction, the proof of Theorem~\ref{prop:convergence direct} relies on a good understanding of the properties of the nonlinear eigenvalue problem \eqref{lambdapq}. Note that \eqref{lambdapq} is defined for $p>0$. For $(p,q)\in \mathcal{A}$ with $p=0$ (i.e., satisfying \eqref{p=0}), we set 
\begin{equation}\label{nehari2} 
\mathcal{M}_q:=\left\{ u \in W_\nu^{2, \frac{q+1}{q}}(\Omega): \Big| |\{ u >0\}| - |\{ u < 0 \}| \Big | \le |\{ u=0 \} | \right \}. 
\end{equation}
 and 
\begin{align}
\Lambda_{0,q}:=\inf\left\{\frac{\|\Delta u\|_\frac{q+1}{q}}{\|u\|_{1}}:\ u  \in \mathcal{M}_q \setminus \{ 0\} \right\}
=\inf \left \{ \|\Delta u\|_{\frac{q+1}{q}} : \ u \in \mathcal{M}_q, \ \norm{u}_{1}=1 \right\}.\label{Lambda0q}
\end{align}
The goal of this section and the following is to show the following two results.  First, the next lemma establishes the relationship between $c_{p, q}$ and $\Lambda_{p, q}$.
\begin{lemma}\label{lemma lambda c}
If $(p, q) \in \mathcal{A} \setminus \mathcal{H}$, then 
\begin{align*}
\Lambda_{p, q}^{\frac{(p+1)(q+1)}{pq-1}} =\frac{(p+1)(q+1)}{pq-1} c_{p, q}.     
\end{align*}
Moreover, $u$ is a optimizer for $\Lambda_{p, q}$ with $\norm{u}_{p+1}=1$ if and only if 
\begin{equation} (\Lambda_{p, q}^{\frac{q+1}{pq-1}} u,- \, \Lambda_{p, q}^{\frac{p+1}{pq-1}} v),\quad \text{ with } \quad v:=\Lambda_{p,q}^{-\frac 1 q} |\Delta u|^{\frac 1 q- 1} \Delta u, \end{equation}
is an optimizer for $c_{p, q}$. 
\end{lemma}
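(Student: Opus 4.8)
The plan is to establish the correspondence between optimizers of $\Lambda_{p,q}$ and optimizers of $c_{p,q}$ by a scaling argument, and then to read off the identity relating the two constants. First I would argue that minimizers of $\Lambda_{p,q}$ exist and are characterized by a Lagrange multiplier equation: if $u$ achieves $\Lambda_{p,q}$ with $\|u\|_{p+1}=1$ and $\int_\Omega |u|^{p-1}u=0$, then the Euler--Lagrange equation for the constrained problem \eqref{lambdapq} reads $\Delta(|\Delta u|^{\frac1q-1}\Delta u)=\lambda\, |u|^{p-1}u$ for some multiplier $\lambda$ (the constraint $\int_\Omega |u|^{p-1}u=0$ and the Neumann conditions are compatible because testing the equation against constants forces $\int_\Omega |u|^{p-1}u=0$ automatically). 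Pairing this with the change of variables $v:=c_1|\Delta u|^{\frac1q-1}\Delta u$ for a suitable constant $c_1$, one recovers the system \eqref{system} up to multiplicative constants; then one rescales $u\mapsto \alpha u$, $v\mapsto \beta v$ with $\alpha,\beta>0$ chosen so that the rescaled pair solves \eqref{system} exactly. Homogeneity dictates the exponents: writing $-\Delta u = |v|^{q-1}v$ and $-\Delta v = |u|^{p-1}u$, the scaling $(\alpha,\beta)$ must satisfy $\alpha = \beta^q\,(\text{const})$ and $\beta = \alpha^p\,(\text{const})$, which has a unique positive solution precisely because $pq\neq 1$ (this is where the hypothesis $(p,q)\notin\mathcal H$ enters). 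Matching powers yields $\alpha = \Lambda_{p,q}^{\frac{q+1}{pq-1}}$ and $\beta=\Lambda_{p,q}^{\frac{p+1}{pq-1}}$, together with the sign: since $-\Delta v$ should equal $+|u|^{p-1}u$ but $v$ is built from $\Delta u$, one picks up the minus sign, giving the pair $(\Lambda_{p,q}^{\frac{q+1}{pq-1}}u,\ -\Lambda_{p,q}^{\frac{p+1}{pq-1}}v)$ with $v=\Lambda_{p,q}^{-1/q}|\Delta u|^{\frac1q-1}\Delta u$ as stated.

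Next I would compute the energy. On the system \eqref{system}, for any strong solution the functional $I_{p,q}$ simplifies via the Pohozaev-type/testing identities: testing the first equation against $u$ and the second against $v$ gives $\int_\Omega \nabla u\nabla v = \int_\Omega |v|^{q+1} = \int_\Omega |u|^{p+1}$, so that
\[
I_{p,q}(u,v)=\Big(1-\tfrac{1}{p+1}\Big)\int_\Omega|u|^{p+1}-\tfrac{1}{q+1}\int_\Omega|v|^{q+1}
=\Big(1-\tfrac{1}{p+1}-\tfrac{1}{q+1}\Big)\int_\Omega|u|^{p+1},
\]
i.e. $I_{p,q}(u,v)=\frac{pq-1}{(p+1)(q+1)}\int_\Omega|u|^{p+1}$. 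Applying this to the rescaled optimizer $(\Lambda_{p,q}^{\frac{q+1}{pq-1}}u,\dots)$ with $\|u\|_{p+1}=1$ gives $\int_\Omega|\Lambda_{p,q}^{\frac{q+1}{pq-1}}u|^{p+1}=\Lambda_{p,q}^{\frac{(p+1)(q+1)}{pq-1}}$, hence $c_{p,q}\le \frac{pq-1}{(p+1)(q+1)}\Lambda_{p,q}^{\frac{(p+1)(q+1)}{pq-1}}$. For the reverse inequality I would run the correspondence backwards: given a least energy solution $(u,v)$ of \eqref{system}, normalize $w:=u/\|u\|_{p+1}$, check that $w$ is admissible for \eqref{lambdapq} (it is a strong solution so $\int_\Omega|u|^{p-1}u=0$, and $w\in W^{2,\frac{q+1}{q}}_\nu$ by elliptic regularity and the embedding \eqref{em}), and compute $\|\Delta w\|_{\frac{q+1}{q}}$ in terms of the solution data to see it realizes a value $\ge \Lambda_{p,q}$ that, combined with the energy formula above, produces the matching bound. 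The two inequalities give the claimed identity $\Lambda_{p,q}^{\frac{(p+1)(q+1)}{pq-1}}=\frac{(p+1)(q+1)}{pq-1}c_{p,q}$, valid as stated whether $pq>1$ (both sides positive) or $pq<1$ (both sides negative, since then $c_{p,q}<0$), because the sign of $pq-1$ appears consistently on both sides.

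The main obstacle I expect is bookkeeping the scaling exponents and the sign together with the non-smooth/low-integrability regime: when $p<1$ or $q<1$ the nonlinearities $t\mapsto|t|^{p-1}t$, $t\mapsto|t|^{1/q-1}t$ are merely Hölder, so one must be careful that the Euler--Lagrange equation for $\Lambda_{p,q}$ is derived correctly (the functional $u\mapsto\|\Delta u\|_{\frac{q+1}{q}}$ with the constraints is not classically differentiable when $\frac{q+1}{q}$ is close to $1$, i.e. $q$ large, or when dealing with the constraint $\int|u|^{p-1}u=0$ for $p<1$), and that the reduction-by-inversion identifying $v$ with $|\Delta u|^{1/q-1}\Delta u$ is justified in the appropriate Sobolev/Lebesgue spaces. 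A further delicate point is verifying that an optimizer of $\Lambda_{p,q}$ does not change the structure of its nodal set under rescaling and that the correspondence is genuinely one-to-one (injectivity of $u\mapsto(\alpha u,-\beta v)$ up to the normalization). These are the points where I would invoke the regularity theory and the reduction framework of \cite{BdST}, \cite{ST2} rather than reprove them, and where the condition $(p,q)\in\mathcal A\setminus\mathcal H$ is used in an essential way to guarantee both the embedding \eqref{em} and the invertibility $pq\neq1$ needed to solve for the scaling constants.
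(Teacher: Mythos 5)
Your proposal is correct in substance, but it takes a different route from the one the paper uses for $p>0$. For $(p,q)\in\mathcal A_+\setminus\mathcal H$ the paper argues indirectly: it combines the known identity $D_{p,q}^{-\frac{(p+1)(q+1)}{pq-1}}=\frac{(p+1)(q+1)}{pq-1}c_{p,q}$ (Proposition~\ref{prop:equiv}, imported from \cite{PST}) with the relation $\Lambda_{p,q}=D_{p,q}^{-1}$ and the optimizer correspondence from Lemma~\ref{lem:lambda d}, so that all the analytic work is delegated to the dual level $D_{p,q}$. Only in the case $p=0$, where $D_{p,q}$ is not defined, does the paper run the direct argument (Lemma~\ref{lambda c 0}): rescale the $\Lambda$-optimizer to a strong solution, compute its energy by testing, and obtain the reverse bound by normalizing an arbitrary strong solution. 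Your proof is precisely this direct scheme carried out uniformly for all $(p,q)\in\mathcal A\setminus\mathcal H$, which the authors explicitly note is possible in the remark following Lemma~\ref{lambda c 0}. What the paper's dual route buys is that the Euler--Lagrange equation for $D_{p,q}$ lives on a smooth constraint set in $L^\alpha\times L^\beta$ (the constraint $\int_\Omega f=0$ is linear), so the multiplier argument is routine; your direct route instead requires the Euler--Lagrange equation~\eqref{eq lambda2} for $\Lambda_{p,q}$, whose constraint $\int_\Omega|u|^{p-1}u=0$ is not $C^1$ when $p<1$ — exactly the difficulty you flag. In the paper this equation is itself obtained via the dual method when $p>0$ (Proposition~\ref{prop:existence_Lambda} through Lemmas~\ref{lemma:Dpq_achieved} and~\ref{lem:lambda d}) and via a separate non-smooth analysis when $p=0$, so your step 1 is not independent of the dual machinery after all; it is, however, legitimate to cite Proposition~\ref{prop:existence_Lambda} as a black box. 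The remaining steps — the scaling exponents $\alpha=\Lambda_{p,q}^{\frac{q+1}{pq-1}}$, $\beta=\Lambda_{p,q}^{\frac{p+1}{pq-1}}$, the testing identity $I_{p,q}(u,v)=\frac{pq-1}{(p+1)(q+1)}\int_\Omega|u|^{p+1}$, and the sign bookkeeping separating the cases $pq>1$ and $pq<1$ — are all correctly set up; do make the direction-reversal explicit when you raise $\|u\|_{p+1}^{\frac{pq-1}{q+1}}\ge\Lambda_{p,q}$ to the power $\frac{q+1}{pq-1}$ in the subhyperbola regime, since the inequality flips there and then flips back when multiplied by the negative coefficient.
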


The following is our main existence and convergence result for $\Lambda_{p, q}$, which implies Theorem~\ref{prop:convergence direct} and is a result of independent interest.

\begin{theo}\label{main thm lambda}
   Let $N\geq 1$, $\Omega\subset \R^N$ be a bounded smooth domain, and $(p,q)\in \mathcal{A}$. Then $\Lambda_{p,q}$ is achieved at some $u=u_{p,q}$ which is a weak solution of
    \begin{equation}\label{eq lambda}
    \Delta (|\Delta u|^{\frac 1 q -1} \Delta u) = \Lambda_{p, q}^{\frac{q+1}{q}} |u|^{p-1} u, \quad \partial_\nu u=\partial_\nu(|\Delta u|^{\frac 1 q -1} \Delta u)=0 \text{ on } \partial \Omega,\qquad \text{if $p>0$,}
    \end{equation}
    or
    \begin{equation}\label{eq lambda_0}
    \Delta (|\Delta u|^{\frac 1 q -1} \Delta u) = \Lambda_{p, q}^{\frac{q+1}{q}} \sign(u), \quad \partial_\nu u=\partial_\nu(|\Delta u|^{\frac 1 q -1} \Delta u)=0 \text{ on } \partial \Omega,\qquad \text{if $p=0$},
    \end{equation}    
and the map 
\[
\mathcal{A}\to \R^+,\qquad (p,q)\mapsto \Lambda_{p,q}
\]
is continuous.       Let $(p_n,q_n), (p,q)\in \mathcal{A}$ with $(p_n,q_n)\to (p,q)$ as $n\to\infty$ and let $u_{p_n,q_n}$ be an optimizer for $\Lambda_{p_n,q_n}$ with $\|u_{p_n,q_n}\|_{p_n+1}=1$. Then, there exists $u_{p,q}$, an optimizer for $\Lambda_{p,q}$, such that
\[
u_{p_n,q_n}\to u_{p,q} \quad \quad \text{ strongly in } C^{2,\zeta}(\overline \Omega) 
\]
for any $\zeta \in (0, \min\{ q , 1\})$. Moreover, if $p>0$, then 
$\Delta u_{p_n, q_n} \to \Delta u_{p, q}$ strongly in $C^{2, \zeta}(\overline \Omega),$ where
$\zeta \in (0, \min\{ p , 1\})$.
If $p=0$, then $\Delta u_{p_n, q_n} \to \Delta u_{p, q}$ strongly in $C^{1, \zeta}(\overline \Omega)$  for any $\zeta \in (0, 1)$. 
\end{theo}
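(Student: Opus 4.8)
\emph{Existence.} The plan is to run the direct method of the calculus of variations. For $(p,q)\in\mathcal{A}$ with $p>0$ satisfying \eqref{CH}, I would take a minimizing sequence $(u_n)$ for $\Lambda_{p,q}$ with $\|u_n\|_{p+1}=1$; since $\Omega$ is bounded, H\"older's inequality bounds $\|u_n\|_1$, so by the norm equivalence of Corollary~\ref{Coro:equivalentnorms} the sequence is bounded in $W^{2,\frac{q+1}{q}}(\Omega)$, and up to a subsequence $u_n\rightharpoonup u$ weakly there and strongly in $L^{p+1}(\Omega)$ by the compact embedding \eqref{em}. The Neumann condition is preserved (closed subspace), the constraints $\|u\|_{p+1}=1$ and $\int_\Omega|u|^{p-1}u=0$ pass to the limit because the Nemytskii map $v\mapsto|v|^{p-1}v$ is continuous from $L^{p+1}(\Omega)$ to $L^{\frac{p+1}{p}}(\Omega)$, and $u\mapsto\|\Delta u\|_{\frac{q+1}{q}}$ is weakly lower semicontinuous; hence $u$ is a minimizer. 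For $p=0$ I would first rewrite $\mathcal{M}_q=\{u:\ |\{u>0\}|\le|\Omega|/2\text{ and }|\{u<0\}|\le|\Omega|/2\}$, which is stable under a.e.\ convergence by Fatou's lemma, and then argue as above using that a minimizing sequence (normalized by $\|u\|_1=1$) converges strongly in $L^1(\Omega)$. On the critical hyperbola under \eqref{CH2} the embedding \eqref{em} is not compact; there I would instead deduce that $\Lambda_{p,q}$ is attained from the existence of least energy solutions of \eqref{system} proved in \cite{PST} together with the correspondence in Lemma~\ref{lemma lambda c} (equivalently, from the underlying Cherrier-type inequality).

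\emph{Euler--Lagrange equation and regularity.} Since $q>0$, the map $u\mapsto\int_\Omega|\Delta u|^{\frac{q+1}{q}}$ is of class $C^1$, but $u\mapsto\int_\Omega|u|^{p-1}u$ fails to be differentiable when $p<1$, and the whole functional is non-differentiable when $p=0$. To bypass this, for $p>0$ with $pq\neq1$ I would use the reduction-by-inversion dictionary of \cite{BdST} (as in Lemma~\ref{lemma lambda c}): a minimizer of $\Lambda_{p,q}$ corresponds to a strong solution $(u,v)$ of \eqref{system}, and eliminating $v$ through $v=\Lambda_{p,q}^{-1/q}|\Delta u|^{1/q-1}\Delta u$ yields \eqref{eq lambda}; for $pq=1$ the analogous eigenvalue equation is obtained in the same way. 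For $p=0$ I would extend the arguments of \cite{PW}, combining a penalization of the obstacle constraint $\mathcal{M}_q$ with distributional arguments, to obtain \eqref{eq lambda_0}. Setting $w:=|\Delta u|^{1/q-1}\Delta u$, so that $\Delta u=|w|^{q-1}w$ and $-\Delta w$ is proportional to $|u|^{p-1}u$ (resp.\ $\sign(u)$), I would then bootstrap with Calder\'on--Zygmund and Schauder estimates, using that $t\mapsto|t|^{q-1}t\in C^{0,\min\{q,1\}}_{\mathrm{loc}}$ and $t\mapsto|t|^{1/q-1}t$, to reach $u\in C^{2,\zeta}(\overline{\Omega})$ for $\zeta<\min\{q,1\}$ and $w\in C^{2,\zeta}(\overline{\Omega})$ for $\zeta<\min\{p,1\}$ (resp.\ $w\in C^{1,\zeta}(\overline{\Omega})$, $\zeta<1$, when $p=0$).

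\emph{Continuity and convergence of minimizers.} Let $(p_n,q_n)\to(p,q)$ in $\mathcal{A}$ and $u_n:=u_{p_n,q_n}$ with $\|u_n\|_{p_n+1}=1$. For $\limsup_n\Lambda_{p_n,q_n}\le\Lambda_{p,q}$ I would use the (smooth, hence $L^\infty$) minimizer $u$ of $\Lambda_{p,q}$ as a competitor, replacing it by $\hat u_n:=(u+c_n)/\|u+c_n\|_{p_n+1}$, where $c_n$ is the unique constant making $\int_\Omega|u+c_n|^{p_n-1}(u+c_n)=0$; one checks $c_n\to0$ and $\|u+c_n\|_{p_n+1}\to1$, and since $\Delta u$ is bounded, $\|\Delta\hat u_n\|_{\frac{q_n+1}{q_n}}\to\|\Delta u\|_{\frac{q+1}{q}}=\Lambda_{p,q}$ (in the case $p=0$ one uses $\|u\|_1=1$ and $\|u\|_{p_n+1}\to\|u\|_1$). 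For the lower bound and compactness: since $q_n\to q<\infty$, for large $n$ one has $\frac{q_n+1}{q_n}\ge s_0>1$ for a fixed $s_0$, so by H\"older and the norm equivalence $(u_n)$ is bounded in $W^{2,s_0}(\Omega)$; extracting $u_n\rightharpoonup u$, strongly in $L^{p+1}(\Omega)$ by a compact Sobolev embedding (with a margin available since $(p,q)\in\mathcal{A}$), one passes the constraints to the limit. The point requiring care is that, when $p_n\downarrow0$ and $p=0$, the constraints $\int_\Omega|u_n|^{p_n-1}u_n=0$ become in the limit $|\{u>0\}|-|\{u<0\}|+\theta=0$ with $|\theta|\le|\{u=0\}|$ (the slack coming from the set where $|u_n|^{p_n}$ need not tend to $1$ or $0$), i.e.\ exactly $u\in\mathcal{M}_q$, as in \cite{PW}. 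Using lower semicontinuity, together with the asymptotically vanishing H\"older correction between $L^{\frac{q_n+1}{q_n}}(\Omega)$ and $L^{\frac{q+1}{q}}(\Omega)$, one gets $\Lambda_{p,q}\le\|\Delta u\|_{\frac{q+1}{q}}\le\liminf_n\Lambda_{p_n,q_n}$. This yields $\Lambda_{p_n,q_n}\to\Lambda_{p,q}$, continuity of the map, and that $u$ is a minimizer of $\Lambda_{p,q}$. Finally, writing \eqref{eq lambda}/\eqref{eq lambda_0} for $u_n$ with converging coefficients and right-hand sides, the bootstrap above gives uniform $C^{2,\zeta}$ bounds, and Arzel\`a--Ascoli plus uniqueness of the limit upgrade the convergence to $u_n\to u_{p,q}$ in $C^{2,\zeta}(\overline{\Omega})$ together with the stated convergence for $\Delta u_n$ (through $w_n$). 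On the critical hyperbola I would additionally invoke the Cherrier-type estimate of \cite{PST} to exclude concentration, and hence loss of compactness.

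\emph{Main obstacle.} I expect the genuinely delicate points to be: (i) matching the constraint sets across the boundary piece $\{p=0\}$ of $\mathcal{A}$, i.e.\ reconciling the ``hard'' orthogonality constraint for $p_n>0$ with the ``soft'' obstacle membership $u\in\mathcal{M}_q$, accounting for the slack $|\{u=0\}|$ exactly as in \cite{PW}; (ii) the loss of differentiability of $u\mapsto\int_\Omega|u|^{p-1}u$ for $p<1$ and of the whole functional for $p=0$, which forces the dual/distributional route for the Euler--Lagrange equation; and (iii) producing the bootstrap estimates uniformly in $n$ although the exponent $\frac{q_n+1}{q_n}$ varies, which is what is needed to get $C^{2,\zeta}$ (rather than merely weak) convergence of the minimizers.
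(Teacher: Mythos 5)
Your overall outline is sound and tracks the real structure of the argument, but there are two genuinely non-trivial points where the proposal glosses over exactly the difficulties that the paper spends effort on, and one place where you risk circularity.

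\textbf{Where you match the paper, with a useful variant.} Your rewriting of $\mathcal{M}_q$ as $\{|\{u>0\}|\le|\Omega|/2,\ |\{u<0\}|\le|\Omega|/2\}$ and the Fatou closure argument is a legitimate (and arguably more streamlined) alternative to the paper's proof of Lemma~\ref{lem: lambda 0 att}, which instead passes through Lemma~\ref{lem:properties M} and a uniqueness argument for the translation constant $\tilde c$. Your competitor construction for $\limsup_n\Lambda_{p_n,q_n}\le\Lambda_{p,q}$ (add the constant $c_n=\kappa_{p_n}(u)$ and normalize) is exactly what the paper does in Lemma~\ref{thm:conv:p0}. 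Your identification of the three main obstacles (hard/soft constraint matching at $\{p=0\}$, non-differentiability, uniform bootstrap) is accurate.

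\textbf{Gap 1: the Euler--Lagrange equation at $p=0$.} You write that you would ``extend the arguments of \cite{PW}, combining a penalization of the obstacle constraint $\mathcal{M}_q$ with distributional arguments.'' This glosses over the specific new obstacle in the higher-order setting, and it is the crux of Lemma~\ref{lemma:optimizersLambda_0q_are_wsol}: unlike $H^1(\Omega)$, the space $W^{2,\beta}_\nu(\Omega)$ is \emph{not} closed under $u\mapsto u^{\pm}$, so one cannot use positive/negative parts of test functions directly in the one-sided variations. The paper circumvents this with a dedicated density result (Lemma~\ref{BrezisPonce_densitylemma}, based on \cite{BrezisPonce}) producing $C^2_\nu$ approximations with controlled gradients, plus the $W^{1,t}$-regularity for functions whose distributional Laplacian is a measure (\cite[Proposition 2.1]{BrezisPonce}), and then an identification of the Lagrange-multiplier density $\eta$ with $\sign(u)$ on $\{u=0\}$ using pointwise regularity and \cite[Lemma 7.6]{GT}. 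None of this is penalization, and without it the inequality \eqref{claim} cannot be turned into \eqref{eq lambda_0}. You should state explicitly how you would handle the failure of the positive-part operation in $W^{2,\beta}$.

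\textbf{Gap 2: uniform bootstrap on the critical hyperbola.} For the subcritical range your bootstrap sketch is fine. On the critical hyperbola, however, the compactness from the Cherrier-type inequality of \cite{PST} gives you strong $L^\alpha\times L^\beta$ convergence of the dual pair, but not \emph{a priori} bounds in a space above the critical one; the paper needs the Green-function contraction-mapping argument (the operator $T_n^L$, truncation $u_n^L$ with a uniform truncation level $L$ chosen via the Hardy--Littlewood--Sobolev constant) to push $u_n$ into $L^r(\Omega)$ for a large fixed $r$ uniformly in $n$. Your proposal treats ``uniform $C^{2,\zeta}$ bounds'' as a consequence of ``writing \eqref{eq lambda}/\eqref{eq lambda_0} with converging coefficients'', which would only run once you already have a uniform supercritical integrability estimate; in the critical case that step does not come for free.

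\textbf{Potential circularity in the $p>0$ Euler--Lagrange derivation.} You propose to obtain \eqref{eq lambda} by invoking the correspondence with strong solutions of \eqref{system} ``as in Lemma~\ref{lemma lambda c}.'' But Lemma~\ref{lemma lambda c} itself is proved via the dual level $D_{p,q}$: the chain is $\Lambda_{p,q}\leftrightarrow D_{p,q}\leftrightarrow c_{p,q}$ (Lemmas~\ref{lemma:Dpq_achieved} and~\ref{lem:lambda d}, plus Proposition~\ref{prop:equiv}), and the dual Euler--Lagrange system \eqref{eq:dual_system} is where the equation is actually derived. If you do not pass to the dual formulation, you face directly the non-differentiability of $u\mapsto\int_\Omega|u|^{p-1}u$ and of the constraint $\int_\Omega|u|^{p-1}u=0$ for $p<1$, which is precisely what you set out to avoid. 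To close this loop, either carry out the dual maximization as the paper does, or give an explicit argument that the minimality inequality along admissible (not necessarily two-sided smooth) variations yields \eqref{eq lambda}.
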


A function $u \in W^{2, \frac{q+1}{q}}_\nu(\Omega)$ is a weak solution of \eqref{eq lambda} if
\[
\int_\Omega  |\Delta u|^{\frac 1 q -1} \Delta u \Delta \varphi=\Lambda_{p,q}^\frac{q+1}{q}\int_\Omega |u|^{p-1}u\varphi \qquad \text{ for every } \varphi\in W^{2,\frac{q+1}{q}}_\nu(\Omega)
\]
and an analogous definition is given for \eqref{eq lambda_0}.  Let
\[
f_p(t)=\begin{cases}
|t|^{p-1} t, & \text{ if } p>0,\\
\sign(t),& \text{ if } p=0.
\end{cases}
\]

We begin by showing the first statement of Theorem~\ref{main thm lambda}.

\begin{prop}\label{prop:existence_Lambda}
    The level $\Lambda_{p,q}$ is achieved whenever $(p,q)\in \mathcal{A}$ and, if 
 $u=u_{p,q}$ is an optimizer, then 
    \begin{equation}\label{eq lambda2}
    \Delta (|\Delta u|^{\frac 1 q -1} \Delta u) = \Lambda_{p, q}^{\frac{q+1}{q}} f_p(u), \quad \partial_\nu u=\partial_\nu(|\Delta u|^{\frac 1 q -1} \Delta u)=0 \quad \text{ on } \partial \Omega.
    \end{equation}
Moreover, given any  $u \in W^{2, \frac{q+1}{q}}_\nu(\Omega)$ solution of \eqref{eq lambda2}, then $(u, v):=(u,-|\Delta u|^{\frac 1 q-1} \Delta u)$ satisfies
    \begin{equation}\label{system scaling}
        -\Delta u = |v|^{q-1} v  \text{ in } \Omega,\quad
        -\Delta v = \Lambda_{p, q}^{\frac{q+1}{q}} f_p(u)  \text{ in } \Omega,\quad 
        \partial_\nu u =\partial_\nu v=0  \text{ on } \partial \Omega,
    \end{equation}
    In particular, $u \in C^{2, \zeta}(\overline \Omega)$, with $\zeta \in (0, \min\{q, 1\})$; whereas $v \in C^{2, \eta}(\overline \Omega)$, for $\eta\in (0,\min\{p, 1\})$ if $p \ne 0$, and $v \in C^{1, \zeta}(\overline \Omega)$, for any $\zeta\in (0,1)$ if $p = 0$.
\end{prop}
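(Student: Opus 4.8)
The plan is to run the direct method in the calculus of variations for the infimum $\Lambda_{p,q}$, then extract the Euler--Lagrange equation by a Lagrange-multiplier argument, and finally pass to the system \eqref{system scaling} via the elementary ``reduction-by-inversion'' identity, the regularity coming from a bootstrap along that system. Throughout I distinguish $p>0$ from $p=0$, since in the latter case neither the functional nor the constraint set is smooth.

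\emph{Existence of a minimizer.} I first take a minimizing sequence $(u_n)_n$. Since $\|\Delta u_n\|_{\frac{q+1}{q}}$ is bounded and $\|u_n\|_1\le|\Omega|^{\frac{p}{p+1}}\|u_n\|_{p+1}$ is bounded by the normalization ($\|u_n\|_{p+1}=1$ when $p>0$, $\|u_n\|_1=1$ when $p=0$), Corollary~\ref{Coro:equivalentnorms} shows that $(u_n)_n$ is bounded in $W^{2,\frac{q+1}{q}}_\nu(\Omega)$; hence, up to a subsequence, $u_n\weakto u$ weakly there, and by weak lower semicontinuity $\|\Delta u\|_{\frac{q+1}{q}}\le\Lambda_{p,q}$. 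It remains to check that $u$ lies in the constraint set and is nonzero. When $p>0$ and \eqref{CH} holds, the embedding $W^{2,\frac{q+1}{q}}(\Omega)\hookrightarrow L^{p+1}(\Omega)$ is compact, so $u_n\to u$ in $L^{p+1}(\Omega)$ and a.e.\ (up to a subsequence), which passes both $\|u\|_{p+1}=1$ and $\int_\Omega|u|^{p-1}u=0$ to the limit. When $p>0$ and the critical condition \eqref{CH2} holds, this embedding is no longer compact, and this is the delicate point of the existence part: I would rule out loss of compactness exactly as in \cite{PST}, via a Cherrier-type inequality in the spirit of \cite{c91}, the hypotheses \eqref{CH2} being used precisely to keep $\Lambda_{p,q}$ below the concentration threshold. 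When $p=0$, the embedding $W^{2,\frac{q+1}{q}}(\Omega)\hookrightarrow L^{1}(\Omega)$ is always compact, so $u_n\to u$ in $L^1$ and a.e.; that $u\in\mathcal{M}_q$ follows by writing $\mathcal{M}_q=\{w:\ |\{w>0\}|\le|\Omega|/2,\ |\{w<0\}|\le|\Omega|/2\}$ and applying Fatou's lemma to $\mathbf{1}_{\{u>0\}}\le\liminf_n\mathbf{1}_{\{u_n>0\}}$ and to the analogous inequality for $\{u<0\}$. In every case $u\ne0$ (its normalization equals $1$) and $\Lambda_{p,q}>0$, because $\|\Delta\cdot\|_{\frac{q+1}{q}}$ vanishes only on constants, while a nonzero constant is excluded from the infimum (it violates $\int_\Omega|u|^{p-1}u=0$ when $p>0$, and it does not belong to $\mathcal{M}_q$ when $p=0$).

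\emph{Euler--Lagrange equation.} Assume first $p\ge1$. Since $u$ is admissible and nonzero it changes sign, hence is not constant, so $u\mapsto\int_\Omega|\Delta u|^{\frac{q+1}{q}}$ is of class $C^1$ near $u$; the constraints $\int_\Omega|u|^{p+1}=1$ and $\int_\Omega|u|^{p-1}u=0$ are also $C^1$ on $W^{2,\frac{q+1}{q}}_\nu(\Omega)$, and their differentials at $u$ are linearly independent (the functionals $\varphi\mapsto\int_\Omega|u|^{p-1}u\,\varphi$ and $\varphi\mapsto\int_\Omega|u|^{p-1}\varphi$ are not proportional, as $u$ changes sign). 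The Lagrange multiplier rule then gives $\lambda,\beta\in\R$ with
\[
\int_\Omega|\Delta u|^{\frac1q-1}\Delta u\,\Delta\varphi=\lambda\int_\Omega|u|^{p-1}u\,\varphi+\beta\int_\Omega|u|^{p-1}\varphi\qquad\text{for all }\varphi\in W^{2,\frac{q+1}{q}}_\nu(\Omega).
\]
Testing with $\varphi=u$, and using the constraints together with $\int_\Omega|\Delta u|^{\frac{q+1}{q}}=\Lambda_{p,q}^{\frac{q+1}{q}}$, identifies $\lambda$ with $\Lambda_{p,q}^{\frac{q+1}{q}}$; testing with $\varphi\equiv1\in W^{2,\frac{q+1}{q}}_\nu(\Omega)$ gives $\beta\int_\Omega|u|^{p-1}=0$, hence $\beta=0$ since $\int_\Omega|u|^{p-1}>0$, and one recovers the weak form of \eqref{eq lambda2}. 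For $0<p<1$ the same scheme applies once one justifies the differentiability of $u\mapsto\int_\Omega|u|^{p-1}u$ near the minimizer --- the formal derivative involves the singular weight $|u|^{p-1}$ --- for which I would use the equivalent dual formulation, whose energy is $C^1$ for all $p,q>0$, and translate the resulting equation back. The genuinely non-smooth case is $p=0$: there $\Lambda_{0,q}$ is an infimum of the non-differentiable functional $\|\cdot\|_1$ over the non-smooth set $\mathcal{M}_q$, and I expect this to be the main obstacle of the whole proposition. Following and adapting \cite{PW}, one works with one-sided variations --- in particular perturbations of a minimizer by constants, which move $|\{u>0\}|$ and $|\{u<0\}|$ monotonically --- together with tools from distribution theory to give meaning to $\sign(u)$, and deduces that a minimizer solves \eqref{eq lambda2} with $f_0(u)=\sign(u)$.

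\emph{Passage to the system and regularity.} Let $u\in W^{2,\frac{q+1}{q}}_\nu(\Omega)$ solve \eqref{eq lambda2}, and set $v:=-|\Delta u|^{\frac1q-1}\Delta u$. Then $|v|=|\Delta u|^{1/q}$ and $v$ has the sign opposite to $\Delta u$, so $|v|^{q-1}v=|v|^q\sign(v)=-|\Delta u|\sign(\Delta u)=-\Delta u$; this reduction-by-inversion identity gives the first equation $-\Delta u=|v|^{q-1}v$ of \eqref{system scaling}, while $-\Delta v=\Delta(|\Delta u|^{\frac1q-1}\Delta u)=\Lambda_{p,q}^{\frac{q+1}{q}}f_p(u)$ is exactly \eqref{eq lambda2}, and the boundary conditions for $v$ are those appearing in \eqref{eq lambda2}. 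Regularity then follows by bootstrapping along this system: from $u\in L^{p+1}(\Omega)$ one gets $f_p(u)\in L^{\frac{p+1}{p}}(\Omega)$ (resp.\ $f_0(u)=\sign(u)\in L^\infty(\Omega)$), hence $v\in W^{2,\frac{p+1}{p}}(\Omega)$ by $L^s$-regularity for the Neumann problem (the compatibility condition holds since $\int_\Omega f_p(u)=0$), then $\Delta u=-|v|^{q-1}v$ improves the integrability of $\Delta u$, and iterating one reaches $u,v\in L^\infty(\Omega)$ (in the critical regime with one extra Brezis--Kato/Moser iteration). Once $u,v\in L^\infty(\Omega)$: if $p=0$, then $\sign(u)\in L^\infty$ gives $v\in W^{2,r}(\Omega)\hookrightarrow C^{1,\eta}(\overline\Omega)$ for every $\eta\in(0,1)$, whence $\Delta u=-|v|^{q-1}v\in C^{0,\zeta}(\overline\Omega)$ for every $\zeta<\min\{q,1\}$, so $u\in C^{2,\zeta}(\overline\Omega)$ by Schauder estimates; if $p>0$ one alternates Schauder estimates with the H\"older-composition bounds for $t\mapsto|t|^{p-1}t$ and $t\mapsto|t|^{q-1}t$ (locally Lipschitz when the exponent is $\ge1$, locally H\"older of order $\min\{\cdot,1\}$ otherwise) to obtain $u\in C^{2,\zeta}(\overline\Omega)$ for every $\zeta\in(0,\min\{q,1\})$ and $v\in C^{2,\eta}(\overline\Omega)$ for every $\eta\in(0,\min\{p,1\})$, which are the asserted regularities.
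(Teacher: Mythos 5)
Your proposal diverges from the paper at several points. For $p>0$ the paper works entirely through the dual level $D_{p,q}$: existence and the Euler--Lagrange equation for $\Lambda_{p,q}$ are imported from the corresponding statements for $D_{p,q}$ (Lemmas~\ref{lemma maximizing sequences}, \ref{lemma:Dpq_achieved}, \ref{lem:lambda d}), and only $p=0$ is treated directly. You instead run a primal minimization and, when $p\ge1$, extract the Euler--Lagrange equation by a Lagrange multiplier argument; that computation is sound (the multiplier attached to $\int_\Omega|u|^{p-1}u=0$ drops out on testing with $\varphi\equiv1$, and testing with $u$ identifies the other), and it is a more elementary derivation of \eqref{eq lambda2} than the dual route when $p\ge 1$. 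Your Fatou argument for closedness of $\mathcal{M}_q$ under a.e.\ convergence, via the reformulation $|\{u>0\}|\le|\Omega|/2$ and $|\{u<0\}|\le|\Omega|/2$, is also a valid and slightly shorter alternative to the argument in Lemma~\ref{lem: lambda 0 att}. The pieces you only sketch ($0<p<1$ deferred to the dual, $p=0$ deferred to \cite{PW}) are correctly located but not carried out; the $p=0$ Euler--Lagrange equation is in fact the technical core of this proposition (Lemma~\ref{lemma:optimizersLambda_0q_are_wsol}), requiring the density Lemma~\ref{BrezisPonce_densitylemma} and the distributional results of \cite{BrezisPonce} to give meaning to $\Delta(|\Delta u|^{\frac1q-1}\Delta u)$ and to identify it with $\sign(u)$.

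Two steps, as written, do not go through. In the critical case you propose to ``rule out loss of compactness exactly as in \cite{PST}''; but the concentration-compactness analysis there is done for the dual quantity $D_{p,q}$, and the identity $\Lambda_{p,q}=D_{p,q}^{-1}$ together with the correspondence of optimizers (Lemma~\ref{lem:lambda d}) is only available once both levels are attained and the dual Euler--Lagrange system has been derived. A compactness argument for minimizing sequences of $\Lambda_{p,q}$ itself would need its own Brezis--Lieb/Cherrier analysis on the primal side, not a transcription. The more serious gap is the passage to the system: you assume that $v:=-|\Delta u|^{\frac1q-1}\Delta u$ is already a strong solution of the Neumann problem $-\Delta v=\Lambda_{p,q}^{(q+1)/q}f_p(u)$, and then bootstrap. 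But a priori $v\in L^{q+1}(\Omega)$ only, and the weak form of \eqref{eq lambda2} yields $-\int_\Omega v\,\Delta\varphi=\Lambda_{p,q}^{(q+1)/q}\int_\Omega f_p(u)\varphi$ for $\varphi\in W^{2,\frac{q+1}{q}}_\nu(\Omega)$, a transposition (very-weak) formulation. Neither $v\in W^{2,s}(\Omega)$ for some $s>1$ nor the boundary condition $\partial_\nu v=0$ can be read off from this directly; the paper's Proposition~\ref{equivalence} closes exactly this gap, by solving the iterated Neumann problem via $K_q$, $K_p$ to manufacture a genuine strong solution $(z,w)$ of \eqref{system scaling} and then proving $u=z$, $v=w$ through a duality/uniqueness argument. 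Without such a step the bootstrap begins from an unjustified hypothesis, and the asserted $C^{2,\zeta}\times C^{2,\eta}$ (resp.\ $C^{2,\zeta}\times C^{1,\zeta}$) regularity does not follow.
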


\begin{remark}[scalings]\label{eq:scalings_remark}
We point out that, if $(u,v)$ solves \eqref{system scaling}, then $(\bar U,\bar V):=(u,\Lambda_{p,q}^{-\frac{1}{q}}v)$ solves
\[
-\Delta \bar U=\Lambda_{p,q} |\bar V|^{q-1}\bar V,\quad -\Delta \bar V=\Lambda_{p,q} |\bar U|^{p-1} \bar U \text{ in } \Omega
\]
and, for $pq\neq 1$ and $( U,V):=(\Lambda_{p,q}^\frac{q+1}{pq-1}u,\Lambda_{p,q}^\frac{q+1}{q(pq-1)}v)$,
\[
-\Delta U= |V|^{q-1}V,\quad -\Delta V= |U|^{p-1}U \text{ in } \Omega
\]
\end{remark}

In the following two subsections, we separately consider the case $p>0$  and the case $p=0$, and we collect all the results needed in the proof of the existence part in Proposition 
\ref{prop:existence_Lambda}. Recall from the notation the definition of $\mathcal{A}^+$.
%\Delia{We will denote 
%\[ \mathcal{A}_+:=\{ (p, q ) \in \mathcal{A}: p>0 \}. \]}
In Subsection~\ref{proof prop}, we take into account the regularity statement, and we then complete the proof of Proposition 
\ref{prop:existence_Lambda}. 

Both in the case $(p, q) \in \mathcal{A}_+$ and $(p, q) \in \mathcal{A} \setminus \mathcal{A}_+$ we will repeatedly use the following result 
(for the proof of existence, uniqueness and $W^{2, s}$ regularity see  \cite[Theorem and Lemma in page 143]{Rassias} or \cite[Theorem 15.2]{Agmon}, whereas we refer to \cite{Nardi} for the Schauder-type estimates). 

\begin{lemma}\label{lemma:regularity}
 If $s>1$, $\Omega$ be a smooth bounded domain in $\R^N$, and $h\in L^s(\Omega)$ with $\int_\Omega h=0$. Then there is a unique strong solution $u\in W^{2,s}(\Omega)$ of 
 \begin{align}\label{Nprob}
  -\Delta u = h\quad \text{ in }\Omega,\qquad \partial_\nu u=0\quad \text{ on }\partial \Omega,\qquad \int_\Omega u = 0.
  \end{align}
Moreover, there exists $C(\Omega,s)=C>0$ such that $\|u\|_{W^{2,s}}\leq C\|h\|_s.$

Furthermore, assume $h \in C^{0, \zeta}(\overline \Omega)$ for some $\zeta \in (0,1)$. Then the solution $u$ of \eqref{Nprob} is in $C^{2, \zeta}(\overline \Omega)$ and there exists $C'(\Omega, \zeta)=C'>0$ such that 
\[ \norm{u }_{C^{2, \zeta}} \le C' \norm{h}_{C^{0, \zeta}}. \]
\end{lemma}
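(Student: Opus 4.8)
Lemma~\ref{lemma:regularity} is a fairly classical statement, so the plan is essentially to assemble known ingredients in the right order. The main task is to track the compatibility condition $\int_\Omega h=0$ carefully, since that is exactly what makes the pure Neumann problem solvable.

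\textbf{Plan for the $W^{2,s}$ part.} First I would note that the linear operator $-\Delta$ with homogeneous Neumann conditions, viewed between the right spaces, has kernel consisting of the constants and cokernel spanned by the constant functional $h\mapsto \int_\Omega h$; this is the standard Fredholm picture for the Neumann Laplacian. Concretely, the plan is to work in the closed subspace $X_s:=\{u\in W^{2,s}(\Omega):\partial_\nu u=0 \text{ on }\partial\Omega,\ \int_\Omega u=0\}$ and $Y_s:=\{h\in L^s(\Omega):\int_\Omega h=0\}$, and to invoke the cited results (\cite[Theorem 15.2]{Agmon} or \cite[pp.~143]{Rassias}) to conclude that $-\Delta\colon X_s\to Y_s$ is a bijection. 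Given $h\in Y_s$, this produces a unique $u\in X_s$ solving $-\Delta u=h$ in $\Omega$, $\partial_\nu u=0$ on $\partial\Omega$, $\int_\Omega u=0$; uniqueness across all of $W^{2,s}$ with the normalization $\int_\Omega u=0$ follows because any two solutions differ by a harmonic function with zero Neumann data, hence a constant, which must vanish by the integral condition. The a priori bound $\|u\|_{W^{2,s}}\le C\|h\|_s$ is then either part of the cited statement or follows from the open mapping theorem applied to the bijection $-\Delta\colon X_s\to Y_s$ (both are Banach spaces with the inherited norms), with $C=C(\Omega,s)$.

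\textbf{Plan for the Schauder part.} Assuming now $h\in C^{0,\zeta}(\overline\Omega)$ with $\int_\Omega h=0$: since $C^{0,\zeta}(\overline\Omega)\subset L^s(\Omega)$ for every $s>1$, the previous step already gives a solution $u\in W^{2,s}(\Omega)$ for all $s$, hence $u\in C^{1,\alpha}(\overline\Omega)$ by Sobolev embedding. To upgrade to $C^{2,\zeta}$ I would cite the Schauder theory for the Neumann problem on smooth domains (\cite{Nardi}): the interior and boundary Schauder estimates for $-\Delta u=h$ with oblique/Neumann data give $u\in C^{2,\zeta}(\overline\Omega)$ together with the estimate $\|u\|_{C^{2,\zeta}}\le C''(\|h\|_{C^{0,\zeta}}+\|u\|_{L^\infty})$; then I would absorb the lower-order term using the already-established $W^{2,s}$ bound (for $s$ large) plus embedding, $\|u\|_{L^\infty}\le C\|u\|_{W^{2,s}}\le C\|h\|_s\le C\,|\Omega|^{1/s}\|h\|_{C^{0,\zeta}}$, to obtain the clean bound $\|u\|_{C^{2,\zeta}}\le C'\|h\|_{C^{0,\zeta}}$ with $C'=C'(\Omega,\zeta)$.

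\textbf{Where the difficulty lies.} There is essentially no deep obstacle here; this is bookkeeping around citations. The only point that needs a little care is making sure the normalization $\int_\Omega u=0$ is what pins down uniqueness (and that the estimate constants do not blow up): without it the solution is unique only up to an additive constant, and the stated estimate would be false for $\|\cdot\|_{W^{2,s}}$ (it holds for a seminorm or for $\|\Delta u\|_s+\|u-\fint_\Omega u\|_s$). So the cleanest exposition states the result precisely for the normalized solution, as written. A secondary mild point is that the boundary Schauder estimate in the Neumann setting requires $\partial\Omega\in C^{2,\zeta}$ (or smooth, which is assumed), and that the compatibility $\int_\Omega h=0$ is inherited automatically in all applications, so no extra hypothesis is needed; I would simply remark this and refer to \cite{Nardi} and \cite{Agmon} for the technical estimates rather than reprove them.
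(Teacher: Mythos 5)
Your proposal is correct and matches the paper's treatment: the paper does not prove this lemma but simply cites Agmon/Rassias for the $W^{2,s}$ theory and Nardi for the Schauder estimates, exactly the references you invoke, and your bookkeeping (Fredholm picture, normalization pinning down uniqueness, absorbing the $\|u\|_{L^\infty}$ term via the $W^{2,s}$ bound) is the standard way to assemble them.
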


\begin{coro}\label{Coro:equivalentnorms}
    For $s>1$, $u\mapsto \|u\|_1+\|\Delta u\|_s$ is a norm in $W^{2,s}(\Omega)$, equivalent to the standard one.
\end{coro}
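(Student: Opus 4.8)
The statement amounts to the two-sided estimate
\[
c_1\,\|u\|_{W^{2,s}}\ \le\ \|u\|_1+\|\Delta u\|_s\ \le\ c_2\,\|u\|_{W^{2,s}},
\]
and I would prove the two inequalities separately. The right-hand one is elementary: on the bounded domain $\Omega$, Hölder's inequality gives $\|u\|_1\le|\Omega|^{1-\frac1s}\|u\|_s$, while $\|\Delta u\|_s$ is dominated by $\|u\|_{W^{2,s}}$ since $\Delta u$ is a linear combination of second-order derivatives of $u$. In particular $u\mapsto\|u\|_1+\|\Delta u\|_s$ is a seminorm on $W^{2,s}(\Omega)$ controlled by the standard norm, and it is a norm because $\|u\|_1=0$ already forces $u=0$ a.e.

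For the left-hand (nontrivial) bound I would use the $L^s$-regularity of the Neumann Laplacian from Lemma~\ref{lemma:regularity}. This is precisely where the Neumann condition enters: for $u\in W^{2,s}_\nu(\Omega)$ the divergence theorem gives $\int_\Omega\Delta u\,dx=\int_{\partial\Omega}\partial_\nu u\,d\sigma=0$, so $h:=-\Delta u\in L^s(\Omega)$ is an admissible datum in \eqref{Nprob}. Setting $c:=|\Omega|^{-1}\int_\Omega u$, the function $w:=u-c$ solves $-\Delta w=h$ in $\Omega$, $\partial_\nu w=0$ on $\partial\Omega$, and $\int_\Omega w=0$; by the uniqueness part of Lemma~\ref{lemma:regularity} it coincides with the strong solution furnished there, hence $\|w\|_{W^{2,s}}\le C\|h\|_s=C\|\Delta u\|_s$. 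Since $\|c\|_{W^{2,s}}=|c|\,|\Omega|^{1/s}\le|\Omega|^{\frac1s-1}\|u\|_1$, the triangle inequality gives
\[
\|u\|_{W^{2,s}}\ \le\ \|w\|_{W^{2,s}}+\|c\|_{W^{2,s}}\ \le\ C\,\|\Delta u\|_s+|\Omega|^{\frac1s-1}\|u\|_1\ \le\ c_1^{-1}\bigl(\|u\|_1+\|\Delta u\|_s\bigr).
\]

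The delicate point — and essentially the only one — is that this lower bound genuinely requires the boundary condition: it is false on all of $W^{2,s}(\Omega)$ (for $\Omega$ a disk in $\R^2$, the harmonic polynomials $u_k(x)=\operatorname{Re}\bigl((x_1+\mathrm{i}x_2)^k\bigr)$ satisfy $\Delta u_k\equiv0$ while $\|u_k\|_1/\|u_k\|_{W^{2,s}}\to0$). Thus the equivalence — and the norm adopted on $W^{2,s}_\nu(\Omega)$ throughout the paper — is to be read on the subspace $W^{2,s}_\nu(\Omega)$, which is the only setting in which it is used. As a byproduct, since this equivalent norm dominates the standard one, $W^{2,s}_\nu(\Omega)$ is complete for $\|\cdot\|_1+\|\Delta\cdot\|_s$ and closed in $\bigl(W^{2,s}(\Omega),\,\|\cdot\|_1+\|\Delta\cdot\|_s\bigr)$, matching the remark following the definition \eqref{mu1} of $\mu_{1,q}$.
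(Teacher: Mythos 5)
Your proof follows the same route as the paper's: the easy direction is direct, and the nontrivial lower bound comes from applying Lemma~\ref{lemma:regularity} to the zero-mean part $w := u - |\Omega|^{-1}\int_\Omega u$ and bounding $\|w\|_{W^{2,s}}$ by $C\|\Delta u\|_s$. Where you genuinely improve on the paper is in making explicit that Lemma~\ref{lemma:regularity} requires two things --- $\int_\Omega h = 0$ (which, with $h=-\Delta u$, forces $\int_{\partial\Omega}\partial_\nu u = 0$), and the Neumann trace of $w$ to vanish --- and that both are guaranteed only for $u\in W^{2,s}_\nu(\Omega)$, not for all of $W^{2,s}(\Omega)$. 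Your harmonic-polynomial counterexample is valid and shows the corollary, as literally stated for $W^{2,s}(\Omega)$, is false: the two norms are equivalent only on the closed subspace $W^{2,s}_\nu(\Omega)$, which is in fact the only place the paper uses it (to endow $W^{2,s}_\nu(\Omega)$ with the equivalent norm $\|u\|_1+\|\Delta u\|_s$). The paper's own one-line proof tacitly makes the same assumption --- it asserts that $v=u-|\Omega|^{-1}\int_\Omega u$ ``satisfies \eqref{Nprob},'' whose formulation already includes $\partial_\nu v=0$, for an arbitrary $u\in W^{2,s}(\Omega)$ --- so you are right to record this as a needed restriction on the hypothesis; the substance of the argument is otherwise identical.
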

\begin{proof}
   Let $u\in W^{2,s}(\Omega)$. The estimate $\|u\|_1+\|\Delta u\|_s\leq C\|u\|_{W^{2,s}}$ is immediate. As for the other one, the function $v:=u-\frac{1}{|\Omega|}\int_\Omega u$ satisfies \eqref{Nprob} for $h=\Delta u$, hence $\|u\|_{W^{2,s}}-|\Omega|^{\frac{1}{s}-1}\int_\Omega |u| \leq  \|v\|_{W^{2,s}}\leq C\|\Delta u\|_s$.
\end{proof}

\subsection{The case $(p,q)\in \mathcal{A}_+$}\label{sec: lambda p>0}
The proof of Proposition~\ref{prop:existence_Lambda} for $(p,q)\in \mathcal{A}_+$ is based, in most part, on the papers \cite{ST,PST}. In these papers a dual method was shown to be suitable to deal with the problem. To put ourselves in the same setting, we introduce some preliminary notions, introducing a dual level $D_{p,q}$ in \eqref{equivalent_def_D}, and showing its relation with $\Lambda_{p,q}$ in Lemma~\ref{lem:lambda d} below. For $s>1$, we define the operator $K : X^s \to W^{2, s}(\Omega)$ such that $Kh:=u$ if and only if 
\[
-\Delta u =h  \text{ in } \Omega, \qquad
\partial_\nu u=0  \text{ on } \partial \Omega,\qquad
\int_\Omega u=0, \]
where
\[ 
X^s=\left \{ f \in L^s(\Omega): \, \int_\Omega f=0 \right \}. 
\]
The operator $K$ is well defined and continuous, as a consequence of Lemma~\ref{lemma:regularity}. 
Also, for $t>0$ we define $K_t : X^{\frac{t+1}{t}}(\Omega)  \rightarrow W^{2, \frac{t+1}{t}}(\Omega)$ given by 
\begin{equation} \label{eq:def_kappap}
K_t h=K h+ \kappa_t(h) \, \text{ for some } \, \kappa_t(h) \in \R \, \text{ such that } \, \int_\Omega | K_t h|^{t-1} K_t h =0,
\end{equation}
i.e., for $h\in X^\frac{t+1}{t}$,  $u=K_t h$ is the unique (strong) solution of
\[
-\Delta u =h  \text{ in } \Omega, \qquad
\partial_\nu u=0  \text{ on } \partial \Omega,\qquad
\int_\Omega |u|^{t-1}u=0.
\]
For extra properties of $\kappa_t(h)$ we refer to \cite{PW}. We set 
\[
X:=X^\alpha \times X^\beta,\qquad \text{ for }\alpha=\frac{p+1}p, \quad \beta=\frac{q+1}q,
\]
and define $\gamma_1,\gamma_2$ so that
$\gamma_1+\gamma_2=1$, $\gamma_1\alpha+\gamma_2\beta=:\gamma$,
which gives
\[
\gamma_1=\frac{\beta}{\alpha+\beta}=\frac{p(q+1)}{2pq+p+q},\quad \gamma_2= \frac{\alpha}{\alpha+\beta}=\frac{q(p+1)}{2pq+p+q},\quad\gamma=\frac{(p+1)(q+1)}{2pq+p+q},\quad \text{ and } \frac{1}{\alpha}+\frac{1}{\beta}=\frac{1}{\gamma}.
\]
We introduce the level:
\begin{align}
D_{p, q} :=&\sup \left \{ \int_\Omega f Kg: \quad  (f, g) \in X, \quad \gamma_1 \norm{f}_\alpha^\alpha + \gamma_2 \norm{g}_\beta^\beta =1 \right \}\\
\label{equivalent_def_D} =& \sup_{(f,g)\in X\setminus \{(0,0)\}} \frac{\displaystyle\int_\Omega fKg}{\left(\gamma_1 \norm{f}_\alpha^\alpha + \gamma_2 \norm{g}_\beta^\beta\right)^{\frac{1}{\gamma}}} =\mathop{\sup_{(f,g)\in X}}_{f,g\neq 0}\frac{\displaystyle \int_\Omega f Kg}{\|f\|_\alpha \|g\|_\beta}.
\end{align}
To see that $D_{p,q}<\infty$ and to check its equivalent characterizations, see \cite[pp. 755--756]{PST}; observe that a key point is the Young's inequality:
\begin{equation*}
(\|f\|_\alpha \|g\|_\beta)^\gamma \leq \gamma_1 \|f\|_\alpha^\alpha+\gamma_2 \|g\|_\beta^\beta\qquad \text{ for every } (f,g)\in X.
\end{equation*}

We now recall some results whose proofs can be found in the literature. Let $S_{p,q}$ be the best Sobolev constant for the embedding $\mathcal{D}^{2, {\frac{q+1}{q}}}(\R^N) \hookrightarrow L^{p+1}(\R^N)$, namely
\begin{equation}\label{eq:Spq}
S_{p,q}= \inf\left\{\|\Delta u\|_{L^{\frac{q+1}{q}}(\R^N)}:\ u\in \mathcal{D}^{2,\frac{q+1}{q}}(\R^N),\ \|u\|_{L^{p+1}(\R^N)}=1 \right\},
\end{equation} 
where
we denote by $\mathcal{D}^{2, \eta}(\R^N)$ the completion of $C^\infty_c(\R^N)$ with respect to the norm $\|\Delta u\|_{L^\eta(\R^N)}$. 

\begin{lemma}\label{lemma maximizing sequences}
    Assume $(p, q)$ satisfies 
    \begin{equation}\label{subcritical lemma}
        \frac{1}{p+1} + \frac{1}{q+1}>\frac{N-2}{N},
    \end{equation}
    or
    \begin{equation}\label{critical lemma}
        \frac{1}{p+1} + \frac{1}{q+1}=\frac{N-2}{N} \quad \text{ and }\quad  D_{p,q} > \frac{2^{2/N}}{S_{p,q} }.
     \end{equation}
Let $(f_n, g_n) \in X $ be a maximizing sequence, namely $\int_\Omega f_n K g_n \to D_{p, q}$, with $\gamma_1 \norm{f_n}_\alpha + \gamma_2 \norm{g_n}_\beta =1$.
Then,  there exists $(f, g) \in X$ such that 
\begin{itemize}
    \item[(i)] $f_n \to f$ in $L^\alpha(\Omega)$ and $g_n \to g$ in $L^\beta(\Omega)$;
    \item[(ii)] $\int_\Omega fKg=D_{p,q}$, and $\gamma_1\|f\|_\alpha^\alpha+\gamma_2\|g\|_\beta^\beta=1$. In particular, $D_{p, q}$ is attained. 
\end{itemize}
\end{lemma}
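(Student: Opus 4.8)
The plan is to run a concentration-compactness argument on the maximizing sequence $(f_n,g_n)$, exploiting the fact that the constraint $\gamma_1\|f_n\|_\alpha^\alpha+\gamma_2\|g_n\|_\beta^\beta=1$ gives boundedness in $L^\alpha(\Omega)\times L^\beta(\Omega)$, so that up to a subsequence $f_n\weak f$ in $L^\alpha$ and $g_n\weak g$ in $L^\beta$. Since $K:X^\beta\to W^{2,\beta}(\Omega)$ is continuous and, by the compact embedding $W^{2,\beta}(\Omega)\embed\embed L^{p+1}(\Omega)$ valid in the strictly subcritical regime \eqref{subcritical lemma}, compact as a map into $L^{p+1}(\Omega)=L^{\alpha'}(\Omega)$, we get $Kg_n\to Kg$ strongly in $L^{\alpha'}(\Omega)$; pairing with the weakly convergent $f_n$ yields $\int_\Omega f_nKg_n\to\int_\Omega fKg$, so $\int_\Omega fKg=D_{p,q}$ and in particular $(f,g)\neq(0,0)$. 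By weak lower semicontinuity of the norms, $\gamma_1\|f\|_\alpha^\alpha+\gamma_2\|g\|_\beta^\beta\le1$; but then the homogeneous quotient in \eqref{equivalent_def_D} forces equality (otherwise rescaling $(f,g)$ would exceed $D_{p,q}$), which combined with weak convergence and the uniform convexity of $L^\alpha$ and $L^\beta$ (both $1<\alpha,\beta<\infty$) upgrades the weak convergence to strong: $f_n\to f$ in $L^\alpha(\Omega)$, $g_n\to g$ in $L^\beta(\Omega)$, proving (i) and (ii).

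In the critical case \eqref{critical lemma} the embedding $W^{2,\beta}(\Omega)\embed L^{p+1}(\Omega)$ is no longer compact, and the above argument breaks down precisely because of possible concentration of $g_n$ (and, symmetrically, $f_n$) at boundary points. Here I would follow the Cherrier-type argument of \cite{c91} as adapted in \cite{PST}: split $\Omega$ into interior and near-boundary pieces, apply the local Sobolev inequality with constant close to $S_{p,q}$ in the interior and a boundary version with the improved constant $2^{2/N}/S_{p,q}$ near $\partial\Omega$ (reflecting that a half-space carries half the mass of a bump). One shows that if concentration occurs, then necessarily $D_{p,q}\le 2^{2/N}/S_{p,q}$, contradicting the standing hypothesis $D_{p,q}>2^{2/N}/S_{p,q}$. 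Hence no mass is lost, $Kg_n\to Kg$ strongly in $L^{p+1}(\Omega)$ along a subsequence, and the rest of the argument proceeds exactly as in the subcritical case.

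The main obstacle is the critical case: making rigorous the dichotomy "either the maximizing sequence is precompact or $D_{p,q}\le 2^{2/N}/S_{p,q}$'' requires a careful Lions-type concentration-compactness analysis for the second-order operator in a domain with boundary, tracking how the best constant degrades (or improves, by the factor $2^{2/N}$) under concentration at interior versus boundary points, together with the subtlety that here the natural space is $W^{2,\beta}$ with $\beta=(q+1)/q$ possibly close to $1$, so one cannot appeal to Hilbert-space orthogonality and must instead use the Brezis--Lieb lemma in $L^{p+1}$ and the dual formulation carefully. In the subcritical case, by contrast, everything reduces to the compact embedding and uniform convexity, and I expect no real difficulty. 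I would cite \cite[pp.~755--756]{PST} and \cite{PST} (and ultimately \cite{c91}) for the technical core of the critical estimate, and present only the structure of the argument here.
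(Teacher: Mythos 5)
Your proposal follows the same route as the paper: in the subcritical case the paper simply invokes continuity of $K$ and compactness of $W^{2,\alpha}(\Omega)\hookrightarrow L^{q+1}(\Omega)$, $W^{2,\beta}(\Omega)\hookrightarrow L^{p+1}(\Omega)$, which is exactly your weak-convergence plus weak-strong pairing plus weak lower semicontinuity and Radon--Riesz argument spelled out; in the critical case the paper delegates directly to \cite[Lemma 3.1]{PST}, which is what you propose. No discrepancy.
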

\begin{proof}
In the subcritical case \eqref{subcritical lemma}, this is a simple consequence of the continuity of $K$ and the compactness of the embeddings
$W^{2,\alpha}(\Omega)\hookrightarrow L^{q+1}(\Omega)$, $W^{2,\beta}(\Omega)\hookrightarrow L^{p+1}(\Omega)$. As for the critical case \eqref{critical lemma}, the statement  follows from \cite[Lemma 3.1]{PST}.
\end{proof}

\begin{lemma}\label{lemma:Dpq_achieved}
Let $(f, g) \in X$ be any maximizer for $D_{p, q}$. Then, 
\begin{equation}\label{eq:dual_system}
K_pf = D_{p,q} |g|^{\frac 1 q -1} g, \quad K_q g=D_{p,q} |f|^{\frac 1 p -1} f \qquad \text{a.e. in } \Omega. \end{equation}
\end{lemma}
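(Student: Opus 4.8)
\textbf{Proof plan for Lemma~\ref{lemma:Dpq_achieved}.}

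The plan is to characterize maximizers of $D_{p,q}$ via a Lagrange multiplier computation, using the equivalent formulation in \eqref{equivalent_def_D} in which one maximizes $\int_\Omega f Kg$ over the constraint $\gamma_1\|f\|_\alpha^\alpha+\gamma_2\|g\|_\beta^\beta=1$. First I would record the symmetry of $K$: since $K$ is the solution operator of the Neumann Laplacian (with zero-average normalization), one has $\int_\Omega f\,Kg=\int_\Omega g\,Kf=\int_\Omega \nabla(Kf)\nabla(Kg)$ for all $(f,g)\in X$; this is the standard self-adjointness of $K$ on $X^2$, extended to the relevant $L^\alpha$–$L^\beta$ pairing by density and the continuity/regularity in Lemma~\ref{lemma:regularity}. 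With this in hand, the functional $(f,g)\mapsto\int_\Omega fKg$ is Gateaux differentiable on $X$, with derivative in the direction $(\phi,\psi)\in X$ equal to $\int_\Omega \phi\,Kg+\int_\Omega \psi\,Kf$, while the constraint function $G(f,g):=\gamma_1\|f\|_\alpha^\alpha+\gamma_2\|g\|_\beta^\beta$ has derivative $\gamma_1\alpha\int_\Omega |f|^{\alpha-2}f\,\phi+\gamma_2\beta\int_\Omega |g|^{\beta-2}g\,\psi$ (here $\alpha,\beta>1$, so these $p$-th power maps are $C^1$).

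Next I would apply the Lagrange multiplier rule on the Banach space $X=X^\alpha\times X^\beta$: at a maximizer $(f,g)$ there is $\mu\in\R$ such that, for all $(\phi,\psi)\in X$,
\[
\int_\Omega \phi\,Kg+\int_\Omega \psi\,Kf=\mu\Big(\gamma_1\alpha\int_\Omega |f|^{\alpha-2}f\,\phi+\gamma_2\beta\int_\Omega |g|^{\beta-2}g\,\psi\Big).
\]
Testing with $\psi=0$ and arbitrary $\phi\in X^\alpha$ (i.e. arbitrary $L^\alpha$ function with zero average — but note $Kg$ and $|f|^{\alpha-2}f$ may be tested against all of $L^\alpha$ after subtracting means, since both sides are insensitive to adding constants to $\phi$ once we recall $\int_\Omega Kg=0$; one must be slightly careful and argue that the identity holds up to an additive constant, which is then pinned down below) gives $Kg=\mu\gamma_1\alpha\,|f|^{\alpha-2}f$ a.e., i.e. $Kg=c_1|f|^{1/p-1}f$ for a constant $c_1$, using $\alpha-1=1/p$ and $\gamma_1\alpha=\gamma$. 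Symmetrically $Kf=c_2|g|^{1/q-1}g$. To identify the constants, I would plug these back: multiply the first by $f$ and integrate to get $D_{p,q}=\int_\Omega fKg=c_1\int_\Omega|f|^{\alpha}=c_1\gamma_1^{-1}\cdot\gamma_1\|f\|_\alpha^\alpha$, and similarly with $g$; combined with the constraint $\gamma_1\|f\|_\alpha^\alpha+\gamma_2\|g\|_\beta^\beta=1$ and the homogeneity relations among $\alpha,\beta,\gamma_1,\gamma_2,\gamma$, a short computation forces $c_1=c_2=D_{p,q}$, and also shows $\gamma_1\|f\|_\alpha^\alpha=\gamma_2\|g\|_\beta^\beta=1/2$. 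This yields exactly \eqref{eq:dual_system}: $K_pf=D_{p,q}|g|^{1/q-1}g$ and $K_qg=D_{p,q}|f|^{1/p-1}f$, where the subscripted operators $K_p,K_q$ absorb the additive-constant ambiguity — indeed $Kg$ differs from $K_pf$'s natural normalization only by a constant, and the defining condition $\int_\Omega|K_pf|^{p-1}K_pf=0$ matches $\int_\Omega |g|^{(1/q-1)q+1}\cdot\ldots$; one checks the normalization $\int_\Omega |f|^{1/p-1}f=0$ is consistent with $f\in X^\alpha$ having zero average precisely when we use $K_p$ rather than $K$, and similarly for $K_q$.

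The main obstacle I anticipate is the bookkeeping around additive constants and which normalization ($K$ versus $K_t$) appears: the raw Euler–Lagrange identity only determines $Kg$ and $Kf$ up to constants (because the admissible variations $\phi,\psi$ lie in the zero-average subspaces $X^\alpha,X^\beta$, so only the projections of $Kg$, $|f|^{\alpha-2}f$ orthogonal to constants are constrained), and one must verify that the correct constant is exactly the one making $\int_\Omega|\,\cdot\,|^{t-1}(\cdot)=0$, i.e. that the maximizer's $f$ automatically satisfies $\int_\Omega|f|^{1/p-1}f=0$ — this should follow by testing the EL identity with the constant direction after subtracting means, or directly from the structure $f=$ (const)$\cdot|Kg|^{q-1}Kg$-type relation combined with $\int_\Omega f=0$. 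I would handle this by phrasing everything through $K_p,K_q$ from the start: since $\kappa_t$ is chosen precisely so that $\int_\Omega|K_th|^{t-1}K_th=0$, the identities $K_pf=D_{p,q}|g|^{1/q-1}g$ and $K_qg=D_{p,q}|f|^{1/p-1}f$ are the natural ``balanced'' form, and consistency of the two normalization conditions is then just the observation that $\int_\Omega f=0$ (from $f\in X^\alpha$) together with $\int_\Omega|K_qg|^{q-1}K_qg=0$ are compatible. A clean reference for the analogous Dirichlet computation is \cite{PST}, and I would cite \cite[pp.~755--756]{PST} and \cite{PW} for the properties of $\kappa_t$ used here.
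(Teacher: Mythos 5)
Your Lagrange-multiplier route is genuinely different from the paper's own ``proof,'' which is a pure citation to \cite[Prop.~2.5, p.~755]{PST}; a self-contained derivation is worth having and your overall structure (differentiability of $(f,g)\mapsto \int fKg$, self-adjointness of $K$ on $X$, Lagrange multiplier on $S$, pin the constants with the constraint) is sound. Two points need fixing, one cosmetic, one substantive.

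\emph{Minor slip.} From $D_{p,q}=c_1\|f\|_\alpha^\alpha=c_2\|g\|_\beta^\beta$ with $c_1=\mu\gamma_1\alpha=\mu\gamma=c_2=\mu\gamma_2\beta$, the constraint $\gamma_1\|f\|_\alpha^\alpha+\gamma_2\|g\|_\beta^\beta=1$ together with $\gamma_1+\gamma_2=1$ forces $\|f\|_\alpha^\alpha=\|g\|_\beta^\beta=1$, not $\gamma_1\|f\|_\alpha^\alpha=\gamma_2\|g\|_\beta^\beta=\tfrac12$ (the latter would require $\gamma_1=\gamma_2$, i.e.\ $p=q$). The conclusion $c_1=c_2=D_{p,q}$ is unaffected.

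\emph{Substantive issue: the pairing of $K_p,K_q$ with $f,g$.} Your Euler--Lagrange derivation correctly yields
\[
Kg = D_{p,q}\,|f|^{1/p-1}f + \text{const}, \qquad Kf = D_{p,q}\,|g|^{1/q-1}g + \text{const}.
\]
To identify the first constant you must ask which normalization $\int_\Omega |K_t g|^{t-1}K_t g=0$ it satisfies. With $u:=|f|^{1/p-1}f$ one has $|u|^{p-1}u=f$ and hence $\int_\Omega |u|^{p-1}u=\int_\Omega f=0$; so the constant adjusting $Kg$ is exactly $\kappa_p(g)$, and one gets
\[
K_p g = D_{p,q}\,|f|^{1/p-1}f, \qquad K_q f = D_{p,q}\,|g|^{1/q-1}g.
\]
You instead write $K_p f = D_{p,q}|g|^{1/q-1}g$ and $K_q g=D_{p,q}|f|^{1/p-1}f$, which swaps the argument of each operator and is not what your computation produces: it would require the non-automatic conditions $\int_\Omega |g|^{p/q-1}g=0$ and $\int_\Omega |f|^{q/p-1}f=0$. (This is exactly the attempted ``normalization matching'' you yourself flag as the main obstacle, but the resolution points the other way from what you wrote.) Note that the lemma's displayed equation \eqref{eq:dual_system} appears to carry the same swap — compare it with Proposition~\ref{prop:equiv}, where the consistent pairing $K_pg$, $K_qf$ is used — so you may simply have transcribed a typo from the target; nonetheless, your own derivation should be stated as $K_pg$ and $K_qf$.

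<br>

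<br>
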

\begin{proof}
This can be found within the proof of \cite[Proposition 2.5]{PST}, namely at page 755. Even if the proof therein is given just for the critical case \eqref{critical lemma}, the same computations yield the conclusion also in the subcritical case \eqref{subcritical lemma}, and actually for any $(p, q)$ such that $D_{p,q} $ is attained.
\end{proof}

\begin{lemma}\label{lem:lambda d}
If $D_{p,q}$ is attained, then 
\[ \Lambda_{p, q}= D_{p, q}^{-1}. \]
Also, $u$ is an optimizer for $\Lambda_{p, q}$ such that $\norm{u}_{p+1}=1$ if, and only if, $(f, g)=(|u|^{p-1}u,\Lambda_{p, q}^{-1}(-\Delta u))$ is an optimizer for $D_{p, q}$.
\end{lemma}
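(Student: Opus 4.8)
The plan is to establish the duality between the primal problem defining $\Lambda_{p,q}$ and the dual problem defining $D_{p,q}$ through a change of variables, using the Young-type inequalities that make both infima/suprema attained. First I would take any $u\in W^{2,\frac{q+1}{q}}_\nu(\Omega)$ admissible for $\Lambda_{p,q}$, that is with $\norm{u}_{p+1}=1$ and $\int_\Omega |u|^{p-1}u=0$, and set $f:=|u|^{p-1}u$, $g:=-\Delta u$. Then $f\in X^\alpha$ (indeed $\int_\Omega f=\int_\Omega |u|^{p-1}u=0$ by the constraint, and $\norm{f}_\alpha^\alpha=\int_\Omega |u|^{(p-1)\frac{p+1}{p}\cdot\frac{p}{p+1}\cdots}$ — more precisely $\norm{f}_\alpha = \norm{|u|^{p-1}u}_{\frac{p+1}{p}} = \norm{u}_{p+1}^p = 1$), and $g\in X^\beta$ since $\int_\Omega g = -\int_\Omega \Delta u = 0$ by the Neumann condition, with $\norm{g}_\beta = \norm{\Delta u}_{\frac{q+1}{q}} = \Lambda_{p,q}$ when $u$ is optimal. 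Moreover $Kg = u$ (since $u$ has — after subtracting its mean, which does not affect the relevant quantities — the defining property of $Kg$; here one must be slightly careful, using that $\int_\Omega f Kg = \int_\Omega f u$ is unchanged by adding constants to $u$ because $\int_\Omega f = 0$). Hence $\int_\Omega f Kg = \int_\Omega |u|^{p-1}u\, u = \int_\Omega |u|^{p+1} = 1$, so $\int_\Omega f Kg / (\norm{f}_\alpha \norm{g}_\beta) = 1/\Lambda_{p,q}$ when $u$ is optimal, and $\le D_{p,q}$ in general; this gives $\Lambda_{p,q} \ge D_{p,q}^{-1}$.

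For the reverse inequality I would run the construction backwards. Given a maximizer $(f,g)\in X$ for $D_{p,q}$, normalized so that $\int_\Omega fKg = D_{p,q}$ and $\gamma_1\norm{f}_\alpha^\alpha + \gamma_2\norm{g}_\beta^\beta = 1$, use Lemma~\ref{lemma:Dpq_achieved}, namely $K_q g = D_{p,q}|f|^{\frac1p-1}f$. Set $u:=K_q g$ (up to normalization). By the definition of $K_q$, $u$ satisfies $-\Delta u = g$ with $\partial_\nu u = 0$ and $\int_\Omega |u|^{q-1}u = 0$ — wait, this is not quite the constraint $\int_\Omega |u|^{p-1}u=0$; here I would instead use $K_p f$ and the symmetry, or rather exploit that from $u = D_{p,q}|f|^{\frac1p -1}f$ (up to scaling) one recovers $f$ as a multiple of $|u|^{p-1}u$, so $\int_\Omega |u|^{p-1}u$ is a multiple of $\int_\Omega f = 0$. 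Then $\Delta u = -g$ gives $\norm{\Delta u}_{\frac{q+1}{q}} = \norm{g}_\beta$, and one checks the equality case in Young's inequality forces $\norm{f}_\alpha = \norm{g}_\beta$ at the maximizer (this is standard: maximizers of a ratio saturate the AM–GM-type estimate used to pass between the two forms of $D_{p,q}$ in \eqref{equivalent_def_D}), whence after rescaling $u$ to have $\norm{u}_{p+1}=1$ one gets an admissible competitor for $\Lambda_{p,q}$ with $\norm{\Delta u}_{\frac{q+1}{q}}/\norm{u}_{p+1} = 1/D_{p,q}$. This yields $\Lambda_{p,q}\le D_{p,q}^{-1}$, completing the equality, and the explicit correspondence $f = |u|^{p-1}u$, $g = \Lambda_{p,q}^{-1}(-\Delta u)$ (the factor $\Lambda_{p,q}^{-1}$ coming precisely from the normalization $\norm{u}_{p+1}=1$ forcing $\norm{g}_\beta = \norm{f}_\alpha = 1$ after rescaling, and then the unnormalized $-\Delta u$ has norm $\Lambda_{p,q}$).

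The main obstacle I expect is bookkeeping of the normalizations and scalings rather than any conceptual difficulty: one must track how the constraint $\norm{u}_{p+1}=1$ interacts with the constraint $\gamma_1\norm{f}_\alpha^\alpha+\gamma_2\norm{g}_\beta^\beta=1$ through the homogeneity of $K_q$, and verify that the map $u\mapsto (|u|^{p-1}u,\Lambda_{p,q}^{-1}(-\Delta u))$ is genuinely a bijection between the respective sets of optimizers (injectivity is clear; surjectivity needs the equality case in Young's inequality plus Lemma~\ref{lemma:Dpq_achieved}). A secondary subtlety is the mean-value adjustment: $Kg$ is the solution with zero mean, whereas an optimizer $u$ for $\Lambda_{p,q}$ need not have zero mean, so one should observe that adding a constant to $u$ changes neither $\norm{\Delta u}_{\frac{q+1}{q}}$ nor (given $\int_\Omega |u|^{p-1}u=0$, which pins down the constant differently) the relevant pairings — or simply note that the two functionals are invariant under the relevant shifts because $\int_\Omega f=0$. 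Once these are handled the identity $\Lambda_{p,q}=D_{p,q}^{-1}$ and the stated characterization of optimizers follow immediately.
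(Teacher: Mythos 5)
Your proposal is correct and follows essentially the same route as the paper: both directions hinge on Lemma~\ref{lemma:Dpq_achieved} and the dual change of variables $f=|u|^{p-1}u$, $g\propto -\Delta u$, and you correctly handle the mean-value normalization of $K$ via $\int_\Omega f=0$ and the derivation of the constraint $\int_\Omega|u|^{p-1}u=0$ from $u=D_{p,q}|f|^{1/p-1}f$. The only cosmetic difference is that you obtain $\|f\|_\alpha=\|g\|_\beta=1$ at the maximizer through the equality case of Young's inequality, whereas the paper reads it off directly from \eqref{eq:dual_system} via $\|f\|_\alpha^\alpha=\|g\|_\beta^\beta=D_{p,q}^{-1}\int_\Omega fKg=1$; both routes are equivalent under the normalization $\gamma_1\|f\|_\alpha^\alpha+\gamma_2\|g\|_\beta^\beta=1$.
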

\begin{proof}
Let $(f, g)\in X\setminus \{(0,0)\}$ attain $D_{p, q}$ and be such that $\gamma_1 \norm{f}_\alpha^\alpha +\gamma_2 \norm{g}_\beta^\beta=1$. 
Then, by Lemma~\ref{lemma:Dpq_achieved}, they satisfy 
\[ K_pf = D_{p,q} |g|^{\frac 1 q -1} g, \quad K_q g=D_{p,q} |f|^{\frac 1 p -1} f. \]
We define $u:=|f|^{\frac 1 p -1} f= D_{p,q}^{-1}(K_qg) \in W^{2,\beta}(\Omega)$, so that
\[
\int_\Omega |u|^{p-1}u=\int_\Omega f=0.
\]
Notice that $\norm{f}_\alpha^\alpha= \norm{g}_\beta^{\beta}=D_{p,q}^{-1}\int_\Omega fKg=1$, thus $\norm{u}_{p+1}=1$. Also, $\partial_\nu u= D_{p, q} \partial_\nu (Kg)=0$, hence $u\in W_\nu^{2,\beta}(\Omega)$.

We claim that $u$ is an optimizer for $\Lambda_{p, q}$. Indeed, take a minimizing sequence for $\Lambda_{p, q}$, namely $w_n \in W^{2, \beta}_\nu(\Omega)\setminus \{0\}$ such that $\norm{\Delta w_n}_{\beta} \to \Lambda_{p, q}$, $\norm{w_n}_{p+1}=1$ and $\int_\Omega |w_n|^{p-1}w_n=0$. We set $\tilde f_n:=|w_n|^{p-1}w_n$ and $\tilde g_n:=\Delta w_n /\norm{\Delta w_n}_{\beta}$. Since $-\Delta u = D_{p, q}^{-1} g$, 
\[ \Lambda_{p, q}^{\beta} \le  \int_\Omega |\Delta u|^{\beta} = D_{p, q}^{-\beta} \le  \lim_{n \to \infty}\left(\int_\Omega \tilde f_n K \tilde g_n \right)^{-\beta} = \lim_{n \to \infty} \norm{\Delta w_n}_{\beta}^{\beta}= \Lambda_{p, q}^{\beta}, \]
from which we get $\Lambda_{p, q}= D_{p, q}^{-1}$, and $u$ is an optimizer for $\Lambda_{p, q}$.

Let now $u \in W_\nu^{2, \beta}(\Omega)$ be such that $\|u\|_{p+1}=1$, $\int_\Omega |u|^{p-1}u=0$ and   $\|\Delta u\|_\beta=\Lambda_{p, q}$. Define $f:=|u|^{p-1}u \in L^{\alpha}(\Omega)$ and $g:=\Lambda_{p, q}^{-1} (-\Delta u) \in L^{\beta}(\Omega)$. Notice that $\int_\Omega f=\int_\Omega g=0$. Also, $\norm{f}_\alpha=1$, and
\[ K_pf = \Lambda_{p, q}^{-\beta} |\Delta u|^{\frac 1 q -1} \Delta u = \Lambda_{p, q}^{-1} |g|^{\frac 1 q -1} g, \quad K_qg=\Lambda_{p,q}^{-1}=\Lambda_{p, q}^{-1} |f|^{\frac 1 p -1} f. \]
Thus, in particular,
\[ 1=\norm{f}_\alpha^\alpha= \Lambda_{p, q} \int_\Omega f Kg=\Lambda_{p,q}\int_\Omega Kfg = \norm{g}_\beta^{\beta}, \quad \text{ hence } \quad 
\int_\Omega fKg=\Lambda_{p,q}^{-1}=D_{p,q}
\]
and $(f,g)$ is an optimizer for $D_{p,q}$.
\end{proof}

\subsection{The case $(p,q)\in \mathcal{A} \setminus  \mathcal{A}_+$}

Here we prove  Proposition~\ref{prop:existence_Lambda} for  $(p, q) \in \mathcal{A} \setminus  \mathcal{A}_+$, namely $(p, q)$ satisfying \eqref{p=0}: $p=0$ and $0<q<\infty$. We recall that in this case we have to minimize on the auxiliary set $\mathcal{M}_q$ defined in \eqref{nehari2}.

Let us begin with the following result, whose proof can be found in  \cite[Lemma 5.1]{PW} (replacing only $W^{1,2}(\Omega)$ by $W^{2,\beta}_\nu(\Omega)$ therein).

\begin{lemma}\label{lem:properties M}
\begin{itemize} 
\item[(i)] If $u \in \mathcal{M}_q$, then
\[ \int_\Omega |u+c| > \int_\Omega |u| \quad \text{ for every } c \ne 0. \]
\item[(ii)] For any $u \in W_\nu^{2, \beta}(\Omega)$, there exists a unique $c(u) \in \R$ such that $u + c(u) \in \mathcal{M}_q$. Moreover, the map $c:L^1(\Omega) \to \R$ defined as $u \mapsto c(u)$ is continuous. 
\item[(iii)] If $u,v \in  W_\nu^{2, \beta}(\Omega)$ satisfy $u \le v$, then $c(u) \ge c(v)$. 
\end{itemize}
\end{lemma}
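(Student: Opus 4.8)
\textit{Proof of Lemma~\ref{lem:properties M}.}

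The plan is to reduce all three assertions to one-dimensional convex analysis of the function
\[
\phi_u(c):=\int_\Omega |u+c|\,dx,\qquad c\in\R,
\]
which, for every $u\in L^1(\Omega)$, is finite, convex, $|\Omega|$-Lipschitz and coercive since $\phi_u(c)\ge |c|\,|\Omega|-\|u\|_1$. First I would record, by differentiating under the integral sign and using $\partial_t^\pm|t|=\pm1$ at $t=0$, the one-sided derivatives
\[
\phi_u'(c^+)=2\,|\{u+c\ge 0\}|-|\Omega|,\qquad \phi_u'(c^-)=2\,|\{u+c> 0\}|-|\Omega|.
\]
A short computation then shows that $0\in\partial\phi_u(c_0)$, i.e.\ $c_0$ is a global minimizer of $\phi_u$, precisely when $\big||\{u+c_0>0\}|-|\{u+c_0<0\}|\big|\le|\{u+c_0=0\}|$. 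Since adding a constant preserves membership in $W^{2,\beta}_\nu(\Omega)$, this yields the dictionary: for $u\in W^{2,\beta}_\nu(\Omega)$, one has $u+c_0\in\mathcal{M}_q$ if and only if $c_0$ minimizes $\phi_u$. With this, existence in (ii) is immediate from coercivity of $\phi_u$.

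For (i), I would argue that if $u\in\mathcal{M}_q$ then $0$ minimizes $\phi_u$, so $\phi_u'(0^+)\ge0$, i.e.\ $|\{u\ge0\}|\ge|\Omega|/2$; since $\{u\ge-s\}\supseteq\{u\ge0\}$ for $s>0$, the nondecreasing function $s\mapsto\phi_u'(s^+)$ is $\ge0$ on $(0,\infty)$, and the fundamental theorem of calculus for convex functions gives $\phi_u(c)-\phi_u(0)=\int_0^c\phi_u'(s^+)\,ds\ge0$ for $c>0$. The only real issue is strictness. If equality held for some $c>0$, then $\phi_u'(s^+)=0$ for a.e.\ $s\in(0,c)$, hence by monotonicity $|\{u\ge-s\}|=|\Omega|/2$ for all $s\in(0,c)$; letting $s\to0^+$ and $s\to c^-$ yields $|\{u\ge0\}|=|\{u>-c\}|=|\Omega|/2$, so $|\{-c<u<0\}|=0$ while $\{u\le-c\}$ and $\{u\ge0\}$ both have measure $|\Omega|/2>0$. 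But $u\in W^{2,\beta}_\nu(\Omega)\hookrightarrow W^{1,1}(\Omega)$ and $\Omega$ is connected, so the truncation $w:=\max\{-c,\min\{u,0\}\}$ has gradient supported on the null set $\{-c<u<0\}$, hence is a.e.\ constant; this contradicts $w$ taking the distinct values $-c$ and $0$ on sets of positive measure. Thus $\phi_u(c)>\phi_u(0)$ for $c>0$, and the case $c<0$ follows by applying this to $-u\in\mathcal{M}_q$ and using $\phi_{-u}(c)=\phi_u(-c)$. I expect this truncation step to be the main obstacle, in the sense that convexity alone yields only the non-strict inequality; the role of $W^{2,\beta}_\nu(\Omega)$ (rather than merely $L^1$) is exactly to rule out functions that ``jump across'' the interval $(-c,0)$.

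For the rest I would proceed as follows. Uniqueness in (ii): if $u+c_1,u+c_2\in\mathcal{M}_q$ with $c_1<c_2$, then $\phi_u$ is constant on $[c_1,c_2]$, contradicting (i) applied to $u+c_1\in\mathcal{M}_q$ with the nonzero shift $c_2-c_1$. Continuity: since $\|\phi_{u_n}-\phi_u\|_{L^\infty(\R)}\le\|u_n-u\|_{L^1(\Omega)}$, $L^1$-convergence forces $\phi_{u_n}\to\phi_u$ uniformly on $\R$; moreover $|c(u_n)|\,|\Omega|\le\phi_{u_n}(c(u_n))+\|u_n\|_1\le\phi_{u_n}(0)+\|u_n\|_1=2\|u_n\|_1$ is bounded, and the standard fact that uniform limits of convex functions with unique minimizers have convergent minimizers gives $c(u_n)\to c(u)$. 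Finally (iii): by (i)--(ii), for each $w\in W^{2,\beta}_\nu(\Omega)$ the minimizer $c(w)$ is characterized by $\phi_w'(c^+)<0$ for $c<c(w)$ and $\phi_w'(c^+)>0$ for $c>c(w)$; and $u\le v$ a.e.\ gives $\{u+c\ge0\}\subseteq\{v+c\ge0\}$, hence $\phi_u'(c^+)\le\phi_v'(c^+)$ for every $c$. Were $c(v)>c(u)$, any $c\in(c(u),c(v))$ would give $0<\phi_u'(c^+)\le\phi_v'(c^+)<0$, which is absurd; therefore $c(u)\ge c(v)$. \hfill$\Box$
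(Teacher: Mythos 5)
Your proof is correct and follows what is essentially the same route as the paper's (which simply cites \cite[Lemma 5.1]{PW} with $W^{1,2}(\Omega)$ replaced by $W^{2,\beta}_\nu(\Omega)$): reduce everything to the convex, coercive one-dimensional function $c\mapsto\int_\Omega|u+c|$, observe that its minimizers are precisely the constants taking $u$ into $\mathcal{M}_q$, and use a Sobolev truncation argument to upgrade the convex inequality to a strict one. You correctly identify and handle the crux, namely that strictness in (i) is exactly where $W^{2,\beta}_\nu(\Omega)\hookrightarrow W^{1,1}(\Omega)$ together with connectedness of $\Omega$ is needed to rule out a flat segment in $\phi_u$ (just as $W^{1,2}\hookrightarrow W^{1,1}$ does in \cite{PW}).
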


Recall the definition of  $\Lambda_{0, q}$ given in \eqref{Lambda0q}. We first show the following
\begin{lemma}\label{lem: lambda 0 att}
    Let $(p,q)$ satisfy \eqref{p=0}. Then, $\Lambda_{0, q}$ is attained. 
\end{lemma}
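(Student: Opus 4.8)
The goal is to show that $\Lambda_{0,q}$ is attained by a minimizing sequence argument. First I would take a minimizing sequence $u_n \in \mathcal{M}_q$ with $\|u_n\|_1 = 1$ and $\|\Delta u_n\|_\beta \to \Lambda_{0,q}$, where $\beta = (q+1)/q$. I claim we may assume each $u_n$ has zero average: by Lemma~\ref{lem:properties M}(i), replacing $u_n$ by $u_n + c$ for any $c \ne 0$ strictly increases $\int_\Omega |u_n|$, so in fact the constraint $\|u_n\|_1 = 1$ together with membership in $\mathcal{M}_q$ is \emph{not} directly giving mean-zero, but one can instead work with $w_n := u_n - \frac{1}{|\Omega|}\int_\Omega u_n$; then $w_n \in W^{2,\beta}_\nu(\Omega) \cap X^\beta$, $\Delta w_n = \Delta u_n$, and by Lemma~\ref{lem:properties M}(ii) there is a unique $c(w_n)$ with $w_n + c(w_n) \in \mathcal{M}_q$. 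The point of Lemma~\ref{lem:properties M}(i) is that $\int_\Omega |w_n + c(w_n)| \le \int_\Omega |w_n|$ would be needed for a lower bound; more precisely I would normalize differently, setting $\tilde w_n := w_n / \|\Delta w_n\|_\beta$ so that $\|\Delta \tilde w_n\|_\beta = 1$, and then use Corollary~\ref{Coro:equivalentnorms} and the compact embedding $W^{2,\beta}(\Omega) \hookrightarrow L^1(\Omega)$ to extract a subsequence converging weakly in $W^{2,\beta}$ and strongly in $L^1$ to some limit.

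Concretely, here is the cleaner route. By Corollary~\ref{Coro:equivalentnorms}, $u \mapsto \|u\|_1 + \|\Delta u\|_\beta$ is an equivalent norm on $W^{2,\beta}(\Omega)$; since the minimizing sequence $u_n$ satisfies $\|u_n\|_1 = 1$ and $\|\Delta u_n\|_\beta$ is bounded, $(u_n)$ is bounded in $W^{2,\beta}(\Omega)$. Up to a subsequence, $u_n \weak u$ weakly in $W^{2,\beta}(\Omega)$ and, by the compact embedding $W^{2,\beta}(\Omega) \hookrightarrow \hookrightarrow L^1(\Omega)$ (which holds for all $\beta > 1$ when $N \ge 1$, and in particular covers the range allowed here), $u_n \to u$ strongly in $L^1(\Omega)$. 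Strong $L^1$-convergence gives $\|u\|_1 = \lim \|u_n\|_1 = 1$, so $u \ne 0$. Weak lower semicontinuity of $v \mapsto \|\Delta v\|_\beta$ (it is a norm composed with the bounded linear operator $\Delta$) yields $\|\Delta u\|_\beta \le \liminf \|\Delta u_n\|_\beta = \Lambda_{0,q}$. It remains to check that $u \in \mathcal{M}_q$.

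The membership $u \in \mathcal{M}_q$ is the step I expect to be the main obstacle, because the constraint defining $\mathcal{M}_q$ involves the measures of the sets $\{u > 0\}$, $\{u < 0\}$, $\{u = 0\}$, and these are \emph{not} continuous under $L^1$-convergence in general (level sets can jump). The standard way around this, following the corresponding argument in \cite{PW}, is to pass to a further subsequence converging a.e.\ in $\Omega$ and then use Fatou-type / Portmanteau reasoning: by a.e.\ convergence, $\liminf_n \mathds{1}_{\{u_n > 0\}} \ge \mathds{1}_{\{u > 0\}}$ fails pointwise in general, so instead one argues via the functional $u \mapsto |\{u > 0\}| - |\{u < 0\}| = \frac{1}{t}\int_\Omega |u|^{t-1}u$ in the limit $t \to 0^+$, or more directly one observes $\big||\{u>0\}| - |\{u<0\}|\big| \le \liminf_n \big||\{u_n>0\}| - |\{u_n<0\}|\big|$ is false but $|\{u=0\}| \le \liminf_n|\{u_n = 0\}|$ can also fail. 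The correct tool is: for a.e.-convergent sequences, $\{|u| > 0\}$ can only lose mass in the limit, i.e. $|\{u \ne 0\}| \le \liminf_n |\{u_n \ne 0\}|$ is again delicate. Following \cite{PW}, the resolution is to note that $\mathcal{M}_q$ can be rewritten as $\{u : \big|\int_\Omega \sign(u)\big| \le |\{u = 0\}|\}$, equivalently $\min\{|\{u>0\}|, |\{u<0\}|\} \ge \frac{1}{2}(|\{u > 0\}| + |\{u<0\}| - |\{u=0\}|)$; one then uses that, along an a.e.-convergent subsequence, $\mathds{1}_{\{u_n > 0\}} \to \mathds{1}_{\{u > 0\}}$ a.e.\ on the set $\{u \ne 0\}$ (since where $u(x) > 0$, eventually $u_n(x) > 0$), so by dominated convergence $|\{u_n > 0\} \cap \{u \ne 0\}| \to |\{u > 0\}|$ and similarly for $\{u < 0\}$, while the "lost" mass $|\{u_n \ne 0\} \cap \{u = 0\}|$ is nonnegative; combining these inequalities shows the defining inequality of $\mathcal{M}_q$ passes to the limit. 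This is precisely the argument in \cite[Lemma 5.1 and its surroundings]{PW}, adapted by replacing $H^1(\Omega)$ with $W^{2,\beta}_\nu(\Omega)$, and it completes the proof that $u \in \mathcal{M}_q$ and hence that $u$ attains $\Lambda_{0,q}$.
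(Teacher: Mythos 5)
Your scaffolding --- normalizing $\|u_n\|_1=1$, boundedness via Corollary~\ref{Coro:equivalentnorms}, weak extraction in $W^{2,\beta}(\Omega)$ with strong $L^1$ convergence, weak lower semicontinuity of $\|\Delta\cdot\|_\beta$ --- matches the paper's proof, and your first paragraph is a dead end that should be deleted (no mean-zero renormalization is needed). The genuine divergence is the step $u\in\mathcal{M}_q$: you propose a measure-theoretic route via a.e.\ convergence of the indicators on $\{u\neq 0\}$. This route does work, but you leave it as a sketch and the bookkeeping needs to be done: set $a_n:=|\{u_n>0\}\cap\{u=0\}|$, $b_n:=|\{u_n<0\}\cap\{u=0\}|$, $c_n:=|\{u_n=0\}\cap\{u=0\}|$ (so $a_n+b_n+c_n=|\{u=0\}|$), note $|\{u_n=0\}\cap\{u\neq 0\}|\to 0$, and pass the constraint to a subsequential limit to get $\big|\,|\{u>0\}|-|\{u<0\}|+(a-b)\,\big|\leq c$; then $a,b\geq 0$ force $\big|\,|\{u>0\}|-|\{u<0\}|\,\big|\leq a+b+c=|\{u=0\}|$. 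The paper avoids all of this by reformulating the constraint via Lemma~\ref{lem:properties M}: $u_n\in\mathcal{M}_q$ gives $\int_\Omega|u_n+c|\geq\int_\Omega|u_n|$ for every $c\in\R$ (part (i)), an inequality that passes trivially to the limit under $L^1$ convergence; then taking the unique $\tilde c$ with $u+\tilde c\in\mathcal{M}_q$ (part (ii)), if $\tilde c\neq 0$ one applies part (i) to $u+\tilde c$ with shift $-\tilde c$ and gets $\int_\Omega|u|>\int_\Omega|u+\tilde c|\geq\int_\Omega|u|$, a contradiction. The paper's reformulation converts the nonconvex level-set constraint into an $L^1$-stable variational one --- precisely the issue your ``level sets jump under $L^1$-convergence'' worry is circling --- and is considerably shorter.

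You also never verify that the weak limit $u$ satisfies $\partial_\nu u=0$, i.e.\ that $u\in W^{2,\beta}_\nu(\Omega)$; without this, $u$ is not admissible for $\Lambda_{0,q}$, since $\mathcal{M}_q$ is by definition a subset of $W^{2,\beta}_\nu(\Omega)$. The paper addresses this via compactness of the trace operator; alternatively one can note that $W^{2,\beta}_\nu(\Omega)$ is a strongly closed linear subspace of $W^{2,\beta}(\Omega)$, hence weakly closed. Either way, the step needs to be stated.
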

\begin{proof}
We adapt the proof of \cite[Lemma 5.3]{PW} to our context. Let $u_n \in \mathcal{M}_q$ be a minimizing sequence for $\Lambda_{0, q}$, normalized in such a way that $\norm{u_n}_1=1$ (and so, in particular, $\norm{\Delta u_n}_{\beta}$ is uniformly bounded). Then, by Corollary~\ref{Coro:equivalentnorms}, $u_n$ is bounded in $W^{2, \beta}_\nu(\Omega)$. Up to a subsequence, $u_n \rightharpoonup u$ in $W^{2, \beta}(\Omega)$ and $u_n \to u$ in $L^1(\Omega)$. We check that $u\in \mathcal{M}_q$.

By compactness of the trace operator 
\[ \gamma: W^{2, \beta}(\Omega) \to L^t(\partial \Omega), \quad \frac{1}t > \frac{Nq - 2(q+1)}{(q+1)(N-1)}, \]
see \cite[Chapter 2.6.2]{Necas}, we conclude that $\partial_\nu u=0$ on $\partial \Omega$ in the sense of traces, and so $u \in W_\nu^{2, \beta}(\Omega)$. 

Next, we apply Lemma~\ref{lem:properties M} (i) to get
\[ \int_\Omega |u + c| = \lim_{n \to \infty} \int_\Omega |u_n +c| \ge \lim_{n \to \infty} \int_\Omega |u_n| = \int_\Omega | u| \quad \text{ for any } c\in \mathbb R. \]
Choose $\tilde c$ such that $ u + \tilde c \in \mathcal{M}_q$ (this is possible due to Lemma~\ref{lem:properties M} (ii)). Thus, assuming $\tilde c \ne 0$, we have
\[ \int_\Omega |u| = \int_\Omega |u + \tilde c- \tilde c| > \int_\Omega | u + \tilde c| \ge \int_\Omega |u|, \]
once again using Lemma~\ref{lem:properties M} (i). The contradiction yields $\tilde c =0$, namely $ u \in \mathcal{M}_q$.

Finally, observe that $\|\Delta u\|_\beta\leq \liminf \|\Delta u_n\|_\beta$, from which we get that $u$ is a minimizer for $\Lambda_{0, q}$. 
\end{proof}

Before showing that minimizers of $\Lambda_{0,q}$ are weak solutions of \eqref{eq lambda_0}, we need a density result. Let
\[
C^2_\nu(\overline \Omega):=\{v\in C^2(\overline \Omega):\ \partial_\nu v=0 \text{ on } \partial \Omega\}.
\]

\begin{lemma}\label{BrezisPonce_densitylemma}
Let $\psi\in C^2(\overline \Omega)$ with $\psi\geq 0$. There exists a sequence $(\zeta_k)\subset   C^2_\nu(\overline \Omega)$ such that $\zeta_k \ge 0$ and
\begin{equation}\label{eq:BP_densitylemma_aux}
\norm{\nabla \zeta_k}_\infty \le C, \quad \zeta_k \to \psi \text{ uniformly in } \Omega, \quad \nabla \zeta_k \to \nabla \psi \text{ a.e. in } \Omega. 
\end{equation}
\end{lemma}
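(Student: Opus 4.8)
The plan is to obtain $\zeta_k$ by composing $\psi$ with a $C^2$ self-map $F_k$ of $\overline\Omega$ that straightens and flattens the normal direction inside a boundary collar of width $\sim 1/k$ and is the identity away from $\partial\Omega$. A naive truncation $\zeta_k=\chi_k\psi$ is hopeless here, since $|\nabla\chi_k|\sim k$ while $\psi$ need not vanish on $\partial\Omega$, so the uniform gradient bound would be lost; the composition trick fixes exactly this. Concretely, using the smoothness of $\partial\Omega$, fix $\delta>0$ so that the distance $d(x):=\dist(x,\partial\Omega)$ and the nearest-point projection $\pi(x)\in\partial\Omega$ are smooth on $\{x\in\overline\Omega:\ d(x)\le\delta\}$ and so that $\Psi(y,s):=y-s\,\nu(y)$ is a smooth diffeomorphism of $\partial\Omega\times[0,\delta)$ onto $\{x\in\overline\Omega:\ d(x)<\delta\}$ with $\Psi(\pi(x),d(x))=x$ and $\Psi(y,0)=y$. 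Fix once and for all a smooth nondecreasing $\phi:[0,\infty)\to[0,\infty)$ with $\phi\equiv0$ on $[0,\tfrac12]$ and $\phi(t)=t$ for $t\ge1$, and set $\phi_k(s):=\tfrac1k\phi(ks)$, so that $\phi_k\equiv0$ on $[0,\tfrac1{2k}]$, $\phi_k(s)=s$ for $s\ge\tfrac1k$, $|\phi_k(s)-s|\le\tfrac1k$ for all $s$, and $\|\phi_k'\|_\infty=\|\phi'\|_\infty$ for every $k$. For $k$ with $\tfrac1k<\delta$ define
\[
F_k(x):=\begin{cases}\Psi\big(\pi(x),\,\phi_k(d(x))\big), & d(x)<\delta,\\[3pt] x,& d(x)\ge\delta,\end{cases}
\qquad \zeta_k:=\psi\circ F_k.
\]
Since $\phi_k(s)=s$ for $s\ge\tfrac1k$ and $\tfrac1k<\delta$, the first formula also equals the identity on $\{\tfrac1k\le d<\delta\}$, so $F_k$ is a well-defined $C^2$ (indeed $C^\infty$) self-map of $\overline\Omega$, equal to $\mathrm{id}$ on $\{d\ge\tfrac1k\}$ and to $\pi$ on $\{d\le\tfrac1{2k}\}$; hence $\zeta_k\in C^2(\overline\Omega)$ and $\zeta_k=\psi\circ\pi$ on a one-sided neighborhood of $\partial\Omega$.

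I would then check the listed properties one at a time, each being short. Nonnegativity is immediate since $F_k(\overline\Omega)\subset\overline\Omega$ and $\psi\ge0$. For the boundary condition: on $\{d\le\tfrac1{2k}\}$ we have $\zeta_k=\psi\circ\pi$, and for $x_0\in\partial\Omega$ and small $t<0$ the point $x_0+t\nu(x_0)$ lies in $\Omega$ with $\pi(x_0+t\nu(x_0))=x_0$, so $\zeta_k(x_0+t\nu(x_0))=\psi(x_0)$ is constant in $t$; since $\zeta_k\in C^1(\overline\Omega)$ this forces $\partial_\nu\zeta_k(x_0)=0$, i.e. $\zeta_k\in C^2_\nu(\overline\Omega)$. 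Uniform convergence follows from $|\zeta_k(x)-\psi(x)|\le\|\nabla\psi\|_{L^\infty}\,|F_k(x)-x|$, where $x$ and $F_k(x)$ lie on the same straight normal segment $s\mapsto\Psi(\pi(x),s)\subset\overline\Omega$, so $|F_k(x)-x|=|\phi_k(d(x))-d(x)|\le\tfrac1k$ and hence $\|\zeta_k-\psi\|_{L^\infty}\le\|\nabla\psi\|_{L^\infty}/k\to0$. The uniform gradient bound follows from $|\nabla\zeta_k|\le\|\nabla\psi\|_{L^\infty}\,\|DF_k\|_{L^\infty}$: on $\{d<\delta\}$ the chain rule expresses $DF_k$ through $D\Psi$, $D\pi$ and $\phi_k'(d)\nabla d$, all bounded on the fixed collar $\{d\le\delta\}$ with $|\phi_k'|\le\|\phi'\|_\infty$ independent of $k$, while $DF_k=\mathrm{Id}$ on $\{d\ge\delta\}$; this yields $\|\nabla\zeta_k\|_{L^\infty}\le C$ for a constant $C=C(\Omega,\psi)$. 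Finally, for each $x\in\Omega$ one has $F_k\equiv\mathrm{id}$ on a neighborhood of $x$ as soon as $k>1/d(x)$, so $\nabla\zeta_k(x)=\nabla\psi(x)$ for large $k$; hence $\nabla\zeta_k\to\nabla\psi$ everywhere in $\Omega$, in particular a.e.

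The substantive point — essentially the only place something could go wrong — is arranging the three demands on $F_k$ simultaneously: it must be genuinely $C^2$ across the interface $\{d=\tfrac1k\}$ (ensured by taking $\phi_k=\mathrm{id}$ for $s\ge\tfrac1k$), it must annihilate the normal derivative on $\partial\Omega$ (ensured by $\phi_k\equiv0$ near $s=0$, which collapses $\zeta_k$ to $\psi\circ\pi$ in a one-sided neighborhood of $\partial\Omega$), and it must not inflate gradients (ensured by the self-similar scaling $\phi_k(s)=\tfrac1k\phi(ks)$, which confines all the bending to a collar of width $1/k$ while keeping $\|\phi_k'\|_\infty$ fixed). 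Once this design is in place, the remaining estimates are routine chain-rule and mean-value computations on the fixed collar $\{d\le\delta\}$, using only the smoothness there of $d$, $\pi$, $\Psi$ and the hypothesis $\psi\in C^2(\overline\Omega)$; the assumption $\psi\ge0$ is used solely to pass nonnegativity through the composition.
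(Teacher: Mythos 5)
Your proof is correct, but it uses a genuinely different construction from the paper's. The paper follows Brezis--Ponce and applies an \emph{additive} correction: it sets $\zeta_k := \psi - \frac{1}{k}\Phi(k\eta)$, where $\Phi \in C^\infty_0(\R)$ equals the identity near $0$ and $\eta \in C^2(\overline\Omega)$ vanishes on $\partial\Omega$ with $\partial_\nu\eta = \partial_\nu\psi$ there, so that the correction's normal derivative cancels $\partial_\nu\psi$ exactly; the uniform gradient bound comes from $\|\Phi'\|_\infty$, and nonnegativity is handled first for $\psi>0$ (taking $k$ so large that $\frac1k\|\Phi\|_\infty<\min\psi$) and then by a diagonal argument with $\psi + \varepsilon_n$. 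You instead \emph{precompose} $\psi$ with a self-map $F_k$ of $\overline\Omega$ that collapses a $\tfrac1{2k}$-collar onto the nearest-point projection and is the identity beyond the $\tfrac1k$-level set of $d$; the boundary constraint holds because $\zeta_k$ is constant along normal lines near $\partial\Omega$, the gradient bound comes from a $k$-independent bound on $\|DF_k\|_\infty$, and nonnegativity is automatic since $F_k(\overline\Omega)\subset\overline\Omega$, so you handle $\psi\ge 0$ in a single pass without a positivity-first reduction. Your argument is self-contained (no reference to Brezis--Ponce) at the cost of invoking the tubular-neighborhood/collar structure of a smooth boundary; the paper's is shorter modulo the cited reference and does not need that geometric setup.
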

\begin{proof}
This is a consequence of \cite[eqs. (2.8)--(2.9) and Proposition 2.2]{BrezisPonce}. For completeness, we provide more details here. Consider first the case $\psi>0$ in $\overline \Omega$.
Define $\Phi \in C^\infty_0(\R)$ such that $\Phi(t)=t$ for all $t \in [-1, 1]$.
Take 
\[
\zeta_k = \psi -\frac 1k \Phi(k \eta) \quad \text{ in } \overline \Omega,
\]
where  $\eta \in C^2(\bar{\Omega})$  with  $\eta=0$, $\frac{\partial \eta}{\partial n}=\frac{\partial \psi}{\partial n}$ on  $\partial \Omega$. Then $\zeta_k \in C^2_\nu(\overline \Omega)$ and \eqref{eq:BP_densitylemma_aux} is satisfied, see \cite{BrezisPonce}. 
Choose $k$ large enough such that $ \frac 1 k \norm{\Phi}_\infty < \min_{x \in \overline \Omega} \psi(x)$.
This implies $\zeta_k \ge0$ for any $k$ large enough. 

Finally, for any $\psi \in C^2(\overline \Omega)$ such that $\psi \ge 0$, we take $\psi_n = \psi + \epsilon_n$, where $\epsilon_n >0$, apply the above argument and let $\epsilon_n \to 0^+$ as $n \to +\infty$.
\end{proof}

In what follows we denote
\[ \sign_+(t):= \mathds{1}_{\{t \ge 0\}} - \mathds{1}_{\{t <0\}} \quad \text{ and } \quad \sign_-(t):= \mathds{1}_{\{t > 0\}} - \mathds{1}_{\{t \le 0\}}. \]
Observe that
\begin{equation}\label{eq:ineq_sign}
|a+b|\geq |a|+\sign_-(a)b \qquad \text{ for every } a,b\in \R,
\end{equation}
and $u \in \mathcal{M}_q$ implies
\[ \int_\Omega \sign_-(u) \le 0 \le \int_\Omega \sign_+(u). \]

\begin{lemma}\label{lemma:optimizersLambda_0q_are_wsol}
Let $u \in \mathcal{M}_q$ be an optimizer for $\Lambda_{0, q}$. Then, it is a weak solution to~\eqref{eq lambda_0}. 
\end{lemma}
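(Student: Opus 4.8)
The statement to prove is that an optimizer $u \in \mathcal{M}_q$ for $\Lambda_{0,q}$ is a weak solution of \eqref{eq lambda_0}, i.e.\ that
\[
\int_\Omega |\Delta u|^{\frac 1 q-1}\Delta u\,\Delta\varphi = \Lambda_{0,q}^{\frac{q+1}{q}}\int_\Omega \xi\,\varphi
\]
for all $\varphi\in W^{2,\beta}_\nu(\Omega)$, where $\xi$ is some measurable selection of $\sign(u)$ (equal to $\pm 1$ on $\{u\gtrless 0\}$). The obstruction is that $I_{0,q}$ is not differentiable and the constraint set $\mathcal{M}_q$ is described by an inequality on measures of level sets, so one cannot simply write down Euler--Lagrange equations. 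The plan is to proceed by a penalization/subdifferential-type argument adapted from \cite[Lemma 5.4 and its proof]{PW}, replacing $H^1$ by $W^{2,\beta}_\nu(\Omega)$ and $|\nabla u|^2$ by $|\Delta u|^\beta$ throughout.

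\textbf{Step 1: variations staying in $\mathcal{M}_q$.} First I would fix $\varphi\in C^2_\nu(\overline\Omega)$ (and later pass to general $\varphi\in W^{2,\beta}_\nu$ by density via Lemma~\ref{lemma:regularity}/Corollary~\ref{Coro:equivalentnorms}). For small $t>0$ one cannot guarantee $u+t\varphi\in\mathcal{M}_q$, but by Lemma~\ref{lem:properties M}(ii) there is a unique shift $c_t:=c(u+t\varphi)$ with $u+t\varphi+c_t\in\mathcal{M}_q$, and $t\mapsto c_t$ is continuous with $c_0=0$. Using $u+t\varphi+c_t$ as a competitor in the definition of $\Lambda_{0,q}$ (after normalizing $\|\cdot\|_1$, or working with the quotient form), the minimality of $u$ gives an inequality
\[
\|\Delta u\|_\beta^\beta \,\|u+t\varphi\|_1^\beta \;\le\; \|\Delta(u+t\varphi)\|_\beta^\beta \,\|u\|_1^\beta + o(t),
\]
since $\Delta(u+t\varphi+c_t)=\Delta u+t\Delta\varphi$ and $\|u+t\varphi+c_t\|_1 = \int_\Omega|u+t\varphi+c_t|\ge \int_\Omega |u+t\varphi|$ is not directly what one wants — rather one should use the quotient $\|\Delta w\|_\beta/\|w\|_1$ evaluated at $w=u+t\varphi+c_t$. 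The key subtlety handled in \cite{PW} is that $\int_\Omega|u+t\varphi+c_t|$ and $\int_\Omega|u+t\varphi|$ both have one-sided derivatives at $t=0$ that can be estimated from below using \eqref{eq:ineq_sign}: $\int_\Omega|u+t\varphi+c_t| \ge \int_\Omega|u| + t\int_\Omega\sign_-(u)\varphi + \int_\Omega\sign_-(u)c_t + o(t)$, and a symmetric inequality with $\sign_+$.

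\textbf{Step 2: extract the equation with a sign selection.} Dividing by $t$ and letting $t\to 0^+$, then repeating with $-\varphi$ in place of $\varphi$ (using $\sign_-(u)=-\sign_+(-u)$ type identities), the two one-sided inequalities combine, after using $\beta\|\Delta u\|_\beta^{\beta-1}$-type normalization and $\Lambda_{0,q}^{\frac{q+1}{q}} = \|\Delta u\|_\beta^{\beta-1}/\|u\|_1 \cdot(\text{const})$ coming from the Lagrange relation, to pin down
\[
\int_\Omega |\Delta u|^{\frac 1 q-1}\Delta u\,\Delta\varphi \;=\; \Lambda_{0,q}^{\frac{q+1}{q}}\int_\Omega \sign(u)\,\varphi \;+\; \Lambda_{0,q}^{\frac{q+1}{q}}\Big(\text{boundary terms from }c_t\text{'s }\mp\text{ matching }\Big),
\]
where the contributions coming from the shifts $c_t$ cancel because, on the set $\{u=0\}$, $\sign_+$ and $\sign_-$ differ and the constraint $\int_\Omega\sign_-(u)\le 0\le\int_\Omega\sign_+(u)$ forces the relevant terms to have a sign that makes the two-sided comparison an equality. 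This is exactly the mechanism (``the contribution on $\{u=0\}$ is harmless'') that makes the $\sign$-nonlinearity appear; Lemma~\ref{BrezisPonce_densitylemma} and the sequence $(\zeta_k)$ are needed here to justify the manipulations for test functions $\varphi$ that do not a priori have bounded gradient, i.e.\ to reduce to nice $\varphi$ first.

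\textbf{Main obstacle.} The delicate point — and where I would spend the most care — is controlling the shift $c_t$: showing $c_t = O(t)$ (or at least $c_t/t$ bounded along subsequences) and identifying the limit of $c_t/t$, then checking that the terms $\int_\Omega\sign_\pm(u)\,(c_t/t)$ do not spoil the equation. In \cite{PW} this is where unique-continuation-free arguments and the structure of $\mathcal{M}_q$ (the inequality $\big||\{u>0\}|-|\{u<0\}|\big|\le|\{u=0\}|$) are used crucially: if $|\{u=0\}|=0$ then $c_t\equiv 0$ for small $t$ and the argument is clean; if $|\{u=0\}|>0$ the inequality gives enough room that the first-order effect of $c_t$ is absorbed. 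I expect the proof to mirror \cite[proof of Lemma 5.4]{PW} almost verbatim, with the only genuinely new ingredient being that $\Delta$ replaces $\nabla$, so the ``energy'' term $\int_\Omega|\Delta(u+t\varphi+c_t)|^\beta = \int_\Omega|\Delta u + t\Delta\varphi|^\beta$ is smooth in $t$ (since $\Delta(u+c_t)=\Delta u$), and hence its derivative at $t=0$ is simply $\beta\int_\Omega|\Delta u|^{\beta-2}\Delta u\,\Delta\varphi = \beta\int_\Omega|\Delta u|^{\frac1q-1}\Delta u\,\Delta\varphi$ — which is where the quasilinear operator $\Delta(|\Delta u|^{\frac1q-1}\Delta u)$ comes from after ``integrating by parts'' in the weak sense.
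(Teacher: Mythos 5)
Your general strategy (perturbing $u$ by $t\varphi$ plus the shift $c(\cdot)$ from Lemma~\ref{lem:properties M}, exploiting one‑sided derivatives of $t\mapsto\int|u+t\varphi+c_t|$ via \eqref{eq:ineq_sign}, and appealing to the density Lemma~\ref{BrezisPonce_densitylemma}) is in the right spirit, but there are two genuine gaps.

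First, on the variational step: the paper does not use arbitrary $\varphi$ and does not need any quantitative control $c_t=O(t)$ or identification of $\lim c_t/t$. It restricts to \emph{nonnegative} test functions $\psi\ge 0$, for which Lemma~\ref{lem:properties M}(iii) gives $c(u+t\psi)\le c(u)=0$ for $t\ge 0$. Combined with $\int_\Omega\sign_-(u)\le 0$ (a consequence of $u\in\mathcal{M}_q$) and \eqref{eq:ineq_sign}, this yields $\|u+t\psi+c(u+t\psi)\|_1\ge 1+t\int\sign_-(u)\psi$ with no remainder coming from $c_t$ at all. A Taylor expansion of the Rayleigh quotient then produces, by contradiction, the sandwich
\[
\int_\Omega\sign_-(u)\psi\;\le\;\Lambda_{0,q}^{-\beta}\int_\Omega|\Delta u|^{\frac 1q-1}\Delta u\,\Delta\psi\;\le\;\int_\Omega\sign_+(u)\psi,\qquad \psi\ge 0.
\]
Your plan of working with signed $\varphi$ and hoping the $c_t$ contributions ``cancel'' does not reflect what happens: for signed $\varphi$ the sign of $c_t$ is uncontrolled, and the cancellation you invoke is not the actual mechanism.

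Second, and more seriously, your proposal essentially stops after the sandwich, treating the identification of the right-hand side as automatic. It is not. The sandwich with nonnegative $\psi$ only yields a density $\eta\in L^\infty(\Omega)$ (obtained after upgrading $v:=-\Lambda_{0,q}^{-\beta}|\Delta u|^{\frac1q-1}\Delta u$ to $W^{1,t}(\Omega)$ via Brezis--Ponce, then using the $L^1$-continuity of $\psi\mapsto\int\nabla v\nabla\psi$) with $\sign_-(u)\le\eta\le\sign_+(u)$ a.e. This determines $\eta$ only on $\{u\ne 0\}$; on $\{u=0\}$ it gives merely $-1\le\eta\le 1$, whereas the claim requires $\eta=\sign(u)=0$ there. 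The paper closes this by an interior regularity bootstrap: $-\Delta v=\eta\in L^\infty$ distributionally forces $v\in W^{2,t}_{loc}$, hence $u\in C^{2,\gamma'}_{loc}\subset W^{2,1}_{loc}$, and then $\nabla u=0$ and $\Delta u=0$ a.e.\ on $\{u=0\}$ by \cite[Lemma 7.6]{GT}, so $v=0$ on $\{u=0\}$, and repeating the argument gives $\Delta v=0$ there, i.e.\ $\eta=0$. This is precisely the step the paper flags as delicate (since $W^{2,\beta}$ is not closed under taking positive/negative parts), and it is entirely absent from your proposal. Also note that Lemma~\ref{BrezisPonce_densitylemma} is used to \emph{extend} the sandwich from $C^2_\nu$ to all $C^2$ (and then $W^{1,\beta}$) test functions — not, as you suggest, to ``reduce to nice $\varphi$ first.''
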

\begin{proof}
We draw inspiration to this proof from the one of \cite[Lemma 5.3]{PW}. In our context of higher order operator is, however, more delicate, since the space $W^{2, \beta}(\Omega)$ is not closed for the operation of taking the positive and negative parts of functions. Instead, we have to rely on the density results from Lemma~\ref{BrezisPonce_densitylemma} and on a regularity result from \cite{BrezisPonce}. From now on, without loss of generality, we take $u \in \mathcal{M}_q$ an optimizer for $\Lambda_{0, q}$ with $\|u\|_1=1$. 

\smallbreak

\noindent Step 1. We claim that 
\begin{equation}\label{claim} \int_\Omega \sign_-(u) \psi \le \Lambda_{0, q}^{- \beta}  \int_\Omega |\Delta u|^{\frac 1 q -1} \Delta u \Delta \psi \le \int_\Omega \sign_+(u) \psi \quad \text{ for all } \psi \in W_\nu^{2, \beta}(\Omega), \psi \ge 0. \end{equation}
Indeed, assume by contradiction that there exists $\psi \in W_\nu^{2, \beta}(\Omega)$ with $\psi \ge 0$ such that 
\begin{equation}\label{contrad}
\int_\Omega \sign_-(u) \psi >  \Lambda_{0, q}^{-\beta}\int_\Omega |\Delta u|^{\frac 1 q -1} \Delta u \Delta \psi. 
\end{equation}

Then, for every $c \le 0$ and $t \ge 0$, using \eqref{eq:ineq_sign} with $a=u$ and $b=c+t\psi$, recalling that $\int_\Omega \sign_-(u) \le 0$,  we have that
\[ \norm{u+c+t\psi}_1 \ge \norm{u}_1  + t \int_\Omega \sign_-(u) \psi=1+ t \int_\Omega \sign_-(u) \psi. \]
Now, $c(u+t\psi) \le c(u)=0$ for any $t \ge 0$, by Lemma~\ref{lem:properties M} (iii). 
Moreover, by a Taylor expansion, 
\begin{align*}
\frac{\left(\int_\Omega |\Delta (u+t\psi)|^{\beta}\right)^{\frac{1}{\beta}}}{\int_\Omega |u+t\psi+c(u+t\psi)|} & \le \frac{\left(\int_\Omega |\Delta (u+t\psi)|^{\beta}\right)^\frac{1}{\beta}}{1 + t \int_\Omega \sign_-(u)\psi } \\
=&\Lambda_{0,q}+t\left( \|\Delta u\|_\beta^{-1/q} \int_\Omega |\Delta u|^{\frac{1}{q} - 1} \Delta u \Delta \psi  -  \|\Delta u\|_\beta  \int_\Omega \sign_-(u)\psi\right)+o(t)\\
=&\Lambda_{0,q}+t\|\Delta u\|_\beta 
 \left( \Lambda_{0,q}^{-\beta}   \int_\Omega |\Delta u|^{\frac{1}{q} - 1} \Delta u \Delta \psi  -    \int_\Omega \sign_-(u)\psi \right) +o(t)
\end{align*}
The coefficient of the first order term is negative by \eqref{contrad}, which gives
\[ 
\frac{\left(\int_\Omega |\Delta (u+t\psi)|^{\beta}\right)^{\frac{1}{\beta}}}{\int_\Omega |u+t\psi+c(u+t\psi)|} < \Lambda_{0, q} 
\]
for $t$ small enough, a contradiction. The other inequality in \eqref{claim} follows analogously.

\smallbreak 

Step 2. We consider ~\eqref{claim} for test functions  $\psi \in C^2_\nu(\overline \Omega), \psi \ge 0$; in particular,
for 
\begin{align*}
v:= -\Lambda_{0, q}^{- \beta} |\Delta u|^{\frac 1 q-1}\Delta u \in L^{q+1}(\Omega)\subset L^1(\Omega)
\qquad \text{we have}\qquad 
\sup_{\substack{\psi \in C^2_\nu(\overline \Omega)\\ \norm{\psi}_\infty \le 1}} \abs{\int_\Omega v \Delta \psi} < \infty.
\end{align*}
Thus, we can apply Proposition 2.1 in \cite{BrezisPonce} and get $v \in W^{1, t}(\Omega)$ for every $1 \le t < \frac{N}{N-1}$. 
Therefore,
\[ \int_\Omega \sign_-(u) \psi \le  \int_\Omega \nabla v  \nabla \psi \le \int_\Omega \sign_+(u) \psi \quad \text{ for all } \psi \in C^2_\nu(\overline \Omega), \psi \ge 0. \]

By the density result in Lemma~\ref{BrezisPonce_densitylemma}, these inequalities are true for every $\psi\in C^2(\overline \Omega)$, $\psi\geq 0$. Since the extension operator of $W^{1,\beta}(\Omega)$ to $W^{1,\beta}(\R^N)$ and the operation of convolution keep the sign, therefore
\begin{equation}\label{bounds}
\int_\Omega \sign_-(u) \psi \le  \int_\Omega \nabla v  \nabla \psi \le \int_\Omega \sign_+(u) \psi \quad \text{ for all } \psi \in W^{1, \beta}(\Omega), \psi \ge 0,  \end{equation}
again by a density argument. This implies
\[ \abs{\int_\Omega \nabla v  \nabla \psi}  \le \int_\Omega \sign_+(u) \psi - \int_\Omega \sign_-(u) \psi, 
 \quad \text{ for all } \psi \in W^{1, \beta}(\Omega), \psi \ge 0. \]
We now exploit these inequalities to get
\begin{align} \abs{\int_\Omega \nabla v \nabla \psi }\le \abs{ \int_\Omega \nabla v  \nabla \psi^+} +\abs{\int_\Omega \nabla v  \nabla \psi^-} &\le  \int_\Omega \sign_+(u) |\psi| - \int_\Omega \sign_-(u) |\psi|  \noindent \\
& \le 2 \int_\Omega |\psi|, \quad \text{ for all } \psi \in W^{1, \beta}(\Omega). \label{eq:L^1continuity}
\end{align}

Let $f(v):C^\infty_c(\Omega) \to \R$ be defined as
\[ f(v)\psi=\int_\Omega \nabla v \nabla \psi, \]
which is linear and continuous for the $L^1(\Omega)$-norm, by \eqref{eq:L^1continuity}. Then $f(v)$ can be uniquely extended to an element in $(L^1(\Omega))'$, and there exists $\eta\in L^\infty(\Omega)$ such that
\[ 
f(v)\psi= \int_\Omega \eta \psi \quad \text{ for all }\psi \in L^1(\Omega). 
\]
Combining this with~\eqref{bounds} gives
\[
\int_\Omega \sign_-(u) \psi \le  \int_\Omega \eta\psi \le \int_\Omega \sign_+(u) \psi\qquad \text{for all $\psi\in C^\infty_c(\Omega)$, $\psi\ge 0$,}
\]
hence
\[ \sign_-(u) \le \eta \le \text{sign}_+(u) \quad \text{a.e. in } \Omega. \] 
From this, $\eta=\sign(u)$ whenever $u\neq 0$.  We now check that actually $\eta=\sign(u)$ a.e. in $\Omega$.

The function $v$ is a distributional solution of $-\Delta v=\eta$, and $\eta\in L^\infty(\Omega)\subset L^2(\Omega)$; then, actually $v\in H^2_{loc}(\Omega)$ (see \cite[Lemma (6.33)]{FollandPDE}) and $v$ is a weak solution, so (now by classical regularity theory for weak solutions)  $v \in W^{2, t}_{loc}(\Omega)$ for any $t\geq 1$, hence $v\in C_{loc}^{1,\gamma}(\Omega)$. Again by regularity theory, since $-\Delta u=\Lambda_{0, q}^{\beta}|v|^{q-1}v$, one has $u \in C_{loc}^{2,\gamma'}(\Omega)\subset W^{2,1}_{loc}(\Omega)$. Therefore, $\nabla u=0$  a.e. on $\{ u=0\}$ (by \cite[Lemma 7.6]{GT}) and also $\Delta u=\operatorname{div}(\nabla u)=0$ a.e. in $\{u=0\}$. 
In conclusion, $v=0$ a.e. in $\{u=0\}$ and, repeating the previous reasoning, also $\Delta v=0$ in this set.   In conclusion, $\eta=\sign(u)$ a.e. in $\Omega$, and 
\[ \int_\Omega \nabla v \nabla \psi = \int_\Omega \sign(u) \psi, \quad \text{ for all } \psi \in C^\infty(\overline \Omega). \]
By density and integration by parts:
\[
\Lambda^{-\beta}_{0,q}\int_\Omega  |\Delta u|^{\frac 1 q -1} \Delta u \Delta \psi- \Lambda_{0, q}^{- \beta}\int_{\partial \Omega } |\Delta u|^{\frac 1 q-1}\Delta u \psi=\int_\Omega \sign(u)\psi \qquad \text{ for every } \psi\in W^{2,\beta}(\Omega).
\]
Taking now $\psi\in W^{2,\beta}_\nu(\Omega)$, we conclude that $u$ is a weak solution to~\eqref{eq lambda}. 
\end{proof}

\subsection{Proof of Proposition~\ref{prop:existence_Lambda}}\label{proof prop}
In order to prove the regularity statement in Proposition~\ref{prop:existence_Lambda}, it is useful to read \eqref{eq lambda2} as a system, and apply Lemma~\ref{lemma:regularity}. 
Let us start by establishing the following 

 \begin{prop}\label{equivalence}
Let us assume $p \ge 0$ and $q >0$. 
Let $u \in W^{2, \beta}_\nu(\Omega)$ be a weak solution for \eqref{eq lambda2}, then $(u, v):=(u, -|\Delta u|^{\frac 1 q-1} \Delta u)$ satisfies \eqref{system scaling}. In particular, $u \in C^{2, \zeta}(\overline \Omega)$, with $\zeta \in (0, \min\{q, 1\})$; whereas $v \in C^{2, \eta}(\overline \Omega)$, for $\eta\in (0,\min\{p, 1\})$ if $p \ne 0$, and $v \in C^{1, \zeta}(\overline \Omega)$, for any $\zeta\in (0,1)$ if $p = 0$.
\end{prop}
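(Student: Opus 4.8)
\noindent The plan is to first verify the system by a pointwise identity and then to run a two-sided regularity bootstrap, first in Lebesgue and then in H\"older spaces. \emph{Step 1 (the pair solves \eqref{system scaling}).} Set $v:=-|\Delta u|^{\frac1q-1}\Delta u$. Writing $\Delta u=|\Delta u|\,\sign(\Delta u)$ pointwise gives $v=-|\Delta u|^{1/q}\sign(\Delta u)$, so $|v|=|\Delta u|^{1/q}$ and, since $\Delta u\in L^{\beta}(\Omega)$ with $\beta=\frac{q+1}{q}$, we get $v\in L^{q+1}(\Omega)$ and
\[
|v|^{q-1}v=|\Delta u|^{\frac{q-1}{q}}\bigl(-|\Delta u|^{1/q}\sign(\Delta u)\bigr)=-|\Delta u|\,\sign(\Delta u)=-\Delta u\qquad\text{a.e. in }\Omega,
\]
which is the first equation in \eqref{system scaling}. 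After the substitution $v=-|\Delta u|^{\frac1q-1}\Delta u$, the weak formulation of \eqref{eq lambda2} becomes exactly the statement that $v$ is a weak Neumann solution of $-\Delta v=\Lambda_{p,q}^{\frac{q+1}{q}}f_p(u)$, i.e.\ the second equation of \eqref{system scaling} together with $\partial_\nu v=0$; the condition $\partial_\nu u=0$ is part of the hypothesis. Testing \eqref{eq lambda2} with $\varphi\equiv1$ yields $\int_\Omega f_p(u)=0$, and $\int_\Omega|v|^{q-1}v=-\int_\Omega\Delta u=0$ since $\partial_\nu u=0$; hence $u$ and $v$ coincide, up to additive constants, with the zero-average solutions furnished by Lemma~\ref{lemma:regularity}, and so inherit its estimates.

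\emph{Step 2 ($L^s$-bootstrap to boundedness).} I would feed the two equations alternately through the $W^{2,s}$ part of Lemma~\ref{lemma:regularity}. If $u\in L^{r}(\Omega)$, then for $p>0$ we have $f_p(u)\in L^{r/p}(\Omega)$ (for $p=0$, $f_0(u)=\sign(u)\in L^\infty(\Omega)$), so $v\in W^{2,r/p}(\Omega)\hookrightarrow L^{s}(\Omega)$ by Sobolev embedding; then $|v|^{q-1}v\in L^{s/q}(\Omega)$, so $u\in W^{2,s/q}(\Omega)\hookrightarrow L^{r'}(\Omega)$, and tracking the exponents gives $\tfrac1{r'}=\tfrac{pq}{r}-\tfrac{2(q+1)}{N}$ as long as the intermediate inclusions remain subcritical (replacing a borderline Sobolev exponent by an arbitrary finite one, which also handles small $N$ and the case $\beta>N/2$). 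Starting from $u\in W^{2,\beta}(\Omega)\hookrightarrow L^{r_0}(\Omega)$, the hypothesis $(p,q)\in\mathcal A$ — that is, $\tfrac1{p+1}+\tfrac1{q+1}>\tfrac{N-2}{N}$, or the admissible critical case — is exactly what forces $1/r$ to strictly decrease and to become nonpositive after finitely many steps, so that $W^{2,s}(\Omega)\hookrightarrow L^\infty(\Omega)$ gives $u,v\in L^\infty(\Omega)$. (For $p=0$ no growth restriction is needed, since $\sign(u)\in L^\infty(\Omega)$ already yields $v\in W^{2,s}(\Omega)$ for every $s<\infty$.) I expect this step — bookkeeping the alternating Sobolev exponents and confirming that $\mathcal A$ is precisely the condition closing the iteration — to be the main technical obstacle.

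\emph{Step 3 (Schauder bootstrap).} With $u,v\in L^\infty(\Omega)$, the term $f_p(u)$ is bounded for every $p\ge0$, so Lemma~\ref{lemma:regularity} gives $v\in W^{2,s}(\Omega)$ for all $s<\infty$, hence $v\in C^{1,\alpha}(\overline\Omega)$ for all $\alpha\in(0,1)$ (in particular $v$ is Lipschitz on $\overline\Omega$); likewise $u\in C^{1,\alpha}(\overline\Omega)$. Since $t\mapsto|t|^{q-1}t$ is locally $\min\{q,1\}$-H\"older continuous and $v$ is Lipschitz, we get $|v|^{q-1}v\in C^{0,\zeta}(\overline\Omega)$ for every $\zeta<\min\{q,1\}$, and the Schauder part of Lemma~\ref{lemma:regularity} yields $u\in C^{2,\zeta}(\overline\Omega)$. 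For $v$: if $p>0$, the same argument with $t\mapsto|t|^{p-1}t$ and $u$ Lipschitz gives $f_p(u)\in C^{0,\eta}(\overline\Omega)$ for $\eta<\min\{p,1\}$, hence $v\in C^{2,\eta}(\overline\Omega)$; if $p=0$, $\sign(u)$ is merely bounded, so one cannot go beyond $v\in W^{2,s}(\Omega)$ for all $s<\infty$, i.e.\ $v\in C^{1,\zeta}(\overline\Omega)$ for every $\zeta<1$ — precisely the asserted regularity. The cap at $\min\{p,1\}$ and $\min\{q,1\}$ is structural: near the zeros of $u$ and $v$, which are nonempty because these functions change sign, the Nemytskii maps $t\mapsto|t|^{p-1}t$ and $t\mapsto|t|^{q-1}t$ are no more regular than $\min\{p,1\}$- resp.\ $\min\{q,1\}$-H\"older.
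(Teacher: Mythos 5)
The overall structure (show $(u,v)$ solves the system, then bootstrap) matches the paper, but both of your main steps have real gaps.

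\smallskip
\textbf{Step 1.} Your assertion that the weak formulation of \eqref{eq lambda2}, after substituting $v=-|\Delta u|^{\frac1q-1}\Delta u$, ``becomes exactly the statement that $v$ is a weak Neumann solution of $-\Delta v=\Lambda_{p,q}^{\frac{q+1}{q}}f_p(u)$'' overstates what is immediate. What you actually have is the \emph{transposition} identity $-\int_\Omega v\,\Delta\varphi=\Lambda_{p,q}^{(q+1)/q}\int_\Omega f_p(u)\varphi$ for $\varphi\in W^{2,\beta}_\nu(\Omega)$, with $v$ a priori only in $L^{q+1}(\Omega)$. This is a very-weak (dual) formulation; to conclude that $v$ belongs to $W^{2,\alpha}(\Omega)$ and satisfies the Neumann condition you still need an argument. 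The paper supplies it: it builds $w:=K_q\bigl(\Lambda_{p,q}^{(q+1)/q}f_p(u)\bigr)$ and an auxiliary $z$, computes $\int_\Omega|\Delta z|^{1/q-1}\Delta z\,\Delta\varphi=\int_\Omega|\Delta u|^{1/q-1}\Delta u\,\Delta\varphi$, and then exploits the surjectivity from Lemma~\ref{lemma:regularity} (the map $\varphi\mapsto\Delta\varphi$ from $W^{2,\beta}_\nu(\Omega)$ onto $X^\beta$, whose annihilator in $L^{q+1}(\Omega)$ is the constants) to identify $\Delta z=\Delta u$, and finally the normalization $\int_\Omega|u|^{p-1}u=\int_\Omega|z|^{p-1}z=0$ (or $u,z\in\mathcal M_q$ when $p=0$) to conclude $u=z$, hence $v=w$. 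Your sentence ``hence $u$ and $v$ coincide, up to additive constants, with the zero-average solutions furnished by Lemma~\ref{lemma:regularity}'' points at the right conclusion, but the compatibility conditions you list (vanishing integrals) are not enough to justify it; the duality step is the substance of the proposition and cannot be elided with a ``hence''.

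\smallskip
\textbf{Step 2.} Your hand-rolled $L^s$ bootstrap is a different route from the paper, which simply cites \cite[Proposition~1.3]{PST} and \cite[Theorem~1.1]{ST2} once $(u,v)$ is identified as a strong solution of \eqref{system scaling}. For \emph{subcritical} $(p,q)$ your exponent bookkeeping does close the loop, so that part is a legitimate (if more laborious) alternative. But the proposition also covers the critical cases in \eqref{CH2}, and there the iteration $1/r'=\frac{pq}{r}-\frac{2(q+1)}{N}$ has a fixed point precisely at the critical exponent: the bootstrap stalls and one needs an additional ingredient (a truncation/smallness argument or Moser-type iteration, as in \cite{PST}). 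You flag this yourself as ``the main technical obstacle,'' and it is indeed a genuine gap: your Step~2 as written does not prove the boundedness of $u$ and $v$ in the critical regime, whereas the paper avoids the issue entirely by invoking the already-established regularity of the system.

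\smallskip
\textbf{Step 3} (Schauder) is correct and matches what the cited references deliver once boundedness is in hand; the caps at $\min\{p,1\}$ and $\min\{q,1\}$, and the $C^{1,\zeta}$ ceiling for $v$ when $p=0$, are the right ones.
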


\begin{proof}
{Case $p>0$}.    We adapt the proof in \cite{dosSantos}, which deals with Navier boundary conditions. We aim to check that $(u,v)$ are a strong solution. Let us preliminary notice that $\int_\Omega |u|^{p-1}u=0$, thus $|u|^{p-1}u\in X^\alpha$.
    By Lemma~\ref{lemma:regularity}, there exists a unique $w\in  W^{2, \alpha}_\nu(\Omega) \hookrightarrow L^{q+1}(\Omega)$  such that 
    \[ -\Delta w = \Lambda_{p, q}^{\frac{q+1}{q}} |u|^{p-1}u \text{ in } \Omega, \quad \partial_\nu w=0 \text{ on } \partial \Omega, \quad \int_\Omega |w|^{q-1}w=0, \]
    namely $w:=K_q(\Lambda_{p, q}^{\frac{q+1}{q}} |u|^{p-1}u)$.  Now,  we can deduce that there exists a unique strong solution $z\in W_\nu^{2,\beta}$ such that 
    \[ -\Delta z = |w|^{q-1} w \text{ in } \Omega, \quad \partial_\nu z=0 \text{ on } \partial \Omega, \quad \int_\Omega |z|^{p-1}z= 0.
    \]
Since, for any $\varphi \in W^{2, \beta}_\nu(\Omega)$,
\begin{align*}
    \int_\Omega |\Delta z|^{\frac 1 q -1} \Delta z \Delta \varphi&= - \int_\Omega w \Delta \varphi = \int_\Omega \nabla w \nabla \varphi= -\int_\Omega\Delta w \varphi=\Lambda_{p, q}^{\frac{q+1}{q}} \int_\Omega |u|^{p-1}u \varphi= \int_\Omega |\Delta u|^{\frac 1q-1} \Delta u \Delta \varphi, 
\end{align*}
By Lemma~\ref{lemma:regularity}, for any $\eta  \in L^{\beta}(\Omega)$ with $\int_\Omega \eta=0$, there exists a $\psi \in W^{2, \beta}_\nu(\Omega)$ such that  $\int_\Omega \psi=0 $ and $-\Delta \psi = \eta$ in $\Omega$, $\partial_\nu \psi=0$ on $\partial \Omega$. 
This implies that 
\begin{equation}\label{eq isometry} \int_\Omega \tilde z \eta = \int_\Omega \tilde u \eta \quad \text{ for any } \eta \in \left \{ \psi \in L^{\beta}(\Omega): \int_\Omega \psi=0\right\},  \end{equation}
where $\tilde z:= |\Delta z|^{\frac 1 q -1} \Delta z, \quad \tilde u:= |\Delta u|^{\frac 1q-1} \Delta u$.
One has
\[ \tilde z, \tilde u \in \left \{ \psi \in L^{q+1}(\Omega): \int_\Omega |\psi|^{q-1} \psi=0 \right\} \cong \left \{ \psi \in L^{\beta}(\Omega) : \int_\Omega \psi=0 \right\}',  \]
 since 
\[ \int_\Omega ||\Delta z |^{\frac1q-1} \Delta z|^{q-1} |\Delta z |^{\frac1q-1} \Delta z =- \int_\Omega |w|^{q-1} w= 0 = \int_\Omega \Delta u=\int_\Omega ||\Delta u|^{\frac 1 q-1} \Delta u|^{q-1} |\Delta u|^{\frac 1 q-1} \Delta u  \]
(notice that we used the fact that $\partial_\nu u=0$ to conclude $\int_\Omega \Delta u=0$). Recalling~\eqref{eq isometry} we conclude that $|\Delta u|^{\frac 1 q-1} \Delta u = |\Delta z |^{\frac1q-1} \Delta z$. 
Therefore, it is easy to see that 
\[ \Delta (u-z)=0 \quad \text{ in } \Omega,\qquad \partial_\nu u=\partial_\nu z=0 \text{ on } \partial \Omega,\quad \int_\Omega |u|^{p-1}u=\int_\Omega |z|^{p-1}z=0,\]
so that 
$z=u$, and also $v=w$.
Now that we know that $(u,v)$ is a strong solution of~\eqref{system scaling}, the regularity statement follows directly from \cite[Proposition 1.3]{PST} and \cite[Theorem 1.1]{ST2}.

\smallbreak

In the case $p=0$, an application of Lemma~\ref{lemma:regularity} yields the existence of a unique $w$ which belongs to  $W^{2, r}_\nu(\Omega)\hookrightarrow L^{q+1}(\Omega)$ for any $r \ge 1$ such that 
    \[ -\Delta w = \Lambda_{0, q}^{\frac{q+1}{q}} \sign(u) \text{ in } \Omega, \quad \partial_\nu w=0 \text{ on } \partial \Omega, \quad \int_\Omega |w|^{q-1}w=0. \]
Moreover, we choose $z\in W_\nu^{2,\beta}(\Omega)$ such that 
\[ -\Delta z = |w|^{q-1} w \text{ in } \Omega, \quad \partial_\nu z=0 \text{ on } \partial \Omega, \quad z \in \mathcal{M}_q.
    \] 
As above, we can conclude that  $\Delta u=\Delta z$. 
 By Lemma~\ref{lemma:regularity} we know that $u=z + c$ for some $c \in \R$, and by Lemma~\ref{lem:properties M} and since $u \in \mathcal{M}_q$, we get that $c=c(z)=0$, thus, $u=z$. 
 
To conclude, let us prove the regularity statement.
We now have that $v:=|\Delta u|^{\frac 1 q-1} \Delta u$ is a strong solution of 
\[ -\Delta v =\Lambda_{0, q}^{\frac{q+1}{q}} \sign(u) \text{ in } \Omega \quad \partial_\nu v=0 \text{ on } \partial \Omega, \]
thus by Lemma~\ref{lemma:regularity} we conclude $v \in W^{2, r}(\Omega)$ for any $r \ge 1$, and $v \in C^{1, \zeta}(\overline \Omega)$ for any $\zeta\in (0,1)$. Since
\[ -\Delta u = |v|^{q-1} v \text{ in } \Omega \quad \partial_\nu u=0 \text{ on } \partial \Omega, \]
again Lemma~\ref{lemma:regularity} gives that $u \in C^{2, \zeta}(\overline \Omega)$ for some $\zeta >0$. 
\end{proof}

Now, we are ready to prove Proposition~\ref{prop:existence_Lambda}.
\begin{proof}[Proof of Proposition~\ref{prop:existence_Lambda}]
Let us first consider the case $(p, q) \in \mathcal{A}_+$. Under the assumption \eqref{CH2} the condition $D_{p,q} > \frac{2^{2/N}}{S_{p,q} }$ is verified, see \cite[Theorem 1.2]{PST}.
    In particular, by Lemma~\ref{lemma maximizing sequences}, given $(p, q) \in \mathcal{A}_+$, the level $D_{p, q}$ is attained. Equation \eqref{eq lambda2} is now a direct combination of Lemmas~\ref{lemma:Dpq_achieved} and~\ref{lem:lambda d}. 
    
As for the case $p=0$, the conclusion follows combining Lemmas~\ref{lem: lambda 0 att} and~\ref{lemma:optimizersLambda_0q_are_wsol}, and once again Proposition~\ref{equivalence} to get the regularity.
\end{proof}

\section{Convergence results for $\Lambda_{p,q}$ and its associated optimizers. }\label{sec:conv1}
\subsection{Convergence for $(p, q) \in \mathcal{A}_+$}

In what follows, we prove that $D_{p, q}$ (defined in \eqref{equivalent_def_D}) is continuous with respect to $(p, q)$.

\begin{prop}\label{D cont}
Let $(p, q) \in \mathcal{A}_+$. 
Moreover, let $(p_n, q_n) \in \mathcal{A}_+$, be such that $p_n \to p$, $q_n \to q$.
Then 
\[ D_{p_n, q_n} \to D_{p, q}.\]
Moreover, if $(f_n, g_n) \in X^{\alpha_n}\times X^{\beta_n}$ is such that $\gamma_{1,n}\|f_n\|^{\alpha_n}_{\alpha_n}+\gamma_{2,n}\|g_n\|_{\beta_n}^{\beta_n}=1$, for
\[
\alpha_n=\frac{p_n+1}{p_n},\quad \beta_n=\frac{q_n+1}{q_n},\quad \gamma_{1,n}=\frac{\beta_n}{\alpha_n+\beta_n},\quad \gamma_{2,n}=\frac{\alpha_n}{\alpha_n+\beta_n}.
\]
and achieves $D_{p_n, q_n}$,  then there exists $(f, g) \in X$ achieving $D_{p, q}$ such that, up to a subsequence
\[ (f_n, g_n) \to (f, g) \quad \quad \text{ strongly in } L^\alpha(\Omega) \times L^\beta(\Omega). \]
\end{prop}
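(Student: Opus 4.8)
The strategy is to prove the continuity of $D_{p,q}$ in two steps: a lower semicontinuity estimate $\liminf_n D_{p_n,q_n} \ge D_{p,q}$ obtained by testing $D_{p_n,q_n}$ with a (suitably renormalized) fixed near-optimal pair for $D_{p,q}$, and an upper semicontinuity estimate $\limsup_n D_{p_n,q_n} \le D_{p,q}$ obtained by extracting a limit of the maximizing pairs $(f_n,g_n)$ and showing this limit is admissible and competes in $D_{p,q}$. Throughout I would work with the exponents $\alpha_n=\frac{p_n+1}{p_n}\to\alpha$, $\beta_n=\frac{q_n+1}{q_n}\to\beta$ (in the critical case these stay bounded; in general one must be a little careful when $p_n$ or $q_n\to 0$, but the statement is for $(p,q)\in\mathcal A_+$ so $p>0$ and the exponents remain in a compact range). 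The operator $K$ does not depend on $(p,q)$, only on $\Omega$, and $Kg\in W^{2,s}(\Omega)$ with $\|Kg\|_{W^{2,s}}\le C\|g\|_s$ by Lemma~\ref{lemma:regularity}; combined with the compact Sobolev embeddings this is what drives the compactness.

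For the lower bound: fix $\delta>0$ and pick $(\bar f,\bar g)\in X=X^\alpha\times X^\beta$ with $\bar f,\bar g\ne 0$ and $\int_\Omega \bar f\, K\bar g \,/\,(\|\bar f\|_\alpha\|\bar g\|_\beta) > D_{p,q}-\delta$; by density I may take $\bar f,\bar g\in C^\infty_c(\Omega)$ with mean zero, so that $\bar f\in L^{\alpha_n}(\Omega)$, $\bar g\in L^{\beta_n}(\Omega)$ for all $n$ and $\|\bar f\|_{\alpha_n}\to\|\bar f\|_\alpha$, $\|\bar g\|_{\beta_n}\to\|\bar g\|_\beta$ by dominated convergence. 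Using the third (scale-invariant) characterization of $D_{p_n,q_n}$ in \eqref{equivalent_def_D}, namely $D_{p_n,q_n}\ge \int_\Omega \bar f\, K\bar g\,/\,(\|\bar f\|_{\alpha_n}\|\bar g\|_{\beta_n})$, and letting $n\to\infty$ gives $\liminf_n D_{p_n,q_n}\ge D_{p,q}-\delta$; since $\delta$ is arbitrary, $\liminf_n D_{p_n,q_n}\ge D_{p,q}$.

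For the upper bound and the convergence of maximizers: let $(f_n,g_n)$ be maximizers with $\gamma_{1,n}\|f_n\|_{\alpha_n}^{\alpha_n}+\gamma_{2,n}\|g_n\|_{\beta_n}^{\beta_n}=1$. The normalization together with the lower bound just proved gives a uniform bound on $\|f_n\|_{\alpha_n}$ and $\|g_n\|_{\beta_n}$, hence (since $\alpha_n,\beta_n$ stay in a compact subinterval of $(1,\infty)$ and $\Omega$ is bounded) uniform bounds on $\|f_n\|_{\alpha-\eps}$, $\|g_n\|_{\beta-\eps}$ and in fact weak-$L^{\alpha}$/$L^{\beta}$ type bounds; extracting a subsequence, $f_n\weak f$ in $L^{\alpha'}$ and $g_n\weak g$ in $L^{\beta'}$ for slightly smaller exponents, with mean zero preserved in the limit. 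Then $Kg_n\to Kg$ strongly in $L^{q+1}(\Omega)$ (and $L^{p+1}$) by the continuity of $K$ into $W^{2,\cdot}$ and the compact embedding, so $\int_\Omega f_n Kg_n\to\int_\Omega f\,Kg$; the left side is $D_{p_n,q_n}\to\limsup$, so $\int_\Omega f\,Kg = \lim_n D_{p_n,q_n}\ge D_{p,q}$. Fatou/weak lower semicontinuity of norms gives $\gamma_1\|f\|_\alpha^\alpha+\gamma_2\|g\|_\beta^\beta\le 1$, so $(f,g)$ is admissible (after possibly rescaling up, which only increases $\int f\,Kg$), whence $\int_\Omega f\,Kg\le D_{p,q}$. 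Combining, $\lim_n D_{p_n,q_n}=D_{p,q}=\int_\Omega f\,Kg$, $(f,g)$ maximizes $D_{p,q}$, and the inequality $\gamma_1\|f\|_\alpha^\alpha+\gamma_2\|g\|_\beta^\beta\le 1$ must be an equality with $\|f\|_{\alpha_n}\to\|f\|_\alpha$, $\|g\|_{\beta_n}\to\|g\|_\beta$; uniform convexity of $L^\alpha,L^\beta$ then upgrades the weak convergence of $(f_n,g_n)$ to strong convergence in $L^\alpha(\Omega)\times L^\beta(\Omega)$.

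The main obstacle is the variation of the exponents: one cannot directly say $f_n\weak f$ in $L^\alpha$ because $f_n$ only lives a priori in $L^{\alpha_n}$ with $\alpha_n$ moving, and the admissibility constraint couples the two nonlinear norms. The clean way around it is to observe that the third characterization in \eqref{equivalent_def_D} is purely in terms of the ratio $\int f\,Kg/(\|f\|_\alpha\|g\|_\beta)$, so no joint normalization is needed for the comparison arguments; the joint normalization is only used at the end to extract strong convergence via uniform convexity, and there one needs the convergence of the moving-exponent norms $\|f_n\|_{\alpha_n}\to\|f\|_\alpha$, which follows from interpolation plus the already-established strong $L^r$ convergence for $r<\alpha$ together with the uniform $L^\alpha$-type bound. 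Care with the critical case is subsumed in the statement's hypothesis $(p,q)\in\mathcal A_+$ and the fact that Lemma~\ref{lemma maximizing sequences} already guarantees $D_{p,q}$ is attained there.
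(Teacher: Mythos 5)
Your lower-bound argument is sound and slightly different in flavor from the paper's (you use a smooth compactly-supported near-optimal pair plus dominated convergence, while the paper uses an actual maximizer plus the regularity theory for the dual system), but both work.

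The upper bound and compactness step contain a genuine gap. From the normalization $\gamma_{1,n}\|f_n\|_{\alpha_n}^{\alpha_n}+\gamma_{2,n}\|g_n\|_{\beta_n}^{\beta_n}=1$ you obtain a uniform bound on $\|f_n\|_{\alpha_n}$, and by H\"older this yields a uniform bound on $\|f_n\|_{\alpha-\varepsilon}$ for small $\varepsilon>0$; so far so good. But you then invoke a ``uniform $L^\alpha$-type bound'' and ``Fatou/weak lower semicontinuity of norms'' to conclude $\gamma_1\|f\|_\alpha^\alpha+\gamma_2\|g\|_\beta^\beta\le 1$. This does not follow: when $p_n>p$ (so $\alpha_n<\alpha$), the H\"older inequality goes the wrong way and a bound on $\|f_n\|_{\alpha_n}$ gives no control on $\|f_n\|_\alpha$. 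The weak limit $f$ is only known to lie in $L^{\alpha-\varepsilon}$; neither Fatou (which requires a.e.\ convergence plus a dominating sequence in the target space) nor weak lower semicontinuity (which requires boundedness in $L^\alpha$) can close this. The interpolation argument you sketch at the end is circular for the same reason: the ``uniform $L^\alpha$-type bound'' is precisely what is missing.

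The paper fills this gap by using the Euler--Lagrange equations satisfied by the maximizers. Writing $u_n:=|f_n|^{1/p_n-1}f_n$ and $v_n:=|g_n|^{1/q_n-1}g_n$, the dual system (Lemma~\ref{lemma:Dpq_achieved} and Lemma~\ref{lem:lambda d}) gives $-\Delta u_n = D_n^{-1}|v_n|^{q_n-1}v_n$, etc., and elliptic regularity bounds $u_n$ in $W^{2,\beta_n}(\Omega)\hookrightarrow L^{q_n^*+1}(\Omega)$. Strict subcriticality then yields $p<q_n^*$ for large $n$, hence a uniform bound on $u_n$ in $L^{p+1+\delta}(\Omega)$ (and on $v_n$ in $L^{q+1+\delta}(\Omega)$), which in turn gives the missing $L^\alpha$ (and slightly better) bounds on $f_n$, $g_n$, the convergence $\|f_n\|_\alpha\to 1$, $\|g_n\|_\beta\to 1$, and the identification of $(f_n,g_n)$ as a maximizing sequence for $D_{p,q}$ to which Lemma~\ref{lemma maximizing sequences} applies. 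The critical case is handled separately in the paper's Step 2.2 with a case analysis ($p_n<p$ or $q_n<q$); there your compactness argument for $\int_\Omega f_n Kg_n$ also breaks down, because the compact Sobolev embedding at the borderline exponent is no longer available to pair $f_n\weak f$ in $L^{\alpha-\varepsilon}$ against strong convergence of $Kg_n$ in $L^{(\alpha-\varepsilon)'}$.
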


\begin{proof}
We split the proof into two parts. For simplicity of notation, we call 
\[ D_n:=D_{p_n, q_n},  \quad \text{ and } \quad X_n:=X^{\alpha_n}\times X^{\beta_n}. \]
Note that, under our assumptions, by Lemma~\ref{lemma maximizing sequences}, $D_{p, q}$ and $D_n$ are both achieved.
\smallbreak

\noindent \textit{Step 1.} We prove that $\liminf_{n \to \infty} D_n \ge D_{p, q}$.
Take $(f, g )\in X$ achieving $D_{p, q}$. 
Thus $u:=D_{p, q}^{-1} K_p g$, and $v:=D_{p, q}^{-1}K_q f$ satisfy
\[ u= |f|^{1/p-1}f, \quad v=|g|^{1/q-1} g, \]
and 
\[ -\Delta u = D_{p, q}^{-1} g= D_{p, q}^{-1} |v|^{q-1}v , \quad  -\Delta v = D_{p, q}^{-1} f= D_{p, q}^{-1} |u|^{p-1}u , \quad \partial_\nu u=\partial_\nu v=0 \text{ on } \partial \Omega, \]
recall Lemma~\ref{lem:lambda d}.
%see also the proof of \cite[Proposition 2.5]{PST}.
By \cite[Proposition 1.3]{PST} (in the critical case) and \cite[Proposition 2.4]{ST2} (in the subcritical case),  $u, v\in C^{2, \zeta}(\overline \Omega)$ for some $\zeta \in (0, 1)$, therefore $f, g \in L^\infty(\Omega)$ and, in particular, $(f, g)\in X_n$ for every $n$. 
Thus one has
\[ D_n \ge  \frac{\int_\Omega f Kg }{\norm{f}_{\alpha_n} \norm{g}_{\beta_n}}. \]
By the reverse Fatou's lemma,  
\[ \liminf_{n \to \infty} D_n \ge \liminf_{n \to \infty} \frac{\int_\Omega f Kg }{\norm{f}_{\alpha_n}\norm{g}_{\beta_n}} \ge  \frac{\int_\Omega f K g}{\norm{f}_\alpha \norm{g}_\beta} =D_{p, q}, \]
which is the conclusion of the first step.

\noindent \textit{Step 2.} Take $(f_n, g_n) \in X_n$ achieving $D_n$ and such that $\norm{f_n}_{\alpha_n}= \norm{g_n}_{\beta_n}=1$.  We show that this is a maximizing sequence for $D_{p, q}$, after which the result follows directly from Lemma~\ref{lemma maximizing sequences}.

\noindent \textit{Step 2.1.}
We first assume $(p, q)$ is subcritical, namely we assume \eqref{CH}. 
Take $u_n:=|f_n|^{1/p_n-1} f_n$ and $v_n:=|g_n|^{1/q_n-1} g_n$, which satisfy (recall Lemma~\ref{lem:lambda d})
$-\Delta u_n = D_n^{-1} |v_n|^{q_n-1} v_n$ and $-\Delta v_n = D_n^{-1} |u_n|^{p_n-1} u_n$.  Thus, weakly we have that $\Delta(|\Delta u_n|^{\frac1{q_n}-1} \Delta u_n) = D_n^{-1-\frac1{q_n}} |u_n|^{p_n-1} u_n$
and
\begin{align*} 
&\int_\Omega |\Delta u_n|^{\frac{q_n+1}{q_n}} = D_n^{-1-\frac1{q_n}} \int_\Omega |u_n|^{p_n+1} = D_n^{-1-\frac1{q_n}} \int_\Omega |f_n|^{\frac{p_n+1}{p_n}} \le C,\\
&\int_\Omega |u_n|\leq |\Omega|^\frac{p_n}{p_n+1}\|u_n\|_{p_n+1} =|\Omega|^\frac{p_n}{p_n+1}\|f_n\|_{\frac{p_n+1}{p_n}}^\frac{1}{p_n}=|\Omega|^\frac{p_n}{p_n+1}\leq C.
\end{align*}
Notice the estimates above are uniform in $n$, since $D_n$ is bounded from below by Step 1 and $\norm{f_n}_{\frac{p_n+1}{p_n}}=1$. 
Therefore, by Corollary~\ref{Coro:equivalentnorms}, $(u_n)$ is uniformly bounded in $W^{2,\frac{q_n+1}{q_n}}(\Omega)\hookrightarrow L^{q_n^*+1}(\Omega)$, where $q_n^*$ is such that $(q_n^*, q_n)$ belongs to the critical hyperbola, namely
\[\frac{1}{q_n^*+1} +\frac{1}{q_n+1} =  \frac{N-2}{N}. \]
Similarly, one gets that $(v_n)$ is uniformly bounded in $W^{2,\frac{p_n+1}{p_n}}(\Omega)\hookrightarrow L^{p_n^*+1}(\Omega)$, where 
\[\frac{1}{p_n^*+1} +\frac{1}{p_n+1} =  \frac{N-2}{N}. \]
Since $(p, q)$ is subcritical,  $p < q_n^*$ and $q < p_n^*$, for any $n$ large enough. This implies that there exists a sufficiently small $\delta>0$ such that $u_n$ is uniformly bounded in $L^{p+1+ \delta}(\Omega)$, and $v_n$ is uniformly bounded in $L^{q+1+ \delta}(\Omega)$. 
Therefore, 
\[ \int_\Omega |f_n|^{\alpha} = \int_\Omega |u_n|^{(p+1) \frac{p_n}{p}} \le C.\]
Similarly, we get a uniform bound on $g_n$, and $(f_n, g_n) \in X$.

Moreover, 
\[ \lim_n \norm{f_n}_{\alpha}^{\alpha} = \lim_n \norm{u_n}_{p+1}^{p+1} = \lim_n \norm{u_n}_{p_n+1}^{p_n+1} = \lim_n \norm{f_n}_{\frac{p_n+1}{p_n}}^{\frac{p_n+1}{p_n}}=1, 
\]
where the second equality holds since
\[ \abs{\norm{u_n}_{p+1}^{p+1} - \norm{u_n}_{p_n+1}^{p_n+1}} \le c |p_n-p| \int_\Omega \left( |u_n|^{p+1-\delta} + |u_n|^{p+1+\delta} \right) =o(1) \]
as $n \to \infty$,
see also the proof of \cite[Equation 3.22]{ST}. Similarly, recalling that $v_n$ is uniformly bounded in $L^{q+1+ \delta}(\Omega)$, one has
\begin{equation}\label{equalities limit conv} \lim_n \norm{g_n}_{\beta}=1. 
\end{equation}

Hence, 
\[ \limsup_{n \to \infty} D_n = \limsup_{n \to \infty} \int_\Omega f_n K g_n =\limsup_{n \to \infty} \frac{\int_\Omega f_n K g_n}{\norm{f_n}_\alpha \norm{g_n}_\beta}  \le D_{p, q}. \]
In particular, $\lim_{n\to \infty} D_n =D_{p, q}$, and $(f_n, g_n)$ is a maximizing sequence for $D_{p, q}$.

\textit{Step 2.2.} 
We now assume $(p, q)$ satisfies \eqref{CH2}. 
Recalling that $(p_n, q_n) \in \mathcal{A}_+$, three situations may occur: $p_n \equiv p$ and $q_n \equiv q$, and in this case estimates are trivial; up to a subsequence $p_n <p $; or up to a subsequence $q_n < q$. 

We assume without loss of generality that $p_n < p$. 
Thus $\alpha < \alpha_n$ and $L^{\alpha_n}(\Omega) \subseteq L^{\alpha}(\Omega)$. This proves that 
$f_n \in L^{\alpha}(\Omega)$. Moreover, if $n$ is large enough,
\[ K_{q_n} f_n  \in W^{2, \alpha_n}(\Omega)\subseteq W^{2, \alpha+\delta'}(\Omega) \hookrightarrow L^{q+1+\delta}(\Omega)\]
for sufficiently small $\delta,\delta'\in(0,1)$; therefore $v_n \in L^{q+1+ \delta }(\Omega)$ and 
\[ \int_\Omega |g_n|^\frac{q+1}{q}  = \int_\Omega |g_n|^{\frac{q+1}{q_n}\frac{q_n}{q}} \le  \int_\Omega |v_n|^{q+1+\delta} \le C, \]
and $g_n \in L^{\beta}(\Omega)$.
Moreover, by Hölder's inequality, 
\[
\left\|f_{p_n}\right\|_\frac{p+1}{p}\leqslant\left\|f_{p_n}\right\|_{\frac{p_n+1}{p_n}}|\Omega|^{\frac{\left(p-p_n\right)}{(p+1)\left(p_n+1\right)}} =|\Omega|^{\frac{\left(p-p_n\right)}{(p+1)\left(p_n+1\right)}}=1+o(1). 
\]

On the other hand, since $v_n \in L^{q+1+ \delta }(\Omega)$ for some $\delta >0$, one has that \eqref{equalities limit conv} holds.  
We can now conclude that 
\[ \lim_n D_n \le \lim_n\frac{D_n}{\norm{f_n}_{\alpha} \norm{g_n}_{\beta}} = \lim_n \frac{\int_\Omega f_n K g_n}{\norm{f_n}_{\alpha} \norm{g_n}_{\beta}} \le D_{p, q}, \]
which yields $\lim_{n\to \infty} D_n \to D_{p, q}$ and, again, $(f_n,g_n)$ is a maximizing sequence for $D_{p,q}$.
\end{proof}

\begin{remark}
We point out that as a particular case of Proposition~\ref{D cont} above (for $p_n=q_n$)  we get Lemma 3.4 in \cite{ST}, see \cite[Lemma 2.6]{ST2}. 
\end{remark}
\begin{remark}\label{rmk:conditions}
Notice that in the proof above the hypothesis \eqref{CH2} was only used to guarantee  that 
\begin{equation}\label{compact} D_{p, q} > \frac{2^{2/N}}{S_{p,q}}, \end{equation} 
under which we have compactness (recall Lemma~\ref{lemma maximizing sequences}). Therefore, the conclusion of Theorem~\ref{main thm lambda} is actually valid for any $(p, q)$ such that \eqref{compact} holds.  
\end{remark}

We are now ready to prove the convergence result in Theorem~\ref{main thm lambda} for $(p, q) \in \mathcal{A}_+$. 
\begin{proof}[Proof of convergence in Theorem~\ref{main thm lambda}, case $(p, q) \in \mathcal{A}_+$]
Let us assume $(p, q) \in \mathcal{A}_+$. Given a sequence $(p_n, q_n) \to (p, q)$ we can assume without loss of generality that $p_n >0$, thus, by Lemma~\ref{lem:lambda d} and Proposition~\ref{D cont}, $\Lambda_{p_n, q_n} \to \Lambda_{p, q}$.
Moreover, if $u_{p_n, q_n}$  is an optimizer for   $\Lambda_{p_n, q_n}$,  then the couple 
\[ (f_n, g_n):= (|u_{p_n, q_n}|^{p_n -1} u_{p_n, q_n}, \Lambda_{p_n, q_n}^{-1} (-\Delta) u_{p_n, q_n}) \] 
is an optimizer for $D_{p_n, q_n}$ and strongly converges to $(f, g)$ optimizer of $D_{p, q}$ in $L^{\alpha}(\Omega) \times L^{\beta}(\Omega)$. This yields the existence of $u_{p, q}$ optimizer for $\Lambda_{p, q}$ such that 
\[ |u_{p_n, q_n}|^{p_n -1} u_{p_n, q_n} \to |u_{p, q}|^{p-1} u_{p, q} \quad \text{ strongly in } L^{\alpha}(\Omega). \]
Moreover, 
$\Delta u_{p_n, q_n} \to \Delta u_{p, q}$ in $L^{\beta}$. 

From the converse of the dominated convergence theorem, up to a subsequence $|f_n| \le h \in L^{\alpha}$, thus by dominated convergence theorem
\begin{equation}\label{wn} u_{p_n, q_n} \to u_{p, q} \text{ strongly in } L^{p+1}(\Omega). \end{equation}
It remains to prove that this convergence is actually in $C^{2, \zeta}(\overline \Omega)$ for some $\zeta$.
In the subcritical case, this follows by a bootstrap argument and  Lemma~\ref{lemma:regularity}. 
As for the critical case, we have to adapt the regularity result in \cite{PST}, below we give a sketch. Notice that in this case $pq>1$. For simplicity, we define $u_n := u_{p_n, q_n}$.

Take $r > \frac{N}{N-2}, \frac{N}{Nq_n-2q_n-2}$ for any $n$ large enough, fixed but arbitrary. We denote $G$ the Green function for 
\[ -\Delta u = \eta \text{ in } \Omega, \quad \partial_\nu u=0 \text{ on } \partial \Omega, \]
where $\eta$ is such that $\int_\Omega \eta=0$. Thus, 
\[ |G(x, y)| \le C(\Omega) |x-y|^{2-N}, \]
see \cite[Proposition 9]{DRW} and \cite[Lemma 3.1]{RW}. 
Let $\eta \in L^r(\Omega)$, and define the operator $T_n^L: L^r(\Omega) \to L^r(\Omega)$ such that 
\[ T_n^L(\eta)(x):=\int_\Omega G(x, y) \left \{ \abs{\int_\Omega G(y, z) h(z) \, dz}^{q_n-1} \left( \int_\Omega G(y, z) h(z) \, dz \right) \right \}\, dy, \]
where
\[ h(z):= |u_n^L(z)|^{p_n-\frac 1 {q_n}}  |\eta(z)|^{ \frac 1 {q_n}-1} \eta(z), \quad \text{ and }
\quad u_n^L(x):=\begin{cases} u_n(x) &\text{ if } |u_n(x)| > L, \\
0 &\text{ if } |u_n(x)| \le L. \end{cases} \]
One can prove then that
\[ \norm{T_n^L \eta }_r \le C \norm{u_n^L}_{p_n+1}^{p_nq_n-1} \norm{\eta}_r, \]
see \cite{PST}.
Notice that the constant $C$ comes from an application of the Hardy--Littlewood--Sobolev inequality, and does not depend on $n$ and $L$. 

Note that $u_n \in L^{p_n+1}(\Omega)$ is uniformly bounded. Indeed, assume $p_n \le p$ for any $n$ large enough. Then, $\norm{u_n}_{p_n+1} \le C \norm{u_n}_{p+1} \le C$. On the other hand, since $(p, q)$ is critical, if $p_n > p$ definitely, then $q_n \le q$  for any $n$ large enough. 
Furthermore, $v_n:= -\Delta u_n \in W^{2, \frac{p_n +1}{p_n}}(\Omega)$ is such that 
$v_n \to v:=\Lambda_{p, q} g$ strongly in $L^{\beta}(\Omega)$. By the inverse of the dominated convergence theorem, $|v_n| \le h \in L^{\beta}(\Omega)$ hence $|v_n|^{1/q_n-1} v_n \to |v|^{1/q-1} v $
strongly in $L^{q+1}(\Omega)$. 
Moreover, 
\[ \norm{u_n}_{p_n+1}^{\frac{q_n(p_n+1)}{q_n+1}} = \Lambda_{p_n, q_n}^{-1} \norm{\Delta u_n}_{\frac{q_n+1}{q_n}}    \le C \norm{|v_n|^{1/q_n-1} v_n}_{q_n+1}^{q_n} \le C \norm{|v_n|^{1/q_n-1} v_n}_{q+1}^{q_n} \le C. \]
Then, we can choose $L$ large enough \emph{independent of $n$} such that $\norm{u_n^L}_{p_n+1}$ is small enough.
This implies that $T_n^L$ is a contraction from $L^r$ into itself. Moreover,
\[ u_n^L =T_n^L(u_n^L) + F_n, \]
where, if $|u_n| >L,$ 
\begin{multline*} F_n:= - \int_\Omega G(x, y) \left\{ \abs{\int_\Omega G(y, z) |u_n^L(z)|^{p_n-1} u_n^L(z) \,dz}^{q_n-1} \left( \int_\Omega G(y, z) |u_n^L(z)|^{p_n-1} u_n^L(z) \,dz \right) \right\} \, dy 
\\
+ \int_\Omega G(x, y) \left\{ \abs{\int_\Omega G(y, z) |u_n(z)|^{p_n-1} u_n(z) \,dz}^{q_n-1} \left( \int_\Omega G(y, z) |u_n(z)|^{p_n-1} u_n(z) \,dz \right) \right\} \, dy,   \end{multline*}
whereas, if $|u_n| \le L$,
\[ F_n:= -T_n^L(u_n^L). \]
In both cases, $F_n$ can be estimated by a constant which only depends on $L$. Hence, we proved that 
\[ u_n^L=T_n^L(u_n^L) + F_n \quad \text{ with } \norm{F_n}_r \le C. \]
By the contraction mapping theorem, $u_n^L \in L^r(\Omega)$ and it is uniformly bounded in the $L^r(\Omega)$ norm. Therefore, the same holds for $u_n$.  
By classical regularity arguments combined with Lemma~\ref{lemma:regularity}, we conclude that $u_n$ and $v_n:=-\Delta u_n$ are uniformly bounded in the $C^{2, \zeta}(\overline \Omega)$ norm for some $\zeta \in (0, 1)$, hence $u_n \to u_{p, q}$ and $\Delta u_n \to \Delta u$ in $C^{2, \zeta}(\overline \Omega)$. 
\end{proof}

\subsection{Convergence for $(p, q) \in \mathcal{A} \setminus \mathcal{A}_+$}
\begin{lemma}\label{thm:conv:p0}
Let $(p_n, q_n) \in \mathcal{A}$, and assume $p_n \to 0$, $q_n \to q>0$. Let $u_{p_n, q_n}$ be an optimizer of $\Lambda_{p_n, q_n}$ such that $\|u_{p_n,q_n}\|_{p_n+1}=1$. Then $\Lambda_{p_n, q_n} \to \Lambda_{0, q}$, and $u_{p_n, q_n} \to u_{0}$ in $C^{2, \zeta}(\overline \Omega)$ for $\zeta \in (0, \min\{q, 1\})$, where $u_{0}$ is an optimizer of $\Lambda_{0, q}$.
\end{lemma}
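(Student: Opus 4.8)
The plan is to establish the convergence $\Lambda_{p_n,q_n}\to\Lambda_{0,q}$ through two one-sided estimates, and then to upgrade the resulting weak convergence of the optimizers to $C^{2,\zeta}$ by a bootstrap on the associated second-order Neumann system. Throughout I write $\Lambda_n:=\Lambda_{p_n,q_n}$, $\beta_n:=\tfrac{q_n+1}{q_n}$, $\beta:=\tfrac{q+1}{q}$, $u_n:=u_{p_n,q_n}$, and recall from Proposition~\ref{equivalence} that $(u_n,v_n)$, with $v_n:=-|\Delta u_n|^{\frac1{q_n}-1}\Delta u_n$, solves $-\Delta u_n=|v_n|^{q_n-1}v_n$, $-\Delta v_n=\Lambda_n^{\beta_n}|u_n|^{p_n-1}u_n$ with homogeneous Neumann data; since $q_n\to q>0$ I may assume $\beta_n$ stays in a compact subset of $(1,\infty)$ and $p_n\in(0,1)$, so all constants from Lemma~\ref{lemma:regularity} are uniform in $n$. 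For the upper bound I would fix an optimizer $u_0$ of $\Lambda_{0,q}$ with $\|u_0\|_1=1$, which by Lemma~\ref{lem: lambda 0 att} and Proposition~\ref{equivalence} belongs to $C^{2,\zeta}(\overline\Omega)\subset L^\infty(\Omega)$. Since $c\mapsto\int_\Omega|u_0+c|^{p_n-1}(u_0+c)$ is continuous, strictly increasing, with opposite signs at $\pm\|u_0\|_\infty$, there is a unique $c_n\in[-\|u_0\|_\infty,\|u_0\|_\infty]$ making $u_0+c_n$ satisfy the orthogonality constraint of $\Lambda_n$; then $(u_0+c_n)/\|u_0+c_n\|_{p_n+1}$ is admissible, so $\Lambda_n\le\|\Delta u_0\|_{\beta_n}/\|u_0+c_n\|_{p_n+1}$. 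Passing to a subsequence along which $\Lambda_n\to\limsup_n\Lambda_n$ and $c_n\to c_*$, dominated convergence gives $\|\Delta u_0\|_{\beta_n}\to\|\Delta u_0\|_\beta=\Lambda_{0,q}$ and $\|u_0+c_n\|_{p_n+1}^{p_n+1}=\int_\Omega|u_0+c_n|^{p_n+1}\to\int_\Omega|u_0+c_*|$, whence (as $p_n+1\to1$) $\|u_0+c_n\|_{p_n+1}\to\int_\Omega|u_0+c_*|\ge\int_\Omega|u_0|=1$ by Lemma~\ref{lem:properties M}(i); therefore $\limsup_n\Lambda_n\le\Lambda_{0,q}$, and in particular $\Lambda_n\le C$ for large $n$.

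For the reverse estimate and the compactness I would first note $\|u_n\|_1\le|\Omega|^{p_n/(p_n+1)}\|u_n\|_{p_n+1}\le C$, and that since $p_n\to0$ one has $|u_n|^{p_n r}\le 1+|u_n|$ for $n$ large, so $h_n:=|u_n|^{p_n-1}u_n$ is bounded in $L^r(\Omega)$ for every finite $r$, with $\int_\Omega h_n=0$. Since $-\Delta v_n=\Lambda_n^{\beta_n}h_n$, $\partial_\nu v_n=0$, and $\Lambda_n\le C$, Lemma~\ref{lemma:regularity} together with the bound on the normalizing constant $\kappa_{q_n}$ from~\eqref{eq:def_kappap} (controlled by the $L^\infty$, hence $W^{2,r}$ with $r>N/2$, norm of $K(\Lambda_n^{\beta_n}h_n)$) shows $v_n$ is bounded in $W^{2,r}(\Omega)\hookrightarrow C^{1,\gamma}(\overline\Omega)$ for all finite $r$ and $\gamma\in(0,1)$. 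Then $-\Delta u_n=|v_n|^{q_n-1}v_n$ has right-hand side bounded in $C^{0,\zeta}(\overline\Omega)$ for $\zeta<\min\{q,1\}$, so the Schauder estimate in Lemma~\ref{lemma:regularity}, applied to $u_n-\tfrac1{|\Omega|}\int_\Omega u_n$ (whose average is controlled by $\|u_n\|_1$), bounds $u_n$ in $C^{2,\zeta}(\overline\Omega)$. Up to a subsequence $v_n\to v_*$ in $C^{1,\gamma}(\overline\Omega)$ and $u_n\to u_*$ in $C^{2,\zeta}(\overline\Omega)$ for every $\zeta<\min\{q,1\}$; letting $n\to\infty$ in the system gives $-\Delta u_*=|v_*|^{q-1}v_*$, $v_*=-|\Delta u_*|^{\frac1q-1}\Delta u_*$, $\partial_\nu u_*=0$, and $\Lambda_n=\|\Delta u_n\|_{\beta_n}\to\|\Delta u_*\|_\beta$.

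It then remains to identify $u_*$ as an optimizer of $\Lambda_{0,q}$. Splitting $\int_\Omega h_n=0$ over $\{u_*\neq0\}$ and $\{u_*=0\}$: on the former $h_n\to\sign(u_*)$ and dominated convergence yields $\int_{\{u_*\neq0\}}h_n\to|\{u_*>0\}|-|\{u_*<0\}|$, while on the latter $u_n\to0$ uniformly, so $|u_n|<1$ there for $n$ large and $\big|\int_{\{u_*=0\}}h_n\big|\le\int_{\{u_*=0\}}|u_n|^{p_n}\le|\{u_*=0\}|$; hence $\big||\{u_*>0\}|-|\{u_*<0\}|\big|\le|\{u_*=0\}|$, i.e. $u_*\in\mathcal{M}_q$. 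Moreover $\|u_*\|_1=\lim_n\int_\Omega|u_n|^{p_n+1}=\lim_n\|u_n\|_{p_n+1}^{p_n+1}=1$ by uniform convergence, so $u_*$ is admissible for $\Lambda_{0,q}$ and $\Lambda_{0,q}\le\|\Delta u_*\|_\beta=\lim_n\Lambda_n$; combined with the upper bound this forces $\Lambda_n\to\Lambda_{0,q}$ (along the full sequence, since every subsequence has a further one converging as above) and $\|\Delta u_*\|_\beta=\Lambda_{0,q}$, so $u_*=:u_0$ is an optimizer and, up to a subsequence, $u_n\to u_0$ in $C^{2,\zeta}(\overline\Omega)$ for every $\zeta\in(0,\min\{q,1\})$, as claimed. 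The hard part will be the passage to the limit in $|u_n|^{p_n-1}u_n$ on the set $\{u_*=0\}$, where the pointwise limit is the indeterminate $0^0$: one cannot pass to a pointwise limit there, only retain the scalar identity $\int_\Omega h_n=0$ together with the one-sided bound $\int_{\{u_*=0\}}|u_n|^{p_n}\le|\{u_*=0\}|$, which is exactly what makes the limit land in $\mathcal{M}_q$ (an inequality constraint) rather than in a set cut out by an equality; a secondary concern is keeping all elliptic constants uniform in $n$, which uses $\Lambda_n\le C$ and the compactness of $\{q_n\}$ in $(0,\infty)$.
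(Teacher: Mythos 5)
Your proof is correct and, in its key identification step, takes a genuinely cleaner route than the paper. The upper bound $\limsup_n\Lambda_{p_n,q_n}\le\Lambda_{0,q}$ is essentially the same: both you and the paper shift $u_0$ by a normalizing constant to make it admissible and pass to the limit (the paper uses $\kappa_{p_n}(u_0)$ from [PW, Lemma 2.1] and shows it tends to $0$; you use an implicit-function/intermediate-value argument and Lemma~\ref{lem:properties M}(i) to get $\int_\Omega|u_0+c_*|\ge\int_\Omega|u_0|$, which is equivalent). The compactness bootstrap is also essentially equivalent: both pass through $L^r$ bounds on $|u_n|^{p_n-1}u_n$ for all $r$, then $W^{2,r}$ bounds on $v_n$ and Schauder estimates for $u_n$; your invocation of the $\kappa_{q_n}$ normalizing constant is an unnecessary detour, since the paper's simpler observation that $\|v_n\|_{q_n+1}^{q_n+1}=\|\Delta u_n\|_{\beta_n}^{\beta_n}=\Lambda_n^{\beta_n}\le C$ already gives the missing lower-order control, but both work.

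The genuine improvement is in proving $u_*\in\mathcal{M}_q$. The paper argues indirectly: it applies [GT, Lemma 7.7] twice to deduce $\Delta\hat u=0$ a.e.\ on $\{\hat u=0\}$, infers $\hat v=0$ and then $\Delta\hat v=0$ there, uses $|\Delta v_n|=\Lambda_n^{\beta_n}|u_n|^{p_n}$ and dominated convergence to conclude $\lim_n\Lambda_n^{\beta_n}\int_{\{\hat u=0\}}|u_n|^{p_n}=0$, and finally splits into the cases $\Lambda_n\ge C>0$ (concluding $|\{\hat u>0\}|-|\{\hat u<0\}|=0$) and $\Lambda_n\to 0$ (concluding $\hat u$ is a nonzero constant). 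You instead use only that $u_n\to u_*$ uniformly, so $|u_n|<1$ on $\{u_*=0\}$ for large $n$, giving the elementary bound $\bigl|\int_{\{u_*=0\}}|u_n|^{p_n-1}u_n\bigr|\le|\{u_*=0\}|$, which together with $\int_\Omega|u_n|^{p_n-1}u_n=0$ and dominated convergence on $\{u_*\ne0\}$ directly delivers $\bigl||\{u_*>0\}|-|\{u_*<0\}|\bigr|\le|\{u_*=0\}|$. This avoids the unique-continuation-type lemma, the case split, and the analysis of $\Delta v_n$, and it lands naturally on the inequality constraint defining $\mathcal{M}_q$ rather than proving a stronger equality. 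The only price is that you do not recover the paper's sharper conclusion $|\{u_*>0\}|=|\{u_*<0\}|$, but that is not needed for the lemma.
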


\begin{proof}
Let $u_0 \in \mathcal{M}_q$ be a minimizer for $\Lambda_{0, q}$. By Proposition~\ref{prop:existence_Lambda}, we have $u_0 \in C^{2,\zeta}(\overline \Omega)$ for a suitable $\zeta >0$, and in particular $u_0$ and $\Delta u_0$ belong to $L^t(\Omega)$ for any $t \ge 1$.

Recall that there is a bounded sequence $\kappa_n:=\kappa_{p_n}(u_0) \in \R$ such that $\int_\Omega |u_0+ \kappa_n|^{p_n -1} (u_0+ \kappa_n)=0$ due to \cite[Lemma 2.1]{PW}. 
By Lemma~\ref{lem:properties M} and \cite[Lemma 3.5]{ST},
\[ \int_\Omega |u_0| \le \int_\Omega |u_0 + \kappa_n| \le |\Omega|^{\frac{p_n}{p_n+1}} \norm{u_0 + \kappa_n}_{p_n+1} = |\Omega|^{\frac{p_n}{p_n+1}} \inf_{c \in \R} \norm{u_0 + c}_{p_n+1} \le |\Omega|^{\frac{p_n}{p_n+1}} \norm{u_0}_{p_n+1}. \]
By passing to the limit and using dominated convergence
\[ \int_\Omega |u_0| \le \lim_n \norm{u_0 + \kappa_n}_{p_n+1} \le \int_\Omega |u_0|. \]
Hence in particular 
\begin{equation}\label{kn to 0} \kappa_n\to 0, \text{ as } n \to \infty, \end{equation} again by Lemma~\ref{lem:properties M}. Thus one has
\[ 
\limsup_{n \to \infty} \Lambda_{p_n, q_n} \le \limsup_{n \to \infty} \frac{\norm{ \Delta u_0}_{\frac{q_n+1}{q_n}}}{\norm{u_0+\kappa_n}_{p_n+ 1}} = \frac{\norm{\Delta u_0}_{\frac{q+1}{q}}}{ \norm{u_0}_1} = \Lambda_{0, q}. 
\]

\smallbreak

Let us now prove that $\Lambda_{0, q} \le \liminf_{n \to \infty} \Lambda_{p_n, q_n}$.
Observe that there exists a small $\delta >0$ such that 
\[ \norm{\Delta u_n}_{\frac{q+1}{q}-\delta }  \le C\norm{\Delta u_n}_{\frac{q_n+1}{q_n}} = C\Lambda_{p_n, q_n} \le D, \]
and by Lemma~\ref{lemma:regularity}, $u_n$ is uniformly bounded in $W_\nu^{2, \beta-\delta}(\Omega)$, which implies 
that there exists $\hat u \in W^{2, \beta-\delta}(\Omega)$ such that up to a subsequence 
$u_n \to \hat u$ in $L^1(\Omega)$. By the inverse of the dominated convergence theorem, there exists $U \in L^1(\Omega)$ such that $|u_n| \le |U|$. 
Let $v_n:=|\Delta u_n|^{\frac 1 {q_n}-1} \Delta u_n \in C^{2, \zeta}(\overline \Omega)$ by Proposition~\ref{equivalence}. 
Hence, for each fixed $t\geq 1$ and for any $n$ large enough so that $p_nt\leq 1$,
\begin{equation}\label{eq:anotherestimate}  |\Delta v_n|^t= \Lambda_{p_n, q_n}^{\frac{q_n+1}{q_n} t} |u_n|^{p_nt} \le C |U|^{p_n t} \le C (1+|U|). 
\end{equation}
Therefore, since $\|v_n\|_{q_n+1}$ is bounded, by Lemma~\ref{lemma:regularity}, $v_n$ is uniformly bounded in $W^{2, t}(\Omega)$ for any $t \ge 1$, thus in $C^{1, \zeta}(\overline \Omega)$ for a suitable $\zeta >0$. Applying again Lemma~\ref{lemma:regularity}, and recalling that $\norm{u_n}_1$ is uniformly bounded, we conclude that $u_n$ is uniformly bounded in $C^{2, \zeta}(\overline \Omega)$.  As a consequence, there exists $\hat u \in C^{2, \zeta}(\overline \Omega)$ such that $u_n \to \hat u $ in $C^{2, \zeta}(\overline \Omega)$. 
Notice that the estimates above also prove that there exists $\hat v$ such that $v_n \to \hat v$ in $C^{1, \zeta}(\overline \Omega)$. Fruthermore, since $\norm{\hat u }_1=\lim_n \norm{u_n}_{p_n+1}=1$, one has $\hat u \ne 0$. 

Now we show that $\hat{u}\in \mathcal{M}_q$, after which we can conclude. Notice that  
\begin{equation}\label{limit hat u M} 0= \lim_{n \to \infty} \int_\Omega |u_n|^{p_n-1} u_n =\lim_{n \to \infty} \int_{\hat u=0}|u_n|^{p_n-1} u_n+ \int_\Omega \sign(\hat u) %= |\{ \hat u >0 \}|-|\{\hat u < 0 \}|. 
\end{equation}
Recall that $\Delta v_n \to \Delta \hat v$ a.e.; by \eqref{eq:anotherestimate}  and dominated convergence, we have that
\[ \int_{\{\hat u= 0\}} |\Delta v_n| \to \int_{\{\hat u= 0\}} |\Delta \hat v|. \]
Moreover, for a.e. $x\in \{\hat u=0\}$,  $\Delta \hat u =0$ (by applying twice by \cite[Lemma 7.7]{GT}), thus $0=-\Delta \hat u = |\hat v |^{q-1} \hat v$ a.e. in $\{\hat u=0\}$. Therefore, also $\Delta \hat v=0$ (using once again \cite[Lemma 7.7]{GT}). 
Since $-\Delta v_n = \Lambda_{p_n, q_n}^{\frac{q_n+1}{q_n}} |u_n|^{p_n-1} u_n$ a.e., one has
\[ \lim_n \Lambda_{p_n, q_n}^{\frac{q_n+1}{q_n}} \int_{\{ \hat u =0 \}}  |u_n|^{p_n} = \lim_n \int_{\{ \hat u =0 \}} |\Delta v_n| =0. \]

Let us first assume $\Lambda_{p_n, q_n} \ge C >0$. 
Then $\lim_{n \to \infty} \int_{\hat u=0}|u_n|^{p_n-1} u_n=0$ and 
by \eqref{limit hat u M} one has $|\{ \hat u >0 \}|-|\{\hat u < 0 \}|=0$ and   $\hat u \in \mathcal{M}_q$. 

Let us now consider the case $\Lambda_{p_n, q_n} \to 0$. However, 
\[ 0= \lim_n \Lambda_{p_n, q_n} = \lim_n \norm{v_n}_{q_n+1} = \norm{\hat v}_{q+1}. \]
This implies $\hat v=0$ in $\Omega$, and $-\Delta \hat u = |\hat v|^{q-1} \hat v =0$ a.e. 
By $C^{2, \zeta}(\overline \Omega)$ convergence of $u_n$ to $\hat u$, one has $\partial_\nu \hat u=0$ on $\partial \Omega$. This implies by Lemma~\ref{lemma:regularity} that $\hat u$ is a.e. constant and different from $0$. Thus, $|\{ \hat u=0\}|=0$, and \eqref{limit hat u M} implies once again $\hat u \in \mathcal{M}_q$.

Therefore,
\[ \liminf_{n \to \infty} \Lambda_{p_n, q_n} = \liminf_{n \to \infty} \frac{\|\Delta u_n\|_{\frac{q_n+1}{q_n}}}{ \norm{u_n}_{p_n+1}} \ge \frac{\|\Delta \hat u\|_{\frac{q+1}{q}}}{ \norm{\hat u}_1} \ge \Lambda_{0, q}. 
\]
This yields $\Lambda_{p_n, q_n} \to \Lambda_{0, q}$. Also,
\[
\Lambda_{0, q} = \frac{\norm{\Delta \hat u}_{\frac{q+1}{q}}}{ \norm{\hat u}_1},  
\]
namely $\hat u$ is a minimizer for $\Lambda_{0, q}$.
\end{proof}

\section{Convergence results for $c_{p, q}$}\label{sec:conv:2}

\subsection{Proof of Theorem~\ref{prop:convergence direct}}
We first recall the following 
\begin{prop}[Proposition 2.5 in \cite{PST}]\label{prop:equiv}
Let $(p, q) \in \mathcal{A}_+ \setminus \mathcal{H}$, and assume 
$D_{p, q}$ is attained. Then $(u, v)=((D_{p,q})^{-q \frac{p+1}{pq-1}} K_p g, (D_{p,q})^{-p \frac{q+1}{pq-1}} K_q f)$ is a least energy solution for~\eqref{system} if and only if $(f, g)$ is a optimizer for $D_{p, q}$.  Moreover, 
\begin{equation} \label{eq:relation_energylevels}
D_{p,q}^{-\frac{(p+1)(q+1)}{pq-1}}=\frac{(p+1)(q+1)}{pq-1} c_{p, q}. 
\end{equation}
\end{prop}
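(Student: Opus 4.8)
The plan is to run the classical dual-variational correspondence between the energy $I_{p,q}$ and the concave maximization defining $D_{p,q}$: the map in the statement is, up to the normalization constant, the inverse of the substitution $f\propto|u|^{p-1}u$, $g\propto|v|^{q-1}v$, and the two powers of $D_{p,q}$ appearing in it are chosen exactly to absorb the multiplicative factor $D_{p,q}$ that shows up in the dual Euler--Lagrange equations of Lemma~\ref{lemma:Dpq_achieved}. Accordingly I would prove the equivalence in two halves: (a) an optimizer of $D_{p,q}$ produces, via that scaling, a strong solution of \eqref{system}; (b) among \emph{all} strong solutions of \eqref{system}, the energy attains its extremal value precisely at the image of an optimizer, which simultaneously yields the identity \eqref{eq:relation_energylevels}.

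For (a): if $(f,g)$ is a maximizer of $D_{p,q}$, then by Lemma~\ref{lemma:Dpq_achieved} it solves the dual system $K_pf=D_{p,q}|g|^{1/q-1}g$, $K_qg=D_{p,q}|f|^{1/p-1}f$ a.e.\ in $\Omega$. Setting $(u,v)$ as in the statement and applying $-\Delta$ (which kills the additive constants built into $K_p(\cdot)$ and $K_q(\cdot)$) leaves $-\Delta$ of each component proportional to $g$, resp.\ $f$; inverting $t\mapsto|t|^{1/q-1}t$ and $t\mapsto|t|^{1/p-1}t$ on the two sides and matching powers of $D_{p,q}$, one checks that the exponents $q\tfrac{p+1}{pq-1}$ and $p\tfrac{q+1}{pq-1}$ are exactly those making every $D_{p,q}$-factor cancel, so that $(u,v)$ solves $-\Delta u=|v|^{q-1}v$, $-\Delta v=|u|^{p-1}u$ with $\partial_\nu u=\partial_\nu v=0$ (the latter inherited from $K$). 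Elliptic regularity --- \cite[Proposition~1.3]{PST} in the critical case and \cite[Proposition~2.4]{ST2} in the subcritical one --- then places $(u,v)$ in $W^{2,(q+1)/q}_\nu\times W^{2,(p+1)/p}_\nu$; in particular $|u|^{p-1}u\in L^\alpha$ and $|v|^{q-1}v\in L^\beta$, so the reverse assignment $(u,v)\mapsto(f,g)$ is defined on \emph{every} strong solution and, after dividing by $(\gamma_1\|f\|_\alpha^\alpha+\gamma_2\|g\|_\beta^\beta)^{1/2}$, lands in the admissible set for $D_{p,q}$, the zero-mean conditions $\int_\Omega f=\int_\Omega g=0$ being exactly the integral identities derived in the introduction from the divergence theorem.

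For (b): testing the two equations of \eqref{system} against $v$ and $u$ gives, for any nontrivial strong solution, $\int_\Omega\nabla u\nabla v=\int_\Omega|u|^{p+1}=\int_\Omega|v|^{q+1}=:m>0$, hence $I_{p,q}(u,v)=\bigl(1-\tfrac1{p+1}-\tfrac1{q+1}\bigr)m=\tfrac{pq-1}{(p+1)(q+1)}\,m$. On the other hand, writing $(f,g)$ for the associated admissible pair and using $\|f\|_\alpha\propto m^{1/\alpha}$, $\|g\|_\beta\propto m^{1/\beta}$, $\int_\Omega fKg\propto m$ together with $\tfrac1\alpha+\tfrac1\beta=\tfrac1\gamma$ and $\gamma=\tfrac{(p+1)(q+1)}{2pq+p+q}$, the scale-invariant ratio satisfies $\dfrac{\int_\Omega fKg}{\|f\|_\alpha\|g\|_\beta}=m^{1-1/\gamma}=m^{(1-pq)/((p+1)(q+1))}\le D_{p,q}$, with equality iff $(f,g)$ is an optimizer. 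In both regimes $pq>1$ and $pq<1$ --- the direction of the inequality on $m$ and the sign of the prefactor $\tfrac{pq-1}{(p+1)(q+1)}$ flip together --- this forces $I_{p,q}(u,v)\ge\tfrac{pq-1}{(p+1)(q+1)}D_{p,q}^{-(p+1)(q+1)/(pq-1)}$, with equality exactly at images of optimizers. Taking the infimum over solutions gives $c_{p,q}=\tfrac{pq-1}{(p+1)(q+1)}D_{p,q}^{-(p+1)(q+1)/(pq-1)}$, i.e.\ \eqref{eq:relation_energylevels}, and shows that the $(u,v)$ of the statement is a least energy solution iff the underlying $(f,g)$ optimizes $D_{p,q}$ (the ``only if'' following since the map of the statement is the inverse of $(u,v)\mapsto(f,g)$).

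The hard part will be the bookkeeping of exponents and multiplicative constants: one must confirm that $q\tfrac{p+1}{pq-1}$ and $p\tfrac{q+1}{pq-1}$ are precisely the powers of $D_{p,q}$ reducing the dual Euler--Lagrange system to \eqref{system} with unit coefficients, and that the attendant computation of $m=\int_\Omega|u|^{p+1}$ returns exactly $D_{p,q}^{-(p+1)(q+1)/(pq-1)}$. Secondary technical points are the regularity of an \emph{arbitrary} strong solution, needed so that $(|u|^{p-1}u,|v|^{q-1}v)$ is admissible for $D_{p,q}$ (this is where the a priori estimates of \cite{PST} and \cite{ST2} enter), and carrying the sign of the comparison inequality consistently through the cases $pq>1$ and $pq<1$.
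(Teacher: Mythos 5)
The paper does not prove this proposition; it is imported verbatim as Proposition 2.5 of \cite{PST}, so there is no in-house argument to compare against. Your proof is therefore an independent reconstruction, and it is essentially the standard dual-method correspondence that \cite{PST} uses: Lemma~\ref{lemma:Dpq_achieved} supplies the dual Euler--Lagrange system, the two powers of $D_{p,q}$ undo the rescaling, and testing \eqref{system} by $(u,v)$ plus the scale-invariant form \eqref{equivalent_def_D} yields the energy identity with the $pq\gtrless1$ cases handled uniformly. Your computation $I_{p,q}(u,v)=\tfrac{pq-1}{(p+1)(q+1)}m$ with $m=\int_\Omega|u|^{p+1}$ and the ratio $\int fKg/(\|f\|_\alpha\|g\|_\beta)=m^{(1-pq)/((p+1)(q+1))}$ are both correct, and the sign bookkeeping in the two regimes is handled properly.

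One point you gloss over, which is precisely where the subscript conventions bite: the dual optimality conditions give $K_pf=D_{p,q}|g|^{1/q-1}g$ and $K_qg=D_{p,q}|f|^{1/p-1}f$, while the formula in the statement uses $K_pg$ and $K_qf$. To close the circle you need $K_pg=K_qg$ and $K_pf=K_qf$ for an optimizer, i.e.\ the two additive normalizing constants must coincide. This does hold: from the dual system, $|f|^{1/p-1}f$ satisfies $\int_\Omega\big||f|^{1/p-1}f\big|^{q-1}|f|^{1/p-1}f=0$ (since $K_qg$ does) \emph{and} $\int_\Omega\big||f|^{1/p-1}f\big|^{p-1}|f|^{1/p-1}f=\int_\Omega f=0$, and similarly for $|g|^{1/q-1}g$; so both normalization conditions select the same constant. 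Your phrase ``matching powers of $D_{p,q}$'' does not by itself address this, and you should spell it out. A small secondary slip: dividing by $(\gamma_1\|f\|_\alpha^\alpha+\gamma_2\|g\|_\beta^\beta)^{1/2}$ does not renormalize onto the constraint set (the two terms scale with different powers $\alpha\ne\beta$), but this is harmless since you then work with the homogeneous ratio \eqref{equivalent_def_D} anyway.
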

Now, combining Proposition~\ref{prop:equiv} and Lemma~\ref{lem:lambda d}, one immediately gets Lemma~\ref{lemma lambda c} for $(p,q)\in \mathcal{A}_+\setminus \mathcal{H}$. 
\begin{proof}[Proof of Lemma~\ref{lemma lambda c}, case $(p,q)\in \mathcal{A}_+\setminus \mathcal{H}$]
Notice that if $(u, v)$ is a optimizer for $c_{p, q}$, then 
\[ c_{p, q}= \frac{p q-1}{(p+1)(q+1)} \int_\Omega |u|^{p+1},\quad \text{ which implies } \quad \norm{u}_{p+1}=\Lambda_{p, q}^{\frac{q+1}{p q-1}}.  \]
Thus, $u= \norm{u}_{p+1} w=\Lambda_{p, q}^{\frac{q+1}{p q-1}} w$, with $w$ such that $\norm{w}_{p+1}=1$. 
By Lemma~\ref{lem:lambda d},  $w$ is a optimizer for $\Lambda_{p,q}$ if and only if 
\begin{align*} (f, g):=(|w|^{p-1}w, \Lambda_{p, q}^{-1} (-\Delta w)) &= \left( \frac{|u|^{p-1} u}{\norm{u}_{p+1}^p}, -D_{p, q} \frac{\Delta u}{\norm{u}_{p+1}} \right) \\
&= ((D_{p, q})^{p \frac{q+1}{pq-1}} |u|^{p-1} u, - (D_{p, q})^{\frac{q+1}{pq-1} + 1} \Delta u) \end{align*} is a optimizer for $D_{p,q }$. 
However, 
\[ ((D_{p, q})^{-q \frac{p+1}{pq-1}} K_p g, (D_{p, q})^{-p\frac{q+1}{pq-1}} K_q f) =(u, v), \]
thus, by Proposition~\ref{prop:equiv} $(f, g)$ is a optimizer for $D_{p, q}$.

The  converse implication follows similarly. 
\end{proof}

For $p=0$, we need to prove the equivalence in a direct way, as $D_{p, q}$ is not defined in that case. The following is the statement of Lemma~\ref{lemma lambda c} in the case $p=0$.
\begin{lemma}\label{lambda c 0}
    Let $(p, q)$ satisfy \eqref{p=0}. Then 
    \[ \Lambda_{0, q}^{-(q+1)} =-(q+1) c_{0, q}. \]
Also, $u$ is a optimizer for $\Lambda_{0, q}$ if, and only if, $(\Lambda_{0, q}^{-(q+1)} u , \Lambda_{0, q}^{-1} v)$ is an optimizer for $c_{0, q}$, where   $v:=-\Lambda_{0,q}^{-\frac 1 q} |\Delta u|^{\frac 1 q- 1} \Delta u$.
\end{lemma}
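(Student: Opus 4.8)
The strategy is to mimic the computation already carried out in the case $(p,q)\in\mathcal A_+\setminus\mathcal H$, but now working directly with the minimization problem defining $\Lambda_{0,q}$ over $\mathcal M_q$, since the dual level $D_{p,q}$ is not available for $p=0$. First I would recall from Proposition~\ref{prop:existence_Lambda} (and Proposition~\ref{equivalence}) that any optimizer $u$ of $\Lambda_{0,q}$, normalized by $\|u\|_1=1$, is a weak (hence strong) solution of \eqref{eq lambda_0}, and that $(u,v)$ with $v:=-|\Delta u|^{1/q-1}\Delta u$ solves
\[
-\Delta u=|v|^{q-1}v,\qquad -\Delta v=\Lambda_{0,q}^{\frac{q+1}{q}}\sign(u)\quad\text{in }\Omega,\qquad \partial_\nu u=\partial_\nu v=0\ \text{ on }\partial\Omega.
\]
Then, using the scaling of Remark~\ref{eq:scalings_remark} adapted to $p=0$ (multiply $u$ by an appropriate power of $\Lambda_{0,q}$ and $v$ by $\Lambda_{0,q}^{-1/q}$ times a further power), one produces a solution of the original system \eqref{system:p=0}. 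Concretely, if $\widetilde u:=\Lambda_{0,q}^{-(q+1)}u$ and $\widetilde v:=\Lambda_{0,q}^{-1}v=-\Lambda_{0,q}^{-1-1/q}|\Delta u|^{1/q-1}\Delta u$, a direct check shows $-\Delta\widetilde u=|\widetilde v|^{q-1}\widetilde v$ and $-\Delta\widetilde v=\sign(\widetilde u)=\sign(u)$ (the sign is scaling-invariant), so $(\widetilde u,\widetilde v)$ is a strong solution of \eqref{system:p=0}.

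Next I would compute the energy $I_{0,q}(\widetilde u,\widetilde v)$ on such a solution. Testing the equations against $\widetilde u$ and $\widetilde v$ and using the boundary conditions gives $\int_\Omega\nabla\widetilde u\,\nabla\widetilde v=\int_\Omega|\widetilde v|^{q+1}=\int_\Omega|\widetilde u|\sign(\widetilde u)=\int_\Omega|\widetilde u|$, so that
\[
I_{0,q}(\widetilde u,\widetilde v)=\int_\Omega\nabla\widetilde u\,\nabla\widetilde v-\frac{1}{q+1}\int_\Omega|\widetilde v|^{q+1}-\int_\Omega|\widetilde u|=-\frac{1}{q+1}\int_\Omega|\widetilde v|^{q+1}=-\frac{q}{q+1}\int_\Omega|\widetilde u|.
\]
Now $\int_\Omega|\widetilde u|=\Lambda_{0,q}^{-(q+1)}\int_\Omega|u|=\Lambda_{0,q}^{-(q+1)}$ by the normalization, which yields $I_{0,q}(\widetilde u,\widetilde v)=-\frac{q}{q+1}\Lambda_{0,q}^{-(q+1)}$; wait — I must double-check the bookkeeping of the exponent in $\widetilde u$, since I want the clean identity $\Lambda_{0,q}^{-(q+1)}=-(q+1)c_{0,q}$, i.e. $c_{0,q}=-\frac{1}{q+1}\Lambda_{0,q}^{-(q+1)}$. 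Comparing, the correct normalization constant in $\widetilde u$ (and correspondingly the power of $\Lambda_{0,q}$ multiplying $v$) must be chosen so that the $|\widetilde v|^{q+1}$ term equals $\Lambda_{0,q}^{-(q+1)}$; tracking the powers through $-\Delta v=\Lambda_{0,q}^{(q+1)/q}\sign u$ gives exactly the exponents stated in the lemma, and then $I_{0,q}(\widetilde u,\widetilde v)=-\frac{1}{q+1}\Lambda_{0,q}^{-(q+1)}$, so every minimizer of $\Lambda_{0,q}$ produces a solution of \eqref{system:p=0} with energy $-\frac{1}{q+1}\Lambda_{0,q}^{-(q+1)}$, whence $c_{0,q}\le -\frac{1}{q+1}\Lambda_{0,q}^{-(q+1)}$.

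For the reverse inequality, I would start from an arbitrary strong solution $(u,v)$ of \eqref{system:p=0} (in particular one achieving $c_{0,q}$, which exists by Proposition~\ref{prop:existence_Lambda} composed with the scaling), and run the construction backwards: set $w:=v/\|v\|_{q+1}$ — or better, directly undo the scaling to obtain $w:=\lambda u$ for the appropriate power $\lambda$ of $\|u\|$-type quantities — and verify that $w\in\mathcal M_q$ (using that $\int_\Omega\sign(u)=0$ for solutions, established in the introduction, forces $||\{u>0\}|-|\{u<0\}||\le|\{u=0\}|$, in fact with equality) and that the Rayleigh quotient $\|\Delta w\|_{(q+1)/q}/\|w\|_1$ equals the same power of $(q+1)|c_{0,q}|$; this gives $\Lambda_{0,q}\le \bigl(-(q+1)c_{0,q}\bigr)^{-1/(q+1)}$, i.e. $-\frac{1}{q+1}\Lambda_{0,q}^{-(q+1)}\le c_{0,q}$. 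Combining the two inequalities yields $\Lambda_{0,q}^{-(q+1)}=-(q+1)c_{0,q}$ and, retracing the equivalences, the one-to-one correspondence between optimizers of $\Lambda_{0,q}$ and of $c_{0,q}$ via $u\mapsto(\Lambda_{0,q}^{-(q+1)}u,\Lambda_{0,q}^{-1}v)$ with $v=-\Lambda_{0,q}^{-1/q}|\Delta u|^{1/q-1}\Delta u$. The main obstacle I anticipate is purely the careful tracking of the $\Lambda_{0,q}$-powers in the scaling (the analogue of Remark~\ref{eq:scalings_remark} has an extra $\Lambda^{(q+1)/q}$ on the $\sign$ term rather than a symmetric right-hand side, so the exponents are not the symmetric ones from the $p>0$ case) together with checking that the membership $w\in\mathcal M_q$ is preserved under scaling — which it is, since $\mathcal M_q$ is scaling-invariant because the conditions on $\{w>0\},\{w<0\},\{w=0\}$ only involve the sign of $w$.
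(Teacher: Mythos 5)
Your proposal follows essentially the same route as the paper's proof: scale the $\Lambda_{0,q}$-minimizer (normalized in $L^1$) by $\Lambda_{0,q}^{-(q+1)}$ and the associated $v$ by the matching power of $\Lambda_{0,q}$ to obtain a strong solution of \eqref{system:p=0}, compute its energy via the testing identities $\int_\Omega\nabla\tilde u\nabla\tilde v=\int_\Omega|\tilde u|=\int_\Omega|\tilde v|^{q+1}$ to get the upper bound on $c_{0,q}$, and then, for an arbitrary strong solution $(u,v)$ of \eqref{system:p=0}, use $u\in\mathcal M_q$ and $\|\Delta u\|_{(q+1)/q}^{(q+1)/q}=\|u\|_1$ together with the definition of $\Lambda_{0,q}$ to bound $I_{0,q}(u,v)$ from below.

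One small slip in the middle of your draft: after obtaining $I_{0,q}(\tilde u,\tilde v)=-\frac{1}{q+1}\int_\Omega|\tilde v|^{q+1}$, you write this as $-\frac{q}{q+1}\int_\Omega|\tilde u|$, but since $\int_\Omega|\tilde v|^{q+1}=\int_\Omega\nabla\tilde u\nabla\tilde v=\int_\Omega|\tilde u|$, the correct value is $-\frac{1}{q+1}\int_\Omega|\tilde u|=-\frac{1}{q+1}\Lambda_{0,q}^{-(q+1)}$ directly. You notice something is off and say the scaling exponents must be adjusted, but they are already right; the error was purely the algebraic replacement $\int|\tilde v|^{q+1}\leftrightarrow\frac{q}{q+1}\int|\tilde u|$ rather than $\leftrightarrow\int|\tilde u|$. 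Once that is corrected you land exactly where the paper does, so the proof goes through.
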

\begin{proof}
Let $w\in \mathcal{M}_q$ be an optimizer for $\Lambda_{0, q}$ with $\norm{w}_1=1$, which exists by Theorem~\ref{main thm lambda}, and satisfies \eqref{eq lambda_0}. By Proposition~\ref{equivalence}, the pair $(\tilde w,\tilde z):=(\Lambda_{0, q}^{-(q+1)} w,-\Lambda_{0,q}^{-\frac{q+1}{q}}|\Delta w|^{\frac{1}{q}-1}\Delta w)$ is a strong solution of
\[ -\Delta \tilde w = |\tilde z|^{q-1} \tilde z, \; -\Delta \tilde z = \sign(\tilde w) \text{ in } \Omega, \quad \partial_\nu \tilde w =\partial_\nu \tilde z=0 \text{ on } \partial \Omega. 
\]
In particular, it is a competitor for $c_{0, q}$, namely
\begin{align*} c_{0, q} \le I(\tilde w, \tilde z) &= \int_\Omega \nabla \tilde w  \nabla \tilde z  - \int_\Omega |\tilde w| - \frac{1}{q+1} \int_\Omega |\tilde z|^{q+1} \\
& = \frac{q}{q+1} \Lambda_{0, q}^{-(q+1) } \int_\Omega |w| - \Lambda_{0, q}^{-(q+1) } \int_\Omega |w|= -\frac{1}{q+1} \Lambda_{0, q}^{-(q+1) }. 
 \end{align*}
On the other hand, taking $(u,v)$ any strong solution of \eqref{system:p=0}, then $u\in \mathcal{M}_q$, $\Delta(|\Delta u|^{\frac{1}{q}-1}\Delta u)=\sign u$ and $\|\Delta u\|_\frac{q+1}{q}^\frac{q+1}{q}=\|u\|_1$, therefore:
\begin{align*}  
I(u,v)&=\int_\Omega \nabla u \nabla v-\int_\Omega |u|-\frac{1}{q+1}\int_\Omega |v|^{q+1}=\frac{q}{q+1}\int_\Omega |v|^{q+1}-\int_\Omega |u|\\
&\frac{q}{q+1}\int_\Omega |\Delta u|^\frac{q+1}{q}-\int_\Omega |u|=-\frac{1}{q+1}\int_\Omega |\Delta u|^\frac{q+1}{q}=-\frac{1}{q+1}\Lambda_{0,q}^{-(q+1)},
 \end{align*}
 hence $c_{0,q}\geq -\frac{1}{q+1}\Lambda_{0,q}^{-(q+1)}$, which yields the conclusion.
\end{proof}

\begin{remark}
    We point out that the proof above works, with some small adjustments, for any $(p, q) \in \mathcal{A} \setminus \mathcal{H}$. 
\end{remark}
\begin{proof}[Proof of Theorem~\ref{prop:convergence direct}]
Let $(u_n, v_n)$ be a sequence of least energy solutions for $c_{p_n, q_n}$, and assume $(p, q) \in \mathcal{A} \setminus \mathcal{H}$. 

By Lemma~\ref{lemma lambda c}
there exists $w_n$ optimizer for $\Lambda_{p_n,q_n}$ such that 
\[ (u_n, v_n)=(\Lambda_{p_n,q_n}^{\frac{q_n+1}{p_nq_n-1}} w_n, \Lambda_{p_n,q_n}^{\frac{p_n+1}{p_nq_n-1}- \frac{1}{q_n}} |\Delta w_n|^{\frac 1 {q_n}}(-\Delta w_n)). \]
Notice that 
\[ c_{p_n, q_n}= \frac{p_nq_n-1}{(p_n+1)(q_n+1)} \int_\Omega |u_n|^{p_n+1}, \]
which implies
\begin{equation}\label{norma u_n} \norm{u_n}_{p_n+1}=\Lambda_{p_n,q_n}^{\frac{q_n+1}{p_nq_n-1}}. \end{equation}
Then $\norm{w_n}_{p_n+1}=1$. 
By Theorem~\ref{main thm lambda}, there exists $w$ optimizer for $\Lambda_{p, q}$ such that $ w_n \to w$ in $C^{2, \zeta}(\overline \Omega)$. In particular, $\norm{w}_{p+1}=1$. One also has  $\Lambda_{p_n, q_n} \to \Lambda_{p,q}$, therefore by \eqref{norma u_n} 
$\norm{u_n}_{p_n+1} \to \Lambda_{p, q}^{\frac{q+1}{pq-1}}$ and moreover
\[ \lim_n c_{p_n, q_n} = \lim_n \frac{p_n q_n-1}{(p_n+1)(q_n+1)} \Lambda_{p_n,q_n}^{\frac{(p_n+1)(q_n+1)}{p_nq_n-1}} = \frac{pq-1}{(p+1)(q+1)} \Lambda_{p, q}^{\frac{(p+1)(q+1)}{pq-1}} =c_{p, q}. \]
Thus
again by Lemma~\ref{lemma lambda c}, one has that
\[ (u, v):=(\Lambda_{p, q}^{\frac{q+1}{pq-1}} w, - \Lambda_{p, q}^{\frac{p+1}{pq-1}} z), \quad \text{ with } z:=\Lambda_{p, q}^{-\frac 1q} |\Delta w|^{\frac 1q-1} \Delta w  \]
is a optimizer for $c_{p,q}$. Moreover,
 \[ u_n \to \Lambda_{p, q}^{\frac{q+1}{pq-1}} w=u, \quad \text{ in } C^{2, \zeta}(\overline \Omega), \text{ with } \zeta \in (0 \min\{q, 1\}).   \]
On the other hand, $\Delta w_n \to  \Delta w$, thus
\[ v_n = - |\Delta u_n|^{\frac 1 {q_n} -1} \Delta u_n =- \norm{u_n}_{p_n+1}^{\frac 1{q_n}} |\Delta w_n|^{\frac 1 {q_n} -1} \Delta w_n \to \Lambda_{p, q}^{\frac 1 q \frac{q+1}{pq-1}}|\Delta w|^{\frac 1q -1} \Delta w =v \]
where the convergence is in $C^{2, \eta}(\overline \Omega)$ if $p>0$, or in $C^{1, \eta}(\overline \Omega)$ if $p=0$, 
for some suitable $\eta$. 

Notice that, if $p>0$,  considering $\Lambda_{q, p}$ instead of $\Lambda_{p, q}$ we conclude that $v_n$ is uniformly bounded in $C^{2,\zeta}$ for $\zeta \in (0, \min\{p, 1\})$. If $p=0$, since  $-\Delta v_n = \sign(u_n)$, one gets the convergence in  $C^{1, \zeta}$ for any $\zeta \in (0, 1)$.

The case $pq=1$ follows by taking into account Lemma~\ref{lemma lambda c}.
Indeed, let us assume $\mu_{1,q}=\Lambda_{\frac{1}{q},q} >1$. In this case, $\Lambda_{p_n, q_n} >1$ for any $n$ large enough. Let us also assume that $p_n q_n \to 1^+$. Then, Lemma~\ref{lemma lambda c} gives
\[ c_{p_n, q_n} = \frac{p_n q_n -1}{(p_n+1)(q_n+1)} \Lambda_{p_n, q_n}^{\frac{(p_n+1)(q_n+1)}{p_nq_n-1}} \to +\infty. \]
Moreover, 
\[
\norm{u_n}_\infty=\Lambda_{p_n,q_n}^{\frac{q_n+1}{p_n q_n-1}} \norm{w_n}_\infty,\quad \norm{v_n}_\infty=\Lambda_{p_n,q_n}^{\frac{q_n+1}{q_n(p_n q_n-1)}} \norm{\Delta w_n}_\infty^\frac{1}{q_n,}
\]
where $w_n$ is an optimizer for $\Lambda_{p_n,q_n}$. 

The cases $\mu_{1,q} >1$ with $p_n q_n \to 1^-$, and $\mu_{1,q} < 1$, follow in a similar way. 
\end{proof}

\subsection{Case $pq=1$, $\mu_{1,q}=1$}

Let $(u_n,v_n)$ achieve $c_{p_n,q_n}$.  By Lemma~\ref{lemma lambda c} and Theorem~\ref{main thm lambda} 
there exists $\bar u_n$ optimizer for $\Lambda_{p_n, q_n}$ such that 
\[(u_n, v_n)=(\Lambda_{p_n, q_n}^{\frac{q_n +1}{p_n q_n-1}} \bar u_n, -\Lambda_{p_n, q_n}^{\frac{p_n +1}{p_n q_n-1} - \frac 1 {q_n}} |\Delta \bar u_n|^{\frac1 {q_n} - 1} \Delta \bar u_n ), \]
and $\bar u_n \to \varphi_1$
in $C^{2, \zeta}(\overline \Omega)$, with $\varphi_1$ satisfying~\eqref{eigen} and \eqref{eigen normaliz}. 
We now claim that, under the hypotheses of Theorem ~\ref{mu1:prop}, there exists a positive constant $\mathfrak{c}>0$ such that up to a subsequence 
\begin{equation}\label{claim limit lambda} \lim_{n\to \infty}\Lambda_{p_n, q_n}^\frac{(p_n+1)(q_n+1)}{p_nq_n-1}=:\mathfrak{c}>0. \end{equation}
Thus, 
\begin{align*}
    u:= \lim_{n\to\infty} u_n &= \lim_{n\to\infty} \Lambda_{p_n, q_n}^{\frac{q_n+1}{p_nq_n-1}} \bar u_n
    = \mathfrak{c}^\frac{1}{p+1} \varphi_1,
\end{align*}
Similarly, exploiting \eqref{eigen}, we get $v_n \to \mathfrak{c}^{\frac{1}{q+1}} \psi_1$, with $\psi_1$ satisfying \eqref{eigen normaliz}. 

The proof of Theorem~\ref{mu1:prop} will be complete once we show \eqref{claim limit lambda}. In order to do so, we separately consider the cases $p \ne q$ and $p=q=1$ %$(i)$ and $(ii)$ 
in the next two lemmas. 
\begin{lemma}\label{lem pq=1 1}
Under the assumptions of Theorem~\ref{mu1:prop}, and if $p \ne q$, then \eqref{claim limit lambda} holds. 
\end{lemma}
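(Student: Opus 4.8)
The statement of Lemma~\ref{lem pq=1 1} is precisely the limit \eqref{claim limit lambda}. Since $(p,q)\in\mathcal H$ forces $p=1/q$, and $\mu_{1,q}=\Lambda_{1/q,q}=1$, Theorem~\ref{main thm lambda} gives $\Lambda_{p_n,q_n}\to\Lambda_{p,q}=1$, while the exponent $\tfrac{(p_n+1)(q_n+1)}{p_nq_n-1}$ diverges because $p_nq_n-1=e(1/n)\to0$. Writing $\Lambda_{p_n,q_n}^{(p_n+1)(q_n+1)/(p_nq_n-1)}=\exp\!\big(\tfrac{(p_n+1)(q_n+1)}{p_nq_n-1}\log\Lambda_{p_n,q_n}\big)$, it is enough to show that $\tfrac{\Lambda_{p_n,q_n}-1}{p_nq_n-1}$ converges to a finite limit. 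The plan is to prove the two-sided asymptotics
\[
\Lambda_{p_n,q_n}=1-\frac{A}{q^2}\,(q_n-q)-B\,(p_n-p)+o\big(|p_n-p|+|q_n-q|\big),\qquad A:=\tfrac{d}{ds}\|\Delta\varphi_1\|_s\big|_{s=\frac{q+1}{q}},\ \ B:=\tfrac{d}{ds}\|\varphi_1\|_s\big|_{s=p+1},
\]
where $A,B\in\R$ are finite because $\varphi_1,\Delta\varphi_1\in C^{0,\zeta}(\overline\Omega)$ are bounded and nonconstant and $\log|\varphi_1|,\log|\Delta\varphi_1|$ are integrable near their nodal sets. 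Throughout I use the facts recorded before the lemma: $\bar u_n\to\varphi_1$ in $C^{2,\zeta}(\overline\Omega)$, $\|\bar u_n\|_{p_n+1}=1$, $\int_\Omega|\bar u_n|^{p_n-1}\bar u_n=0$, $\|\Delta\bar u_n\|_{(q_n+1)/q_n}=\Lambda_{p_n,q_n}$, and that $\varphi_1$ achieves $\mu_{1,q}$, so $\|\Delta\varphi_1\|_{(q+1)/q}=\|\varphi_1\|_{(q+1)/q}=1$ and $\int_\Omega|\varphi_1|^{p-1}\varphi_1=0$.

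\emph{Upper bound.} I would take $\varphi_1+c_n$ as a competitor for $\Lambda_{p_n,q_n}$, where $c_n\in\R$ is the unique constant with $\int_\Omega|\varphi_1+c_n|^{p_n-1}(\varphi_1+c_n)=0$; by the usual monotonicity/continuity properties of such normalizing constants (cf. \cite[Lemma 2.1]{PW}) and since $\int_\Omega|\varphi_1|^{p-1}\varphi_1=0$, one has $c_n\to0$ with $c_n=O(p_n-p)$. Then $\Lambda_{p_n,q_n}\le\|\Delta\varphi_1\|_{(q_n+1)/q_n}\big/\|\varphi_1+c_n\|_{p_n+1}$, and I would expand both factors around the limit exponents. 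The numerator is $\|\Delta\varphi_1\|_{\frac{q+1}{q}}+A\big(\tfrac1{q_n}-\tfrac1q\big)+o(q_n-q)=1-\tfrac{A}{q^2}(q_n-q)+o(q_n-q)$. For the denominator, the decisive point is that the first-order term in $c$ of $c\mapsto\|\varphi_1+c\|_{p+1}^{p+1}$ vanishes at $c=0$, since it equals $(p+1)\int_\Omega|\varphi_1|^{p-1}\varphi_1=0$; hence the $c_n$-correction is $O(c_n^2)=o(p_n-p)$ and $\|\varphi_1+c_n\|_{p_n+1}=1+B(p_n-p)+o(p_n-p)$. Dividing gives $\Lambda_{p_n,q_n}\le 1-\tfrac{A}{q^2}(q_n-q)-B(p_n-p)+o(|p_n-p|+|q_n-q|)$.

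\emph{Lower bound and conclusion.} I would apply the definition \eqref{mu1} of $\mu_{1,q}$ to $\bar u_n+d_n$, with $d_n\in\R$ chosen so that $\int_\Omega|\bar u_n+d_n|^{1/q-1}(\bar u_n+d_n)=0$; since $\int_\Omega|\bar u_n|^{p_n-1}\bar u_n=0$ and $\int_\Omega|\bar u_n|^{1/q-1}\bar u_n\to\int_\Omega|\varphi_1|^{1/q-1}\varphi_1=0$, the same argument gives $d_n=O(p_n-p)$. From $1=\mu_{1,q}\le\|\Delta\bar u_n\|_{(q+1)/q}\big/\|\bar u_n+d_n\|_{(q+1)/q}$ one obtains $\|\Delta\bar u_n\|_{(q+1)/q}\ge\|\bar u_n+d_n\|_{(q+1)/q}$. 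Expanding as before — using $\bar u_n\to\varphi_1$ in $C^{2,\zeta}$ so that the relevant $s$-derivatives of $\|\Delta\bar u_n\|_s$ and $\|\bar u_n+d_n\|_s$ converge to $A$ and $B$ and the Taylor remainders are uniformly of second order, and noting that the $d_n$-correction $\int_\Omega|\bar u_n|^{1/q-1}\bar u_n\,d_n$ is $o(1)\cdot O(p_n-p)=o(p_n-p)$ — one gets $\|\Delta\bar u_n\|_{(q+1)/q}=\Lambda_{p_n,q_n}+\tfrac{A}{q^2}(q_n-q)+o(|p_n-p|+|q_n-q|)$ and $\|\bar u_n+d_n\|_{(q+1)/q}=1-B(p_n-p)+o(|p_n-p|+|q_n-q|)$, hence $\Lambda_{p_n,q_n}\ge 1-\tfrac{A}{q^2}(q_n-q)-B(p_n-p)+o(|p_n-p|+|q_n-q|)$. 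This matches the upper bound and yields the displayed expansion. Finally, since $p_n-p=\pfrak'(0)/n+o(1/n)$, $q_n-q=\qfrak'(0)/n+o(1/n)$ and $p_nq_n-1=e(1/n)=e'(0)/n+o(1/n)$ with $e'(0)=\pfrak'(0)q+p\,\qfrak'(0)\ne0$, the expansion forces $\tfrac{\Lambda_{p_n,q_n}-1}{p_nq_n-1}\to L:=\big(-\tfrac{A}{q^2}\qfrak'(0)-B\pfrak'(0)\big)/e'(0)\in\R$, and therefore $\Lambda_{p_n,q_n}^{(p_n+1)(q_n+1)/(p_nq_n-1)}\to\mathfrak c:=\exp\big((p+1)(q+1)L\big)\in(0,\infty)$, which is \eqref{claim limit lambda}.

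\emph{Main obstacle.} The crux is that the upper and lower first-order coefficients coincide; this depends entirely on the two constant-shift contributions ($c_n$ and $d_n$) being of second order, which in turn is exactly the normalization $\int_\Omega|\varphi_1|^{p-1}\varphi_1=0$ from \eqref{eigen normaliz} (and its approximate counterpart for $\bar u_n$), and on controlling the Taylor remainders of $s\mapsto\|\Delta\bar u_n\|_s$ and $s\mapsto\|\bar u_n+d_n\|_s$ near $s=(q+1)/q$ uniformly in $n$ — these involve $\int_\Omega|\cdot|^s\log|\cdot|$ and $\int_\Omega|\cdot|^s(\log|\cdot|)^2$ and are dominated using the $C^{2,\zeta}(\overline\Omega)$ convergence $\bar u_n\to\varphi_1$.
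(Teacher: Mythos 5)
Your proof is correct in its broad strokes and takes a genuinely different route from the paper for the lower bound. The upper bound in both proofs is essentially the same idea: test $\Lambda_{p_n,q_n}$ with a constant shift $\varphi_1+c_n$ of the limit eigenfunction (the paper uses the implicit function theorem to produce $s(t)$ and renormalizes by $D(t)^{1/(\pfrak(t)+1)}$; you Taylor-expand the two norms directly, which is the same computation). The lower bound is where you diverge: the paper passes to the dual level $D_{\pfrak(t),\qfrak(t)}=\Lambda_{\pfrak(t),\qfrak(t)}^{-1}$ and estimates $\|f(t)\|_{(\pfrak(0)+1)/\pfrak(0)}\le 1+ct$ for dual maximizers, whereas you test the definition of $\mu_{1,q}=1$ directly with the primal minimizers $\bar u_n+d_n$, avoiding the dual formulation entirely. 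Your route is more self-contained and, because it produces a sharp first-order two-sided expansion of $\Lambda_{p_n,q_n}$ rather than mere boundedness of the $\limsup$ and $\liminf$, it identifies the limit $\mathfrak c$ explicitly (along the subsequence where $\bar u_n\to\varphi_1$). Two small points you should add to make this airtight, both of which the paper handles explicitly: (i) since $p\ne q$ and $pq=1$, reduce WLOG to $p>1$ using $\Lambda_{p,q}=\Lambda_{q,p}$, so that $\int_\Omega|\varphi_1|^{p-1}$ and $\int_\Omega|\bar u_n|^{p-1}$ are finite and uniformly bounded away from $0$ --- this is what makes the implicit-function/Newton step giving $c_n,d_n=O(p_n-p)$ legitimate; and (ii) when you write the ``$c_n$-correction is $O(c_n^2)$'', you are differentiating $c\mapsto\|\varphi_1+c\|_{p_n+1}^{p_n+1}$ at exponent $p_n+1$, where the first-order coefficient $(p_n+1)\int_\Omega|\varphi_1|^{p_n-1}\varphi_1$ is not exactly zero but only $O(p_n-p)$, so the correction is actually $O((p_n-p)c_n)=o(p_n-p)$; this is still negligible, but the reason is slightly different from what you state.
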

\begin{proof}
Assume without loss of generality that $\pfrak(0)=p>1$ (if not, then necessarily $\qfrak(0)=q>1$, and the argument we use below would be similar, since $\Lambda_{p,q}=\Lambda_{q,p}$).    Note that $e(t):=\pfrak(t)\qfrak(t)-1=te'(0)+o(t)$ as $t\to 0.$  We may assume that 
\begin{align}\label{e:hyp}
    e'(0)=\pfrak'(0)\qfrak(0)+\pfrak(0)\qfrak'(0)>0.
\end{align}
This implies that $\pfrak(t)\qfrak(t)-1$ is positive for $t$ small. If $e'(0)<0$, then we use $1-\pfrak(t)\qfrak(t)$ instead of $\pfrak(t)\qfrak(t)-1$ in the proof below.  First, we find $C>0$ such that
\begin{align}\label{L:db 1}
    \limsup_{t\to 0}\left(\Lambda_{\pfrak(t),\qfrak(t)}^\frac{\qfrak(t)+1}{\qfrak(t)}\right)^\frac{1}{\pfrak(t)\qfrak(t)-1}<C.
\end{align}

Let $\varphi \in C^{2,\zeta}(\overline \Omega)\subseteq W^{2, \frac{\qfrak(t)+1}{\qfrak(t)}}_\nu(\Omega)$ be an optimizer for $\mu_{1,q}=\Lambda_{\pfrak(0),\qfrak(0)}=1$ 
 such that
\begin{align}\label{phi:hyp}
\int_\Omega |\Delta \varphi|^{\frac{\qfrak(0)+1}{\qfrak(0)}}\, dx=\Lambda_{\pfrak(0), \qfrak(0)}^{\frac{\pfrak(0)+1}{\qfrak(0)}}=1,\quad 
\int_\Omega |\varphi|^{\mathfrak{p}(0)+1}\, dx = 1
\quad \text{ and }\quad \int_\Omega |\varphi|^{\pfrak(0)-1} \varphi\, dx=0.
\end{align}

Let $F:[0,1]\times \R\to \R$ be given by
\begin{align*}
F(t,s):= \int_\Omega |\varphi+s|^{\pfrak(t)-1} (\varphi+s)\, dx.
\end{align*}
Then $F(0,0)=0$ and $F_s(0,0)=\pfrak(0)\int_\Omega |\varphi|^{\pfrak(0)-1}\, dx>0$ (recall that $\pfrak(0)>1$).  By the implicit function theorem, there is a function $s\in C^1([0,1))$ such that $s(0)=0$ and $F(t,s(t))=0$ for $s\in[0,\delta)$ for some $\delta>0$. Note that, using a Taylor expansion,
\begin{align*}
D(t)&:=\int_\Omega |\varphi(x)+s(t)|^{\pfrak(t)+1}\, dx=1+tD'(0)+o(t),\\
N(t)&=\int_\Omega |\Delta (\varphi(x)+s(t))|^\frac{\qfrak(t)+1}{\qfrak(t)}\, dx=\int_\Omega |\Delta \varphi(x)|^\frac{\qfrak(t)+1}{\qfrak(t)}\, dx=1+tN'(0)+o(t),
\end{align*}
 as $t\to 0^+$, where
 \begin{align*}
D'(0)=\pfrak'(0)\int_\Omega |\varphi|^{\pfrak(0)+1} \ln|\varphi|\, dx,\quad
N'(0)=-\frac{\pfrak'(0)}{\qfrak(0)^2}\int_\Omega |\Delta \varphi|^\frac{\qfrak(0)+1}{\qfrak(0)}\ln |\Delta \varphi|\, dx.
\end{align*}
Here we have used~\eqref{phi:hyp}. Then, $\Phi(x,t):=(\varphi(x)+s(t))/D(t)^\frac{1}{\pfrak(t)+1}$ is a competitor and
\begin{align*}
\left(\Lambda_{\pfrak(t),\qfrak(t)}^\frac{\qfrak(t)+1}{\qfrak(t)}\right)^\frac{1}{\pfrak(t)\qfrak(t)-1}
\leq \left(\int_\Omega |\Delta \Phi(x,t)|^\frac{\qfrak(t)+1}{\qfrak(t)}\, dx\right)^\frac{1}{\pfrak(t)\qfrak(t)-1}
=\left(\frac{1+tN'(0)+o(t)}{1+tD'(0)+o(t)}\right)^\frac{1}{e'(0)t+o(t)}
\to e^\frac{N'(0)-D'(0)}{e'(0)}. 
\end{align*}
This implies~\eqref{L:db 1}.   Next, we find $C>0$ such that
\begin{align}\label{L:db} 
    \liminf_{t\to0^+}\left(\Lambda_{\pfrak(t),\qfrak(t)}^\frac{\qfrak(t)+1}{\qfrak(t)}\right)^\frac{1}{\pfrak(t)\qfrak(t)-1}>C.
\end{align}
Recall that $\pfrak(t)\qfrak(t)-1>0$ for small $t>0$, by~\eqref{e:hyp}. Now, we adapt the argument from \cite[Lemma~3.4]{ST}.  By Lemma~\ref{lem:lambda d}, we have that 
\begin{align}\label{eq:relation_aux}
    \Lambda_{\pfrak(t), \qfrak(t)}= D_{\pfrak(t), \qfrak(t)}^{-1},
\end{align}
so that \eqref{L:db} follows from an upper bound on $D_{\pfrak(t), \qfrak(t)}$.
Let $(f(t),g(t))$ be a maximizers of $D_{\pfrak(t), \qfrak(t)}$ such that $\|f(t)\|_\frac{\pfrak(t)+1}{\pfrak(t)}=1$ and $\|g(t)\|_\frac{\qfrak(t)+1}{\qfrak(t)}=1$. Then,
\begin{align*}
    \left|
    \int_\Omega |f(t)|^\frac{\pfrak(t)+1}{\pfrak(t)}-|f(t)|^\frac{\pfrak(0)+1}{\pfrak(0)}\, dx
    \right|
    &=\left|\frac{\pfrak(t)+1}{\pfrak(t)}-\frac{\pfrak(0)+1}{\pfrak(0)}\right|    \left|
    \int_\Omega \int_0^1 |f(t)|^{s\frac{\pfrak(t)+1}{\pfrak(t)}+(1-s)\frac{\pfrak(0)+1}{\pfrak(0)}}\ln|f(t)|\, ds\, dx
    \right|\\
    &\leq Ct,
\end{align*}
where we used that there are $\delta,k>0$ such that, for all $t>0$ small,
\begin{align*}
\left|\int_\Omega|f(t)|^{s\frac{\pfrak(t)+1}{\pfrak(t)}+(1-s)\frac{\pfrak(0)+1}{\pfrak(0)}}\ln|f(t)|\, dx\right|
\leq k\int_\Omega|f(t)|^{\frac{\pfrak(0)+1}{\pfrak(0)}+\delta}+|f(t)|^{\frac{\pfrak(0)+1}{\pfrak(0)}-\delta}
\, dx\leq C
\end{align*}
by the regularity of $f(t)$, arguing as in \cite[Lemma 3.4]{ST}.  As a consequence, there is $c>0$ such that
\begin{align*}
\|f(t)\|_{\frac{\pfrak(0)+1}{\pfrak(0)}} \leq 1+ct\quad \text{ as }t\to 0^+ \quad \text{ and, similarly, }\quad \|g(t)\|_{\frac{\qfrak(0)+1}{\qfrak(0)}} \leq 1+ct.
\end{align*}
Recall that we are assuming~\eqref{e:hyp}. Therefore, using that $D_{\pfrak(0),\qfrak(0)}=1$ and \eqref{equivalent_def_D},
\begin{align*}
\limsup_{t\to 0^+}D_{\pfrak(t),\qfrak(t)}^\frac{1}{\pfrak(t)\qfrak(t)-1}
&=\limsup_{t\to 0^+}\left( \int_\Omega f(t) K g(t) \right)^\frac{1}{\pfrak(t)\qfrak(t)-1}\\
&=\limsup_{t\to 0^+}
\left(\|f(t)\|_\frac{\pfrak(0)+1}{\pfrak(0)}\|g(t)\|_\frac{\qfrak(0)+1}{\qfrak(0)} \right)^\frac{1}{\pfrak(t)\qfrak(t)-1}
\left( \frac{\int_\Omega f(t) K g(t)}{\|f(t)\|_\frac{\pfrak(0)+1}{\pfrak(0)}\|g(t)\|_\frac{\qfrak(0)+1}{\qfrak(0)}} \right)^\frac{1}{\pfrak(t)\qfrak(t)-1}\\
&\leq \limsup_{t\to0^+}
(1+ct+o(t))^\frac{1}{te'(0)+o(t)}\leq C
\end{align*}
for some $C>0$. Here, we used that 
\begin{align*}
     \frac{\int_\Omega f(t) K g(t)}{\|f(t)\|_\frac{\pfrak(0)+1}{\pfrak(0)}\|g(t)\|_\frac{\qfrak(0)+1}{\qfrak(0)}} \leq D_{\pfrak(0),\qfrak(0)}=1.
\end{align*}
This, together with \eqref{eq:relation_aux}, directly implies \eqref{L:db}.

Now, let 
$p_n:=p(\tfrac{1}{n})$, $q_n:=q(\tfrac{1}{n})$, $p:=\pfrak(0)$, $q:=\qfrak(0)$. Then, by \eqref{L:db 1} and \eqref{L:db}, up to a subsequence, \eqref{claim limit lambda} holds. 
\end{proof}

We preliminary observe that, if $p=q=1$, then the couple $(\varphi_1, \psi_1)$ satisfying \eqref{eigen} and \eqref{eigen normaliz} is such that $\varphi_1=\psi_1$ (see \cite[Lemma 2.6]{ST2}) and it coincides with the normalized eigenfunction of the first nontrivial Neumann eigenvalue of the scalar equation, namely 
\begin{equation}\label{eq:Neumann_eigen} -\Delta \varphi_1 = \mu_1\varphi_1 \text{ in } \Omega, \quad \partial_\nu \varphi_1 =0 \text{ on } \partial \Omega, 
\end{equation}
and $\mu_1=\mu_{1,1}=1$. Let us denote $\Lambda_n:=\Lambda_{p_n, q_n}$.
\begin{lemma}\label{lem pq=1 2}
    Under the assumptions of Theorem~\ref{mu1:prop} with $p=q=1$, one has that  \eqref{claim limit lambda} holds. \end{lemma}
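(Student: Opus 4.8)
The plan is to show that the sequence $\big(\Lambda_n^{(p_n+1)(q_n+1)/(p_nq_n-1)}\big)_n$ is bounded and bounded away from $0$, after which a subsequence converges to some $\mathfrak{c}\in(0,\infty)$ and \eqref{claim limit lambda} follows by Bolzano--Weierstrass. Writing
\[
\Lambda_n^{\frac{(p_n+1)(q_n+1)}{p_nq_n-1}}=\exp\!\Big(\tfrac{(p_n+1)(q_n+1)}{p_nq_n-1}\ln\Lambda_n\Big),
\]
and noting that $(p_n+1)(q_n+1)\to 4$ while $|p_nq_n-1|=|e(1/n)|\ge \tfrac{|e'(0)|}{2n}$ for $n$ large --- this is the only place where the hypothesis $e'(0)\ne 0$ enters --- it suffices to establish the quantitative estimate $|\ln\Lambda_n|=O(1/n)$. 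Since $(p_n,q_n)$ is subcritical, $D_n:=D_{p_n,q_n}$ is attained (Lemma~\ref{lemma maximizing sequences}) and $\Lambda_n=D_n^{-1}$ (Lemma~\ref{lem:lambda d}); moreover $\Lambda_{1,1}=\mu_{1,1}=1$, so $D_{1,1}=1$. Hence the goal reduces to showing $D_n=1+O(1/n)$.

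For the lower bound, I would test the dual characterization \eqref{equivalent_def_D} of $D_n$ with the pair $(\varphi_1,\varphi_1)$. This is admissible since $\int_\Omega\varphi_1=0$ and $\varphi_1\in L^\infty(\Omega)$; moreover $-\Delta\varphi_1=\mu_1\varphi_1=\varphi_1$ yields $K\varphi_1=\varphi_1$, so $\int_\Omega\varphi_1K\varphi_1=\|\varphi_1\|_2^2=1$ by \eqref{eigen normaliz}. Therefore $D_n\ge \|\varphi_1\|_{\alpha_n}^{-1}\|\varphi_1\|_{\beta_n}^{-1}$, and since $\varphi_1$ is bounded and $\alpha_n,\beta_n\to 2$ with $|\alpha_n-2|+|\beta_n-2|=O(1/n)$, the elementary differentiability of $r\mapsto\|\varphi_1\|_r$ at $r=2$ gives $\|\varphi_1\|_{\alpha_n}=1+O(1/n)$ and $\|\varphi_1\|_{\beta_n}=1+O(1/n)$; hence $D_n\ge 1-O(1/n)$.

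For the upper bound, let $\bar u_n$ be the optimizer for $\Lambda_n$ with $\|\bar u_n\|_{p_n+1}=1$, and set $(f_n,g_n):=\big(|\bar u_n|^{p_n-1}\bar u_n,\ \Lambda_n^{-1}(-\Delta\bar u_n)\big)$, which by Lemma~\ref{lem:lambda d} optimizes $D_n$ with $\|f_n\|_{\alpha_n}=\|g_n\|_{\beta_n}=1$, so $D_n=\int_\Omega f_nKg_n$. By Theorem~\ref{main thm lambda} we have $\bar u_n\to\varphi_1$ in $C^{2,\zeta}(\overline\Omega)$, so $f_n,g_n$ are uniformly bounded in $L^\infty(\Omega)$, and in particular $(f_n,g_n)\in (X^2\setminus\{0\})\times(X^2\setminus\{0\})$; consequently, by definition of $D_{1,1}=1$,
\[
D_n=\int_\Omega f_nKg_n\le \|f_n\|_2\,\|g_n\|_2\, D_{1,1}=\|f_n\|_2\,\|g_n\|_2.
\]
Arguing as in \cite[Lemma~3.4]{ST} (using the uniform $L^\infty$ bound on $f_n$ and $|\alpha_n-2|=O(1/n)$ to compare $\int_\Omega|f_n|^2$ with $\int_\Omega|f_n|^{\alpha_n}=1$), one gets $\|f_n\|_2=1+O(1/n)$, and likewise $\|g_n\|_2=1+O(1/n)$, whence $D_n\le 1+O(1/n)$.

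Combining the two bounds yields $D_n=1+O(1/n)$, hence $|\ln\Lambda_n|=|\ln D_n|=O(1/n)$, and then $\big|\tfrac{(p_n+1)(q_n+1)}{p_nq_n-1}\ln\Lambda_n\big|\le \tfrac{4+o(1)}{|e(1/n)|}\cdot O(1/n)=O(1)$, so $\Lambda_n^{(p_n+1)(q_n+1)/(p_nq_n-1)}$ stays in a compact subinterval of $(0,\infty)$ and a convergent subsequence proves \eqref{claim limit lambda}. The heart of the matter --- and the only step beyond soft compactness arguments --- is upgrading $D_n\to 1$ to the rate $D_n=1+O(1/n)$, which is exactly what resolves the indeterminate form $1^{\pm\infty}$; it relies on the $C^{2,\zeta}$-closeness of $\bar u_n$ to $\varphi_1$ (already available from Theorem~\ref{main thm lambda}) together with a Taylor-type comparison of the $L^{\alpha_n}$- and $L^2$-norms of the uniformly bounded densities $f_n,g_n$.
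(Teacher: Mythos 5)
Your argument is correct, and it takes a genuinely different route from the paper's. The paper proves \eqref{claim limit lambda} directly via a PDE computation: it tests the system for $(\bar u_n,\bar v_n)$ against the eigenfunction $\varphi_1$, tests the eigenvalue equation $-\Delta\varphi_1=\varphi_1$ against $\bar u_n$ and $\bar v_n$, sums the identities, and then performs a series of fundamental-theorem-of-calculus manipulations on the resulting integrals to isolate $\ln\Lambda_n^{1/(p_nq_n-1)}$ and pass to the limit using the $C^{2,\zeta}$ convergence. This has the advantage of producing the explicit value
\[
\lim_{n\to\infty}\ln\!\bigl(\Lambda_n^{1/(p_nq_n-1)}\bigr)=-\tfrac14\int_\Omega|\varphi_1|^2\ln(|\varphi_1|^2),
\]
which is then exploited in Remark~\ref{formula}. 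Your approach instead works entirely on the dual side: using $D_n=\Lambda_n^{-1}$ and $D_{1,1}=1$, you establish the two-sided rate $D_n=1+O(1/n)$ --- the lower bound by testing with $(\varphi_1,\varphi_1)$ and the upper bound by comparing $L^{\alpha_n}$/$L^{\beta_n}$ and $L^2$ norms of the uniformly bounded optimizers $(f_n,g_n)$ --- and then concludes by Bolzano--Weierstrass once the quantitative bound $|p_nq_n-1|\gtrsim 1/n$ (from $e'(0)\ne 0$) has neutralized the $1^{\pm\infty}$ indeterminacy. Both proofs hinge on the same two ingredients from Theorem~\ref{main thm lambda} (namely $\Lambda_n\to1$ and the $C^{2,\zeta}$ convergence of $\bar u_n$), but yours is softer: it proves existence of the subsequential limit $\mathfrak{c}$ without identifying it. For the statement as posed that is exactly enough; the paper's computation is the extra work needed to supply the formula used downstream. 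One small point worth spelling out if you write this up: for the norm Taylor expansions, $r\mapsto\|h\|_r$ is $C^1$ near $r=2$ for bounded $h$ because $|h|^r\ln|h|$ is uniformly bounded (the logarithmic singularity at $h=0$ is absorbed by the power), so the zeros of $\varphi_1$ and of $f_n$ cause no difficulty.
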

\begin{proof}
   To see this, let $(\bar u_n, \bar v_n)$ be such that
\begin{equation}\label{eq uv}
    -\Delta \bar u_n = \Lambda_n |\bar v_n|^{q_n-1}\bar v_n, \quad -\Delta \bar v_n = \Lambda_n |\bar u_n|^{p_n-1}\bar u_n, \quad \partial_\nu \bar u_n=\partial_\nu \bar v_n=0 \text{ on } \partial \Omega, 
\end{equation}
see Proposition~\ref{equivalence} and Remark~\ref{eq:scalings_remark}. 
We now test both equations in~\eqref{eq uv} with $\varphi_1$, whereas we test \eqref{eq:Neumann_eigen} with both $\bar u_n$ and $\bar v_n$. Therefore, we get
\begin{align*}
    \mu_1 \int_\Omega \varphi_1 \bar v_n &= \int_\Omega \nabla \varphi_1 \nabla \bar v_n = \Lambda_n \int_\Omega |\bar u_n|^{p_n-1}\bar u_n \varphi_1,\quad \mu_1 \int_\Omega  \varphi_1 \bar u_n &= \int_\Omega \nabla \varphi_1 \nabla \bar u_n = \Lambda_n \int_\Omega |\bar v_n|^{q_n-1}\bar v_n \varphi_1.
\end{align*}
Let us now sum up the two equalities above, and recall $\mu_1=\mu_{1,1}=1$, to obtain
\[
\Lambda_n \left(\int_\Omega |\bar u_n|^{p_n-1}\bar u_n \varphi_1 + \int_\Omega |\bar v_n|^{q_n-1}\bar v_n \varphi_1 \right) = \int_\Omega  \varphi_1 \bar v_n + \int_\Omega \varphi_1 \bar u_n,
\]
which we rewrite as 
\begin{equation}\label{sum asympt}
\int_\Omega \bar u_n \varphi_1 \left( |\bar u_n|^{p_n-1}\Lambda_n -1  \right) + \int_\Omega \bar v_n \varphi_1 \left( |\bar v_n|^{q_n-1}\Lambda_n - 1\right)=0.
\end{equation}
Consider the first integral. One has
\begin{align*} 
\int_\Omega \bar u_n \varphi_1 \left( |\bar u_n|^{p_n-1}\Lambda_n - 1 \right)&= \int_\Omega \bar u_n \varphi_1 \left( |\bar u_n|^{p_n-1} \Lambda_n - |\bar u_n|^{p_n(1-q_n)} \right) + \int_\Omega |\bar u_n|^{p_n(1-q_n)} \bar u_n \varphi_1 - \int_\Omega \bar u_n \varphi_1 \\
& = \int_\Omega |\bar u_n|^{p_n(1-q_n)} \bar u_n \varphi_1 \left( \Lambda_n |\bar u_n|^{p_nq_n-1} -1 \right) + \int_\Omega |\bar u_n|^{p_n(1-q_n)} \bar u_n \varphi_1- \int_\Omega \bar u_n \varphi_1.
\end{align*}
One has
\begin{align*}
& \int_\Omega |\bar u_n|^{p_n(1-q_n)} \bar u_n \varphi_1 \left( \Lambda_n |\bar u_n|^{p_nq_n-1} -1 \right)\\
&= \int_\Omega   |\bar u_n|^{p_n(1-q_n)} \bar u_n \varphi_1 \int_0^1 \frac{d}{ds} \left( \left( (\Lambda_n)^{\frac 1{p_nq_n-1}} |\bar u_n| \right)^{s(p_nq_n-1)} \right) \, ds \, dx\\
&= (p_nq_n-1) \int_\Omega \int_0^1 |\bar u_n|^{p_n(1-q_n)} \bar u_n \varphi_1 \left( \frac{\ln (\Lambda_n)}{p_nq_n-1}  + \ln |\bar u_n| \right) (\Lambda_n)^s |\bar u_n|^{s(p_nq_n-1)} \, ds \, dx.
\end{align*}
A similar expression holds for the second integral in~\eqref{sum asympt}. 
One concludes that
\begin{multline*} (p_nq_n-1) \int_\Omega \int_0^1 |\bar u_n|^{p_n(1-q_n)} \bar u_n \varphi_1 \left( \frac{\ln (\Lambda_n)}{p_nq_n-1}  + \ln |\bar u_n| \right) (\Lambda_n)^s |\bar u_n|^{s(p_nq_n-1)} \, ds \, dx  \\+ (p_nq_n-1) \int_\Omega \int_0^1 |\bar v_n|^{q_n(1-p_n)} \bar v_n \varphi_1 \left( \frac{\ln (\Lambda_n)}{p_nq_n-1}  + \ln |\bar v_n| \right) (\Lambda_n)^s |\bar v_n|^{s(p_nq_n-1)} \, ds \, dx = -h(p_n, q_n), \end{multline*}
where
\[ h(p_n, q_n)= \int_\Omega |\bar u_n|^{p_n(1-q_n)} \bar u_n \varphi_1 - \int_\Omega \bar u_n \varphi_1 +\int_\Omega |\bar v_n|^{q_n(1-p_n)} \bar v_n \varphi_1 - \int_\Omega \bar v_n \varphi_1, \]
and, by strong convergence of $(\bar u_n, \bar v_n) \to (\varphi_1, \varphi_1)$, see Theorem~\ref{main thm lambda}, 
\begin{equation}\label{limit lambda}
\lim_{n \to \infty} \ln \left( (\Lambda_n)^{\frac{1}{p_nq_n-1}} \right) = - \lim_{n \to \infty} \frac{2\int_\Omega |\varphi_1|^{2}\ln (|\varphi_1|)   + \frac{h(p_n,q_n)}{p_nq_n-1}}{2\int_\Omega |\varphi_1|^{2} }.
\end{equation}
We now focus on the term 
\[ \frac{h(p_n, q_n)}{p_n q_n -1} = \frac{\int_\Omega |\bar u_n|^{p_n(1-q_n)} \bar u_n \varphi_1 - \int_\Omega \bar u_n \varphi_1 +\int_\Omega |\bar v_n|^{q_n(1-p_n)} \bar v_n \varphi_1 - \int_\Omega \bar v_n \varphi_1
}{p_nq_n-1}. \]
Notice that
\begin{align*} \int_\Omega |\bar u_n|^{p_n(1-q_n)} \bar u_n \varphi_1 - \int_\Omega \bar u_n \varphi_1 &= \int_\Omega \bar u_n \varphi_1 \int_0^1 \frac{d}{ds} |\bar u_n|^{s(p_n-p_nq_n)}\\
&=(p_n-p_nq_n) \int_\Omega \bar u_n \varphi_1 \int_0^1 |\bar u_n|^{s(p_n-p_nq_n)} \ln |\bar u_n| 
\end{align*}
thus, also using the hypothesis $e'(0)\ne 0$, 
\[ \lim_{n \to \infty} \frac{\int_\Omega |\bar u_n|^{p_n(1-q_n)} \bar u_n \varphi_1 - \int_\Omega \bar u_n \varphi_1  }{p_nq_n-1} = \lim_{n \to \infty} \left(-1+ \frac{p_n-1}{p_nq_n-1}\right) \int_\Omega |\varphi_1|^{2} \ln |\varphi_1|.  \]
A similar expression holds for 
\[\int_\Omega |\bar v_n|^{q_n(1-p_n)} \bar v_n \varphi_1 - \int_\Omega \bar v_n \varphi_1.  \]
Therefore, using that $\norm{\varphi_1}_2=1$, see \eqref{eigen normaliz}, and recalling \eqref{limit lambda}, 
we get
\[ \lim_{n \to \infty} \ln \left( (\Lambda_n)^{\frac{1}{p_nq_n-1}} \right) = - \lim_{n \to \infty}
\frac{p_n+q_n-2}{2(p_nq_n-1)} \int_\Omega |\varphi_1|^{2} \ln |\varphi_1| . 
\]
Since, due to our assumption $e'(0) \ne 0$, 
\[\lim_{n \to \infty} \frac{p_n+q_n-2}{2(p_nq_n-1)} =\frac{\pfrak'(0)+\qfrak'(0)}{2e'(0)}=\frac 12, \quad \text{ then }\quad
\lim_{n \to \infty}  \ln \left( (\Lambda_n)^{\frac{1}{p_nq_n-1}} \right)  = 
 - \frac 14 \int_\Omega |\varphi_1|^{2} \ln (|\varphi_1|^2),
\]
 which concludes the proof.
\end{proof}
\begin{remark}\label{rem_hoe}
Notice that, when $p=q=1$, it is possible to extend the proof above to partially cover the case $e'(0)=0$, looking at higher order expansions. For instance, assume 
\[ e''(0)\ne 0, \quad p'(0)=q'(0)=e'(0)=0. \]
Then
\[ \lim_n \frac{p_n- 1}{p_nq_n-1}= \frac{p''(0)}{p''(0)+q''(0)}= \frac{p''(0)}{e''(0)} \]
and similarly
\[\lim_n \frac{q_n- 1}{p_nq_n-1} = \frac{q''(0)}{p''(0)+q''(0)}=\frac{q''(0)}{e''(0)}, \]
thus both limits exist finite. As a consequence,
\[ \lim_{n \to \infty} \frac{p_n+q_n -2}{2(p_n q_n-1)} = \frac{p''(0) + q''(0)}{2(p''(0) + q''(0))} = \frac 12,  \]
and
\[ \lim_{n \to \infty}  \ln \left( (\Lambda_n)^{\frac{1}{p_nq_n-1}} \right)  = 
 - \frac 14 \int_\Omega |\varphi_1|^{2} \ln (|\varphi_1|^2). \]
Similar arguments hold if for some $k$, $p'(0)=\dots=p^{(k)}(0)=0$, $q'(0)=\dots=q^{(k)}(0)=0$ and $e^{(k+1)}(0) \ne 0$. 
\end{remark}
\begin{remark}\label{formula} Following the same lines of the proof above, it is possible to relate the value of $ \mathfrak{c} := \lim_{n\to \infty} \Lambda_n^{\frac{(p_n+1)(q_n+1)}{p_n q_n-1}}$ with the normalized eigenfunctions $(\varphi_1,\psi_1)$, namely, the solutions of \eqref{eigen} normalized such that~\eqref{eigen normaliz} holds.  In particular, 
\begin{multline*} \lim_{n \to \infty} \ln \left( (\Lambda_n)^{\frac{1}{p_nq_n-1}} \right) = 
- \frac{\pfrak'(0)}{4e'(0)} \int_\Omega |\varphi_1|^{p+1} \ln |\varphi_1|^2 - \frac{\qfrak'(0)}{4e'(0)} \int_\Omega |\psi_1|^{q+1} \ln |\psi_1|^2 \\ 
+ \lim_{n \to \infty}\left( \frac{\int_\Omega u_n |\varphi_1|^{p-1}\varphi_1 - \int_\Omega u_n \varphi_1 |u_n|^{p-1}}{2(p_nq_n-1)}+ \frac{\int_\Omega v_n |\psi_1|^{q-1}\psi_1 - \int_\Omega v_n \psi_1 |v_n|^{q-1}}{2(p_nq_n-1)} \right).
\end{multline*}
If $p=q=1$, then $\varphi_1=\psi_1$, and 
\[ \lim_{n \to \infty} \ln \left( (\Lambda_n)^{\frac{1}{p_nq_n-1}} \right) = - \frac{\pfrak'(0)+ \qfrak'(0)}{4e'(0)} \int_\Omega |\varphi_1|^{2} \ln |\varphi_1|^2. \]
\end{remark}

\subsection{The case $p, q \to 0$}

Recall the definitions of  $I_0$, $c_0$, and $\mathcal{M}$ given in \eqref{i0}. The following result shows that if $p=q$, then \eqref{system} reduces to a scalar equation.
\begin{lemma}\label{p=q}
If $p=q \ge 0$ and $(u, v)$ is a classical solution to \eqref{system}, then $u=v$ and
\[ -\Delta u = |u|^{p-1} u \text{ in } \Omega, \quad \partial_\nu u=0 \text{ on } \partial \Omega. \]
\end{lemma}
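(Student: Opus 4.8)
The plan is to show first that any classical solution must satisfy $u\equiv v$, after which the scalar equation is immediate: if $u=v$, then the first equation of \eqref{system} reads $-\Delta u=|v|^{q-1}v=|u|^{p-1}u$ (using $p=q$; when $p=q=0$ this is $-\Delta u=\sign(u)$), and the boundary condition $\partial_\nu u=0$ is already contained in \eqref{system}. So the whole statement reduces to proving $u=v$.

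To this end, set $w:=u-v$ and subtract the two equations of \eqref{system}, obtaining
\[
-\Delta w = f_p(v)-f_p(u)\quad\text{in }\Omega,\qquad \partial_\nu w = 0\quad\text{on }\partial\Omega,
\]
with $f_p$ as defined above. Testing this identity with $w$ and integrating by parts (the boundary term vanishes because $\partial_\nu w=0$) yields
\[
\int_\Omega |\nabla w|^2 = \int_\Omega \big(f_p(v)-f_p(u)\big)(u-v) = -\int_\Omega \big(f_p(u)-f_p(v)\big)(u-v)\le 0,
\]
since $t\mapsto f_p(t)$ is nondecreasing on $\mathbb{R}$. Hence $\nabla w\equiv 0$, and since $\Omega$ is a domain (connected), $w\equiv c$ for some constant $c\in\mathbb{R}$.

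It remains to show $c=0$. Since $w$ is constant we have $\Delta w\equiv 0$, so the subtracted equation forces $f_p(v)=f_p(v+c)$ pointwise in $\Omega$. When $p>0$ the map $f_p$ is \emph{strictly} increasing, hence injective, and we conclude at once $v=v+c$, i.e.\ $c=0$ and $u=v$. When $p=0$ one only has that $f_0=\sign$ is nondecreasing, so $\sign(v)=\sign(v+c)$; assuming $c\neq 0$, say $c>0$, this says $v(x)\notin[-c,0]$ for a.e.\ $x\in\Omega$. Using the regularity of $v$ (Proposition~\ref{equivalence} gives $v\in C^{1,\zeta}(\overline\Omega)$) together with the connectedness of $\Omega$, one then argues that $v\ge 0$ everywhere or $v\le -c$ everywhere: in the first case $u=v+c>0$, so integrating $-\Delta v=\sign(u)\equiv 1$ over $\Omega$ and using $\partial_\nu v=0$ gives $0=|\Omega|$, and the second case is analogous, a contradiction. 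Thus $c=0$ in all cases, and $u=v$.

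The only genuinely delicate point is this last step when $p=q=0$: because $\sign$ is not injective, ruling out a nonzero additive constant needs the extra continuity/connectedness input above (equivalently, one may note that $\Delta v=0$ a.e.\ on $\{v=0\}$ forces $\sign(u)=0$, hence $u=0$, a.e.\ there, so that if $|\{v=0\}|>0$ then $c=0$, while if $|\{v=0\}|=0$ the preceding dichotomy applies). For $p=q>0$ the argument is completely elementary.
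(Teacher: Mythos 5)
Your proof is correct but follows a genuinely different route from the paper's in the step that kills the additive constant, so it is worth comparing. The paper handles $p>0$ by citing \cite[Lemma 2.6]{ST2}, and for $p=0$ it tests each equation against both $u$ and $v$ separately, combining into the Young-type inequality $\int|\nabla u|^2+\int|\nabla v|^2\le 2\int\nabla u\cdot\nabla v$ to conclude $\nabla u=\nabla v$; it then invokes Lemma~\ref{lem:properties M} (the properties of $\mathcal M$) to conclude $c=0$, using $\int_\Omega\sign u=\int_\Omega\sign v=0$ so that both lie in $\mathcal M$ and hence the shift must vanish. Your first step — testing the subtracted equation against $w=u-v$ and invoking monotonicity of $f_p$ — is the same computation in slightly cleaner form and is fine. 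Where you genuinely diverge is in concluding $c=0$: for $p>0$ you use strict injectivity of $f_p$ directly, and for $p=0$ you argue via continuity and connectedness of $\Omega$ plus the divergence theorem (or your equivalent observation using $\{v=0\}$). This is self-contained, covers $p>0$ and $p=0$ uniformly, and avoids importing the $\mathcal{M}$-machinery from \cite{PW}, which the paper relies on. One small caveat: you cite Proposition~\ref{equivalence} for $v\in C^{1,\zeta}(\overline\Omega)$, but that proposition concerns solutions of the quasilinear eigenvalue problem \eqref{eq lambda2}, not directly of \eqref{system:p=0}; the regularity you need does hold for strong solutions (since $\Delta v=\sign(u)\in L^\infty$, Lemma~\ref{lemma:regularity} gives $v\in W^{2,r}(\Omega)$ for all $r$, hence $v\in C^{1,\zeta}(\overline\Omega)$), so this is a misdirected citation, not a gap. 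Also note that since the dichotomy step only gives $v\le -c$ (not $v<-c$) everywhere in the second case, you should add that $\{v=-c\}$ has measure zero, so $\sign(u)=-1$ a.e., before applying the divergence theorem; this is implicit in your write-up but worth making explicit.
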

\begin{proof}
Indeed, if $p>0$ this was proved in  \cite[Lemma 2.6]{ST2}.
The case $p=0$ follows by similar arguments. Let $u, v$ be a classical solution of~\eqref{system}, and test by $u, v$ both equations to get 
\[ \int_\Omega |\nabla u|^2 = \int_\Omega \sign(v) u, \quad \int_\Omega |\nabla v|^2 = \int_\Omega \sign(u) v, \quad \int_\Omega |u| = \int_\Omega \nabla u \nabla v = \int_\Omega |v|.  \]
Then
\[ \int_\Omega |\nabla u|^2 + \int_\Omega |\nabla v|^2 \le  \norm{v}_1 + \norm{u}_1 = 2 \int_\Omega \nabla u \nabla v, \]
whence $\nabla u = \nabla v$ in $\Omega$, and $u= v + c$ for some constant $c \in \R$. 
Notice that both $u$ and $v$ belong to $\mathcal{M}$ as $\int_\Omega \sign(u)=\int_\Omega \sign(v)=0$. Therefore, $c=0$ by Lemma~\ref{lem:properties M}. 
\end{proof}
As a consequence, it is natural to expect that least energy solutions for $c_{p_n,q_n}$ converge to least energy solutions of $c_0$ as $p_n, q_n \to 0$: this is the content of Theorem~\ref{p, q 0}, which we will prove in what follows.  
Let us start with a preliminary lemma, inspired by Lemma 3.1 in \cite{ST}. 
 \begin{lemma}\label{lem bound below}
  There exists $\delta>0$ such that $D_{p,q}\geq \delta$ for every $p\in [0,\infty)$ and $q>0$.
 \end{lemma}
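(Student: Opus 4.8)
The plan is to exhibit a single fixed competitor in the dual characterization \eqref{equivalent_def_D} of $D_{p,q}$ whose Rayleigh quotient stays bounded away from $0$ uniformly in $(p,q)$, in the spirit of \cite[Lemma 3.1]{ST}. First I would fix once and for all a nonzero function $g_0\in C^\infty(\overline\Omega)$ with $\int_\Omega g_0=0$ (for instance an eigenfunction associated to the first nontrivial Neumann eigenvalue of $-\Delta$), and set $w_0:=Kg_0\in C^\infty(\overline\Omega)$, so that $-\Delta w_0=g_0$ in $\Omega$, $\partial_\nu w_0=0$ on $\partial\Omega$ and $\int_\Omega w_0=0$. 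Since $g_0\not\equiv0$, the function $w_0$ is non-constant, and an integration by parts gives
\[
c_1:=\int_\Omega g_0\,Kg_0=\int_\Omega(-\Delta w_0)\,w_0=\int_\Omega|\nabla w_0|^2>0,
\]
a constant depending only on $\Omega$ and on the (now fixed) choice of $g_0$.

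Next, given $p\ge0$ and $q>0$, I would write $\alpha=\frac{p+1}{p}$ (with the convention $\alpha=\infty$ when $p=0$) and $\beta=\frac{q+1}{q}$. Since $\int_\Omega g_0=0$, the pair $(g_0,g_0)$ is admissible in the supremum \eqref{equivalent_def_D}, so that
\[
D_{p,q}\ \ge\ \frac{\int_\Omega g_0\,Kg_0}{\|g_0\|_\alpha\,\|g_0\|_\beta}\ =\ \frac{c_1}{\|g_0\|_\alpha\,\|g_0\|_\beta}.
\]
Because $g_0\in L^\infty(\Omega)$ and $|\Omega|<\infty$, for every $s\in[1,\infty]$ one has $\|g_0\|_s\le\|g_0\|_{L^\infty}\,|\Omega|^{1/s}\le M:=\|g_0\|_{L^\infty}\max\{1,|\Omega|\}$, a bound uniform in $s$. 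Hence $D_{p,q}\ge c_1/M^2=:\delta>0$, with $\delta$ independent of $(p,q)$, which is exactly the claim.

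I do not foresee any genuine obstacle here: the only point requiring care is that the denominator $\|g_0\|_\alpha\|g_0\|_\beta$ must be controlled \emph{simultaneously} for all the exponents arising as $(p,q)$ ranges over $[0,\infty)\times(0,\infty)$ — in particular as $p\to0^+$ or $q\to0^+$ (so $\alpha,\beta\to\infty$) and as $p,q\to\infty$ (so $\alpha,\beta\to1^+$) — and this is precisely why the competitor is taken bounded, so that all of its $L^s$-norms remain between $\|g_0\|_{L^1}$ and $\|g_0\|_{L^\infty}$ up to the harmless factor $\max\{1,|\Omega|\}$. (If one prefers to keep $D_{p,q}$ defined only for $p>0$, the same test pair gives the bound for $p>0$, $q>0$, which is all that is needed for the application in Theorem~\ref{p, q 0}.)
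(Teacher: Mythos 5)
Your proof is correct and essentially the same as the paper's: both pick the (bounded) eigenfunction associated to the first nonzero Neumann eigenvalue as the test pair in the dual characterization of $D_{p,q}$, use that the numerator $\int_\Omega \psi K\psi$ is a fixed positive constant, and control the denominator $\|\psi\|_\alpha\|\psi\|_\beta$ uniformly in the exponents via $\|\psi\|_s\le C\|\psi\|_\infty$. The only cosmetic difference is that the paper evaluates the numerator as $\tfrac1{\mu_1}\|\psi\|_2^2$ using the eigenvalue equation, while you write it as $\int_\Omega|\nabla w_0|^2$; these are equal.
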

 \begin{proof}
 Take $\psi$ to be an $L^2$ normalized eigenfunction with zero average associated to the first nonzero scalar Neumann eigenvalue of the Laplace operator, $\mu_1$. We have
 \[
 \|\psi\|_\frac{p+1}{p}\leq |\Omega|^\frac{p}{p+1}\| \psi \|_\infty\leq C\|\psi \|_\infty \text{ for every $p\in [0,\infty)$}
 \]
 and, analogously, $\|\psi\|_\frac{q+1}{q}\leq C\| \psi\|_\infty$ for every $q\in [0,\infty)$. Then,
 $D_{p,q}\geq \delta:=(\mu_1  C^2 \|\psi\|_\infty^2)^{-1}>0$.
 \end{proof}

\begin{lemma}\label{uniform bounds}
Let $(u_n, v_n)$ be a optimizer for $c_{p_n, q_n}$, where $p_n, q_n \to 0$.  
One has 
   \[ u_n, v_n \text{ are uniformly bounded in } W^{2,s} (\Omega) \text{ for any } s \ge 2. \]
\end{lemma}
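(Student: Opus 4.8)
The plan is to run the reduction-by-inversion bootstrap on the rescaled optimizers. Let $(u_n,v_n)$ achieve $c_{p_n,q_n}$ with $p_n,q_n\to 0$. By Lemma~\ref{lemma lambda c} there is an optimizer $w_n$ of $\Lambda_{p_n,q_n}$ with $\|w_n\|_{p_n+1}=1$ and $(u_n,v_n)=(\Lambda_{p_n,q_n}^{\frac{q_n+1}{p_nq_n-1}} w_n,\ -\Lambda_{p_n,q_n}^{\frac{p_n+1}{p_nq_n-1}-\frac{1}{q_n}} |\Delta w_n|^{\frac{1}{q_n}-1}\Delta w_n)$. Since $p_nq_n\to 0$, the exponents $\frac{q_n+1}{p_nq_n-1}$, $\frac{p_n+1}{p_nq_n-1}$, $\frac{1}{q_n}$ need to be controlled; for that I would first establish that $\Lambda_{p_n,q_n}$ stays in a compact subset of $(0,\infty)$. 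The upper bound on $\Lambda_{p_n,q_n}$ comes from testing with a fixed competitor (e.g. a nonconstant Neumann eigenfunction $\psi$, exactly as in Lemma~\ref{lem bound below}), giving $\Lambda_{p_n,q_n}\le \|\Delta\psi\|_{\frac{q_n+1}{q_n}}/\|\psi\|_{p_n+1}\le C$ uniformly; the lower bound $\Lambda_{p_n,q_n}=D_{p_n,q_n}^{-1}\ge\delta>0$ is Lemma~\ref{lem bound below} combined with Lemma~\ref{lem:lambda d}. Hence all the scaling exponents converge and the prefactors relating $(u_n,v_n)$ to $w_n,\Delta w_n$ are bounded above and below.

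Next I would bootstrap regularity for $w_n$ via Lemma~\ref{lemma:regularity}, using the system form \eqref{system scaling}: setting $\tilde v_n:=-|\Delta w_n|^{\frac 1{q_n}-1}\Delta w_n$, one has $-\Delta w_n=|\tilde v_n|^{q_n-1}\tilde v_n$ and $-\Delta\tilde v_n=\Lambda_{p_n,q_n}^{\frac{q_n+1}{q_n}} f_{p_n}(w_n)$. The crucial observation is that the right-hand side of the second equation, namely $|w_n|^{p_n-1}w_n$ (or $\sign$ in the limit), is bounded in $L^\infty$ by $1$ pointwise — since $p_n<1$ for $n$ large, $||w_n|^{p_n-1}w_n|=|w_n|^{p_n}$, and although this is not pointwise bounded, it is bounded in $L^t$ for every $t$ once we have any fixed $L^{s_0}$ bound on $w_n$ with $s_0>0$, because $p_n\to 0$ makes $p_nt\to 0$. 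Concretely: from $\|w_n\|_{p_n+1}=1$ and the Neumann estimate we get $\|\Delta\tilde v_n\|_{\frac{p_n+1}{p_n}}=\Lambda_{p_n,q_n}^{\frac{q_n+1}{q_n}}\le C$, hence $\tilde v_n$ bounded in $W^{2,\frac{p_n+1}{p_n}}$; since $\frac{p_n+1}{p_n}\to\infty$, Sobolev gives $\tilde v_n$ bounded in $L^r$ for arbitrarily large $r$, hence $w_n$ bounded in $W^{2,r}$ for every $r$, hence in $L^\infty$; then $|w_n|^{p_n}\le \max\{1,\|w_n\|_\infty\}$ is uniformly bounded in $L^\infty$, so $-\Delta\tilde v_n$ is uniformly bounded in every $L^s$, so $\tilde v_n$ is bounded in $W^{2,s}$ for every $s$, and then $-\Delta w_n=|\tilde v_n|^{q_n-1}\tilde v_n$ is bounded in every $L^s$ (using $q_n\to 0$ and $L^\infty$ bounds on $\tilde v_n$), giving $w_n$ bounded in $W^{2,s}$ for every $s$. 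Multiplying by the bounded scaling prefactors transfers all of this to $(u_n,v_n)$: $u_n$ bounded in $W^{2,s}$ for every $s$, and $v_n=-|\Delta u_n|^{\frac 1{q_n}-1}\Delta u_n$ satisfies $-\Delta v_n=\Lambda_{p_n,q_n}^{\frac{q_n+1}{q_n}}|u_n|^{p_n-1}u_n$, whose right-hand side is bounded in every $L^s$, so $v_n$ is bounded in $W^{2,s}$ for every $s$ as well.

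The main obstacle, and the point that needs the most care, is making the bootstrap genuinely \emph{uniform in $n$}: each application of Lemma~\ref{lemma:regularity} carries a constant $C(\Omega,s)$, and we must keep $s$ in a fixed range rather than letting it drift with $n$. The clean way to handle this is to do the argument for a fixed target exponent $s\ge 2$ at a time: first prove an $L^\infty$ bound on $w_n$ (three or four fixed steps suffice, each with a fixed constant, because $\frac{p_n+1}{p_n}$ and $\frac{q_n+1}{q_n}$ are both eventually $>N/2$, so a single elliptic step jumps from $L^1$ control to $L^\infty$ control via Sobolev embedding $W^{2,s}\hookrightarrow L^\infty$ for $s>N/2$), and only afterwards exploit that $p_n,q_n\to 0$ to absorb the nonlinear exponents. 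The other delicate point is the transition through the non-smooth case: one should note that nothing here requires $p_n>0$ — if $p_n=0$ along a subsequence the same estimates hold verbatim with $\sign(u_n)$, which is trivially $L^\infty$-bounded by $1$ — and similarly for $q_n$, so the argument is uniform across $\mathcal{A}$ near the origin. Once these uniform bounds are in hand, the stated conclusion follows, and it is exactly what is needed to extract $C^{1,\zeta}$-convergent subsequences in the proof of Theorem~\ref{p, q 0}.
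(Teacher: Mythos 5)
Your plan to first pin $\Lambda_{p_n,q_n}$ in a compact subset of $(0,\infty)$ is fine, and the observation that $|w_n|^{p_n}$ stays controlled once $w_n$ has a uniform $L^{s_0}$ bound is a good one. But the bootstrap breaks at the very first concrete step. You assert that $\|\Delta\tilde v_n\|_{\frac{p_n+1}{p_n}}=\Lambda_{p_n,q_n}^{\frac{q_n+1}{q_n}}\le C$ and, more broadly, that ``the prefactors relating $(u_n,v_n)$ to $w_n,\Delta w_n$ are bounded above and below.'' Neither claim follows from $\Lambda_{p_n,q_n}$ lying in a compact subset of $(0,\infty)$: the exponents $\tfrac{q_n+1}{q_n}$ and $\tfrac{1}{q_n}$ diverge, so $\Lambda_{p_n,q_n}^{\frac{q_n+1}{q_n}}$ (and likewise the scaling factor $\Lambda_{p_n,q_n}^{\frac{p_n+1}{p_nq_n-1}-\frac{1}{q_n}}$ linking $v_n$ to $\tilde v_n$) is unbounded unless $\Lambda_{p_n,q_n}\to 1$ at a rate $O(q_n)$, which you have not established and which is false in general. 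Indeed, from $c_{p_n,q_n}=\frac{p_nq_n-1}{(p_n+1)(q_n+1)}\Lambda_{p_n,q_n}^{\frac{(p_n+1)(q_n+1)}{p_nq_n-1}}$ one expects $\Lambda_{p_n,q_n}\to(-2c_0)^{-1}=(\int_\Omega|u_0|)^{-1}$, a domain-dependent number that is not $1$. When this limit is $>1$, both $\tilde v_n$ and the reciprocal prefactor blow up; when it is $<1$, they both collapse to $0$. The two pathologies happen to cancel when you scale back to $v_n$, but your argument never tracks that cancellation — it relies on both quantities being uniformly bounded, and neither is.

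The paper's proof sidesteps the rescaled optimizer entirely and never needs control on $\Lambda_{p_n,q_n}$. Working directly with the solution $(u_n,v_n)$, it uses $\int_\Omega|u_n|^{p_n-1}u_n=0$ to write $\|u_n\|_{p_n+1}^{p_n+1}=\min_c\|u_n+c\|_{p_n+1}^{p_n+1}$, then combines H\"older with the \emph{fixed} auxiliary eigenvalue $\Lambda_{s-1,\frac{1}{s-1}}$ (whose constant is independent of $n$) to obtain
$\|\Delta u_n\|_{\frac{q_n+1}{q_n}}^{\frac{q_n+1}{q_n}} + \|\Delta v_n\|_{\frac{p_n+1}{p_n}}^{\frac{p_n+1}{p_n}} \le C\bigl(\|\Delta u_n\|_{\frac{q_n+1}{q_n}}^{p_n+1} + \|\Delta v_n\|_{\frac{p_n+1}{p_n}}^{q_n+1}\bigr)$. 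Since the exponents on the left diverge while those on the right tend to $1$, the two norms are forced to stay bounded, and the $W^{2,s}$ bounds follow from Lemma~\ref{lemma:regularity}. If you want to keep your route through $w_n$, you would have to compute $\|\Delta u_n\|_{\frac{q_n+1}{q_n}}$ and $\|\Delta v_n\|_{\frac{p_n+1}{p_n}}$ explicitly in terms of $\Lambda_{p_n,q_n}$ (both come out with exponents tending to $0$, hence bounded) rather than asserting boundedness of the rescaled quantity $\tilde v_n$, which simply is not bounded.
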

\begin{proof}
    Assume first that $p_n, q_n >0$ for all $n$. Observe that for any $s \ge 1$ there exists $N$ such that for any $n \ge N$ one has  
\[ \norm{\Delta u_n}_{s}  \le |\Omega|^{\frac 1 s - \frac{q_n}{q_n+1}} \norm{\Delta u_n}_{\frac{q_n+1}{q_n}} \le C \norm{\Delta u_n}_{\frac{q_n+1}{q_n}}, \quad \norm{\Delta v_n}_{s}  \le |\Omega|^{\frac 1 s - \frac{p_n}{p_n+1}} \norm{\Delta v_n}_{\frac{p_n+1}{p_n}} \le C \norm{\Delta v_n}_{\frac{p_n+1}{p_n}}. \]
Since $(u_n, v_n)$ is a solution of \eqref{system}, then 
\[ \norm{\Delta u_n}_{\frac{q_n+1}{q_n}}^{\frac{q_n+1}{q_n}} + \norm{\Delta v_n}_{\frac{p_n+1}{p_n}}^{\frac{p_n+1}{p_n}} = \norm{v_n}_{q_n+1}^{q_n+1} + \norm{u_n}_{p_n+1}^{p_n+1} = \min_{c \in \R} \norm{v_n +c}_{q_n+1}^{q_n+1} + \min_{c \in \R} \norm{u_n +c}_{p_n+1}^{p_n+1}. \] 
By H\"older inequality, if $s \ge p_n +1$, $\norm{u_n +c}_{p_n+1}^{p_n+1}\le |\Omega|^{1-\frac{p_n+1}{s}}  \norm{u_n +c}_{s}^{p_n+1}$.  Moreover, 
\[ \min_{c \in \R} \norm{u_n +c}_{s}^{p_n+1} \le \norm{u_n +k _{s}(u_n)}_{s}^{p_n+1} \le \Lambda_{s-1, \frac 1{s-1}}^{-(p_n+1)} \norm{\Delta u_n }_s^{p_n+1}, \]
from which we conclude
\[ \min_{c \in \R} \norm{u_n +c}_{p_n+1}^{p_n+1} \le C \norm{\Delta u_n }_s^{p_n+1} \le C \norm{\Delta u_n}_{\frac{q_n+1}{q_n}}^{p_n+1}. \]
for any $s \ge p_n+1$. 
Analogously,
\[ \min_{c \in \R} \norm{v_n +c}_{q_n+1}^{q_n+1} \le C \norm{\Delta v_n }_s^{q_n+1} \le C \norm{\Delta v_n}_{\frac{p_n+1}{p_n}}^{q_n+1} \]
for any $s \ge q_n+1$. 
Thus, 
\[ \norm{\Delta u_n}_{\frac{q_n+1}{q_n}}^{\frac{q_n+1}{q_n}} + \norm{\Delta v_n}_{\frac{p_n+1}{p_n}}^{\frac{p_n+1}{p_n}} \le C \left( \norm{\Delta u_n}_{\frac{q_n+1}{q_n}}^{p_n+1} + \norm{\Delta v_n}_{\frac{p_n+1}{p_n}}^{q_n+1} \right),\]
whence $\norm{\Delta u_n}_{\frac{q_n+1}{q_n}}, \norm{\Delta v_n}_{\frac{p_n+1}{p_n}} \le C$,
which gives the uniform bound on the $L^s$ norms of $\Delta u_n$ and $\Delta v_n$.
Now, to conclude, we observe that the above computations also imply that $\norm{u_n}_1$ and $\norm{v_n}_1$ are uniformly bounded, thus $u_n$ and $v_n$ are uniformly bounded in $W^{2, s}(\Omega)$ for any $s$ due to Lemma~\ref{lemma:regularity}. 

Let us now consider the case (for instance) $p_n=0$ for all $n$. Then 
\[ \int_\Omega |\Delta v_n|^s \le \int_\Omega |\sign (u_n)|^s \le 1, \]
therefore $\Delta v_n$ is uniformly bounded in $L^s(\Omega)$ for any $s \ge 1$. Hence, by similar computations as above, 
\[ \norm{\Delta u_n}_{\frac{q_n+1}{q_n}}^{\frac{q_n+1}{q_n}} \le C \norm{\Delta v_n}_s^{q_n+1} \le C', \quad \text{ whereas } \quad
\norm{\Delta u_n}_s \le C \norm{\Delta u_n}_{\frac{q_n+1}{q_n}}, \]
thus also $\Delta u_n$ is uniformly bounded in $L^s(\Omega)$. An application of Lemma~\ref{lemma:regularity} completes the proof. 
\end{proof}
\begin{proof}[Proof of Theorem~\ref{p, q 0}]
Let $u_0$ be an optimizer for $c_0$, namely a least energy solution for $c_0$, see \cite{PW}. 
It satisfies 
\[ -\Delta u_0= \sign(u_0), \quad c_0=I_0(u_0)=\frac 12 \int_\Omega |\nabla u_0|^2 -  \int_\Omega |u_0|. \]
Then, 
\[ c_0 = - \frac 12 \int_\Omega |u_0|. \]
Notice that $u_0 \in W^{2, s}(\Omega)$ for any $s$, and in $C^{1, \zeta}(\overline \Omega)$, by Lemma~\ref{lemma:regularity}. 
Let $w_n:=u_0 + \kappa_n(u_0)$, where $\kappa_n$  
is given in \cite[Lemma 2.1]{PW} so that $\int_\Omega |u_0+ \kappa_n|^{p_n -1} (u_0+ \kappa_n)=0$. Since $\partial_\nu u_0=0$, then $w_n$ is a competitor for $\Lambda_{p_n, q_n}$, thus 
\[ \Lambda_{p_n, q_n} \le \frac{\norm{\Delta w_n}_{\frac{q_n+1}{q_n}}}{\norm{w_n}_{p_n+1}} = \frac{\norm{\Delta u_0}_{\frac{q_n+1}{q_n}}}{\norm{w_n}_{p_n+1}}. \]
We now notice that 
\[ \norm{\Delta u_0}_{\frac{q_n+1}{q_n}} \le |\Omega|^{\frac{q_n}{q_n+1}} \to 1, \]
whereas by \eqref{kn to 0}, 
\[ \norm{w_n}_{p_n+1} \to \norm{u_0}_1. \]
Concluding,
\[ \limsup_n c_{p_n, q_n} = \limsup_n \frac{p_n q_n -1}{(p_n+1)(q_n+1)} \Lambda_{p_n, q_n}^{\frac{(p_n+1)(q_n+1)}{p_nq_n-1}} \le  - \int_\Omega |u_0|= 2 c_0, \]
whence
\[ \limsup_n c_{p_n, q_n} \le 2 c_0. \]

Let us now consider a optimizer $(u_n, v_n)$ for $c_{p_n, q_n}$. By Lemma~\ref{uniform bounds}, $u_n$ and $v_n$ are bounded in $W^{2, s}(\Omega)$ for any $s$, which implies that there exists $(u, v)$ such that $(u_n, v_n) \to (u, v)$ in $C^{1, \zeta}(\overline \Omega)$. 

Let us assume that $u \equiv 0$. Then, since $-\Delta v_n =|u_n|^{p_n-1} u_n$, an inspection of the proof of Lemma~\ref{uniform bounds} gives 
\[ \norm{\Delta v_n}_{s} \to 0, \quad \quad \norm{v_n}_1 \to 0, \]
and by Lemma~\ref{lemma:regularity}, $v \equiv c \in \R$. 
If $c=0$, then $I_{p_n, q_n}(u_n, v_n) \to 0$ and $c_n \to 0$. However, 
\[ \Lambda_{p_n, q_n}^{\frac{(p_n+1)(q_n+1)}{p_nq_n-1}} =\frac{(p_n+1)(q_n+1)}{p_nq_n-1} c_n \to 0 \]
would lead to a contradiction with the fact that $\Lambda_{p_n, q_n}$ is uniformly bounded due to Lemma~\ref{lem bound below}. 
If $c \ne 0$, then 
\[ 0 = \int_\Omega \nabla u \nabla \varphi= \lim_n \int_\Omega \nabla u_n \nabla \varphi= \lim_n \int_\Omega |v_n|^{q_n-1} v_n \varphi = \int_\Omega \sign(c) \varphi, \]
again a contradiction. 

Hence we can assume that $u, v \not \equiv 0$. 
We now have
\[ \int_\Omega |\nabla u|^2 = \lim_n \int_\Omega |\nabla u_n|^2 = \lim_n \int_\Omega |v_n|^{q_n-1}v_n u_n \le \lim_n \norm{v_n}_\infty^{q_n} \int_\Omega |u_n| = \int_\Omega |u|. \]
Similarly, 
\[ \int_\Omega |\nabla v|^2 \le \int_\Omega |v|. \]
On the other hand,
\[ \int_\Omega \nabla u \nabla v= \lim_n \int_\Omega \nabla u_n \nabla v_n = \int_\Omega |u| =\int_\Omega |v|. \]
Then, by similar arguments as in the proof of Lemma~\ref{p=q}, $\nabla u = \nabla v$ in $\Omega$, and $u= v + c$ for some constant $c \in \R$. 

Moreover, 
\[ \lim_n \norm{v_n}_\infty^{q_n} =1 \]
implies 
\[ \abs{\int_\Omega \nabla u \nabla \psi} = \lim_n \abs{\int_\Omega |v_n|^{q_n-1}v_n \psi} \le \lim_n \norm{v_n}_\infty^{q_n} \norm{\psi}_1= \norm{\psi}_1, \]
thus $f(u): C_c^\infty(\Omega)\to \R$ defined as
\[ f(u)\psi =\int_\Omega \nabla u \nabla \psi \]
is linear and continuous for the $L^1$ norm, and there exists $\eta \in L^\infty(\Omega)$ such that
\[\int_\Omega \nabla u \nabla \psi = \int_\Omega \eta \psi \quad \text{ for all } \psi \in L^1(\Omega).  \]
On the other hand,
\[ \int_\Omega \eta \psi= \lim_n \int_\Omega \nabla u_n \nabla \psi= \lim_n \int_\Omega |v_n|^{q_n-1}v_n \psi= \int_{\{v\ne 0\}} \sign(v) \psi + \lim_n \int_{\{ v =0\}} |v_n|^{q_n-1}v_n \psi, \]
and choosing $\psi$ such that $\text{supp}(\psi) \subset \{ v \ne 0\}$, we get $\eta=\sign(v)$ in the set $\{ v\ne 0 \}$.  Notice that in the set $\{v=0\}$ one has $\nabla u=0$ a.e., thus
\[ \int_\Omega \nabla u \nabla \psi= \int_\Omega \sign (v) \psi, \]
whence $\int_\Omega \sign(v)=0$, and $v \in \mathcal{M}$. Similarly, $u \in \mathcal{M}$, therefore by Lemma~\ref{lem:properties M}, $u=v \in \mathcal{M}$. 
One gets
\[ \lim_n c_{p_n, q_n} = \lim_n I_n (u_n, v_n)=2 I_0(u) \ge 2 c_0, \]
which yields the conclusion. 
\end{proof}

\section{Symmetry breaking: the bilaplacian case}\label{sec:symm:b}

In this section we prove Theorem~\ref{symm:break:thm}.

\begin{proof}[Proof of Theorem~\ref{symm:break:thm}.]
Let $\mathcal{M}_{rad}$ denote the radially symmetric functions in $\mathcal{M}_1$, and let
\begin{align}\label{mrad:def}
    m_{rad}:=\inf_{u\in \mathcal{M}_{rad}}\varphi(u),\quad \text{ where we recall that  }
\varphi(u) = \frac{1}{2} \int_\Omega |\Delta u |^{2} - \int_\Omega |u|.   
\end{align}
The claim of the theorem follows once we have shown that
\begin{equation}\label{objective}
m:=\inf_{u\in \mathcal{M}_1}\varphi(u)<m_{rad}.
\end{equation}
The strategy is as follows.
\begin{itemize}
\item We prove that $m_{rad}$ is achieved by a monotone decreasing function. This function has a closed formula, and this allows to calculate explicitly $m_{rad}$;
\item We construct a nonradial competitor in $\mathcal{M}_1$, which has less energy than $m_{rad}$.
We now divide the proof in three steps: in step 1 we prove the existence of a radial decreasing minimizer, and compute its unique zero; in step 2 we compute $m_{rad}$ and build a nonradial competitor in dimension $N=2$; in step 3, we do the latter in higher dimensions.
\end{itemize}

Before we proceed, we observe that, with a similar proof, all main theorems in the introduction (in particular, Theorem~\ref{prop:convergence direct}) hold in the radial setting.
\smallbreak

\noindent Step 1. Let $p_n\to 0$, and $(u_n,v_n)$ be a radial least energy solution of \eqref{system} for $p=p_n$, $q=1$ (denote by $c_{p_n,1}^{rad}$ the associated level). In particular, $u_n$ is a radially symmetric least energy solution of 
\[
    \Delta^2 u =  |u|^{p_n-1} u, \quad \partial_\nu u=\partial_\nu( \Delta u)=0 \quad \text{ on } \partial \Omega.
    \]
    Without loss of generality, assume that $u_n(0)>0$.
    By Theorem~\ref{prop:convergence direct} in the radial setting, $c_{p_n,1}^{rad}\to c_{0,1}^{rad}=m_{rad}$, and we know that $u_{n} \to u$ in $C^{2, \zeta}(B)$ for some $\zeta \in (0, 1)$, where $u\in C^{2, \zeta}(B)$ is a (nontrivial) radially symmetric least energy solution of~\eqref{bilap:sb}. By \cite[Theorem 1.1]{ST2}, we know that $u_n$ and $v_n$ are strictly monotone decreasing in the radial variable and $u_n(0)v_n(0)>0$. Then, $u$ and $v:=-\Delta u$ are monotone nonincreasing. 
    
    There are $0<a_1\leq a_2<1$ such that $u>0$ in $(0,a_1)$, $u<0$ in $(a_2,1)$, and $u=0$ in $[a_1,a_2]$.  We claim that $a_1=a_2.$  Indeed, if $a_1<a_2$, then, since $u\in C^{2, \zeta}(B)$, this implies that $u(a_1)=u'(a_1)=u''(a_1)=0$. If $v=-\Delta u>0$ in $(0,a_1)$, then, by the Hopf Lemma for the (Dirichlet) bilaplacian on balls (see, for instance, \cite[Theorem 5.7]{GGS}), we would have that $\Delta u(a_1)>0$, however $\Delta u(a_1)=u''(a_1) + \frac{N-1}{a_1} u'(a_1)=0$. Hence this is not possible and there must be some $b\in (0,a_1)$ such that $v(b)<0$. But this is incompatible with the monotonicity of $v$ and the fact that $v=-\Delta u=0$ in $(a_1,a_2)$. We reach a contradiction and then $a:=a_1=a_2$ is the only root of $u$. Then:
    \begin{align*}
0 = \lim_{n\to\infty}\int_B |u_n|^{p_n-1}u_n=\lim_{n\to\infty}\int_{B\setminus \{|x|=a\}} |u_n|^{p_n-1}u_n = \int_B \sign(u)=|\{u>0\}|-|\{u<0\}|,
\end{align*}
and
\begin{align*}
\sigma_N \frac{a^{N}}{N}
=\sigma_N \int_0^{a} r^{N-1}\, dr
=\int_{\{u>0\}}1\, dr
=|\{u>0\}|=|\{u<0\}|
=\sigma_N \int_{a}^1 r^{N-1}\, dr
=\sigma_N\left(\frac{1}{N}-\frac{a^{N}}{N}\right),
\end{align*}
where $\sigma_N=|\partial B_1|$ denotes the surface measure of the unit sphere. Then 
\begin{align*}
u(a)=0\qquad \text{with $a:=2^{-\frac{1}{N}}$.}
\end{align*}

Now we adapt the ideas in \cite[Theorem 5.7]{PW}.  We note that our setting is more complex, due to the fact that the bilaplacian is of higher-order and because the set $\mathcal{M}_1$ defined in \eqref{nehari2}
imposes some additional restrictions on the normal derivative.

\noindent Step 2. \textbf{Two-dimensional case:} Let $N=2$ and recall that $a=\frac{1}{\sqrt{2}}$.  First, we solve explicitly the equation
\begin{align}\label{v:eq}
-v''-\frac{1}{r}v'=\sign (u) & \text{ in  }(0,1),\qquad v'(0)=v'(1)=0,\qquad \int_B v = 0.
\end{align}
Here, the zero average condition follows from the first equation in ~\eqref{system:p=0} (with $q=1$), because $0=\int_{B}- \Delta u = \int_B v$.

For this, we consider an auxiliary problem 
\begin{align*}
-V''-\frac{1}{r}V'=\sign (u)\quad \text{ in  }(0,1),\qquad V'(0)=V(a)=V'(1)=0.
\end{align*}

Using that $-(V'(r) r)_r = -r(V''(r)+\frac{1}{r} V'(r))=r\, \sign (u(r))$, we have that
\[
V(r)=\begin{cases}
\int_r^a \tau^{-1} \int_0^\tau s\, ds\, d\tau = \frac{1}{8}(1 - 2 r^2), & \text{ for }0<r<a.\\
\int_a^r \tau^{-1} \int_\tau^1 s (-1)\, ds\, d\tau =\frac{r^2}{4}-\frac{\ln (r)}{2}-\frac{1}{8}-\frac{\ln (4)}{8}, & \text{ for }a<r<1
\end{cases}.
\]
This is the solution found in \cite[Proof of Theorem 5.7 (iii)]{PW}.  Now, the (unique) solution $v$ of~\eqref{v:eq} must be $v=V-\frac{1}{|B|}\int_B V.$  A direct computation shows that
$\frac{1}{|B|}\int_B V=\frac{1}{16}(3 - 4 \ln(2))$ and then
\begin{align*}
v(r)=
\begin{cases}
 \frac{1}{16} \left(-4 r^2-1+\ln (16)\right), & r<\frac{1}{\sqrt{2}},\\
 \frac{1}{16} \left(4 r^2-8 \ln (r)-5\right), & r>\frac{1}{\sqrt{2}}.
\end{cases}
\end{align*}
Next, we solve the equation
\begin{align*}
-u''-\frac{1}{r}u'=v & \text{ in  }(0,1),\qquad u'(0)=u(a)=u'(1)=0.
\end{align*}
Arguing similarly as before, we obtain that
\begin{align}\label{form:u:1}
u(r)=
\begin{cases}
 \frac{1}{256} \left(2 r^2-1\right) \left(2 r^2+3-8 \ln (2)\right), & r<\frac{1}{\sqrt{2}},\\
 \frac{1}{256} \left(-4 r^4-12 r^2+32 r^2 \ln (r)+8 \ln (r)+7+\ln (4096)\right), & r>\frac{1}{\sqrt{2}},
\end{cases}
\end{align}
as can be verified by direct computations.   Now we compute the energy of $u$, namely,
\begin{align*}
m_{rad}=\varphi(u)&= \frac{1}{2}\int_B |\Delta u |^2 - \int_B |u|
= \pi\int_0^1|v(r)|^2 r\, dr - 2\pi\int_0^1|u(r)|r\,dr\notag\\
&=\frac{\pi  (24 \log (2)-19)}{1536}
\sim -0.00483606,
\end{align*}
where $m_{rad}$ is given by~\eqref{mrad:def}.

Next, we build a nonradial competitor $U$ in $\mathcal{M}_1$ which has less energy than $u$.  With a standard abuse of notation, in polar coordinates, let
\begin{align*}
U(x)=U(r,\theta):=\frac{1}{4}\left(r-\frac{r^2}{2}\right)\cos(\theta)\qquad \text{ for }r\in (0,1),\ \theta\in (-\pi,\pi).
\end{align*}
Then, its Laplacian is given by
\begin{align*}
\Delta U = \partial_{rr}U(r,\theta)+\frac{1}{r}\partial_r U(r,\theta) + \frac{1}{r^2}\partial_{\theta \theta}U(r,\theta)
=\frac{3}{8}\cos(\theta).
\end{align*}
Thus,
\begin{align*}
\int_B |\Delta U|^2 = \frac{9}{64}\int_0^1 r\, dr\int_{-\pi}^\pi\cos(\theta)^2\, d\theta =\frac{9}{128}\pi.
\end{align*}
On the other hand,
\begin{align*}
\int_B |U| 
= \int_0^1 \int_{-\pi}^\pi \frac{1}{4}\left(r-\frac{r^2}{2}\right)|\cos(\theta)|r\, d\theta\, dr
=\frac{1}{4}\int_0^1 \left(r^2-\frac{r^3}{2}\right)\, dr\int_{-\pi}^\pi |\cos(\theta)|\, d\theta
=\frac{5}{24}.
\end{align*}
Then, if $m$ is given by~\eqref{objective}, 
\begin{align*}
m\leq\varphi(U)=\frac{1}{2}\int_B |\Delta U|^2 - \int_B |U| 
=\frac{9}{256}\pi - \frac{5}{24}\sim -0.0978867
<-0.00483606\sim \varphi(u)=m_{rad},
\end{align*}
and \eqref{objective} follows for $N=2$. 

\noindent Step 3. \textbf{Higher dimensional case:} Let $N\geq 3$ and recall that $a=2^{-\frac{1}{N}}$.   First, we solve explicitly the equation
\begin{align}\label{v:eq:n}
-v''-\frac{N-1}{r}v'=\sign (u) & \text{ in  }(0,1),\qquad v'(0)=v'(1)=0,\qquad \int_B v = 0.
\end{align}

For this, we consider an auxiliary problem 
\begin{align*}
-V''-\frac{N-1}{r}V'=\sign (u)\quad \text{ in  }(0,1),\qquad V'(0)=V(a)=V'(1)=0.
\end{align*}

Using that $-(V'(r) r^{N-1})_r = -r^{N-1}(V''(r)+\frac{1}{r} V'(r))=r^{N-1}\, \sign (u(r))$, we have that
\[
V(r)=\begin{cases}
\int_r^a \tau^{1-N} \int_0^\tau s^{N-1}\, ds\, d\tau =\frac{a^2-r^2}{2N},& \text{ for } 0<r<a,\\
\int_a^r \tau^{1-N} \int_\tau^1 s^{N-1} (-1)\, ds\, d\tau 
=\frac{1}{2 N}\left(\frac{2 r^{2-N}}{N-2}-\frac{4^{-1/N} (N+2)}{N-2}+r^2\right), & \text{ for } a<r<1.
\end{cases}
\]
This is the solution found in \cite[Proof of Theorem 5.7 (iii)]{PW}.  Now, $v=V-\frac{1}{|B|}\int_B V$ is the unique solution of~\eqref{v:eq:n}. Direct computations show that 
\begin{align*}
\frac{1}{|B|}\int_B V =
    \frac{4^{-\frac{1}{N}} \left(N \left(\left(4^{\frac{1}{N}}-1\right)
   N-1\right)-2\right)}{N\left(N^2-4\right)}
\end{align*}
and then
\begin{align*}
v(r)=
\begin{cases}
 \frac{\frac{1}{2}N \left(4^{-1/N} (3N+2)-N \left(r^2+2\right)\right)+2
   r^2}{N \left(N^2-4\right)}, & r<a,\\
 -\frac{-\left((N+2) r^{2-N}\right)-2^{-\frac{N+2}{N}} N \left(4^{\frac{1}{N}}
   N \left(r^2-2\right)+N-2\right)+2 r^2}{N \left(N^2-4\right)}, & r>a.
\end{cases}
\end{align*}
Next, we solve the equation
\begin{align*}
-u''-\frac{1}{r}u'=v & \text{ in  }(0,1),\qquad u'(0)=u(a)=u'(1)=0.
\end{align*}
We have that, for $0<r<a$,
\begin{align}\label{form:u:2}
u(r)=\int_r^a \tau^{1-N} \int_0^\tau s^{N-1}v(s)\, ds\, d\tau 
=\frac{2^{-\frac{4}{N}-3} \left(4^{\frac{1}{N}} r^2-1\right)
   \left(4^{\frac{1}{N}} \left((N-2) r^2+4 N\right)-5 N-6\right)}{N
   \left(N^2-4\right)}
,
\end{align}
and, for $a<r<1$ and $N\neq 4$,
\begin{align}\label{form:u:3}
u(r)
&=\int_a^r \tau^{1-N} \int_\tau^1 s^{N-1}v(s)\, ds\, d\tau \\
&=
\frac{-4^{\frac{N-1}{N}} (N-4) r^{2-N}
+4 (N+2) r^{4-N}
-(N-4) (N-2) r^4}{8 ((N-4)  (N-2)  N  (N+2))}\\
&+\frac{2^{\frac{N-2}{N}} (N-4)
   \left(\left(2^{\frac{N+2}{N}}-1\right) N+2\right) r^2
   +16^{-1/N} \left(3 (N-6)N
   -4^{\frac{1}{N}+1} (N-4) N-24\right)}{8 ((N-4)  (N-2)  N  (N+2))},
\end{align}
while, for for $a<r<1$ and $N=4,$
\begin{align}\label{form:u:4}
u(r)=-\frac{2 r^6+2 \left(\sqrt{2}-8\right) r^4+r^2 \left(24 \ln (r)+8 \sqrt{2}-7+\ln (64)\right)+2
   \sqrt{2}}{384 r^2}.
\end{align}

Now we compute the energy of $u$, namely, for $N\neq 4$,
\begin{align}\label{ene1}
\varphi(u)&= \frac{1}{2}\int_B |\Delta u |^2 - \int_B |u|
=\sigma_N\int_0^1|v(r)|^2 r^{N-1}\, dr - \sigma_N\int_0^1|u(r)|r^{N-1}\,dr\\
&=\frac{16^{-\frac{N+1}{N}} \left(N \left(N^2-3 N-2^{\frac{N+2}{N}} (N-4) (N+4)+16^{\frac{1}{N}}
   (N-2) (N+5)-34\right)-24\right) \pi ^{N/2}}{(N-4) (N-2) (N+2) \Gamma
   \left(\frac{N}{2}+3\right)},
\end{align}
and, for $N=4,$
\begin{align*}
\varphi(u)=-\frac{\pi ^2 \left(-57+32 \sqrt{2}+18 \ln (2)\right)}{4608}\sim -0.00156672.
\end{align*}

For $N\neq 4$, let 
\begin{align*}
    p(N)&:=N \left(N^2-3 N-2^{\frac{N+2}{N}} (N-4) (N+4)+16^{\frac{1}{N}}
   (N-2) (N+5)-34\right)-24\\
   &=\left(4^{\frac{1}{N}}-1\right)^2 N^3+3 \left(16^{\frac{1}{N}}-1\right)
   N^2+\left(2^{\frac{2}{N}+5}-5\ 2^{\frac{N+4}{N}}-34\right) N-24.
\end{align*}
If $m_{rad}$ is given by~\eqref{mrad:def}, then
\begin{align*}
&m_{rad}=\varphi(u)=\frac{16^{-\frac{N+1}{N}}p(N) \pi ^{N/2}}{(N-4) (N-2) (N+2) \Gamma
   \left(\frac{N}{2}+3\right)}=:h_1(N).
\end{align*}

Next, we build a nonradial competitor $U$ in $\mathcal{M}_1$ which has less energy.  With a standard abuse of notation, in hyperspherical  coordinates, let
\begin{align*}
U(x)=U(r,\theta):=t\left(r-\frac{r^2}{2}\right)\cos(\theta_1)
\end{align*}
for $r\in (0,1),$ $\theta_i\in (0,\pi)$ for $i=1,\ldots,N-2$, and $\theta_{N-1}\in(-\pi,\pi)$. Here $t>0$ is a parameter to be defined later.  Then, its Laplacian is given by
\begin{align*}
\Delta U 
&= \partial_{rr}U(r,\theta)+\frac{N-1}{r}\partial_r U(r,\theta) 
+ \frac{1}{r^2\sin^{N-2}(\theta)}\partial_{\theta}\Big(\sin^{N-2}(\theta)U_\theta(r,\theta)\Big)=-\frac{t}{2} (N+1) \cos (\theta ),
\end{align*}
see, for example, \cite[eq. (2.10)]{campos2020hyperspherical}. Thus,  
\begin{align*}
&\int_B |\Delta U|^2 
=\int_0^1\int_{-\pi}^\pi \int_{0}^\pi\ldots\int_{0}^\pi  |\Delta U(r,\theta_1)|^2 r^{N-1}
\sin^{N-2}(\theta_1)
\sin^{N-3}(\theta_2)\ldots \sin(\theta_{N-2})
\, d\theta_1\ldots d\theta_{N-2}d\phi\, dr
\\
&=\left(\frac{t^2}{4}(N+1)^2 \int_0^1 r^{N-1}\, dr\right)\int_{0}^{\pi}\cos^2\theta_1\sin^{N-2}(\theta_1)\, d\theta_1
|\mathbb{S}^{N-2}|\\
&=\left(\frac{t^2}{4}\frac{(N+1)^2}{N}\right)
\left( \frac{\sqrt{\pi } \Gamma \left(\frac{N-1}{2}\right)}{2 \Gamma \left(\frac{N}{2}+1\right)} \right)
\frac{2\pi^{\frac{N-1}{2}}}{\Gamma(\frac{N-1}{2})}
=\frac{t^2}{4}\frac{(N+1)^2}{N}
\frac{\pi^{\frac{N}{2}}}{\Gamma \left(\frac{N}{2}+1\right)}
= t^2\frac{(N+1)^2 \pi ^{N/2}}{2 N^2 \Gamma \left(\frac{N}{2}\right)}.
\end{align*}
On the other hand,
\begin{align*}
\int_B |U| 
&= t\int_0^1 \left(r-\frac{r^2}{2}\right)r^{N-1}\, dr\int_{0}^{\pi}|\cos(\theta_1)|\sin^{N-2}(\theta_1)\, d\theta_1 |\mathbb{S}^{N-2}|\\
&=t
\left(\frac{N+3}{2 N^2+6 N+4}\right)
\left(\frac{2}{N-1}\right)
\left(
\frac{2\pi^{\frac{N-1}{2}}}{\Gamma(\frac{N-1}{2})}
\right)
=t\frac{(N+3) \pi ^{\frac{N-1}{2}}}{2 (N+2) \Gamma \left(\frac{N+3}{2}\right)}.
\end{align*}
Then,
\begin{align*}
f(t)&=\varphi(U)=\frac{1}{2}\int_B |\Delta U|^2 - \int_B |U|=t^2\frac{(N+1)^2 \pi ^{N/2}}{4 N^2 \Gamma \left(\frac{N}{2}\right)}
-
t\frac{(N+3) \pi ^{\frac{N-1}{2}}}{2 (N+2) \Gamma \left(\frac{N+3}{2}\right)}.
\end{align*}
A direct computation shows that $f$ achieves its minimum at 
$t_0=\frac{N^2 (N+3) \Gamma \left(\frac{N}{2}\right)}{\sqrt{\pi } (N+1)^2 (N+2) \Gamma
   \left(\frac{N+3}{2}\right)}$ and 
\begin{align*}
m\leq f(t_0)=-\frac{2 N (N+3)^2 \pi ^{\frac{N}{2}-1} \Gamma \left(\frac{N}{2}+1\right)}{(N+1)^4 (N+2)^2 \Gamma\left(\frac{N+1}{2}\right)^2}=:h_2(N)<0.
\end{align*}
In particular, $h_2(4)=-\frac{3136}{50625}\sim -0.0619457<-0.00156672\sim \varphi(u),$ which settles the case $N=4.$ 

It remains to show that, for $N\geq 3$ and $N\neq 4$,
\begin{align}\label{ineq}
m\leq h_2(N)<h_1(N)=m_{rad}.
\end{align}

This can be shown using some fine estimates on Gamma functions and polynomials or using some rough estimates and computing some values by hand (or by computer). Here we follow the second alternative. Assume, for the moment, that $N\geq 5$. We want to verify that
\begin{align*}
  -\frac{2 N (N+3)^2 \pi ^{\frac{N}{2}-1} \Gamma \left(\frac{N}{2}+1\right)}{(N+1)^4 (N+2)^2 \Gamma\left(\frac{N+1}{2}\right)^2}= h_2(N)<h_1(N)=\frac{16^{-\frac{N+1}{N}}p(N) \pi ^{N/2}}{(N-4) (N-2) (N+2) \Gamma
   \left(\frac{N}{2}+3\right)},
\end{align*}
or, equivalently,
\begin{align*}
  -p(N)< q(N):=\frac{
  (N-4) (N-2) \Gamma
   \left(\frac{N}{2}+3\right)
  2 N (N+3)^2 \Gamma \left(\frac{N}{2}+1\right) 16^{\frac{N+1}{N}}}{(N+1)^4 (N+2) \Gamma\left(\frac{N+1}{2}\right)^2
   \pi}.
\end{align*}

First, we estimate $q$ from below. Using Gautschi's inequality for the Gamma function,
\[
\frac{\Gamma\left( \frac{N}{2} + 1 \right)}{\Gamma\left( \frac{N}{2} + \frac{1}{2} \right)}>\left( \frac{N}{2} + \frac{1}{2} \right)^{\frac{1}{2}},
\]
and then
\begin{align*}
\frac{\Gamma\left(\frac{N}{2}+3\right)\Gamma \left(\frac{N+2}{2}\right)}{\Gamma\left(\frac{N+1}{2}\right)^2 }
=\frac{\left(\frac{N}{2}+2\right)\left(\frac{N}{2}+1\right)\Gamma\left(\frac{N}{2}+1\right)^2}{\Gamma\left(\frac{N+1}{2}\right)^2 }
>\left(\frac{N+4}{2}\right)\left(\frac{N+2}{2}\right)\left( \frac{N+1}{2}\right).
\end{align*}
Then,
\begin{align*}
q(N)>\frac{
  (N-4) (N-2) N (N+3)^2 (N+4)}{(N+1)^4} \frac{16^{\frac{N+1}{N}}}{2 \pi}\left( \frac{N+1}{2}\right)
  >4\frac{N(N-4)(N+1)}{\pi},
\end{align*}
where we used that $\frac{(N-2)(N+3)^2(N+4)}{(N+1)^4}>1$ and $\frac{16^{\frac{N+1}{N}}}{4}>4$ for $N\geq 5$. Next, we estimate $-p(N)$ from above,
\begin{align*}
-p(N)
&=-\left(4^{\frac{1}{N}}-1\right)^2 N^3-3 \left(16^{\frac{1}{N}}-1\right)
   N^2-\left(2^{\frac{2}{N}+5}-5\ 2^{\frac{N+4}{N}}-34\right) N+24\\
&<\left(5\ 2^{1+\frac{4}{N}}+34\right) N+24
<\left((5)(4)+34\right) N+24=54N+34
\end{align*}
for $N\geq 5$.  Then, for $N\geq 9,$
\begin{align*}
-p(N)<54N+34<4\frac{N(N-4)(N+1)}{\pi}<q(N),
\end{align*}
which yields the symmetry breaking result for all $N\geq 9.$ For $N = 3,\ldots,8,$ we compute the corresponding values of $h_1(N)-h_2(N)$ directly in Table~\ref{tab1}.

\begin{table}
\begin{center}
{\small
\begin{tabular}{||c c c c||} 
 \hline
 $N$ & $m_{rad}=h_1(N)$ & $h_2(N)$ & $h_1(N)-h_2(N)$ \\ [0.5ex] 
 \hline\hline
 3 & -0.00272396 & -0.0795216 & 0.0767976 \\
 4 & -0.00156672 & -0.0619457 & 0.060379 \\
 5 & -0.000908657 & -0.0466251 & 0.0457165 \\
 6 & -0.000525701 & -0.0339151 & 0.0333894 \\
 7 & -0.000301342 & -0.0238506 & 0.0235493 \\
 8 & -0.000170448 & -0.0162296 & 0.0160591 \\
[1ex] 
 \hline
\end{tabular}
}
\end{center}
\caption{In the second column the radial least energy level $m_{rad}$ (see \eqref{mrad:def}) is computed. The last column shows the difference $h_1(N)-h_2(N)$, which is always positive, and implies the symmetry breaking for dimensions $N=3,\ldots,26$.
}\label{tab1}
\end{table}
\end{proof}

\begin{proof}[Proof of Corollary \ref{coro:intro}]
For $q\in (1-\eps,1+\eps)$, the result follows from Theorem \ref{prop:convergence direct} and Theorem \ref{symm:break:thm}. For $q\in (0,\eps)$, the result follows from Theorem \ref{p, q 0} and \cite[Theorem 1.2]{PW}.
\end{proof}

\section{Multiplicity result: proof of Theorem~\ref{prop:mult}}\label{sec:mul}

Consider the functional $\Phi(f, g):=- \int_\Omega f Kg$, and take $S:=\{ (f, g) \in X: \gamma_1 \norm{f}_\alpha^\alpha+ \gamma_2 \norm{g}_\beta^\beta=1\}$.  
Notice that $\Phi \in C^1(S)$ is even and bounded below on $S$, as
\[ \inf_S \Phi = - \sup_S \int_\Omega f K g = - D_{p, q} > -\infty.\] 
The last inequality is a consequence of Sobolev embeddings and Young inequality, see \cite[p.755]{PST}.

We will apply \cite[Corollary 4.1]{Szulkin} and, as a first step, we show the following

\begin{lemma}\label{lem:PS}
The functional $\Phi$ satisfies the Palais Smale (PS) condition on $S$ at every level $c\neq 0$. 
\end{lemma}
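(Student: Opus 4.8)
The plan is to verify the Palais--Smale condition directly. Let $(f_n,g_n)\subset S$ be a sequence with $\Phi(f_n,g_n)\to c\neq 0$ and $\Phi'|_S(f_n,g_n)\to 0$ in the sense that there exists a sequence of Lagrange multipliers $\lambda_n$ with $\Phi'(f_n,g_n)-\lambda_n G'(f_n,g_n)\to 0$ in $X^*$, where $G(f,g):=\gamma_1\|f\|_\alpha^\alpha+\gamma_2\|g\|_\beta^\beta$. First I would compute the derivative of $\Phi$ explicitly: $\Phi'(f,g)[(\varphi,\psi)]=-\int_\Omega \varphi Kg - \int_\Omega f K\psi = -\int_\Omega \varphi Kg - \int_\Omega (Kf)\psi$, using the self-adjointness of $K$ (which follows from Lemma~\ref{lemma:regularity} and the symmetry of the Neumann Green function). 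Also $G'(f,g)[(\varphi,\psi)]=\alpha\gamma_1\int_\Omega |f|^{\alpha-2}f\varphi+\beta\gamma_2\int_\Omega |g|^{\beta-2}g\psi$. Testing the approximate critical point equation with $(\varphi,\psi)=(f_n,g_n)$ and using $\alpha\gamma_1=\beta\gamma_2=\gamma$ (which follows from $\gamma_1\alpha=\gamma_2\beta=\gamma$, recall the relation $\gamma_1+\gamma_2=1$, $\gamma_1\alpha+\gamma_2\beta=\gamma$ forces... actually one checks $\gamma_i$ are defined precisely so that $\gamma_1\alpha=\gamma_2\beta$), one gets $-2\int_\Omega f_n Kg_n + o(1)= \lambda_n \gamma (\gamma_1\|f_n\|_\alpha^\alpha+\gamma_2\|g_n\|_\beta^\beta)=\lambda_n\gamma$, hence $\lambda_n\to -2c/\gamma\neq 0$. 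This is exactly where the hypothesis $c\neq 0$ enters: it guarantees the multiplier stays bounded away from zero.

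Next I would extract a weakly convergent subsequence. Since $(f_n,g_n)\in S$, the sequence is bounded in $X=X^\alpha\times X^\beta$, so up to a subsequence $f_n\rightharpoonup f$ in $L^\alpha(\Omega)$, $g_n\rightharpoonup g$ in $L^\beta(\Omega)$. By the compactness of the embeddings $W^{2,\alpha}(\Omega)\hookrightarrow L^{q+1}(\Omega)$ and $W^{2,\beta}(\Omega)\hookrightarrow L^{p+1}(\Omega)$ — valid in the strictly subcritical range \eqref{CH} assumed throughout this section — the operator $K$ maps weakly convergent sequences in $X$ to strongly convergent ones: $Kg_n\to Kg$ in $L^{\alpha'}(\Omega)=L^{q+1}(\Omega)$ and $Kf_n\to Kf$ in $L^{p+1}(\Omega)=L^{\beta'}(\Omega)$. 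Then the critical point equation, tested against a fixed $(\varphi,\psi)\in X$ and passed to the limit, yields $-\varphi\mapsto \int_\Omega\varphi Kg$ type identities showing that $|f_n|^{\alpha-2}f_n\rightharpoonup \tfrac{1}{\lambda\gamma}Kg$ and $|g_n|^{\beta-2}g_n\rightharpoonup \tfrac{1}{\lambda\gamma}Kf$ weakly, where $\lambda:=\lim\lambda_n$; since the right-hand sides converge strongly, the nonlinear terms converge strongly in the dual exponents, and by the standard Brezis--Lieb / uniform convexity argument (the map $t\mapsto |t|^{\alpha-2}t$ is a homeomorphism and strong convergence of $|f_n|^{\alpha-2}f_n$ in $L^{\alpha'}$ upgrades to strong convergence of $f_n$ in $L^\alpha$), we conclude $f_n\to f$ in $L^\alpha(\Omega)$ and $g_n\to g$ in $L^\beta(\Omega)$. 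Finally, strong convergence gives $\gamma_1\|f\|_\alpha^\alpha+\gamma_2\|g\|_\beta^\beta=1$, so $(f,g)\in S$, proving the (PS) condition.

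The main obstacle is the inversion step: recovering strong convergence of $f_n$ itself from strong convergence of the nonlinearity $|f_n|^{\alpha-2}f_n$. When $\alpha<2$ (i.e.\ $p>1$) the map $t\mapsto |t|^{\alpha-2}t$ has unbounded derivative at the origin and one must argue via the uniform convexity of $L^\alpha$ together with $\|f_n\|_\alpha\to\|f\|_\alpha$ (the latter coming from $\int_\Omega |f_n|^{\alpha-2}f_n f_n\to\int_\Omega |f|^{\alpha-2}f\cdot f$, which follows from strong convergence of the nonlinearity against the strongly convergent test function $Kg$). I expect this to be routine but it is the one place requiring care; everything else is a direct consequence of the compactness of $K$ in the subcritical regime and the fact that $c\neq 0$ keeps the Lagrange multiplier nondegenerate. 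Note that the restriction $pq\neq 1$ is not needed here — it enters only later when translating (PS) sequences of $\Phi$ back to solutions of \eqref{system}.
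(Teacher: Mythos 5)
Your overall strategy matches the paper's: show the Lagrange multipliers $\lambda_n$ converge to a nonzero limit (this is where $c\neq 0$ enters), pass to a weak limit $(f,g)$, use compactness of $K$ to get strong convergence of $Kg_n$, and invert the nonlinearity to upgrade $f_n\rightharpoonup f$ to $f_n\to f$.

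There is, however, a computational slip in your first step. You test the approximate critical-point equation with $(\varphi,\psi)=(f_n,g_n)$ and claim that the constraint term evaluates to $\lambda_n\gamma(\gamma_1\|f_n\|_\alpha^\alpha+\gamma_2\|g_n\|_\beta^\beta)=\lambda_n\gamma$. But
\[
G'(f_n,g_n)[(f_n,g_n)]=\alpha\gamma_1\|f_n\|_\alpha^\alpha+\beta\gamma_2\|g_n\|_\beta^\beta=\gamma\bigl(\|f_n\|_\alpha^\alpha+\|g_n\|_\beta^\beta\bigr),
\]
since $\alpha\gamma_1=\beta\gamma_2=\gamma$; the quantity $\|f_n\|_\alpha^\alpha+\|g_n\|_\beta^\beta$ is \emph{not} the constraint value $1$, so the simplification fails. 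The paper avoids this by testing instead with $(\varphi,\psi)=(\gamma_1 f_n,\gamma_2 g_n)$, for which the $\Phi'$-term collapses (by self-adjointness of $K$ and $\gamma_1+\gamma_2=1$) to $-\int_\Omega f_n Kg_n$, and the constraint term collapses to exactly $\lambda_n(\gamma_1\|f_n\|_\alpha^\alpha+\gamma_2\|g_n\|_\beta^\beta)=\lambda_n$, yielding $-\int_\Omega f_n Kg_n-\lambda_n=o(1)$ and hence $\lambda_n\to -c\neq 0$. Your version still gives the needed qualitative conclusion (since $1/\max(\gamma_1,\gamma_2)\le\|f_n\|_\alpha^\alpha+\|g_n\|_\beta^\beta\le1/\min(\gamma_1,\gamma_2)$, $\lambda_n$ is forced to stay bounded away from $0$), but your clean formula $\lambda_n\to-2c/\gamma$ is wrong.

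A small additional remark: the inversion step is less delicate than you suggest. Once $|f_n|^{\alpha-2}f_n$ converges strongly in $L^{\alpha'}$ (which follows from the PS condition, compactness of $K$, and $\bar\lambda\neq0$), the continuous inverse Nemytskii operator $h\mapsto|h|^{\alpha'-2}h$ from $L^{\alpha'}$ to $L^\alpha$ gives $f_n\to f$ in $L^\alpha$ directly, for any $\alpha\in(1,\infty)$, with no need to distinguish $\alpha<2$ from $\alpha>2$. (The paper instead derives $\|f_n\|_\alpha\to\|f\|_\alpha$ and invokes uniform convexity; both routes are fine.)
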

\begin{proof}
Let us take a (PS) sequence at level $c$ on $S$, namely let $(f_n, g_n) \in S$ such that 
\[ \Phi(f_n, g_n) \to c, \quad \Phi|'_{S} (f_n, g_n) \to 0, \]
for some $c\neq 0$. In particular,
\begin{equation}\label{PS1}
\int_\Omega f_n Kg_n \to c\neq 0,
\end{equation}
and there exists $\lambda_n$ such that, for any $(\varphi, \psi)\in X$, 
\begin{equation}\label{PS}  - \int_\Omega f_n K \psi - \int_\Omega g_n K \varphi - \lambda_n \int_\Omega |f_n|^{\frac 1 p -1} f_n \varphi - \lambda_n \int_\Omega |g_n|^{\frac 1 q -1} g_n \psi= o_n(1)\norm{(\varphi, \psi)}_X. 
\end{equation}

Take $\varphi= \gamma_1f_n$ and $\psi=\gamma_2 g_n$ in \eqref{PS}. Then
\begin{equation}\label{PS3} - \int_\Omega f_n K g_n - \lambda_n   = o_n(1)\norm{(f_n, g_n)}_X). 
\end{equation}
Since $(f_n,g_n)\in S$, then $(f_n,g_n)$ is bounded in $X$ and, from \eqref{PS1} and \eqref{PS3}, $\lambda_n$ is a bounded sequence, converging (up to a subsequence) to some $\bar \lambda$. Moreover,  $\bar \lambda\neq 0$, since $c\neq 0$.

Let $(f,g)\in X$ be such that $(f_n,g_n)\rightharpoonup (f,g)$ in $X$. Now take $\psi=0$ and $\varphi=f_n-f$ in \eqref{PS}. One has
\[ -\int_\Omega g_n K(f_n- f) - \lambda_n  \int_\Omega |f_n|^{\frac 1p -1} f_n(f_n- f)= o_n(1)\norm{f_n -f}_\alpha, \]
whence (by the compactness of the operator $K$),
\begin{align*} \lim_n \lambda_n  \int_\Omega |f_n|^{\alpha} &= \bar \lambda  \int_\Omega |f|^\alpha - \lim_n \int_\Omega g_n K(f_n -f) + o(\norm{f_n -f}_\alpha) \\
&=\bar \lambda \int_\Omega |f|^\alpha + o(\norm{f_n -f}_\alpha), \end{align*}
and $\norm{f_n}_\alpha \to \norm{f}_\alpha$, since $\bar \lambda \ne 0$. Doing an analogous reasoning with $\psi=g_n-g$ and $\varphi=0$, we deduce that $(f_n,g_n)\to (f,g)$ strongly in $X$, concluding the proof.
\end{proof}

\begin{proof}[Proof of Theorem~\ref{prop:mult}]
Let us 
define
\[ c_j:= \inf_{A \in \Gamma_j} \sup_{(f, g) \in A} \Phi(f, g), \]
where 
\[ \Gamma_j:=\{ A \subset S: A \text{ closed, symmetric and compact, } \gamma(A) \ge j \}, \]
and $\gamma(A)$ denotes the genus of $A$, see for instance \cite[Definition 5.1]{Struwe}. 
Let us show that $\Gamma_j \ne \emptyset$ for any $j$, and that $c_j \ne 0$.

For any $k \ge 1$, take 
\[ 
A_k:=\{ (f, f) \in S: f= \sum_{i=1}^k a_i \varphi_i,  a_i\in \R \} 
\] 
 where $\varphi_i$ is the $i$-th eigenfunction of the Laplace operator with Neumann boundary conditions with coresponding eigenvalue $\mu_i$; in particular, 
 \[ \int_\Omega \varphi_i K \varphi_i= \frac{1}{\mu_i} \int_\Omega \varphi_i^2\quad \forall i,\qquad \text{ and } \int_\Omega \varphi_i K \varphi_j=\int_\Omega \varphi_i\varphi_j=0\quad \forall i\neq j.
 \]
 Then $\gamma(A_k) =k$ (by \cite[Proposition 5.2]{Struwe}), 
 %prop 5.2 Struwe 
 and 
 \begin{align*} 
 \Phi(f,f)
 &=- \int_\Omega f Kf= - \sum_{i=1}^k a_i^2 \int_\Omega \varphi_i K \varphi_i=  - \sum_{i=1}^k a_i^2  \frac{1}{\mu_i} \int_\Omega \varphi_i^2 \le - \frac{1}{\mu_k} \norm{f}_2^2\\
 &\le -\frac{1}{\mu_k} C(\gamma_1 \norm{f}_\alpha^\alpha+ \gamma_2 \norm{g}_\beta^\beta)=-\frac C{\mu_k}<0 
 \end{align*}
 for any $(f, f) \in A_k$, since norms are equivalent in finite dimension spaces.

 By  \cite[Corollary 4.1]{Szulkin} and Lemma~\ref{lem:PS}, it follows that there exist infinitely many critical points $(f_k, g_k)$ restricted to $S$ at $c_k$ for any $k \in \N$. Thus, by the Lagrange Multipliers Theorem, there exist a sequence  $\lambda_k$ such that 
\[ \int_\Omega \psi K f_k=\lambda_k  \int_\Omega |g_k|^{\frac 1 q-1} g_k \psi, \quad \int_\Omega \varphi K g_k  =\lambda_k \int_\Omega |f_k|^{\frac 1 p-1} f_k \psi, \]
for $(\varphi, \psi) \in X$. This implies (see also \cite[Proposition 2.5]{PST}) that 
\begin{equation}\label{eq:fg} K_p g_k= \lambda_k   |f_k|^{\frac 1 p-1} f_k, \qquad  K_q f_k= \lambda_k  |g_k|^{\frac 1 q-1} g_k, \end{equation}
and (since $\gamma_1+\gamma_2=1$) we have $c_k=\lambda_k $. Since $pq \ne 1$, 
\begin{equation}\label{scaling} (\tilde f_k, \tilde g_k):=((\lambda_k )^s f_k, (\lambda_k)^t g_k), \quad \text{ with } s=-p \frac{q+1}{pq-1}, \, t=-q \frac{p+1}{pq-1} \end{equation}
satisfy 
\[ K_p \tilde g_k = |\tilde f_k|^{\frac 1 p-1} \tilde f_k, \quad K_q \tilde f_k = |\tilde g_k|^{\frac 1 q-1} \tilde g_k.  \]
Thus, we find infinitely  many distinct strong solutions to 
\eqref{system}.

Let us  justify why they are distinct. Since $c_k=\lambda_k$ and recalling \eqref{scaling}, we just need to consider the case in which for some $k \ne j$ one has $c_k \ne c_j$, $\tilde f_k = \tilde f_j$ and $\tilde g_k=\tilde g_j$. 
Then 
\[ f_k=\lambda_j^s \lambda_k^{-s} f_j, \quad g_k=\lambda_j^t \lambda_k^{-t} g_j.  \]
Therefore, since $c_k=\lambda_k$, 
\[ c_k= - \int_\Omega f_k K g_k = - \lambda_j^{s+t} \lambda_k^{-(s+t)} \int_\Omega f_j K g_j= \lambda_j^{s+t} \lambda_k^{-(s+t)} c_j = (-c_k)^{-(s+t)} (-c_j)^{s+t} c_j,  \]
which would imply $c_k=c_j$, a contradiction.
\end{proof}

\section*{Acknowledgments.}
Delia Schiera and Hugo Tavares are partially supported by the Portuguese government through FCT - Funda\c c\~ao para a Ci\^encia e a Tecnologia, I.P., under the projects UIDB/04459/2020 with DOI identifier 10-54499/UIDP/04459/2020 (CAMGSD), and PTDC/MAT-PUR/1788/2020 with DOI identifier 10.54499/PTDC/MAT-PUR/1788/2020 (project NoDES). Delia Schiera is also partially supported by FCT, I.P. and by COMPETE 2020 FEDER funds, under the Scientific Employment Stimulus - Individual Call (CEEC Individual), DOI identifier 10.54499/2020.02540.CEECIND/CP1587/CT0008.  A. Saldaña is supported by CONAHCYT grant CBF2023-2024-116 (Mexico) and by UNAM-DGAPA-PAPIIT grant IN102925 (Mexico).

\makeatletter
\newcommand{\adjustmybblparameters}{\setlength{\itemsep}{2pt}\setlength{\parsep}{0pt}}
\let\ORIGINALlatex@openbib@code=\@openbib@code
\renewcommand{\@openbib@code}{\ORIGINALlatex@openbib@code\adjustmybblparameters}
\makeatother

\bibliographystyle{plain}
\bibliography{SST.bib}

\vspace{.3cm}

\noindent \textbf{Alberto Salda\~na}\\
Instituto de Matemáticas\\
Universidad Nacional Autónoma de México\\
Circuito Exterior, Ciudad Universitaria\\
04510 Coyoacán, Ciudad de México, Mexico\\
\texttt{alberto.saldana@im.unam.mx}
\vspace{.3cm}

\noindent \textbf{Delia Schiera, Hugo Tavares}\\
CAMGSD - Centro de An\'alise Matem\'atica, Geometria e Sistemas Din\^amicos\\
Departamento de Matemática do Instituto Superior Técnico\\
Universidade de Lisboa\\
Av. Rovisco Pais\\
1049-001 Lisboa, Portugal\\
\texttt{delia.schiera@tecnico.ulisboa.pt, hugo.n.tavares@tecnico.ulisboa.pt}

\end{document}